\definecolor{darkred}{rgb}{0.8,0,0}
\newtheorem{theorem}{Theorem}
\newtheorem{lemma}{Lemma}
\newtheorem{proposition}{Proposition}
\newtheorem{corollary}{Corollary}
\newtheorem{definition}{Definition}
\newtheorem{example}{Example}
\newtheorem{remark}{Remark}
\newcommand{\be}{\begin{equation}}
\newcommand{\ee}{\end{equation}}
\newcommand{\al}{\alpha}
\newcommand{\gam}{\gamma}
\newcommand{\M}{{\cal M}}
\newcommand{\U}{{\cal U}}
\newcommand{\dz}{\wedge}
\newcommand{\ba}{\begin{array}}
\newcommand{\ea}{\end{array}}
\newcommand{\beq}{\begin{eqnarray}}
\newcommand{\eeq}{\end{eqnarray}}
\newcommand{\bd}{\begin{deff}}
\newcommand{\ed}{\end{deff}}
\newcommand{\bl}{\begin{lm}}
\newcommand{\el}{\end{lm}}
\newcommand{\bp}{\begin{proo}}
\newcommand{\ep}{\end{proo}}
\newcommand{\bt}{\begin{thee}}
\newcommand{\et}{\end{thee}}
\newcommand{\bc}{\begin{co}}
\newcommand{\ec}{\end{co}}
\newcommand{\brm}{\begin{rem}}
\newcommand{\erm}{\end{rem}}
\newcommand{\der}{{\rm d}}
\newcommand{\newc}{\newcommand}
\let\ccdot\cdot
\def\cdot{\hbox to 2.5pt{\hss$\ccdot$\hss}}
\newc{\aR}{\mbox{\boldmath{$ R$}}}
\newc{\aS}{\mbox{\boldmath{$ S$}}}
\newc{\aT}{\mbox{\boldmath{$ T$}}}
\newc{\aW}{\mbox{\boldmath{$ W$}}}
\newc{\aK}{\mbox{\boldmath{$ K$}}}
\newc{\aL}{\mbox{\boldmath{$ L$}}}
\newcommand{\bma}{\begin{pmatrix}}
\newcommand{\ema}{\end{pmatrix}}
\newcommand{\bet}{\beta}
\newc{\obstrn}[2]{B^{#1}_{#2}}
\newcommand{\rpl}                         
{\mbox{$
\begin{picture}(12.7,8)(-.5,-1)
\put(0,0.2){$+$}
\put(4.2,2.8){\oval(8,8)[r]}
\end{picture}$}}
\newcommand{\lpl}                         
{\mbox{$
\begin{picture}(12.7,8)(-.5,-1)
\put(2,0.2){$+$}
\put(6.2,2.8){\oval(8,8)[l]}
\end{picture}$}}
\newc{\tensor}[1]{#1}
\newc{\Mvariable}[1]{\mbox{#1}}
\newc{\down}[1]{{}_{#1}}
\newc{\up}[1]{{}^{#1}}
\newc{\JulyStrut}{\rule{0mm}{6mm}}
\newc{\midtenPan}{\mbox{\sf S}}
\newc{\midten}{\mbox{\sf T}}
\newc{\midtenEi}{\mbox{\sf U}}
\newc{\ATen}{\mbox{\sf E}}
\newc{\BTen}{\mbox{\sf F}}
\newc{\CTen}{\mbox{\sf G}}
\def\sideremark#1{\ifvmode\leavevmode\fi\vadjust{\vbox to0pt{\vss
 \hbox to 0pt{\hskip\hsize\hskip1em
 \vbox{\hsize3cm\tiny\raggedright\pretolerance10000
 \noindent #1\hfill}\hss}\vbox to8pt{\vfil}\vss}}}%
\numberwithin{equation}{section}
\newcounter{romenumi}
\newcommand{\labelromenumi}{(\roman{romenumi})}
\newcommand{\D}{\mathcal{D}}
\begin{document}

\title{The geometry of marked contact Engel structures}

\vskip 1.truecm

\author{\textsc{Gianni Manno}\\
    Dipartimento di Scienze Matematiche (DISMA),\\
    Politecnico di Torino,\\
    Corso Duca degli Abruzzi, 24,
    10129 Torino, Italy\\
    \textsf{giovanni.manno@polito.it}\\[0.5cm]
    \textsc{Pawe\l\ Nurowski}\\
    Center for Theoretical Physics PAS\\
     Al. Lotnik\'ow 32/46,
     02-668 Warsaw, Poland\\
    \textsf{nurowski@cft.edu.pl}\\[0.5cm]
    \textsc{Katja Sagerschnig}\\
     Center for Theoretical Physics PAS\\
     Al. Lotnik\'ow 32/46,
     02-668 Warsaw, Poland\\
    \textsf{katja@cft.edu.pl}}

\date{\today}

\maketitle

\begin{abstract}
A \emph{contact twisted cubic structure} $(\M,\mathcal{C},\gamma)$ is a $5$-dimensional manifold $\M$ together with a contact distribution $\mathcal{C}$ and a bundle of twisted cubics $\gamma\subset\mathbb{P}(\mathcal{C})$ compatible with the conformal symplectic form on $\mathcal{C}$.
In Engel's classical work, the Lie algebra of the exceptional Lie group $\mathrm{G}_2$ was realized as the symmetry algebra of the most symmetrical contact twisted cubic structure; we thus refer to this one as the \emph{contact Engel structure}.
In the present paper we equip the contact Engel structure with a smooth section $\sigma: \M\to\gamma$, which ``marks'' a point in each fibre $\gamma_x$.
 We study the local geometry of the resulting structures $(\M,\mathcal{C},\gamma, \sigma)$, which we call  \emph{marked contact Engel structures}.   Equivalently, our study can be viewed as a study of foliations of $\M$ by curves whose tangent directions are everywhere contained in  $\gamma$.
  We provide a complete set of local invariants of marked contact Engel structures, we classify all homogeneous models with symmetry groups of dimension $\geq 6$, and we prove an analogue of the classical Kerr theorem from relativity.
  \end{abstract}

\newpage
\tableofcontents
\newpage
\section{Introduction}

Consider a smooth $5$-dimensional  manifold $\M^5$ together with a contact distribution, i.e., a rank $4$ subbundle $\mathcal{C}\subset T\M^5$ such that the Levi bracket
\begin{equation}\label{Levi}\mathcal{L}:\Lambda^2\mathcal{C}\to T\M^5/\mathcal{C},\quad \xi_x\wedge\eta_x\mapsto [\xi,\eta]_x \ \mathrm{mod}\ \mathcal{C}_x\end{equation}
is non-degenerate at each point $x\in \M$. Then $\mathcal{L}_x$ endows each fibre $\mathcal{C}_x$ with the structure of a conformal symplectic vector space.  Locally, $\mathcal{C}$ is the kernel of a contact form, i.e.,  $\mathcal{C}=\mathrm{ker}(\alpha)$, where $\alpha\in\Omega^1(\M^5)$ satisfies $\der\alpha\wedge\der\alpha\wedge\alpha\neq 0$, and the conformal symplectic structure  on $\mathcal{C}_x$ is generated by  $\der\alpha\vert_{{\mathcal{C}}_x}$.

Now suppose  that each contact plane $\mathcal{C}_x$ is equipped with a cone $\hat{\gamma}_x\subset\mathcal{C}_x$ whose projectivization $\gamma_x\subset\mathbb{P}(\mathcal{C}_x)$ is the image of the map
$$
\mathbb{R}\mathbb{P}^1\to\mathbb{P}(\mathcal{C}_x)\cong\mathbb{R}\mathbb{P}^3,\quad [t,s]\mapsto [t^3,t^2 s,t s^2,s^3]\,;
$$
such a curve  is called a twisted cubic curve (also, rational normal curve of degree three). Moreover, assume that  $\hat{\gamma}_x$ is a Lagrangian in the sense that the tangent space at each non-zero point is a $2$-dimensional subspace of $\mathcal{C}_x$ on which the  conformal symplectic form vanishes identically. Further suppose that $\gamma=\bigsqcup_{x\in\mathcal{M}^5}\gamma_x\to\M^5$  is a subbundle of   $\mathbb{P}(\mathcal{C})\to\M^5$. Then $(\M^5,\mathcal{C},\gamma)$ is called a \emph{contact twisted cubic structure}.

 In 1893 Cartan and Engel, in the same journal but independent articles \cite{cartan, engel}, provided the first explicit realizations of the Lie algebra of the exceptional Lie group $\mathrm{G}_2$\footnote{In this paper $\mathrm{G}_2$ denotes a Lie group whose Lie algebra is the split real form of the smallest of the complex exceptional simple Lie algebras, see Section \ref{LieTheory}.}
as infinitesimal automorphisms of geometric structures on $5$-dimensional manifolds. One of these structures was the simplest maximally non-integrable rank two distribution, while the other was the simplest contact twisted cubic structure. (In other words, Cartan and Engel gave local coordinate descriptions of the geometric structures on the two $5$-dimensional homogeneous spaces $\mathrm{G}_2/\mathrm{P}_1$ and $\mathrm{G}_2/\mathrm{P}_2$ whose automorphism groups are precisely  $\mathrm{G}_2$.) Engel's description of the invariant contact twisted cubic structure
 was (up to a different choice of coordinates) as follows:
 Let $(x^0,x^1,x^2,x^3,x^4)$ be local coordinates on an open subset $\mathcal{U}\subset \mathbb{R}^5$ and consider
co-frame $(\alpha^0,\alpha^1,\alpha^2,\alpha^3,\alpha^4),$
\begin{equation}\label{coframeintro}\alpha^0=\mathrm{d}x^0+x^1 \mathrm{d}x^4- 3 x^2 \mathrm{d}x^3,\quad \alpha^1=\der x^1,\quad \alpha^2=\der x^2,\quad \alpha^3=\der x^3,\quad \alpha^4=\der x^4,\end{equation} with dual frame
 $(X_0,X_1,X_2,X_3,X_4)$,
\begin{equation}\label{frameintro}
\begin{aligned}X_0=\partial_{x^0},\quad
X_1=\partial_{x^1}, \quad X_2=\partial_{x^2},\quad X_3=3x^2\partial_{x^0}+\partial_{x^3},\quad X_4=-x^1\partial_{x^0}+\partial_{x^4}.
\end{aligned}
\end{equation}
Here $\alpha^0$ is a contact form and defines a contact distribution $\mathcal{C}=\mathrm{ker}(\alpha^0)$.
Now consider the set of horizontal null vectors
$$\hat{\gamma}=\{\,Y\in\mathcal{C}:\,g_1(Y,Y)=g_2(Y,Y)=g_3(Y,Y)=0\,\}$$
 of the  three degenerate metrics \footnote{We refer to the tensor fields $g_1, g_2, g_3\in\Gamma(\smash{\bigodot^2T^*\mathcal{U}})$ as metrics, although strictly speaking these are not metrics, since all three of them are indeed degenerate, and  $g_1$ and $g_2$ are even degenerate when restricted to the distribution $\mathcal{C}$.}
\begin{equation}\begin{aligned}\label{cone}
  g_1=\alpha^1\alpha^3- (\alpha^2)^2,\quad
  g_2= \alpha^2\alpha^4- (\alpha^3)^2,\quad
  g_3= \alpha^2\alpha^3- \alpha^1 \alpha^4,
\end{aligned}
\end{equation}
where  $\alpha^i\alpha^j=\tfrac{1}{2}(\alpha^i\otimes\alpha^j+\alpha^j\otimes\alpha^i)$. Then $Y\in\Gamma(\mathcal{C})$ takes values in $\hat{\gamma}$ if and only if  is of the form $$Y=t^3  X_1+t^2 s X_2+ ts^2X_3+s^3X_4.$$
Hence  the projectivization $\gamma_x\subset\mathbb{P}(\mathcal{C}_x)$ of $\hat{\gamma}_x$ is a twisted cubic curve, and it is straightforward to verify that  $\hat{\gamma}_x\subset\mathcal{C}_x$ is Lagrangian.
 We shall call the structure $(\mathcal{U},\mathcal{C},\gamma)$
 the \emph{contact Engel structure} in view of Engel's classical work.\footnote{Contact Engel structures should not be confused with Engel distributions, sometimes also called Engel structures, which are maximally non-integrable rank $2$ distributions on $4$-dimensional manifolds.}


The contact Engel structure is the \emph{flat model} for contact twisted cubic structures in the following sense.
One can show that a contact twisted cubic structure is the underlying structure of a certain type of Cartan geometry, more specifically parabolic geometry, see \cite{tanaka, book}. 
As such it admits a \emph{canonical Cartan connection}, which has in general  \emph{nonzero curvature}. There is a unique, up to a local equivalence, contact twisted cubic structure
whose curvature vanishes identically.
This is the one we call the contact Engel structure.

A specialization of contact twisted cubic structures can go \emph{independently} in other directions. For example, instead of imposing restrictions on the curvature
of a given contact twisted cubic structure, one can restrict its structure group by
adding more structure.
The structure group of the corresponding enriched geometry must preserve this additional structure, and gets reduced. We will explain below that a natural choice for such a reduction is to add a section
$$\sigma:\mathcal{M}^5\to\gamma\subset \mathbb{P}(\mathcal{C})$$
of the bundle
 $\,\mathbb{R}\mathbb{P}^1\to\gamma\to \M^5$ of twisted cubics to the geometric structure.
 Since  such a section $\sigma$ marks a point $*=\sigma(x)$ in each twisted cubic $\gamma_x$, $x\in \mathcal{M}^5$, we refer to the enriched structure $(\M^5,\mathcal{C},\gamma,\sigma)$
 as a \emph{marked contact twisted cubic structure}. If the underlying contact twisted cubic structure is flat, then the resulting structure will be called a \emph{marked contact Engel structure}.

One may think of a marked contact twisted cubic structure as a foliation of a contact twisted cubic structure by special horizontal curves.
 Suppose we are given a marked contact twisted cubic structure $(\M^5,\mathcal{C},\gamma,\sigma)$. For each $x\in \M^5$, the point $\sigma(x)\in\gamma_x$ corresponds to a direction $\ell^{\sigma}_x$ in the contact plane $\mathcal{C}_x$.
Therefore, the section $\sigma$ defines a rank one distribution $\ell^{\sigma}\subset T\M^5$ whose integral manifolds define a foliation  of $\mathcal{M}^5$ by  curves (a \emph{congruence}). Conversely, a congruence on $\M^5$ by  curves whose tangent directions are everywhere contained in $\gamma\subset\mathbb{P}(\mathcal{C})$ uniquely determines a section $\sigma:\mathcal{M}^5\to\gamma$.  Since $\gamma_x\subset\mathbb{P}(\mathcal{C}_x)$ is cut out by three polynomials, the congruences corresponding to sections $\sigma:\M^5\to\gamma$ can be also seen as null congruences.

\subsection{Context and motivation}
Before we outline the main results of this paper, a few words of motivation are in order:

It follows from the above brief description that the marked contact Engel structures, or their more general cousins, the marked contact twisted cubic structures, are \emph{special} contact twisted cubic structures. This places the area of our present study in the context of \emph{special geometries}, which are mostly developed in Riemannian geometry. For example, similarly to the addition of a section $\sigma$ to a contact twisted cubic structure $(\M^5,{\mathcal C},\gamma)$, one can add an almost Hermitian structure $\mathcal{J}$ to an even-dimensional Riemannian manifold $(\M^{2n},g)$. In this way one passes from the Riemmannian geometry $(\M^{2n},g)$ to the \emph{special} Riemannian geometry (almost Hermitian geometry) $(\M^{2n},g,\mathcal{J})$, as we are passing from $(\M^5,{\mathcal C},\gamma)$ to the special geometry $(\M^5,{\mathcal C},\gamma,\sigma)$.

The analogy between our marked contact Engel structures and special geometries is particularly striking if we replace Riemannian geometry by \emph{conformal Lorentzian geometry in 4-dimensions} $(\M^4,[g])$. These are the geometries studied in General Relativity, when the related physics is concerned with massless particles only.
Of particular importance in General Relativity are \emph{null congruences}, i.e. \emph{foliations} of $(\M^4,[g])$ \emph{by null curves}. Suppose that we have such a congruence on $(\M^4,[g])$. Let $K\subset T\M^4$ be  the null line subbundle such that any section $s: \M^4\to K$ be tangent to the congruence. Then we have a \emph{special} Lorentzian conformal geometry $(\M^4,[g],K)$, which we call a \emph{null congruence structure}. One can study the local equivalence problem of such geometries, where two null congruence structures $(\M^4_i,[g_i],K_i)$, $i=1,2$, are locally equivalent if and only if there exists a local diffeomorphism $\phi:\M^4_1\to \M^4_2$ such that $\phi^*(g_2)=f^2 g_1$, $\phi^*(K_2)=K_1$, with $f$ a non-vanishing function on $\M^4_1$. One very quickly establishes that there are locally non-equivalent null congruence structures even if both conformal structures are conformally flat. For example, if the curves of one null congruence are \emph{geodesics} (this is a conformally invariant property) and the curves of the other one are not, the two congruences are locally non-equivalent. Even if we have two null congruences such that both are weaved by geodesics, they are still  in general  \emph{not} locally equivalent. The next important conformally invariant property distinguishing locally non-equivalent structures  is  \emph{shearfreeness} \cite{Robinson}, see \cite{GHN, HJN, nurTraut, ArmanTwistor, Curtis} for more details. So here is our analogy:\\
\begin{center}
\begin{tabular}{|c|c|}
  \hline
  conformal spacetime &contact twisted cubic structure\\ $(\M^4,[g])$  &  $(\M^5,{\mathcal C},\gamma)$\\
  \hline
  conformally flat spacetime & Engel structure\\
  \hline
  null congruence & marked contact twisted cubic \\structure $(\M^4,[g],K)$&  structure $(\M^5,{\mathcal C},\gamma,\sigma)$\\
  \hline
  conformally flat null &marked contact \\ congruence structure & Engel structure\\
  \hline
  conformally flat null & integrable marked contact \\congruence structure & Engel structure \\  of geodesics & \\     \hline
  conformally flat null &integrable marked contact \\ congruence structure & Engel structure \\
  of shearfree geodesics&  \\\hline
   Kerr theorem&Kerr theorem for contact\\& Engel structures\\ \hline
\end{tabular}
\end{center}
The relevance of the integrability condition on marked contact Engel structure, which appears in the above Table,
will be explained in Section \ref{SecKerr}. Here we only mention that in our analogy it is related to the celebrated Kerr theorem of General Relativity, see \cite{penroserindler2, Tafel}, which gives a construction of all null congruence structures of shearfree geodesics that can live in conformally flat spacetimes. This theorem is the origin of Penrose's \emph{twistor theory} \cite{RP}. The analogy described above shows that it has a well defined interesting counterpart for marked contact Engel structures.

\subsection{Structure and main results of the article}

Section \ref{sec_marked}  introduces the notions of a contact twisted cubic structure, Engel structure, marked contact twisted cubic structure and marked contact Engel structure. First observations about these structures are presented.
In particular, the so-called ``osculating filtration'' determined by a marked contact twisted cubic structure is introduced: This is a filtration of the contact bundle $\mathcal{C}$ by distributions
$$\ell^{\sigma}\subset\mathcal{D}^{\sigma}\subset\mathcal{H}^{\sigma}\subset\mathcal{C},$$
with respective ranks $1$, $2$, $3$, $4$, where $\mathcal{D}^{\sigma}$ is a \emph{Legendrian} rank two distribution.
It   corresponds fibre-wise to the osculating sequence
of the twisted cubic $\gamma_x\subset\mathbb{P}(\mathcal{C}_x)$ at a point $\sigma(x)$.
We call a marked contact twisted cubic structure (respectively the section $\sigma$) \emph{integrable} if  the distribution $\mathcal{D}^{\sigma}$  is integrable.

The core of the present paper is Section \ref{CartanEquiv}, where  we apply Cartan's method of equivalence to study the local equivalence problem  of  marked contact Engel structures. Throughout this paper, we shall refer to  the set of \emph{all} vector fields  preserving a given marked contact Engel structure as \emph{the infinitesimal symmetry algebra}, or simply \emph{the symmetry algebra}, of the marked contact Engel structure. We shall denote by $\mathfrak{g}$ the Lie algebra of the exceptional Lie group $\mathrm{G}_2$.\footnote{We chose to denote the Lie algebra of the Lie group $\mathrm{G}_2$  by $\mathfrak{g}$ in order to avoid confusion with a certain grading component that is commonly denoted by $\mathfrak{g}_2$.}
\begin{itemize}
\item We show that there exists a  (locally unique) maximally symmetric  model for marked contact Engel structures. Its symmetry algebra is isomorphic to  the  $9$-dimensional parabolic subalgebra  $\mathfrak{p}_1$ of  $\mathfrak{g}$ that may be realized as the stabilizer of a highest weight line in the $7$-dimensional irreducible representation of $\mathfrak{g}$ on $\mathbb{R}^{3,4}$ (Theorem \ref{main}).

\item We provide an explicit construction of a unique coframe (absolute parallelism)  on a $9$-dimensional  bundle naturally associated with any marked contact Engel structure (Proposition \ref{propo_coframe}). Differentiating this coframe yields a complete set of local invariants for marked contact Engel structures.

\item In particular, we obtain a filtration of  differential conditions for marked contact Engel structures, where the first is the integrability condition described above, and the last is equivalent to flatness, i.e., to local equivalence with  the aforementioned maximally symmetric model (Theorem \ref{main}).

\item We systematically use the filtration of invariant conditions to classify, up to local equivalence,  all homogeneous marked contact Engel structures whose  symmetry algebra is of dimension $\geq 6$. Our analysis  shows that there are precisely two locally non-equivalent homogeneous marked contact Engel structures whose symmetry algebras are $8$-dimensional (they are $\mathfrak{sl}(3,\mathbb{R})$ and $\mathfrak{su}(1,2)$). Moreover, we provide differential conditions characterizing these sub-maximally symmetric marked contact Engel structures. We show that there are no homogeneous marked contact Engel structures with $7$-dimensional symmetry algebra, and that there are precisely two locally non-equivalent homogeneous marked contact Engel structures with $6$-dimensional  symmetry algebra (one of them is semisimple and isomorphic to $\mathfrak{sl}(2,\mathbb{R})\oplus\mathfrak{sl}(2,\mathbb{R})$). We provide examples of locally non-equivalent homogeneous marked contact Engel structures with $5$-dimensional  symmetry algebra as well. These results are summarized in Theorem \ref{thmsummary}, see also Table \ref{table}.

\end{itemize}

Sections \ref{sec_123} and \ref{SecKerr} provide geometric interpretations of some of the invariant properties of contact Engel structures derived in Section \ref{CartanEquiv}.  In particular, the central notion of integrability will be revisited.

In Section \ref{SecKerr} we prove an analogue of the Kerr Theorem (Theorem \ref{Kerr1}), which provides a construction method of all integrable marked contact Engel structures. We subsequently recast the result in terms of the double filtration for the exceptional Lie group $\mathrm{G}_2$:
\begin{align}\label{doublefib}
   \xymatrix{
        &\mathrm{G}_2/\mathrm{P}_{1,2}  \ar[dl]_{\pi_2} \ar[dr]^{\pi_1} & \\
                   \mathrm{G}_2/\mathrm{P}_2 &            & \mathrm{G}_2/\mathrm{P}_1\ . }
\end{align}
Here $\mathrm{P}_1$ and $\mathrm{P}_2$ are the $9$-dimensional parabolic subgroups  of $\mathrm{G}_2$ and $\mathrm{P}_{1,2}=\mathrm{P}_1\cap \mathrm{P}_2$ is the $8$-dimensional Borel subgroup of $\mathrm{G}_2$.
The contact Engel structure  is a  local coordinate description of the $\mathrm{G}_2$-invariant structure on the $5$-dimensional  space  $\mathrm{G}_2/\mathrm{P}_2$.
  The total space of the $\mathbb{R}\mathbb{P}^1$-bundle $\gamma\to \mathrm{G}_2/\mathrm{P}_2$ can be identified with the $6$-dimensional homogeneous space $\mathrm{G}_2/\mathrm{P}_{1,2}$.
Marked contact Engel structures can be identified with local sections  $\sigma$ of the first leg in the double fibration,
$$\mathrm{G}_2/\mathrm{P}_2\supset\U\xrightarrow{\sigma} \sigma(\U)\subset\mathrm{G}_2/\mathrm{P}_{1,2}.$$
The image of such a section defines a hypersurface in $\mathrm{G}_2/\mathrm{P}_{1,2}$, which   descends to a hypersurface in the second $5$-dimensional homogeneous space $\mathrm{G}_2/\mathrm{P}_1$ if and only if $\sigma$ is integrable. This yields a local one-to-one  correspondence between integrable sections and generic hypersurfaces in $\mathrm{G}_2/\mathrm{P}_1$ (Corollary \ref{Kerr2} of Theorem \ref{Kerr1}). The correspondence is then used  to describe the maximal and submaximal marked contact Engel structures; these correspond to the simplest hypersurfaces in $\mathrm{G}_2/\mathrm{P}_1$,
namely, identifying $\mathrm{G}_2/\mathrm{P}_1$ with the projectivized null cone in $\mathbb{R}^{3,4}$, they correspond to intersections of the null cone with hyperplanes in $\mathbb{R}^{3,4}$.

 Section \ref{sec_general} provides a first analysis  of
 general marked contact twisted cubic structures. Following the general framework due to Tanaka, see \cite{tanaka, Morimoto, zelenko}, they are  viewed as particular types of filtered $G_0$-structures in this section. We compute the (algebraic) Tanaka prolongation associated with these structures, which implies the existence of a canonical coframe on a $9$-dimensional bundle associated with any marked contact twisted cubic structure in a natural manner.
Finally, we investigate the question whether a normalization  condition in the sense of \cite{CapCartan} can be found. We prove that this is not the case, and thereby provide an example of a structure where such a normalization condition does not exist.

\

\subsection{Conventions and Notation}\label{notation}
Throughout the paper all of our objects are smooth, all of our considerations are local and it follows from the context which neighbourhoods are taken into account.

We use the notations
\begin{equation}\label{abbriv}E^1E^2\dots E^k=E^1\odot E^2\odot \dots \odot E^k=\smash{\tfrac{1}{k!}\sum_{\sigma\in S_k}(E^{\sigma 1}\otimes E^{\sigma 2}\otimes\dots\otimes E^{\sigma k})},\end{equation}
where $S_k$ is the symmetric group of degree $k$, for the symmetrized tensor product.

For a general coframe $(\omega^i)$ we write $F_{\omega^i}$ for the derivatives with respect to the coframe, i.e.,  $\der F=\sum_{i} F_{\omega^i}\omega^i$. If we consider a coordinate coframe $(\der x^i) $, we simply write $F_{x^i}$.

\subsection{Acknowledgements}
Discussions with Jan Gutt and Giovanni Moreno were influential for the development of this project. Moreover, we acknowledge fruitful discussions with many of the participants of the Simon's semester \emph{Symmetry and Geometric Structures} hosted by the Institute of Mathematics of the Polish Academy of Sciences in Warsaw. In particular, we thank Andreas \v Cap, Michael Eastwood,  Wojciech Krynski, Tohru Morimoto, Katharina Neusser and  Arman Taghavi-Chabert for helpful comments.

This work was  supported by the Simons Foundation grant 346300 and the Polish Government MNiSW 2015--2019 matching fund.
All the authors acknowledge support from the project ``FIR-2013 Geometria delle equazioni differenziali''. In particular, Katja Sagerschnig was an INdAM-research fellow funded by that project during the period 01/02/2016 - 31/12/2017, where part of the present research was accomplished. All the authors were also partially supported by the ``Starting grant per giovani ricercatori $53\_RSG16MANGIO$'', Politecnico di Torino. Gianni Manno is a member of GNSGA of INdAM.
Katja Sagerschnig also acknowledges support  by the Polish National Science Center (NCN) via the
POLONEZ  grant  2016/23/P/ST1/04148.
This project has received funding
from the European Union's Horizon 2020 research and innovation
programme under the Marie Sk\l odowska-Curie grant agreement No
665778.\

\section{Marked contact twisted cubic structures}\label{sec_marked}
Marked contact twisted cubic structures are  $5$-dimensional contact structures equipped with additional geometric structures, and we shall introduce these additional geometric structures in the following section. We shall start with purely pointwise considerations, that is, facts about Legendrian twisted cubics in a conformal symplectic vector space in Section \ref{algebra}. Then we will define and discuss general contact twisted cubic structures and marked contact twisted cubic structures on $5$-dimensional manifolds in Section \ref{contact_twisted_section}. We shall introduce the notion of an \emph{integrable} marked contact twisted cubic structure. Finally, we will focus on marked contact twisted cubic structures whose underlying contact twisted cubic structure is flat, which we call marked contact Engel structures, in Section \ref{LieTheory}.

\subsection{Preliminaries on Legendrian twisted cubics}\label{algebra}
We shall first collect some algebraic background. References are e.g. \cite{Harris, bu}.

The \emph{twisted cubic} (rational normal curve of degree three)  $\gamma\subset \mathbb{R}\mathbb{P}^3$ is the image of the Veronese map
\begin{equation}\textstyle{\mathbb{R}\mathbb{P}^1=\mathbb{P}(\mathbb{R}^2)\to\mathbb{P}(\smash{\bigodot^3\mathbb{R}^2})=\mathbb{R}\mathbb{P}^3,\quad [w]\mapsto [w\odot w\odot w].}\end{equation}
In coordinates with respect to bases $(e_1,e_2)$ of $\mathbb{R}^2$ and $(E_1=e_1\odot e_1\odot e_1, E_2=3 e_1\odot e_1\odot e_2, E_3=3 e_1\odot e_2\odot e_2, E_4=e_2\odot e_2\odot e_2)$ of $\smash{\bigodot^3\mathbb{R}^2}$  it is of the form
$$\gamma=[s^3,s^2 t,s t^2,t^3].$$
Denoting by $(E^1, E^2, E^3, E^4)$ the dual basis, the twisted cubic is also given by the zero set of the three quadratic forms
 \begin{equation}\label{3polynomials} g_1=E^1E^3-(E^2)^2,\quad g_2=E^2 E^4-(E^3)^2, \quad g_3=E^2E^3- E^1E^4.\end{equation}
With respect to the  introduced bases, the irreducible representation
\begin{equation}\label{rho}
\textstyle{\rho:\mathrm{GL}(2,\mathbb{R})\to\mathrm{End}(\mathbb{R}^4)},\quad\mathbb{R}^4=\smash{\bigodot^3\mathbb{R}^2},
\end{equation}
is of the form \begin{equation} \label{irrepres}
 \bma\alpha&\beta\\\rho&\delta\ema \mapsto
\bma\alpha^3&3\alpha^2\beta&3\alpha\beta^2&\beta^3\\
\alpha^2\rho&\alpha^2\delta+2\alpha\beta\rho&2\alpha\beta\delta+\beta^2\rho&\beta^2\delta\\
\alpha\rho^2&2\alpha\delta\rho+\beta\rho^2&\alpha\delta^2+2\beta\delta\rho&\beta\delta^2\\
\rho^3&3\delta\rho^2&3\delta^2\rho&\delta^3\ema.\end{equation}
The tangent map at the identity of \eqref{rho} defines the irreducible Lie algebra representation
$$\rho'=T_e\rho:\mathfrak{gl}(2,\mathbb{R})\to \mathrm{End}(\mathbb{R}^4).$$
One can check the following.
 \begin{proposition}\label{subalg}
 The subalgebra of $\mathrm{End}(\mathbb{R}^4)$ preserving $\gamma\subset\mathbb{P}(\mathbb{R}^4)$ is precisely $\rho'\left(\mathfrak{gl}(2,\mathbb{R})\right)$.
\end{proposition}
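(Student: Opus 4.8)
## Proof proposal

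The plan is to prove both inclusions. One direction is essentially immediate: since $\rho$ is a group homomorphism into $\mathrm{GL}(2,\mathbb{R})$ acting via the Veronese map, for $g\in\mathrm{GL}(2,\mathbb{R})$ we have $\rho(g)\cdot[w\odot w\odot w]=[(gw)\odot(gw)\odot(gw)]$, so $\rho(g)$ maps $\gamma$ to itself. Differentiating at the identity, every element of $\rho'(\mathfrak{gl}(2,\mathbb{R}))$ is tangent to the flow of such automorphisms, hence lies in the subalgebra of $\mathrm{End}(\mathbb{R}^4)$ preserving $\gamma$. (Here ``preserving $\gamma$'' for an endomorphism $A$ means that the vector field on $\mathbb{P}(\mathbb{R}^4)$ it induces is tangent to $\gamma$ at every point of $\gamma$, equivalently $A$ exponentiates to a one-parameter group fixing $\gamma$ setwise.) This gives $\rho'(\mathfrak{gl}(2,\mathbb{R}))\subseteq\mathfrak{aut}(\gamma)$.

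For the reverse inclusion I would argue by a dimension count together with the identification of the stabilizer of a point. First, $\rho'$ is injective (the representation $\rho$ is faithful, being irreducible and nontrivial on the center), so $\dim\rho'(\mathfrak{gl}(2,\mathbb{R}))=4$. Next I would show $\dim\mathfrak{aut}(\gamma)\le 4$. To do this, pick a point of $\gamma$, say $p_0=[E_1]=[e_1^{\odot 3}]$, and consider the evaluation/isotropy exact sequence: an element $A\in\mathfrak{aut}(\gamma)$ restricts to a vector field on the curve $\gamma\cong\mathbb{R}\mathbb{P}^1$, giving a Lie algebra homomorphism $\mathfrak{aut}(\gamma)\to\mathfrak{vect}(\mathbb{R}\mathbb{P}^1)$ whose image lands in the finite-dimensional algebra $\mathfrak{sl}(2,\mathbb{R})$ of projective vector fields on $\mathbb{R}\mathbb{P}^1$ (any $A$ preserving the image of the Veronese acts projectively on the source, since a degree-three rational normal curve determines its $\mathrm{PGL}(2)$-structure). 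Thus $\dim(\text{image})\le 3$. The kernel consists of $A$ that act trivially on $\gamma$ pointwise, i.e. $A$ fixes the projective class of every point $[w^{\odot 3}]$; since the $[w^{\odot 3}]$ span $\mathbb{R}^4$ and are in ``general position'' (four of them form a basis, any four distinct ones likewise), such an $A$ is scalar on each of four independent lines, hence scalar, hence zero in $\mathrm{End}$ modulo... — more precisely, $A$ must be a scalar multiple of the identity, and the identity does act trivially on $\mathbb{P}(\mathbb{R}^4)$, so the kernel is exactly $\mathbb{R}\cdot\mathrm{id}$, of dimension $1$. Therefore $\dim\mathfrak{aut}(\gamma)\le 3+1=4$.

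Combining, $\rho'(\mathfrak{gl}(2,\mathbb{R}))\subseteq\mathfrak{aut}(\gamma)$ with both of dimension $4$ forces equality.

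I expect the main obstacle to be making the step ``any $A$ preserving $\gamma$ acts projectively on the parametrizing $\mathbb{R}\mathbb{P}^1$'' fully rigorous at the infinitesimal level — i.e. controlling the homomorphism $\mathfrak{aut}(\gamma)\to\mathfrak{vect}(\gamma)$ and showing its image is genuinely $\mathfrak{sl}(2,\mathbb{R})$ rather than something larger. An alternative, perhaps cleaner, route avoids this: work directly with the quadrics \eqref{3polynomials}. An endomorphism $A$ preserves $\gamma$ iff its induced action on $\mathrm{End}(\bigodot^2\mathbb{R}^4)$ preserves the $3$-dimensional space $\langle g_1,g_2,g_3\rangle$ of quadratic forms vanishing on $\gamma$ (this is the homogeneous ideal of $\gamma$ in degree $2$, which generates the ideal). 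One then writes a general $A=(a_{ij})\in\mathfrak{gl}(4,\mathbb{R})$, imposes that $\mathcal{L}_A g_k\in\langle g_1,g_2,g_3\rangle$ for $k=1,2,3$, and solves the resulting linear system in the $a_{ij}$; the solution space is the explicit $4$-dimensional space of matrices $\rho'\bigl(\begin{smallmatrix}a&b\\c&d\end{smallmatrix}\bigr)$ read off from differentiating \eqref{irrepres}. This reduces everything to a finite, if slightly tedious, linear algebra computation, and sidesteps any global argument about the curve. I would present the dimension-count argument as the conceptual proof and remark that the quadric computation gives an independent verification.
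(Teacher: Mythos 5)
The paper offers no proof of this proposition at all --- it is introduced with ``One can check the following'' --- so there is no argument of the authors' to compare yours against. Your second route (imposing $\mathcal{L}_A g_k\in\mathrm{Span}(g_1,g_2,g_3)$ for $k=1,2,3$ on a general $A\in\mathfrak{gl}(4,\mathbb{R})$ and solving the resulting linear system) is almost certainly the ``check'' the authors intend, and it is complete once you justify the equivalence between preserving $\gamma$ and preserving the degree-two part of its ideal; that is fine, since the three quadrics cut out exactly $\gamma$ in $\mathbb{R}\mathbb{P}^3$ and the space of quadrics vanishing on a twisted cubic is exactly $3$-dimensional. Your conceptual dimension-count argument is also sound in outline and more illuminating. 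The step you yourself flag as delicate --- that the restriction homomorphism lands in $\mathfrak{sl}(2,\mathbb{R})\subset\mathfrak{vect}(\mathbb{R}\mathbb{P}^1)$ --- is in fact unproblematic: $\exp(tA)$ preserves $\gamma$, hence induces for each $t$ a projective automorphism of the parametrizing $\mathbb{R}\mathbb{P}^1$ (a rational normal curve determines its projective structure), and differentiating this smooth path in $\mathrm{PGL}(2,\mathbb{R})$ at $t=0$ gives a projective vector field.

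The one genuine flaw is in your kernel computation. From ``$A$ fixes the projective class of every $[w^{\odot 3}]$'' you infer that $A$ is ``scalar on each of four independent lines, hence scalar''. That implication is false as stated: a diagonal matrix with distinct diagonal entries is scalar on each of the four coordinate lines without being a multiple of the identity. What is needed is that the eigenvalues on these lines all agree, and for that you must use at least \emph{five} points of the curve: any four distinct points of a twisted cubic are linearly independent, so choosing five distinct points $[v_1],\dots,[v_5]$ of $\hat{\gamma}$, the first four form a basis in which $v_5$ has all coordinates nonzero, and $Av_i=\lambda_i v_i$ for all $i$ then forces $\lambda_1=\dots=\lambda_5$. (Equivalently: the cone $\hat{\gamma}$ is an irreducible spanning subvariety contained in the union of the eigenspaces of $A$, so one eigenspace must be all of $\mathbb{R}^4$.) With that repair the kernel is exactly $\mathbb{R}\cdot\mathrm{id}$, the bound $\dim\le 3+1=4$ holds, and the proof closes.
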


The
 decomposition
$\bigwedge^2(\smash{\bigodot^3\mathbb{R}^2})\cong \bigodot^4\mathbb{R}^2\oplus \mathbb{R}$
shows that there is a unique (up to scalars) skew-symmetric bilinear form on $\smash{\bigodot^3\mathbb{R}^2}$ preserved by the  $\mathrm{GL}(2,\mathbb{R})$-action up to scalars. Explicitly, it is given by \begin{equation}\label{symp}\omega=E^1\wedge E^4 - 3 E^2\wedge E^3
.\end{equation}
In order to  characterize the $\mathrm{GL}(2,\mathbb{R})$-invariant conformal class of  the symplectic form \eqref{symp} in terms of the twisted cubic, we shall introduce some more terminology:
Let $\omega$ be a symplectic form on $\mathbb{R}^4$ and let $[\omega]$ be the conformal class of all non-zero multiples of $\omega$.
Recall that a maximal subspace $\mathbb{W}$ on which a symplectic form $\omega$ (and then any $\omega'\in[\omega]$) vanishes identically is called \emph{Lagrangian}. A  twisted cubic $\gamma\subset\mathbb{P}(\mathbb{R}^4)$ is called \emph{Legendrian} with respect to $[\omega]$, see \cite{bu},  if the  cone $$\hat{\gamma}=\{\,w\odot w\odot w : w\in\mathbb{R}^2\,\}\subset \mathbb{R}^4$$ is  Lagrangian, i.e., the tangent space  at each point $p$ of $\hat{\gamma}\setminus \{0\}$  is a Lagrangian subspace of $T_p\mathbb{R}^4\cong\mathbb{R}^4$.
\begin{proposition}\label{confsymp}
The
conformal symplectic structure $[\omega]$ generated by $\omega=E^1\wedge E^4 - 3 E^2\wedge E^3$
is the unique conformal symplectic structure
 such that $\gamma=[s^3,s^2 t,s t^2,t^3]$ is Legendrian with respect to $[\omega]$.
\end{proposition}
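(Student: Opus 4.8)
The plan is to establish uniqueness by combining two observations: first, that the given $\omega$ does make $\gamma$ Legendrian, and second, that Legendrianity imposes enough linear constraints on a prospective symplectic form to pin it down up to scalar. For the first part I would simply parametrize $\hat\gamma$ by $w(s,t) = s e_1 + t e_2$, so that the point of $\hat\gamma$ is $p(s,t) = w\odot w\odot w = s^3 E_1/1 + \dots$, more precisely $(s^3, s^2t, st^2, t^3)$ in the basis $(E_1,E_2,E_3,E_4)$ up to the numerical factors $1,3,3,1$. The tangent space $T_p\hat\gamma\setminus\{0\}$ at such a point is spanned by $\partial_s p$ and $\partial_t p$; these two vectors, together with $p$ itself, span a $3$-dimensional space, but for Legendrianity I need the $2$-plane $\mathrm{span}(\partial_s p,\partial_t p)$ to be isotropic for $\omega$. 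So the check reduces to verifying $\omega(\partial_s p,\partial_t p)=0$ identically in $(s,t)$, which is a short direct computation with the explicit $\omega = E^1\wedge E^4 - 3E^2\wedge E^3$.

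For uniqueness, I would argue as follows. Suppose $\omega'$ is any symplectic form on $\mathbb{R}^4$ for which $\gamma$ is Legendrian. The condition $\omega'(\partial_s p(s,t),\partial_t p(s,t))=0$ for all $(s,t)$ is a polynomial identity in $s,t$; expanding it in monomials gives a system of linear equations on the six coefficients of $\omega'\in\bigwedge^2(\mathbb{R}^4)^*$. I expect this system to have a one-dimensional solution space. To see this cleanly rather than by brute force, I would invoke $\mathrm{GL}(2,\mathbb{R})$-equivariance: the group $\mathrm{GL}(2,\mathbb{R})$ acts on $\mathbb{R}^4=\bigodot^3\mathbb{R}^2$ via $\rho$, it preserves $\gamma$ (indeed its image in $\mathrm{End}(\mathbb{R}^4)$ is exactly the stabilizer of $\gamma$ by Proposition \ref{subalg}), and hence it permutes the set of symplectic forms making $\gamma$ Legendrian, acting by the dual representation on $\bigwedge^2(\mathbb{R}^4)^*$ twisted by a character. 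The decomposition $\bigwedge^2(\bigodot^3\mathbb{R}^2)\cong\bigodot^4\mathbb{R}^2\oplus\mathbb{R}$ quoted above shows that, up to scalars, there is a unique $\mathrm{GL}(2,\mathbb{R})$-invariant line in $\bigwedge^2(\mathbb{R}^4)^*$, namely the one spanned by $\omega$. So it suffices to show that the space of Legendrian-compatible symplectic forms is $\mathrm{GL}(2,\mathbb{R})$-invariant and at most one-dimensional; then it must coincide with $[\omega]$.

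The main obstacle is ruling out the possibility that the Legendrianity constraints admit a strictly larger solution space than the invariant line — that is, showing the solution space of the linear system is exactly one-dimensional and not, say, two- or three-dimensional. There is no a priori reason that an arbitrary solution must be $\mathrm{GL}(2,\mathbb{R})$-invariant (the Legendrian condition is invariant as a \emph{set} of forms, but individual solutions could be permuted nontrivially). I see two ways around this. The cleaner route: observe that the $1$-parameter subgroup $\mathrm{diag}(\lambda,\lambda^{-1})$ and the unipotent subgroups act on $\bigwedge^2(\mathbb{R}^4)^*$ with known weight decomposition, and a short weight-counting argument shows that the subspace cut out by the (manifestly $\mathrm{GL}(2,\mathbb{R})$-stable) Legendrian conditions cannot contain any vector of nonzero $\mathfrak{sl}(2)$-weight without containing an entire isotypic block, which a dimension count excludes except for the trivial block. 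The more pedestrian but fully reliable route: just carry out the finite linear algebra — write $\omega' = \sum_{i<j} c_{ij}\, E^i\wedge E^j$, impose $\omega'(\partial_s p,\partial_t p)\equiv 0$, collect the coefficients of each monomial $s^a t^b$, and solve; this is a small explicit computation that one checks yields $c_{14} = -3 c_{23}$ and all other $c_{ij}=0$, hence $\omega' \in [\omega]$. I would present the pedestrian computation as the actual proof and mention the representation-theoretic interpretation as the conceptual reason.
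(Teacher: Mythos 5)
Your pedestrian computation is exactly the paper's proof: parametrize the cone by $p(s,t)=w\odot w\odot w$, take $X=\partial_s p$ and $Y=\partial_t p$ as a basis of the tangent plane, expand $\omega'(X,Y)$ as a polynomial in $s,t$, and solve the resulting linear system, which kills all coefficients except for one relation between $c_{14}$ and $c_{23}$ (the correct relation is $c_{23}=-3c_{14}$, i.e.\ $c_{14}=-\tfrac13 c_{23}$, not $c_{14}=-3c_{23}$ as you wrote, though this does not affect the conclusion $\omega'\in[\omega]$). The representation-theoretic detour is unnecessary and, as you yourself observe, has a gap that would need closing, so presenting the direct computation as the actual proof is the right call and matches the paper.
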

\begin{proof}
The tangent space to $\hat{\gamma}$ at a point
\begin{align}\label{point}\hat{p}={s}^3E_1+{s}^2{t}E_2+{s}{t}^2E_3+{t}^3E_4\end{align} is spanned by
\begin{align}\label{X,Y}
X= 3{s}^2 E_1+ 2{s}{t} E_2+ {t}^2 E_3\quad \mathrm{and} \quad Y={s}^2 E_2+ 2{s}{t} E_3+ 3{t}^2 E_4.
\end{align}
Let $\omega=\tfrac12\omega_{ij}E^i\wedge E^j$ be a symplectic form, then
\begin{align*}
\omega(X,Y)= 3{s}^4 \omega_{12} + 6{s}^3{t} \omega_{13}+ 3{s}^2{t}^2 (3 \omega_{14}+  \omega_{23}) + 6{s}{t}^3\omega_{24}+ 3{t}^4\omega_{34}.
\end{align*}
Hence $\gamma$ is Legendrian with respect to $\omega$ if and only if
\begin{align*}\omega_{14}=-\tfrac13\omega_{23}\end{align*}
and, modulo antisymmetry,  the remaining $\omega_{ij}$ vanish. This  determines $\omega$ uniquely up to scale.
\end{proof}

We now introduce some additional data. Namely, we suppose the twisted cubic is marked, that is, a point $p\in\gamma\subset\mathbb{P}(\mathbb{R}^4)$ is distinguished. The point $p$ corresponds to a line $\ell\subset\hat{\gamma}\subset\mathbb{R}^4$. Such a line is of the form
$\ell=\mathrm{Span}(\{l\odot l\odot l :l\in L\})$ for a unique $1$-dimensional subspace $L \subset\mathbb{R}^2$. Clearly $\mathrm{GL}(2,\mathbb{R})$ acts transitively on $\gamma$ and we may choose our line $\ell$ to be spanned by the first basis vector $e_1\odot e_1\odot e_1$. The stabilizer subgroup
\begin{equation}\label{borel}
B:=\{A\in\mathrm{GL}(2,\mathbb{R}): \rho (A)(\ell)\subset\ell\}=\{A\in\mathrm{GL}(2,\mathbb{R}): A(L)\subset L\}
\end{equation}  is a Borel subgroup $B\subset \mathrm{GL}(2,\mathbb{R})$; in the presentation \eqref{irrepres}, it is given by those matrices for which $\gamma=0$. In particular, $B$ preserves a full filtration of $\mathbb{R}^4$. This immediately implies:

\begin{lemma}\label{lemfilt}
A distinguished point $p\in\gamma$ determines a filtration by subspaces
\begin{align}
\ell\subset\mathrm{D}\subset \mathrm{H}\subset\mathbb{R}^4.
\end{align}
If $\gamma$ is Legendrian, then $\mathrm{D}$ is a Lagrangian subspace  and $\mathrm{H}$ is the symplectic orthogonal to $\ell$.
\end{lemma}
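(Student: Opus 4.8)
The plan is to extract the filtration directly from the concrete matrix representation \eqref{irrepres} of the Borel subgroup $B$, and then read off the Lagrangian and symplectic-orthogonality statements from the explicit forms \eqref{symp} of $\omega$ and \eqref{X,Y} of the tangent space to $\hat\gamma$. First I would fix the distinguished point to be $p=[E_1]=[e_1\odot e_1\odot e_1]$, which we may do by transitivity of the $\mathrm{GL}(2,\mathbb{R})$-action on $\gamma$ (Proposition \ref{subalg}), so that $\ell=\mathbb{R}E_1$ and $L=\mathbb{R}e_1$. Then, reading \eqref{borel}, $B$ consists of lower-triangular matrices $\bigl(\begin{smallmatrix}\alpha&0\\\rho&\delta\end{smallmatrix}\bigr)$ (those with $\beta=0$); substituting $\beta=0$ into \eqref{irrepres} shows that $\rho(B)$ is lower triangular on $\mathbb{R}^4$, hence preserves the full flag
$$
\mathbb{R}E_1\subset \mathbb{R}E_1\oplus\mathbb{R}E_2\subset \mathbb{R}E_1\oplus\mathbb{R}E_2\oplus\mathbb{R}E_3\subset\mathbb{R}^4.
$$
Setting $\mathrm{D}=\mathbb{R}E_1\oplus\mathbb{R}E_2$ and $\mathrm{H}=\mathbb{R}E_1\oplus\mathbb{R}E_2\oplus\mathbb{R}E_3$ gives the asserted filtration $\ell\subset\mathrm{D}\subset\mathrm{H}\subset\mathbb{R}^4$; its invariance under $B$ is exactly what is needed so that the construction is independent of the choices made.

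Next I would verify the two statements about the symplectic geometry, assuming $\gamma$ is Legendrian, i.e. that $[\omega]$ is the conformal class from Proposition \ref{confsymp} with representative $\omega=E^1\wedge E^4-3E^2\wedge E^3$. For $\mathrm{D}$ Lagrangian it suffices to check $\omega(E_1,E_2)=0$: from the explicit $\omega$ this pairing is zero since the only nonzero pairings are $\omega(E_1,E_4)=1$ and $\omega(E_2,E_3)=-3$. Since $\dim\mathrm{D}=2$ in the $4$-dimensional symplectic space, a $2$-plane on which $\omega$ vanishes is automatically maximal isotropic, hence Lagrangian. Alternatively, and more in the spirit of the Legendrian hypothesis, $\mathrm{D}$ is precisely the osculating $2$-plane of $\hat\gamma$ at $\hat p = E_1$ (i.e. the span of $\hat p$ and the tangent vector there): setting $s=1,t=0$ in \eqref{point}--\eqref{X,Y} gives $\hat p=E_1$, $X=3E_1$, $Y=E_2$, so the tangent line to $\hat\gamma$ at $\hat p$ together with $\hat p$ spans $\mathrm{D}=\mathbb{R}E_1\oplus\mathbb{R}E_2$, and by definition of Legendrian this tangent plane is Lagrangian. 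For the last claim I would compute the symplectic orthogonal $\ell^{\perp_\omega}=\{v:\omega(E_1,v)=0\}$; since $\omega(E_1,\cdot)$ is (up to scale) $E^4$, its kernel is $\mathbb{R}E_1\oplus\mathbb{R}E_2\oplus\mathbb{R}E_3=\mathrm{H}$, which is exactly the asserted identification $\mathrm{H}=\ell^{\perp_\omega}$.

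The argument is essentially a bookkeeping exercise once the normalization $p=[E_1]$ is made, so there is no serious obstacle; the only point requiring a word of care is well-definedness, namely that the filtration $\ell\subset\mathrm{D}\subset\mathrm{H}$ depends only on $p$ and not on the chosen basis diagonalizing the situation. This follows because any two choices differ by an element of $B$, and we have just shown $B$ preserves the flag; equivalently, $\mathrm{D}$ and $\mathrm{H}$ admit the intrinsic descriptions ``second osculating space of $\hat\gamma$ at $\hat p$'' and ``symplectic orthogonal of $\ell$'' (the latter needing the Legendrian hypothesis, or one can take $\mathrm{H}$ to be the third osculating space, which coincides with $\ell^{\perp_\omega}$ precisely when $\gamma$ is Legendrian). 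I would state the proof using the osculating-space description for $\mathrm{D}$ so that the connection to the ``osculating filtration'' mentioned in the introduction is transparent, and then close by noting the two symplectic identities as the one-line computations above.
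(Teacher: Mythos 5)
Your route --- normalize $p=[E_1]$ by transitivity, observe that the stabilizing Borel acts triangularly in the representation \eqref{irrepres} and therefore preserves a full flag, then verify the Lagrangian and symplectic-orthogonality claims by one-line computations with $\omega=E^1\wedge E^4-3E^2\wedge E^3$ --- is exactly the paper's (which leaves the lemma as an immediate consequence of the triangularity of $B$ and records the intrinsic descriptions $\mathrm{D}=\mathrm{Span}(\{l\odot l\odot e\})$, $\mathrm{H}=\mathrm{Span}(\{l\odot e\odot f\})$ afterwards). Your symplectic computations are correct: $\omega(E_1,E_2)=0$ so the $2$-plane $\mathrm{D}$ is Lagrangian, and $\omega(E_1,\cdot)=E^4$ so $\ell^{\perp_\omega}=\mathrm{Span}(E_1,E_2,E_3)=\mathrm{H}$. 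The well-definedness remark and the osculating-plane reading of $\mathrm{D}$ are also fine.

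There is, however, one concrete slip in the step that actually produces the filtration. The stabilizer of $L=\mathbb{R}e_1$ consists of the matrices with vanishing \emph{lower-left} entry, i.e. $\rho=0$ in the notation of \eqref{irrepres} (the paper's phrase ``those matrices for which $\gamma=0$'' is a leftover from an earlier name for that entry), not $\beta=0$: a matrix with $\beta=0$ and $\rho\neq 0$ sends $e_1$ to $\alpha e_1+\rho e_2\notin L$. Moreover, even granting your identification of $B$, the implication you draw fails: with $\beta=0$ the matrix \eqref{irrepres} sends $E_1$ to $\alpha^3E_1+\alpha^2\rho E_2+\alpha\rho^2E_3+\rho^3E_4$, so a lower-triangular $\rho(B)$ preserves the flag $\mathbb{R}E_4\subset\mathrm{Span}(E_3,E_4)\subset\mathrm{Span}(E_2,E_3,E_4)$ and \emph{not} $\mathbb{R}E_1\subset\mathrm{Span}(E_1,E_2)\subset\mathrm{Span}(E_1,E_2,E_3)$. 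Taking $\rho=0$ instead, \eqref{irrepres} becomes upper triangular and does preserve the flag you assert, so your conclusion is right and the repair is a one-word change; but as written, the triangularity argument does not go through.
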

In terms of  $\mathbb{R}^4=\smash{\bigodot^3\mathbb{R}^2}$,  $\mathrm{D}=\mathrm{Span}(\{l\odot l\odot e$\,: $l\in L, \,e\in\mathbb{R}^2\})$, and  $\mathrm{H}=\mathrm{Span}(\{l\odot e\odot f$\,: $l\in L, \,e,f\in\mathbb{R}^2\})$.
Geometrically,  $\mathrm{D}$ is the de-projectivized tangent line to $\gamma$ at $p$ and $\mathrm{H}$ is the de-projectivized osculating plane to $\gamma$ at $p$. Thus we refer to the above filtration as the \emph{osculating sequence} at  $p$.

\begin{remark}\label{rem3metrics}
We underline that we need all the three quadratic forms $g_1, g_2, g_3$ in \eqref{3polynomials} to define a twisted cubic $\gamma$. In fact, the common zero locus in $\mathbb{R}\mathbb{P}^3$ of any two of the quadric forms belonging to $\mathrm{Span}(g_1, g_2, g_3)$  gives a twisted cubic plus a line (the so called residual intersection, see \cite{Harris}). In the present paper we are interested in the case when this line is tangent to the twisted cubic. The point of tangency   is the distinguished point $p\in\gamma$.
\end{remark}

\subsection{Definitions and descriptions of (marked) contact twisted cubic structures}\label{contact_twisted_section}

We are now in the position to define the central objects of this article.
\begin{definition} \label{contact_twisted_def} A \emph{contact twisted cubic structure} on a $5$-dimensional smooth manifold $\M$ is  a contact distribution $\mathcal{C}\subset T\M$ together with a sub-bundle $\gamma\subset \mathbb{P}(\mathcal{C})$ whose fibre $\gamma_x$ at each point $x\in \M$ is a Legendrian twisted cubic with respect to the conformal symplectic structure $\mathcal{L}_x$ on $\mathcal{C}_x$.
An equivalence between contact twisted cubic structures $(\M,\mathcal{C},\gamma)$ and $(\widetilde{\M},\widetilde{\mathcal{C}},\widetilde{\gamma})$ is a diffeomorphism $f:\M\to\widetilde{\M}$ such that
$f_*(\mathcal{C}_x)=\widetilde{\mathcal{C}}_{f(x)}$ and $f_*(\gamma_x)=\widetilde{\gamma}_{f(x)}$ for all $x\in \M$. A self equivalence is called an automorphism, or a symmetry.
\end{definition}

\begin{definition}
A \emph{marked contact twisted cubic structure} is a contact twisted cubic structure  equipped with a smooth section $\sigma$ of $\gamma\to \M$.
 An equivalence between marked contact twisted cubic structures $(\M,\mathcal{C},\gamma,\sigma)$ and $(\widetilde{\M},\widetilde{\mathcal{C}},\widetilde{\gamma},\widetilde{\sigma})$ is an equivalence $f$ between the underlying contact twisted cubic structures $(\M,\mathcal{C},\gamma)$ and $(\widetilde{\M},\widetilde{\mathcal{C}},\widetilde{\gamma})$ such that $f_*(\sigma_x)=\widetilde{\sigma}_{f(x)}$ for all $x\in \M$. A self equivalence is called an automorphism, or a symmetry.
\end{definition}

Throughout this paper we will use various, locally equivalent,   viewpoints on (marked) contact twisted cubic structures, which we shall summarize in Propositions \ref{prop_twisted} and \ref{prop_punctured}. Yet another important description, in terms of \emph{adapted coframes}, shall be given in Section \ref{adaptedcoframes}.

Before stating the Propositions, we recall that the  $5$-dimensional  Heisenberg Lie algebra is the graded nilpotent Lie algebra  $\mathfrak{m}=\mathfrak{m}_{-1}\oplus\mathfrak{m}_{-2}$, where $\mathfrak{m}_{-1}\cong\mathbb{R}^4$, $\mathfrak{m}_{-2}\cong\mathbb{R}$,  and the only non-trivial component of the Lie bracket $[,]:\Lambda^2\mathfrak{m}_{-1}\to\mathfrak{m}_{-2}$ defines a non-degenerate skew-symmetric bilinear form.
 It then follows from non-degeneracy of the Levi bracket \eqref{Levi} that the associated graded $\mathrm{gr}(T\M)=\mathcal{C}\oplus T\M/\mathcal{C}$ of the contact structure $\mathcal{C}\subset T\M$ equipped with the Levi bracket $\mathcal{L}$  is a bundle of graded nilpotent Lie algebras modeled on the Heisenberg Lie algebra $\mathfrak{m}$.
  It has an associated   graded frame bundle $\mathcal{F}\to \M$  with structure group the grading preserving Lie algebra automorphisms $\mathrm{Aut}_{gr}(\mathfrak{m})\cong\mathrm{CSp}(2,\mathbb{R})$ of $\mathfrak{m}$; its fibre $\mathcal{F}_x$, at each point $x\in  \M$, comprises all graded Lie algebra isomorphisms $\varphi:\mathrm{gr}(T_x\M)\to\mathfrak{m}$.

\begin{proposition}\label{prop_twisted}
A contact twisted cubic structure on a $5$-dimensional manifold $\M$, locally, admits the following locally equivalent descriptions:
\begin{enumerate}
\item It is given by a contact distribution $\mathcal{C}\subset T\M$, an auxiliary rank $2$ bundle $\mathcal{E}\to \M$ and a vector bundle isomorphism
\begin{equation}\label{Psi}\Psi: \textstyle{\bigodot^3\mathcal{E}}\to \mathcal{C}\end{equation}
compatible in the sense that it pulls back the conformal symplectic structure $\mathcal{L}_x$ on $\mathcal{C}_x$ to the $\mathrm{GL}(\mathcal{E}_x)$-invariant one on $ \smash{\bigodot^3\mathcal{E}_x}$ for all $x\in \M$.
\item It is given by a reduction of the graded frame bundle  $\mathcal{F}\to \M$ of a contact structure
 to the structure group $\rho(\mathrm{GL}(2,\mathbb{R}))$
with respect to an irreducible representation $\rho:\mathrm{GL}(2,\mathbb{R})\to\mathrm{CSp}(2,\mathbb{R})$.

\item It is given by a contact distribution $\mathcal{C}=\mathrm{ker}(\alpha)$ on $\M$ and a reduction of the structure group of the frame bundle of $\mathcal{C}$ from $\mathrm{GL}(4,\mathbb{R})$ to the irreducible $\mathrm{GL}(2,\mathbb{R})\subset \mathrm{CSp}(\mathrm{d}\alpha)$.
\end{enumerate}
\end{proposition}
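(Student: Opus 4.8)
The plan is to establish the three descriptions by showing each is a manifestation of the same pointwise linear-algebraic data, bundled smoothly over $\M$, and then verifying the local equivalences in a cycle (1) $\Rightarrow$ (2) $\Rightarrow$ (3) $\Rightarrow$ (1). The conceptual engine is Proposition \ref{confsymp}: on the model fibre $\bigodot^3\mathbb{R}^2$, the Legendrian twisted cubic canonically determines the conformal symplectic form, and by Proposition \ref{subalg} the group preserving both is exactly the irreducible $\mathrm{GL}(2,\mathbb{R})$. So all three descriptions amount to choosing, fibrewise and smoothly, an identification of $(\mathcal{C}_x,\gamma_x,\mathcal{L}_x)$ with the model $(\bigodot^3\mathbb{R}^2,\gamma,[\omega])$ up to this residual group $\mathrm{GL}(2,\mathbb{R})$, equivalently a $\mathrm{GL}(2,\mathbb{R})$-reduction of the relevant frame bundle. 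The real content is merely to package this and to observe that all the objects involved ($\mathcal{E}$, $\Psi$, the reductions) depend smoothly on $x$ by construction, which follows since $\gamma$ is assumed to be a smooth subbundle of $\mathbb{P}(\mathcal{C})$.

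First I would treat (1) $\Leftrightarrow$ (2). Given the isomorphism $\Psi:\bigodot^3\mathcal{E}\to\mathcal{C}$ from (1), define $\mathcal{E}$-frames: a local frame $(f_1,f_2)$ of $\mathcal{E}$ induces, via the model basis $(E_1,\dots,E_4)$ of $\bigodot^3\mathbb{R}^2$ and $\Psi$, a frame of $\mathcal{C}$, hence (using any choice of complement to $\mathcal{C}$ and a contact form to trivialize $T\M/\mathcal{C}$) a graded frame of $\mathrm{gr}(T\M)$; the compatibility of $\Psi$ with the conformal symplectic structures guarantees this graded frame lies in $\mathcal{F}$, and changing $(f_1,f_2)$ by $A\in\mathrm{GL}(2,\mathbb{R})$ changes the graded frame by $\rho(A)$, so one gets exactly a reduction of $\mathcal{F}$ to $\rho(\mathrm{GL}(2,\mathbb{R}))$. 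Conversely, from a reduction as in (2), the associated bundle construction with $\mathrm{GL}(2,\mathbb{R})$ acting on $\mathbb{R}^2$ produces the rank $2$ bundle $\mathcal{E}$, and the $\rho$-equivariance of the Veronese map yields the isomorphism $\Psi$ on $\bigodot^3\mathcal{E}$; compatibility with $[\omega]$ is exactly the statement that $\rho$ lands in $\mathrm{CSp}(2,\mathbb{R})$. For (2) $\Leftrightarrow$ (3), I would use that a contact structure $\mathcal{C}=\mathrm{ker}(\alpha)$ with its conformal symplectic form $\mathrm{d}\alpha\vert_\mathcal{C}$ gives a canonical identification between $\mathrm{CSp}$-adapted graded frames of $\mathrm{gr}(T\M)$ and $\mathrm{CSp}(\mathrm{d}\alpha)$-frames of the rank $4$ bundle $\mathcal{C}$ itself, so that a $\rho(\mathrm{GL}(2,\mathbb{R}))$-reduction of $\mathcal{F}$ corresponds precisely to a reduction of the frame bundle of $\mathcal{C}$ to the irreducible $\mathrm{GL}(2,\mathbb{R})\subset\mathrm{CSp}(\mathrm{d}\alpha)$; the choice of $\alpha$ only affects the $\mathrm{CSp}$ by an overall scale, consistent with working with conformal symplectic structures. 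Finally, to close the loop I would note that each of these reductions, fibrewise, is the stabilizer (inside the ambient group) of the pair (conformal symplectic form, twisted cubic cone), so it is tautologically the same datum as the subbundle $\gamma\subset\mathbb{P}(\mathcal{C})$ with Legendrian fibres — invoking Propositions \ref{subalg} and \ref{confsymp} for the pointwise identification.

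The main obstacle, and the only place requiring genuine care, is the word ``locally'' and the honest bookkeeping of what is canonical versus what requires choices: the bundle $\mathcal{E}$ and the isomorphism $\Psi$ are \emph{not} globally canonical — $\mathcal{E}$ is defined only up to tensoring by a line bundle and $\Psi$ up to the corresponding rescaling, and even locally $\mathcal{E}$ requires choosing a lift of the $\mathrm{PGL}$-type ambiguity (the cube root, so to speak); similarly the passage between $\mathrm{gr}(T\M)$ and $\mathcal{C}$ uses a contact form $\alpha$, which is only locally defined and only up to scale. I would handle this by being explicit that the equivalences are asserted locally, that all constructions are carried out on a trivializing neighbourhood for $\gamma$ (which exists since $\gamma$ is a smooth subbundle), and that the residual ambiguities in each description match up — this last point being precisely the content of Propositions \ref{subalg}, \ref{confsymp} and Lemma \ref{lemfilt} at the level of structure groups. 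Once this dictionary between ``reduction of structure group'', ``auxiliary bundle plus Veronese isomorphism'', and ``Legendrian twisted cubic subbundle'' is spelled out, the smoothness and the equivalences are immediate, so I would present the proof as a sequence of three short identifications rather than a single long computation.
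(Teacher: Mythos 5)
Your proposal is correct and follows essentially the same route as the paper's (very brief) proof sketch: both rest on Propositions \ref{subalg} and \ref{confsymp} for the pointwise identification of the Legendrian twisted cubic with the model, on the local existence of the pair $(\mathcal{E},\Psi)$, and on the fact that a graded frame of $\mathrm{gr}(T\M)$ is determined by its restriction to $\mathcal{C}$. You merely spell out the (1)$\Leftrightarrow$(2) step that the paper delegates to a citation of \cite{book}, and phrase (2)$\Leftrightarrow$(3) via a choice of contact form rather than via the uniqueness of the extension of a graded automorphism of $\mathfrak{m}$ from $\mathfrak{m}_{-1}$ — which is the same observation.
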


We only sketch the proof. Given  an isomorphism \eqref{Psi},
  the image of  the map
\begin{align*}
\iota:\mathbb{P}^1=\mathbb{P}(\mathcal{E}_x)\to\mathbb{P}(\textstyle{\bigodot^3\mathcal{E}_x})\cong\mathbb{P}(\mathcal{C}_x),\quad [\lambda]\mapsto [\Psi(\lambda\odot\lambda\odot\lambda)],
\end{align*}
 is a twisted cubic $\gamma_x$. By the compatibility requirement of the conformal symplectic structures and  Proposition \ref{confsymp}, the twisted cubic is Legendrian.

 Conversely, given  a sub-bundle $\gamma\subset \mathbb{P}(\mathcal{C})$ of twisted cubics, then in a neighbourhood of  each point there exists a rank $2$ bundle $\mathcal{E}$ and a vector bundle isomorphism $\Psi: \smash{\bigodot^3\mathcal{E}\cong\mathcal{C}}$.   The compatibility of the conformal symplectic structures follows from the fact that the twisted cubic is Legendrian and by Proposition \ref{confsymp}.

 The equivalence between the first and the second description is explained in \cite{book}. The equivalence of the second and third follows from the fact that any graded Lie algebra automorphism of $\mathfrak{m}$ is uniquely determined by its restriction to $\mathfrak{m}_{-1}$.

\begin{remark}
A  contact twisted cubic structure is the natural contact analogue of an irreducible $\mathrm{GL}(2,\mathbb{R})$-structure in dimension four, as studied, for instance, in \cite{bryant, nurowski}. In particular, one could also call it an irreducible $\mathrm{GL}(2,\mathbb{R})$-contact structure.

 Contact twisted cubic structures are also known as a $\mathrm{G}_2$-contact structure in the literature,
since they are the underlying structures of  regular, normal parabolic geometries of type $(\mathrm{G}_2,\mathrm{P}_2)$, see \cite{book}.
\end{remark}

\begin{proposition}\label{prop_punctured}
A marked contact twisted cubic structure, locally, admits the following locally  equivalent descriptions:
\begin{enumerate}
\item It is given by a contact distribution $\mathcal{C}\subset T\M$, an auxiliary rank $2$ bundle $\mathcal{E}\to \M$,  a vector bundle isomorphism
$\Psi:\smash{\bigodot^3\mathcal{E}}\to \mathcal{C}$ compatible with the conformal symplectic structures and, in addition, a line subbundle $L\subset \mathcal{E}.$
\item It is given by  a  reduction of structure group of the graded frame bundle $\mathcal{F}\to \M$ of a contact structure in dimension $5$ with respect to the restriction
$$\rho:B\to\mathrm{CSp}(2,\mathbb{R})$$
of an irreducible $\mathrm{GL}(2,\mathbb{R})$-representation $\rho$ to the Borel subgroup $B\subset\mathrm{GL}(2,\mathbb{R})$.
\item It is given by a contact twisted cubic structure equipped with a $\gamma$-congruence, that is,   a foliation  of $\M$ by curves whose tangent directions are everywhere contained in  $\gamma\subset\mathbb{P}(\mathcal{C})$.
\end{enumerate}
%
\end{proposition}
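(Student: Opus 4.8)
The plan is to deduce each of the three descriptions from the corresponding description of the underlying contact twisted cubic structure in Proposition \ref{prop_twisted}, keeping track of what the extra datum --- the section $\sigma$ --- becomes in each picture. In all three cases the pointwise dictionary established in Section \ref{algebra} supplies the essential content, and what remains is local smoothness bookkeeping.

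First, starting from description (1) of Proposition \ref{prop_twisted}, fix a local isomorphism $\Psi\colon\smash{\bigodot^3\mathcal{E}}\to\mathcal{C}$, so that $\gamma_x$ is the image of $[\lambda]\mapsto[\Psi(\lambda\odot\lambda\odot\lambda)]$. By the discussion preceding Lemma \ref{lemfilt}, every point of $\gamma_x$ is of the form $[\Psi(l\odot l\odot l)]$ for a unique line $L_x\subset\mathcal{E}_x$. Hence a section $\sigma\colon\M\to\gamma$ amounts to an assignment $x\mapsto L_x$, and smoothness of $\sigma$ is equivalent to smoothness of $x\mapsto L_x$ --- checked in a local trivialization, where $\mathbb{P}(\mathcal{E}_x)\to\gamma_x$ is a diffeomorphism depending smoothly on $x$. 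Thus $\sigma$ corresponds to a line subbundle $L\subset\mathcal{E}$, and an equivalence of marked structures is exactly one intertwining the $\Psi$'s and the $L$'s.

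Second, for description (2), recall that the underlying structure is a reduction $\mathcal{G}\subset\mathcal{F}$ of the graded frame bundle to $\rho(\mathrm{GL}(2,\mathbb{R}))$. By Section \ref{algebra}, $\mathrm{GL}(2,\mathbb{R})$ acts transitively on the model twisted cubic $\gamma\subset\mathbb{R}\mathbb{P}^3$, with stabilizer of the marked point $\ell=[e_1\odot e_1\odot e_1]$ equal to the Borel subgroup $B$ of \eqref{borel}. Therefore marking a point $\sigma(x)\in\gamma_x$ is the same as selecting, inside $\mathcal{G}_x$, the adapted frames that additionally carry $\sigma(x)$ to $\ell$; this selection is a single $\rho(B)$-orbit, so it defines a reduction $\mathcal{G}_B\subset\mathcal{G}$ to $\rho(B)$, and conversely such a reduction recovers $\sigma$ by transporting $\ell$ back into each $\gamma_x$. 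Compatibility with morphisms is automatic, since an equivalence acts on $\mathcal{F}$ preserving all reductions in sight. For description (3), a point $\sigma(x)\in\gamma_x\subset\mathbb{P}(\mathcal{C}_x)$ is a line $\ell^{\sigma}_x\subset\mathcal{C}_x\subset T_x\M$, and as $\sigma$ varies this defines a rank one distribution $\ell^{\sigma}\subset T\M$; every rank one distribution is integrable, so its leaves foliate $\M$ by curves tangent to $\gamma$. Conversely a $\gamma$-congruence assigns to each $x$ the tangent direction of its leaf, a point of $\gamma_x$ depending smoothly on $x$, hence a section $\sigma$; the two constructions are mutually inverse and respect equivalences.

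The only genuinely technical --- though entirely routine --- point is the smoothness bookkeeping: one must verify that the pointwise bijection $\{$marked points of $\gamma_x\}\leftrightarrow\{$lines in $\mathcal{E}_x\}$ patches to an isomorphism of the associated bundles, and similarly that the $\rho(B)$-reduction in (2) and the congruence in (3) vary smoothly. None of this is deep; it is simply where the local nature of the statement is used. The conceptual content is carried by the transitivity of the $\mathrm{GL}(2,\mathbb{R})$-action on the twisted cubic and the resulting identification of the marked-point stabilizer with the Borel subgroup $B$.
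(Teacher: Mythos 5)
Your proposal is correct and follows essentially the same route as the paper's own (very brief) argument: the equivalence of (1) and (2) is reduced to Proposition \ref{prop_twisted} together with the pointwise facts from Section \ref{algebra} (each point of the twisted cubic comes from a unique line $L\subset\mathbb{R}^2$, whose stabilizer is the Borel subgroup $B$), and (3) is handled by identifying the section $\sigma$ with the line field $\ell^{\sigma}\subset\hat{\gamma}$ and its integral curves. You simply make explicit the details the paper dismisses as obvious, and they check out.
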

In view of Proposition \ref{prop_twisted}, the equivalence of the first two descriptions is obvious. Concerning the last description, note that  a section $\sigma: \M\to\gamma$ is the same  as a rank $1$ distribution $\ell^{\sigma}\subset\hat{\gamma}\subset\mathcal{C},$ where $\hat{\gamma}\subset\mathcal{C}$ is the cone over $\gamma\subset\mathbb{P}(\mathcal{C})$. The integral manifolds of this line distribution define the $\gamma$-congruence. Conversely, one obtains $\ell^{\sigma}$ from the $\gamma$-congruence by considering the field of tangent directions to the curves.

By Lemma \ref{lemfilt}, we have the following ``osculating filtration''.
\begin{proposition}\label{propfilt}
A marked contact twisted cubic structure $(\M,\mathcal{C},\gamma,\sigma)$ is equipped with a  flag of distributions
\begin{align}\label{filtration}
\ell^{\sigma}\subset\mathcal{D}^{\sigma}\subset \mathcal{H}^{\sigma}\subset\mathcal{C}\subset T\M,
\end{align}
where the rank $2$ distribution $\mathcal{D}^{\sigma}\subset\mathcal{C}$ is Legendrian (i.e., totally null with respect to the conformal symplectic structure on $\mathcal{C}$)  and the rank $3$ distribution $\mathcal{H}^{\sigma}$ is the symplectic orthogonal  to $\ell^{\sigma}$.
\end{proposition}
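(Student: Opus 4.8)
The plan is to obtain the flag \eqref{filtration} by transporting the pointwise filtration of Lemma \ref{lemfilt} fibre-by-fibre along the section $\sigma$, and then to check that the resulting distributions are smooth. First I would note that, by Proposition \ref{prop_punctured}(1), locally we may fix an auxiliary rank $2$ bundle $\mathcal{E}\to\M$, a line subbundle $L\subset\mathcal{E}$, and a compatible isomorphism $\Psi:\bigodot^3\mathcal{E}\to\mathcal{C}$, where $L_x$ is precisely the $1$-dimensional subspace of $\mathcal{E}_x$ determined by the marked point $\sigma(x)\in\gamma_x$ (as in the discussion preceding Lemma \ref{lemfilt}). Then I would set, pointwise,
\begin{align*}
\ell^{\sigma}_x&=\Psi\big(\mathrm{Span}\{l\odot l\odot l:l\in L_x\}\big),\\
\mathcal{D}^{\sigma}_x&=\Psi\big(\mathrm{Span}\{l\odot l\odot e:l\in L_x,\ e\in\mathcal{E}_x\}\big),\\
\mathcal{H}^{\sigma}_x&=\Psi\big(\mathrm{Span}\{l\odot e\odot f:l\in L_x,\ e,f\in\mathcal{E}_x\}\big),
\end{align*}
which is exactly the image under $\Psi_x$ of the osculating sequence $\ell\subset\mathrm{D}\subset\mathrm{H}\subset\mathbb{R}^4$ from Lemma \ref{lemfilt}. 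Smoothness of each of these as a subbundle of $\mathcal{C}$ is immediate: locally choosing a nowhere-vanishing section $l$ of $L$ and completing it to a frame $(l,e)$ of $\mathcal{E}$, the vectors $\Psi(l\odot l\odot l)$; $\Psi(l\odot l\odot l),\Psi(l\odot l\odot e)$; and $\Psi(l\odot l\odot l),\Psi(l\odot l\odot e),\Psi(l\odot e\odot e)$ are smooth local frames for $\ell^{\sigma},\mathcal{D}^{\sigma},\mathcal{H}^{\sigma}$ respectively, of the asserted ranks $1,2,3$. That the construction is independent of the choices of $\mathcal{E}$, $L$ and $\Psi$ follows because any two compatible trivializations differ by a structure-group element preserving the marked point, hence by an element of the Borel subgroup $B$ of \eqref{borel}, which preserves the full filtration of $\mathbb{R}^4$; thus the flag is intrinsically attached to $(\M,\mathcal{C},\gamma,\sigma)$.

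It remains to verify the two symplectic statements, and these are purely pointwise, hence already contained in Lemma \ref{lemfilt}: since $\gamma_x$ is Legendrian with respect to $\mathcal{L}_x$ and $\Psi$ pulls $\mathcal{L}_x$ back to the $\mathrm{GL}(\mathcal{E}_x)$-invariant conformal symplectic form on $\bigodot^3\mathcal{E}_x$, Proposition \ref{confsymp} identifies that form (up to scale) with $\omega=E^1\wedge E^4-3E^2\wedge E^3$ in the standard basis, and Lemma \ref{lemfilt} then tells us that $\mathrm{D}$ is Lagrangian and $\mathrm{H}=\ell^{\perp_\omega}$. Transporting by $\Psi_x$, we conclude that $\mathcal{D}^{\sigma}_x$ is a maximal totally null ($2$-dimensional) subspace of $(\mathcal{C}_x,\mathcal{L}_x)$, i.e. Legendrian, and that $\mathcal{H}^{\sigma}_x$ is the $\mathcal{L}_x$-orthogonal complement of $\ell^{\sigma}_x$. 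This holds at every $x$, which is exactly the content of the proposition.

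I do not expect any serious obstacle here: the only things to be careful about are (i) confirming that the marked point $\sigma(x)$ really does single out a unique line $L_x\subset\mathcal{E}_x$ — this is the remark before Lemma \ref{lemfilt}, namely that $\ell=\mathrm{Span}\{l\odot l\odot l:l\in L\}$ forces $L$ to be unique — and (ii) making sure the local frame argument for smoothness is phrased so that it does not depend on a global choice of $l$, which is handled by working on a trivializing neighbourhood of $L$. Everything else is an application of Proposition \ref{prop_punctured}, Proposition \ref{confsymp} and Lemma \ref{lemfilt}; the proposition is essentially the ``family version'' of those pointwise facts, and the proof is a short unwinding of definitions.
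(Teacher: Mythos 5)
Your proposal is correct and follows the same route as the paper, which derives Proposition \ref{propfilt} directly by applying the pointwise Lemma \ref{lemfilt} fibrewise (the paper in fact gives no further argument beyond citing that lemma). Your additional care about smoothness via local frames of $L\subset\mathcal{E}$ and independence of the trivialization via the Borel subgroup $B$ is a legitimate and welcome filling-in of details the paper leaves implicit.
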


\begin{definition}
We call a   marked contact twisted cubic structure  \emph{integrable} if the distribution $\mathcal{D}^{\sigma}$ is integrable. In this case the section $\sigma:\M\to\gamma$  is called an integrable section.
\end{definition}

\subsection{(Marked) contact Engel structures and the exceptional Lie group $\mathrm{G}_2$}
\label{LieTheory}
As mentioned in the introduction, the most symmetric contact twisted cubic structure, that we refer to as the contact Engel structure, is intimately related to the exceptional Lie group $\mathrm{G}_2$. We shall explain this relationship in the following section. For further references see e.g. \cite{engel, yamaguchi, BryantCartan, book}.

Let $\mathrm{G}_2$ denote the connected Lie group with center $\mathbb{Z}_2$ whose Lie algebra  $\mathfrak{g}$ is the split real form of
the smallest of  the  exceptional complex simple Lie algebras.
 $\mathrm{G}_2$ can be defined as the stabilizer subgroup in $\mathrm{GL}(7,\mathbb{R})$ of a generic $3$-form $\Phi\in\Lambda^3(\mathbb{R}^7)^*$. It preserves a non-degenerate bilinear form $h\in \bigodot^2(\mathbb{R}^7)^*$ of signature $(4,3)$.  

The Lie algebra $\mathfrak{g}$ of $\mathrm{G}_2$ has, up to conjugacy, three parabolic subalgebras: the maximal parabolic  algebras $\mathfrak{p}_1,$ $\mathfrak{p}_2$ and the Borel subalgebra $\mathfrak{p}_{1,2}$. Corresponding parabolic subgroups of $\mathrm{G}_2$ can be realized as follows:
$\mathrm{P}_1$ is the  stabilizer of a null line in $\mathbb{R}^7$ with respect to the $\mathrm{G}_2$-invariant bilinear form $h$,  $\mathrm{P}_2$ is the stabilizer of a totally null $2$-plane in $\mathbb{R}^7$ that inserts trivially into $\Phi$, and $\mathrm{P}_{1,2}=\mathrm{P}_1\cap \mathrm{P}_2$.

For a parabolic subgroup $P$ of a simple Lie group $\mathrm{G}$, let $G_+\subset P$ be the unipotent radical and $G_0=P/G_+$ the reductive Levi factor, so that $P=G_0\ltimes G_+$. Denote by $\mathfrak{g}_+$ and $\mathfrak{g}_0=\mathfrak{p}/\mathfrak{g}_+$ the corresponding Lie algebras.
Via the adjoint action,  $P$ preserves a filtration
\begin{align}\label{para_filt}\mathfrak{g}=\mathfrak{g}^{-k}\supset \mathfrak{g}^{-k+1}\supset\cdots\supset\mathfrak{g}^0\supset\mathfrak{g}^1\supset\cdots\supset\mathfrak{g}^k,\end{align}
where $\mathfrak{g}^1=\mathfrak{g}_+$, $\mathfrak{g}^j=[\mathfrak{g}^{j-1},\mathfrak{p}_+]$ for $j\geq 2$, $\mathfrak{g}^{j+1}=(\mathfrak{g}^{-j})^{\perp}$ for $j\leq -1$ (the complement is taken with respect to the Killing form) and, in particular, $\mathfrak{g}^0=\mathfrak{p}$. Any splitting $\mathfrak{g}_0\to\mathfrak{p}$ determines an identification of the filtered Lie algebra $\mathfrak{g}$ with its associated graded Lie algebra
$$\mathrm{gr}(\mathfrak{g})=\mathfrak{g}_{-k}\oplus\cdots\oplus\mathfrak{g}_0\oplus\cdots\oplus\mathfrak{g}_{k}.$$

For complex simple Lie algebras (and their split-real forms) conjugacy classes of parabolic subalgebras  are in on-to-one correspondence with subsets of simple roots (having fixed a Cartan subalgebra $\mathfrak{h}$ and a set of simple roots $\Delta^0$). The correspondence is given as follows: Recall that any root can be uniquely decomposed into a sum of simple roots $\alpha=\sum_{i}a_i \alpha_i$ where all coefficients $a_i$ (if non-zero) are integers of the same sign. For any subset $\Sigma\subset\Delta^0$ one now defines the $\Sigma$-height $\mathrm{ht}_{\Sigma}(\alpha)$ of a root  to be
 $\mathrm{ht}_{\Sigma}(\alpha)=\sum_{i:\alpha_i\in\Sigma}a_i.$ Then  $$\mathfrak{p}=\mathfrak{h}\oplus_{\{\alpha:\mathrm{ht}_{\Sigma}(\alpha)\geq0\}}\mathfrak{g}_{\alpha}$$
is a parabolic subalgebra. In fact, these choices determine a grading: $\mathfrak{g}_0=\mathfrak{h}\oplus_{\{\alpha:\mathrm{ht}_{\Sigma}(\alpha)=0\}}\mathfrak{g}_{\alpha}$
is a Levi subalgebra and the remaining grading components  are given by
$\mathfrak{g}_{i}=\oplus_{\{\alpha:\mathrm{ht}_{\Sigma}(\alpha)=i\}}\mathfrak{g}_{\alpha}.$

In the $\mathrm{G}_2$ case  we have two simple roots $\Delta^0=\{\alpha_1,\alpha_2\}$, and the parabolic subalgebras $\mathfrak{p}_1$, $\mathfrak{p}_2$ and $\mathfrak{p}_{1,2}$ correspond  to the  sets $\Sigma_1=\{\alpha_1\}$, $\Sigma_2=\{\alpha_2\}$ and $\Sigma=\Delta^0$.

In this paper we are particularly interested in the contact grading, corresponding to $\Sigma_2=\{\alpha_2\}$. Here we have $\mathfrak{g}_0\cong\mathfrak{gl}(2,\mathbb{R})$, $\mathfrak{g}_{-}=\mathfrak{g}_{-1}\oplus\mathfrak{g}_{-2}$ and $\mathfrak{g}_+=\mathfrak{g}_1\oplus\mathfrak{g}_2$ are dual with respect to the Killing form and  isomorphic to the $5$-dimensional Heisenberg algebra. Moreover,  the $\mathfrak{g}_0$-representation $\mathfrak{g}_{-1}$ is irreducible; hence $\mathfrak{g}_{-1}\cong \smash{\bigodot^3\mathbb{R}^2}$ as a representation of the semisimple part ${\mathfrak{g}_0}^{ss}\cong\mathfrak{sl}(2,\mathbb{R})$.
\begin{equation}\label{cont_grad}
\begin{tikzpicture}[scale=1,baseline=-5pt]
    \node[black] at (-1.6,2.3)
    {$\mathfrak{p}_2=\mathfrak{g}_0\oplus\mathfrak{g}_1\oplus\mathfrak{g}_2$};
  \draw[black] (-1.4,  -0.2) rectangle (1.4,0.2);
   \draw[black] (-1.85,  0.6) rectangle (1.85,1.85);
   \draw[black] ( 0  , 1.732) -- (0,0);
    \filldraw[black] ( 0  ,  1.732) circle (0.06);
    \draw[black] (0, 0) -- (1.5,  0.866); 
    \filldraw[black] ( 1.5,  0.866) circle (0.06);
    \draw[black] (0, 0) -- (0.5,  0.866); 
    \filldraw[black] ( 0.5,  0.866) circle (0.06);
    \draw[black] (0, 0) -- (-0.5,  0.866); 
    \filldraw[black] ( -0.5,  0.866) circle (0.06);
    \draw[black] (0, 0) -- (-1.5,  0.866); 
    \filldraw[black] ( -1.5,  0.866) circle (0.06);
    \draw[black] (0, 0) -- (0.866,  0); 
    \filldraw[black] (0.866,  0) circle (0.06);
    \draw[black] (0, 0) -- (1.5,  -0.866); 
    \filldraw[black] ( 1.5,  -0.866) circle (0.06);
    \draw[black] ( 0  , -1.732) -- (0,0);
    \filldraw ( 0  ,  -1.732) circle (0.06);
    \draw[black] (0, 0) -- (-1.5,  -0.866); 
    \filldraw ( -1.5,  -0.866) circle (0.06);
    \draw[black] (0, 0) -- (-0.5,  -0.866); 
    \filldraw ( -0.5,  -0.866) circle (0.06);
    \draw[black] (0, 0) -- (0.5,  -0.866); 
    \filldraw ( 0.5,  -0.866) circle (0.06);
    \draw[black] (0, 0) -- (-0.866,  0); 
    \filldraw[black] (-0.866,  0) circle (0.06);
    \filldraw[color=black] (0,0) circle (0.1); 
          \draw[dashed](-  2, 1.2)--(2, 1.2);
   \draw[dashed](-  2, 0.4)--(2, 0.4);
   \draw[dashed](-  2, -1.2)--(2, -1.2);
   \draw[dashed](-  2, -0.4)--(2, -0.4);

\draw (2.5,1.7) node {$\mathfrak{g}_{2}$};
\draw (2.5,-1.7) node {$\mathfrak{g}_{-2\,}$};
\draw (2.5,0.9) node {$\mathfrak{g}_{1}$};
\draw (2.5,-0.9) node {$\mathfrak{g}_{-1\,}$};
\draw (2.5,0) node {$\mathfrak{g}_{0}\ $};

\end{tikzpicture}
 \end{equation}

The model for contact twisted cubic structures  is the
 homogeneous space $\mathrm{G}_2/\mathrm{P}_2$.
\begin{proposition}\label{homog}
The homogeneous space $\mathrm{G}_2/\mathrm{P}_2$ is naturally equipped with a $\mathrm{G}_2$-invariant contact twisted cubic structure.
\end{proposition}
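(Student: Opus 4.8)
The plan is to exhibit the $\mathrm{G}_2$-invariant contact twisted cubic structure on $\mathrm{G}_2/\mathrm{P}_2$ concretely from the contact grading of $\mathfrak{g}$ described in \eqref{cont_grad}, and then invoke Proposition \ref{prop_twisted} to recognize the resulting data as a contact twisted cubic structure. First I would recall the general principle that for a $|k|$-graded Lie algebra $\mathfrak{g}=\mathfrak{g}_{-k}\oplus\cdots\oplus\mathfrak{g}_k$ with parabolic $\mathfrak{p}=\mathfrak{g}_0\oplus\cdots\oplus\mathfrak{g}_k$ and parabolic subgroup $P$, the homogeneous space $\mathrm{G}/P$ carries a $\mathrm{G}$-invariant filtration of the tangent bundle $TM=T^{-k}M\supset\cdots\supset T^{-1}M$ with $\mathrm{gr}(T_{eP}M)\cong\mathfrak{g}_-=\mathfrak{g}_{-k}\oplus\cdots\oplus\mathfrak{g}_{-1}$, and this identification is $P$-equivariant for the adjoint action of $P$ on $\mathfrak{g}/\mathfrak{p}\cong\mathfrak{g}_-$. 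Specializing to $(\mathrm{G}_2,\mathrm{P}_2)$: the contact grading is a $|2|$-grading with $\mathfrak{g}_-=\mathfrak{g}_{-1}\oplus\mathfrak{g}_{-2}$, so $T^{-1}M=:\mathcal{C}$ is a corank-one distribution, and the Levi bracket $\mathcal{L}:\Lambda^2\mathcal{C}\to TM/\mathcal{C}$ is identified, at the base point, with the Lie bracket $\Lambda^2\mathfrak{g}_{-1}\to\mathfrak{g}_{-2}$, which is non-degenerate because $\mathfrak{g}_-$ is the Heisenberg algebra; by $\mathrm{G}_2$-homogeneity non-degeneracy holds everywhere, so $\mathcal{C}$ is a contact distribution.

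Next I would produce the bundle of twisted cubics. Since $\mathfrak{g}_0\cong\mathfrak{gl}(2,\mathbb{R})$ acts on $\mathfrak{g}_{-1}$ irreducibly, $\mathfrak{g}_{-1}\cong\bigodot^3\mathbb{R}^2$ as a $\mathrm{GL}(2,\mathbb{R})$-module (this is stated in the excerpt just before \eqref{cont_grad}). The reductive Levi $G_0\cong\mathrm{GL}(2,\mathbb{R})$ (or a suitable cover thereof) acts on $\mathfrak{g}_{-1}$ through the irreducible representation $\rho$ of \eqref{rho}, and the full $\mathrm{P}_2$ acts on $\mathfrak{g}/\mathfrak{p}\cong\mathfrak{g}_{-1}$ via $G_0$ composed with the projection $\mathrm{P}_2\to G_0$ — because the unipotent radical $G_+=\exp(\mathfrak{g}_1\oplus\mathfrak{g}_2)$ maps $\mathfrak{g}_{-1}$ into $\mathfrak{g}_{-1}+\mathfrak{p}$, hence acts trivially on the quotient. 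Therefore the associated bundle $\mathrm{G}_2\times_{\mathrm{P}_2}\bigodot^3\mathbb{R}^2 \cong \mathrm{G}_2\times_{\mathrm{P}_2}\mathfrak{g}_{-1}$ is canonically isomorphic to $\mathcal{C}$; more precisely, taking $\mathcal{E}:=\mathrm{G}_2\times_{\mathrm{P}_2}\mathbb{R}^2$ (the standard rep pulled back along $\mathrm{P}_2\to\mathrm{GL}(2,\mathbb{R})$), the associated $\mathrm{P}_2$-equivariant isomorphism $\bigodot^3\mathbb{R}^2\cong\mathfrak{g}_{-1}$ induces a vector bundle isomorphism $\Psi:\bigodot^3\mathcal{E}\to\mathcal{C}$. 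Finally, the conformal symplectic form on $\mathcal{C}$ supplied by the contact structure must be compatible: by Proposition \ref{confsymp} there is a \emph{unique} $\mathrm{GL}(2,\mathbb{R})$-invariant conformal symplectic class on $\bigodot^3\mathbb{R}^2$ making the twisted cubic Legendrian, and the Levi bracket's conformal symplectic class on $\mathfrak{g}_{-1}$, being $\mathfrak{g}_0\cong\mathfrak{gl}(2,\mathbb{R})$-invariant (the bracket is $\mathrm{ad}$-equivariant), must coincide with it up to scale. Hence $\Psi$ is compatible in the sense of Proposition \ref{prop_twisted}(1), and by that proposition we obtain a contact twisted cubic structure on $\mathrm{G}_2/\mathrm{P}_2$. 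Its $\mathrm{G}_2$-invariance is immediate since every piece of the construction (the filtration of $TM$, the bundle $\mathcal{E}$, the isomorphism $\Psi$) is built from associated bundles and is therefore $\mathrm{G}_2$-equivariant by construction.

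The step I expect to require the most care is the compatibility/uniqueness of the conformal symplectic structures — specifically, checking that the conformal symplectic class induced on $\mathfrak{g}_{-1}$ by the Killing-form pairing with $\mathfrak{g}_{-2}$ (equivalently, by the Lie bracket $\Lambda^2\mathfrak{g}_{-1}\to\mathfrak{g}_{-2}$) is genuinely $\mathrm{GL}(2,\mathbb{R})$-invariant rather than merely $\mathrm{SL}(2,\mathbb{R})$-invariant, and that it is the same conformal class that appears in Proposition \ref{confsymp}. This is where one must be slightly careful about which cover of $\mathrm{GL}(2,\mathbb{R})$ actually acts and how the center and the grading element scale things; but since $\bigwedge^2(\bigodot^3\mathbb{R}^2)\cong\bigodot^4\mathbb{R}^2\oplus\mathbb{R}$ has a one-dimensional trivial-up-to-scale summand (noted in the excerpt before \eqref{symp}), uniqueness of the invariant conformal symplectic form is automatic, and the match with Proposition \ref{confsymp} is then forced. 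A clean alternative, which I would mention, is simply to cite \cite{book}: contact twisted cubic structures are precisely the underlying structures of regular normal parabolic geometries of type $(\mathrm{G}_2,\mathrm{P}_2)$ (as recalled in the Remark following Proposition \ref{prop_twisted}), and the homogeneous model $\mathrm{G}_2/\mathrm{P}_2$ of such a parabolic geometry carries the flat, hence $\mathrm{G}_2$-invariant, structure; the explicit coframe \eqref{coframeintro}–\eqref{cone} then realizes this invariant structure in local coordinates.
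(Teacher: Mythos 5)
Your proposal is correct and follows essentially the same route as the paper's proof: both identify $\mathcal{C}$ from the $\mathrm{P}_2$-invariant filtration of $\mathfrak{g}/\mathfrak{p}_2$, deduce the contact property from the Heisenberg structure of $\mathfrak{g}_-$, observe that the unipotent radical acts trivially so the $\mathrm{P}_2$-action on $\mathfrak{g}^{-1}/\mathfrak{p}_2$ factors through the irreducible $G_0\cong\mathrm{GL}(2,\mathbb{R})$-representation, and obtain the invariant Legendrian twisted cubic from that representation. The only cosmetic difference is that the paper takes the highest weight orbit in $\mathbb{P}(\mathcal{C}_o)$ directly, whereas you package the same data via the auxiliary bundle $\mathcal{E}$ and Proposition \ref{prop_twisted}(1), with the compatibility of conformal symplectic classes settled by the uniqueness statement of Proposition \ref{confsymp}.
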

\begin{proof}
The tangent bundle of $\mathrm{G}_2/\mathrm{P}_2$ is the associated bundle
\begin{align}\label{iso}T(\mathrm{G}_2/\mathrm{P}_2)=\mathrm{G}_2\times_{\mathrm{P}_2}(\mathfrak{g}/\mathfrak{p}_2),\end{align} where $\mathfrak{g}$ denotes the Lie algebra of $\mathrm{G}_2$. The identification is induced by the trivialization of the tangent bundle of the Lie group $\mathrm{G}_2$ by left-invariant vector fields.
Using the Maurer-Cartan form $\omega_{G_2}\in\Omega^1(\mathrm{G}_2,\mathfrak{g}),$ $\omega_{G_2}(\xi_g)=T\lambda_{g^{-1}}\xi_g$, it can be written as
$$\xi_{x}\mapsto [g, \omega_{G_2}(\xi_{x})+\mathfrak{p}_2],$$
where $g\in\mathrm{G}_2, x=g\mathrm{P}_2 $ and  $\xi_x\in T_x(\mathrm{G}_2/\mathrm{P}_2) .$ The filtration \eqref{para_filt} induces a $\mathrm{P}_2$-invariant filtration $$\mathcal{C}_o=\mathfrak{g}^{-1}/\mathfrak{p}_2\subset\mathfrak{g}/\mathfrak{p}_2=T_o(\mathrm{G}_2/\mathrm{P}_2)$$ and, via the identification \eqref{iso}, a  subbundle $\mathcal{C}\subset T(\mathrm{G}_2/\mathrm{P}_2)$ of codimension one. The Levi bracket $\mathcal{L}:\Lambda^2\mathcal{C}\to T\M/\mathcal{C}$ corresponds to the Lie bracket on $\mathfrak{g}_{-}=\mathrm{gr}(\mathfrak{g}/\mathfrak{p}_2)$. Since this is the $5$-dimensional Heisenberg Lie algebra, $\mathcal{C}$ is contact. Moreover, since the unipotent radical   acts trivially on $\mathfrak{g}^{-1}/\mathfrak{p}_2$, the $\mathrm{P}_2$ action  factors to a $G_0$ action on $\mathcal{C}_o=\mathfrak{g}^{-1}/\mathfrak{p}_2$.
The latter action is irreducible, and  the orbit through a highest weight line defines a $G_0$-invariant Legendrian twisted cubic $\gamma_o\subset\mathbb{P}(\mathcal{C}_o)$.
\end{proof}
\begin{definition}
A contact twisted cubic structure is called \emph{flat}, or \emph{contact Engel structure},  if and only if it is locally equivalent to the $\mathrm{G}_2$-invariant structure on $\mathrm{G}_2/\mathrm{P}_2$.
\end{definition}

 \begin{remark}\label{remark6}
It follows from the general theory, see \cite{book}, that there is an equivalence of categories between general contact twisted cubic structures and certain regular, normal parabolic geometries.
The Engel structure is the locally unique contact twisted cubic structure with infinitesimal symmetry algebra of maximal dimension, and it is characterized, up to local equivalence, by the vanishing of the harmonic part  of the curvature of the canonically associated Cartan connection. The infinitesimal automorphisms of a general contact twisted cubic structure form a Lie algebra of dimension $\leq 14$. In fact, if the structure is non-flat, it is known that the symmetry algebra is of dimension $\leq 7,$ see \cite{gap}.
  \end{remark}

  \begin{proposition} \label{G2modG12} Let $\gamma\subset\mathbb{P}(\mathcal{C})$ be the $\mathrm{G}_2$-invariant contact twisted cubic structure on $\mathrm{G}_2/\mathrm{P}_2$. Then
 $$\gamma=\mathrm{G}_2\times_{\mathrm{P}_2} {\mathrm{P}_2/\mathrm{P}_{1,2}}=\mathrm{G}_2/\mathrm{P}_{1,2} .$$
 \end{proposition}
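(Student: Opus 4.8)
\emph{Approach.} The plan is to recognize $\gamma$ as an associated bundle and then identify its fibre over the base point as a single $\mathrm{P}_2$-orbit; the group-theoretic statement then follows formally from ``associated bundle over a homogeneous space is again homogeneous''. Recall from the proof of Proposition~\ref{homog} that $\mathcal{C} = \mathrm{G}_2\times_{\mathrm{P}_2}(\mathfrak{g}^{-1}/\mathfrak{p}_2)$, so that $\mathbb{P}(\mathcal{C}) = \mathrm{G}_2\times_{\mathrm{P}_2}\mathbb{P}(\mathcal{C}_o)$ with $\mathcal{C}_o = \mathfrak{g}^{-1}/\mathfrak{p}_2\cong\mathfrak{g}_{-1}$. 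A $\mathrm{G}_2$-invariant subbundle $\gamma\subset\mathbb{P}(\mathcal{C})$ is determined by its fibre $\gamma_o\subset\mathbb{P}(\mathcal{C}_o)$ over $o = e\mathrm{P}_2$, which must be a $\mathrm{P}_2$-invariant subset, via $\gamma = \mathrm{G}_2\times_{\mathrm{P}_2}\gamma_o$. By the construction in Proposition~\ref{homog}, $\gamma_o$ is the orbit of $G_0 = \mathrm{P}_2/G_+$ through a highest weight line $\ell_o\in\mathbb{P}(\mathfrak{g}_{-1})$; since the unipotent radical $G_+$ acts trivially on $\mathfrak{g}_{-1}$, this orbit is the same as the $\mathrm{P}_2$-orbit of $\ell_o$. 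Hence $\mathrm{P}_2$ acts transitively on $\gamma_o$, so $\gamma_o = \mathrm{P}_2/\mathrm{Q}$, where $\mathrm{Q}\subset\mathrm{P}_2$ is the stabilizer of $\ell_o$. This already yields $\gamma = \mathrm{G}_2\times_{\mathrm{P}_2}(\mathrm{P}_2/\mathrm{Q}) = \mathrm{G}_2/\mathrm{Q}$, and it remains only to identify $\mathrm{Q}$ with $\mathrm{P}_{1,2}$.

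\emph{Identifying the stabilizer.} Since $G_+$ acts trivially on $\mathfrak{g}_{-1}$, we have $G_+\subset\mathrm{Q}$, and the image of $\mathrm{Q}$ in $G_0\cong\mathrm{GL}(2,\mathbb{R})$ is the stabilizer of $\ell_o$ in $G_0$. Under the isomorphism $\mathfrak{g}_{-1}\cong\bigodot^3\mathbb{R}^2$ of $\mathfrak{g}_0^{ss}$-modules, $\ell_o$ is the line spanned by $e_1\odot e_1\odot e_1$, whose stabilizer in $\mathrm{GL}(2,\mathbb{R})$ is precisely the Borel subgroup $B$ of \eqref{borel}; thus $\mathrm{Q} = B\ltimes G_+$, with Lie algebra $\mathfrak{q} = \mathfrak{b}_0\oplus\mathfrak{g}_1\oplus\mathfrak{g}_2$, where $\mathfrak{b}_0\subset\mathfrak{g}_0\cong\mathfrak{gl}(2,\mathbb{R})$ is the Borel subalgebra annihilating $\ell_o$. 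Comparing with the root space description: in the contact grading ($\Sigma_2 = \{\alpha_2\}$) one has $\mathfrak{g}_0 = \mathfrak{h}\oplus\mathfrak{g}_{\alpha_1}\oplus\mathfrak{g}_{-\alpha_1}$, the positive part is $\mathfrak{g}_1\oplus\mathfrak{g}_2 = \bigoplus_{\alpha:\,\mathrm{ht}_{\Sigma_2}(\alpha)>0}\mathfrak{g}_\alpha$, and $\mathfrak{b}_0 = \mathfrak{h}\oplus\mathfrak{g}_{\alpha_1}$ (the raising operator $\mathfrak{g}_{\alpha_1}$ kills a highest weight vector). Adding these, $\mathfrak{q}$ is the sum of $\mathfrak{h}$ with all positive root spaces of $\mathfrak{g}$, that is, $\mathfrak{q} = \mathfrak{p}_{1,2}$ (the Borel corresponding to $\Sigma = \Delta^0$). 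As $\mathrm{Q}$ and $\mathrm{P}_{1,2}$ are parabolic subgroups of the connected group $\mathrm{G}_2$ with the same Lie algebra, they coincide; therefore $\gamma = \mathrm{G}_2\times_{\mathrm{P}_2}(\mathrm{P}_2/\mathrm{P}_{1,2}) = \mathrm{G}_2/\mathrm{P}_{1,2}$.

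\emph{Main obstacle.} Conceptually the argument is short; the only substantive input is that the fibre $\gamma_o$ is a \emph{single} $\mathrm{P}_2$-orbit, which rests on the unipotent radical acting trivially on $\mathfrak{g}_{-1}$. The remaining work is bookkeeping: matching the $G_0$-stabilizer of the highest weight line in $\bigodot^3\mathbb{R}^2$ with the $\mathfrak{g}_0$-part of the Borel $\mathfrak{p}_{1,2}$, i.e. checking that the conventions (``highest weight line'' on one side, ``the Borel subgroup $\mathrm{P}_{1,2}$'' on the other) are chosen consistently, and noting that a parabolic subgroup of $\mathrm{G}_2$ is determined by its Lie algebra so that the Lie-algebra identification $\mathfrak{q} = \mathfrak{p}_{1,2}$ lifts to the group level. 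No nontrivial computation is involved.
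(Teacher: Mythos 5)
Your proof is correct and follows essentially the same route as the paper's: both identify $\gamma$ as the associated bundle $\mathrm{G}_2\times_{\mathrm{P}_2}\gamma_o$, use triviality of the unipotent radical's action on $\mathfrak{g}^{-1}/\mathfrak{p}_2$ to see that $\mathrm{P}_2$ acts transitively on $\gamma_o$ with stabilizer $B\ltimes G_+$, and identify this with the Borel $\mathrm{P}_{1,2}$. The only difference is that you spell out the root-space bookkeeping showing $\mathrm{Lie}(B\ltimes G_+)=\mathfrak{p}_{1,2}$ and the lift to the group level, which the paper simply asserts.
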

 \begin{proof}
The left $\mathrm{G}_2$ action on $\mathrm{G}_2/\mathrm{P}_2$ lifts to a $\mathrm{G}_2$ action on $\gamma$.
  Consider the fibre $\gamma_o\subset\mathbb{P}(\mathfrak{g}^{-1}/\mathfrak{p}_2)$ over the origin $o=e\mathrm{P}_2$. Then the $\mathrm{G}_2$ action on $\gamma$ restricts to a $\mathrm{P}_2$ action on $\gamma_o$, which
factors to an action of  $G_0=\mathrm{GL}(2,\mathbb{R})$, since the unipotent radical acts trivially. The latter action is transitive on $\gamma_o$ and the stabilizer of a point in $\gamma_o$ (which is a highest weight line in $\mathfrak{g}^{-1}/\mathfrak{p}_2$) is the Borel subgroup $B\subset \mathrm{GL}(2,\mathbb{R})$ as in \eqref{borel}.  Then the stabilizer in $\mathrm{P}_2$ of the point is $B\ltimes\mathrm{exp}(\mathfrak{g}_+)$, which is the Borel subgroup $\mathrm{P}_{1,2}\subset\mathrm{G}_2$, and so
$$\gamma=\mathrm{G}_2\times_{\mathrm{P}_2} {\gamma_o}=G_2\times_{\mathrm{P}_2} {\mathrm{P}_2/\mathrm{P}_{1,2}}=G_2/\mathrm{P}_{1,2} .$$
\end{proof}

\begin{definition}\label{contEngel}
A \emph{marked contact Engel structure} is a marked contact twisted cubic structure whose underlying contact twisted cubic structure is flat.
\end{definition}

\begin{remark}
Also in the general, non-flat case, we can  identify $\gamma$ with the so-called \emph{correspondence space} $\mathcal{G}\times_{\mathrm{P}_2} {\mathrm{P}_2/\mathrm{P}_{1,2}}=\mathcal{G}/\mathrm{P}_{1,2}$ by means of the associated canonical Cartan connection $\omega\in\Omega^1(\mathcal{G},\mathfrak{g})$.
\end{remark}

\section{Local invariants and homogeneous models of marked contact Engel structures via Cartan's equivalence method}\label{CartanEquiv}
In this section we apply Cartan's method of equivalence (see e.g. \cite{Olver} for an introduction to the general method) to the local equivalence problem of marked contact Engel structures. We derive a set of local differential invariants of marked contact Engel structures. These allow us, in particular, to characterize the maximal and submaximal symmetric models. We further  obtain a tree of locally non-equivalent branches of marked contact Engel structures, and we derive the structure equations for the maximally symmetric homogeneous structures in (almost all) branches. In particular, this yields a complete classification of all homogeneous marked contact Engel structures with the symmetry algebra of dimension $\geq 6$ up to local equivalence.

\subsection{Adapted coframes}\label{adaptedcoframes}

In order   to apply Cartan's method to the equivalence problem of marked contact Engel structures, we shall recast  the problem in terms of adapted coframes. A (marked) contact twisted cubic structure on a manifold $\M$ defines a natural coframe  bundle, and  adapted coframes are the sections of these bundles.

\begin{definition}\label{adapted1}
 Let $\gamma\subset\mathbb{P}(\mathcal{C})$ be a contact twisted cubic structure on $\mathcal{U}$.
A (local) coframe $(\omega^0,\omega^1,\omega^2,\omega^3,\omega^4)$ on $\mathcal{U}$ is adapted to  the contact twisted cubic structure $\gamma\subset\mathbb{P}(\mathcal{C})$ if  in terms of this coframe
$$\mathcal{C}=\mathrm{ker}(\omega^0)$$
and $\gamma\subset \mathbb{P}(\mathcal{C})$ is the projectivization of the set of all tangent vectors contained in $\mathcal{C}$ that are simultaneously null for the following three symmetric tensor fields
 \begin{equation}\label{3metrics}
 \begin{aligned}
 g_1=\omega^1\omega^3- (\omega^2)^2,\quad
  g_2= \omega^2\omega^4- (\omega^3)^2,\quad
  g_3= \omega^2\omega^3-\omega^1\omega^4.
 \end{aligned}
 \end{equation}
\end{definition}

\begin{proposition}
Two coframes $(\hat{\omega}^0,\hat{\omega}^1,\hat{\omega}^2,\hat{\omega}^3,\hat{\omega}^4)$ and $(\omega^0,\omega^1,\omega^2,\omega^3,\omega^4)$ on $\mathcal{U}$ are adapted to the same
contact twisted cubic structure if and only if
\begin{align}\label{coframe2}
\begin{pmatrix}
\hat{\omega}^0\\ \hat{\omega}^1\\ \hat{\omega}^2\\ \hat{\omega}^3\\ \hat{\omega}^4\\
\end{pmatrix}=
\begin{pmatrix}
s_0& 0&0&0&0\\
s_1&{s_5}^3& 3{s_5}^2s_6&3 s_5{s_6}^2&{s_6}^3\\
s_2&{s_5}^2s_7& {s_5}(s_5 s_8+2 s_6s_7)& s_6(2s_5s_8+s_6s_7)&{s_6}^2s_8\\
s_3& s_5 {s_7}^2& s_7(2 s_5  s_8+s_6s_7) & s_8(s_5{s_8}+2s_6s_7)&s_6{s_8}^2\\
s_4& {s_7}^3& 3 {s_7}^2s_8&3s_7{s_8}^2&{s_8}^3
\end{pmatrix}
               \begin{pmatrix}\omega^0\\ \omega^1\\ \omega^2\\ \omega^3\\ \omega^4\\   \end{pmatrix}
\end{align}
where  $s_0, s_1,s_2,s_3,s_4,s_5,s_6,s_7,s_8$ are smooth functions on $\mathcal{U}$ such that the determinant $s_0(s_6s_7-s_5s_8)^6\neq 0$.

Two contact twisted cubic structures represented by coframes $(\omega^0,\dots, \omega^4)$ on $\mathcal{U}$ and $(\bar{\omega}^0,\dots,\bar{\omega}^4)$ on $\mathcal{V}$ are (locally) equivalent if and only if there exists a (local) diffeomorphism $f:\mathcal{U}\to \mathcal{V}$ such that
$(f^*(\bar{\omega}^0),\dots, f^*(\bar{\omega}^4))$ is related to $(\omega^0,\dots,\omega^4)$ by a transformation matrix of the form as in \eqref{coframe2}.
\end{proposition}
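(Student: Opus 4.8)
Here is how I would approach the proof.

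The plan is to reduce the statement to one algebraic fact --- a description of the linear automorphisms of the standard twisted cubic cone $\hat\gamma=\{w\odot w\odot w:w\in\mathbb{R}^2\}\subset\mathbb{R}^4=\smash{\bigodot^3\mathbb{R}^2}$ --- and then transfer it pointwise. First I would record the purely matrix-theoretic observation that the transformation matrix in \eqref{coframe2} is block lower triangular: its first row is $(s_0,0,0,0,0)$, its first column is $(s_0,s_1,s_2,s_3,s_4)$, and, comparing with \eqref{irrepres}, its lower-right $4\times 4$ block is exactly $\rho(A)$ for $A=\bma s_5&s_6\\ s_7&s_8\ema\in\mathrm{GL}(2,\mathbb{R})$. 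Hence its determinant is $s_0\det\rho(A)=s_0(\det A)^6=s_0(s_6s_7-s_5s_8)^6$, which accounts for the stated non-degeneracy condition; and conversely every invertible matrix of this block shape whose lower-right block lies in $\rho(\mathrm{GL}(2,\mathbb{R}))$ is of the form \eqref{coframe2}, with $s_0,\dots,s_8$ depending smoothly on it.

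For the ``if'' direction, assume $(\omega^0,\dots,\omega^4)$ is adapted to $\gamma\subset\mathbb{P}(\mathcal{C})$ and $(\hat\omega^0,\dots,\hat\omega^4)$ is obtained from it via \eqref{coframe2}. Then $\hat\omega^0=s_0\omega^0$ with $s_0$ nowhere zero, so $\ker\hat\omega^0=\ker\omega^0=\mathcal{C}$. For $Y\in\mathcal{C}$ one has $\omega^0(Y)=0$, so \eqref{coframe2} gives $(\hat\omega^1(Y),\dots,\hat\omega^4(Y))=\rho(A)(\omega^1(Y),\dots,\omega^4(Y))$. Since $\rho(A)$ carries $w\odot w\odot w$ to $(Aw)\odot(Aw)\odot(Aw)$, it preserves the cone cut out in $\mathbb{R}^4$ by the three quadrics \eqref{3polynomials}; hence $Y$ is simultaneously null for the tensors \eqref{3metrics} built from $(\hat\omega^i)$ if and only if it is for those built from $(\omega^i)$. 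Thus the two coframes cut out the same $\gamma$, and $(\hat\omega^i)$ is adapted to the same structure.

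For the ``only if'' direction, suppose both coframes are adapted to the same structure $(\mathcal{C},\gamma)$. From $\ker\omega^0=\mathcal{C}=\ker\hat\omega^0$ we get $\hat\omega^0=s_0\omega^0$ with $s_0$ nowhere zero, and since $(\omega^i)$ is a coframe we may write $\hat\omega^i=s_i\omega^0+\sum_{j=1}^4 N^i{}_j\,\omega^j$, $i=1,\dots,4$, with smooth $s_i$ and smooth invertible $N=(N^i{}_j)$. Evaluating on $Y\in\mathcal{C}$ shows $N$ carries $(\omega^j(Y))_j$ to $(\hat\omega^i(Y))_i$; since both coframes are adapted and the three quadrics \eqref{3polynomials} cut out the full twisted cubic cone (cf.\ Remark \ref{rem3metrics}), each of these tuples lies on the standard cone exactly when $Y$ lies in the one common cone $\hat\gamma_x\subset\mathcal{C}_x$ over $\gamma_x$. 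Hence $N\in\mathrm{GL}(4,\mathbb{R})$ preserves the standard twisted cubic cone. This is the crux, and the integrated form of Proposition \ref{subalg}: such an $N$ lies in $\rho(\mathrm{GL}(2,\mathbb{R}))$. One sees it directly: $[N]$ preserves $\gamma\subset\mathbb{R}\mathbb{P}^3$, which is parametrized by $\mathbb{R}\mathbb{P}^1$ via $\rho$ and spans $\mathbb{R}\mathbb{P}^3$ (any five of its points are in general position), so $[N]$ is determined by its restriction to $\gamma$, which must lie in $\mathrm{PGL}(2,\mathbb{R})$; thus $[N]=[\rho(B)]$ for some $B\in\mathrm{GL}(2,\mathbb{R})$, and $N=\lambda\rho(B)=\rho(\mu B)$ for a real cube root $\mu$ of $\lambda\in\mathbb{R}^{*}$. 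Setting $\bma s_5&s_6\\ s_7&s_8\ema=\mu B$ and invoking \eqref{irrepres} puts the transition into the form \eqref{coframe2} with determinant $s_0(s_6s_7-s_5s_8)^6\neq0$; smoothness of $s_5,\dots,s_8$ follows since $\rho$ is a Lie group isomorphism onto its image, the closed subgroup of $\mathrm{GL}(4,\mathbb{R})$ stabilizing the cone.

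Finally, for the statement on equivalences of structures: given a local diffeomorphism $f:\mathcal{U}\to\mathcal{V}$, the pulled-back coframe $(f^{*}\bar\omega^0,\dots,f^{*}\bar\omega^4)$ is adapted to the pulled-back structure $(f^{*}\bar{\mathcal{C}},f^{*}\bar\gamma)$, because the defining conditions of Definition \ref{adapted1} --- being $\ker\omega^0$, and being the projectivized common null locus of the tensors \eqref{3metrics} --- are preserved verbatim under pullback by a diffeomorphism. Hence $f$ is an equivalence $(\mathcal{C},\gamma)\to(\bar{\mathcal{C}},\bar\gamma)$ if and only if $(f^{*}\bar{\mathcal{C}},f^{*}\bar\gamma)=(\mathcal{C},\gamma)$, i.e.\ if and only if $(f^{*}\bar\omega^i)$ and $(\omega^i)$ are adapted to the same contact twisted cubic structure, which by the first part of the proposition means precisely that they are related by a matrix of the form \eqref{coframe2}. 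The only step I expect to require genuine thought is the identification of the stabilizer of the twisted cubic cone inside $\mathrm{GL}(4,\mathbb{R})$ with $\rho(\mathrm{GL}(2,\mathbb{R}))$; everything else is pointwise linear algebra together with the observation that adaptedness is an algebraic condition invariant under pullback.
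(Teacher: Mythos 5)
The paper states this proposition without proof (only the remark identifying the lower-right block as $\rho(\mathrm{GL}(2,\mathbb{R}))$), and your argument supplies exactly the intended reasoning: pointwise linear algebra reducing everything to the fact that the stabilizer of the twisted cubic cone in $\mathrm{GL}(4,\mathbb{R})$ is $\rho(\mathrm{GL}(2,\mathbb{R}))$, i.e.\ the group-level form of Proposition \ref{subalg}, which the paper likewise invokes without proof. Your proof is correct; the only place one could ask for a word more is the assertion that the induced self-map of $\mathbb{RP}^1$ is Möbius (over $\mathbb{R}$ a bijective rational self-map need not be, e.g.\ $t\mapsto t^3$, so one should use the quadric identities $\ell_0\ell_2=\ell_1^2$, $\ell_1\ell_3=\ell_2^2$ to force $u=\ell_1/\ell_0$ to be a ratio of linear polynomials), but this is precisely the content of the cited algebraic fact.
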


Note that the bottom right $4\times 4$ block in the transformation matrix from \eqref{coframe2} is $\mathrm{GL}(2,\mathbb{R})$ in the $4$-dimensional irreducible representation \eqref{irrepres}.

\begin{definition} Let $\sigma:\mathcal{U}\to\gamma\subset\mathbb{P}(\mathcal{C})$ be a marked contact twisted cubic structure on $\mathcal{U}$.
A (local) coframe $(\omega^0,\omega^1,\omega^2,\omega^3,\omega^4)$ is  adapted to the marked contact twisted cubic structure $\sigma:U\to\gamma\subset\mathbb{P}(\mathcal{C})$ if it is adapted to the underlying contact twisted cubic structure as in Definition \ref{adapted1}
 and moreover the line field $\ell^{\sigma}$  is given by
\begin{align*} \ell^{\sigma} =\mathrm{ker} (\omega^0,\omega^1, \omega^2, \omega^3). \end{align*}
\end{definition}

\begin{proposition}
Two coframes $(\hat{\omega}^0,\hat{\omega}^1,\hat{\omega}^2,\hat{\omega}^3,\hat{\omega}^4)$ and $(\omega^0,\omega^1,\omega^2,\omega^3,\omega^4)$ on $\mathcal{U}$  are adapted to the same marked contact twisted cubic structure if and only if
\begin{align}\label{equiv_punct}
\begin{pmatrix} \hat{\omega}^0\\ \hat{\omega}^1\\ \hat{\omega}^2\\ \hat{\omega}^3\\ \hat{\omega}^4\\ \end{pmatrix}=
\begin{pmatrix}
s_0& 0&0&0&0\\
s_1&{s_5}^3& 0 & 0&0\\
s_2&{s_5}^2s_7& {s_5}s_5 s_8& 0 &0\\
s_3& s_5 {s_7}^2& 2s_7 s_5  s_8 & s_8s_5{s_8}& 0\\
s_4& {s_7}^3& 3 {s_7}^2s_8&3s_7{s_8}^2&{s_8}^3
              \end{pmatrix}
               \begin{pmatrix}  \omega^0\\ \omega^1\\ \omega^2\\ \omega^3\\ \omega^4\\  \end{pmatrix}
\end{align}
where $s_0, s_1,s_2,s_3,s_4,s_5,s_7,s_8$ are smooth functions on $\mathcal{U}$ such that $s_0 s_5s_8\neq 0$.

 Two marked contact twisted cubic structures represented by coframes $(\omega^0,\dots, \omega^4)$ on $\mathcal{U}$ and $(\bar{\omega}^0,\dots,\bar{\omega}^4)$ on $\mathcal{V}$ are (locally) equivalent if and only if there exists a (local) diffeomorphism $f:\mathcal{U}\to \mathcal{V}$ such that
$(f^*(\bar{\omega}^0),\dots, f^*(\bar{\omega}^4))$ is related to $(\omega^0,\dots,\omega^4)$ by a transformation matrix of the form as in \eqref{equiv_punct}.
\end{proposition}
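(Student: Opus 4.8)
The plan is to reduce this statement to the unmarked case (the previous Proposition, whose transformation matrix is \eqref{coframe2}) together with the extra constraint imposed by matching the line fields $\ell^\sigma$. First I would observe that any coframe adapted to the marked structure is, a fortiori, adapted to the underlying contact twisted cubic structure, so by the earlier Proposition two such coframes $(\hat\omega^i)$ and $(\omega^i)$ must be related by a matrix of the form \eqref{coframe2}, with parameters $s_0,\dots,s_8$ and nonvanishing determinant $s_0(s_6 s_7-s_5 s_8)^6$. The whole content of the claim is then: the additional requirement that both coframes encode the \emph{same} section $\sigma$, i.e. that $\ell^\sigma=\ker(\omega^0,\omega^1,\omega^2,\omega^3)=\ker(\hat\omega^0,\hat\omega^1,\hat\omega^2,\hat\omega^3)$, is equivalent to the normalization $s_6=0$ (equivalently $s_6=0$ and $\det\neq0$ becomes $s_0 s_5 s_8\neq 0$), and that substituting $s_6=0$ into \eqref{coframe2} produces exactly the matrix \eqref{equiv_punct}.

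The key computation is therefore purely linear-algebraic and fibrewise. At a point $x$, the line $\ell^\sigma_x\subset\mathcal{C}_x$ corresponds to the marked point of the twisted cubic; in the model description of Section \ref{algebra} it is the highest weight line, and in the adapted coframe $(\omega^i)$ it is cut out by $\omega^0=\omega^1=\omega^2=\omega^3=0$, i.e. it is the span of the dual vector $X_4$. Now I would impose $\ker(\omega^0,\dots,\omega^3)=\ker(\hat\omega^0,\dots,\hat\omega^3)$: writing $\hat\omega^1,\hat\omega^2,\hat\omega^3$ as the linear combinations of $\omega^0,\dots,\omega^4$ given by rows $2,3,4$ of \eqref{coframe2}, the condition that each of these annihilates the vector dual to $\omega^4$ forces the $(2,5),(3,5),(4,5)$ entries of the matrix to vanish. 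Reading off \eqref{coframe2}, these entries are $s_6{}^3$, $s_6{}^2 s_8$ and $s_6 s_8{}^2$; since the determinant condition for the marked case will demand $s_8\neq 0$, vanishing of all three is equivalent to $s_6=0$. Conversely, if $s_6=0$ then rows $2,3,4$ of \eqref{coframe2} involve only $\omega^0,\omega^1,\omega^2,\omega^3$, so the two kernels agree; hence the line-field condition is exactly $s_6=0$.

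It then remains to substitute $s_6=0$ into \eqref{coframe2} and check entry by entry that one obtains \eqref{equiv_punct}: the $(3,3)$ entry $s_5(s_5 s_8+2 s_6 s_7)$ becomes $s_5\cdot s_5 s_8$, the $(4,3)$ entry $s_7(2s_5 s_8+s_6 s_7)$ becomes $2s_7 s_5 s_8$, the $(4,4)$ entry $s_8(s_5 s_8+2 s_6 s_7)$ becomes $s_8 s_5 s_8$, and all the entries that carried a factor of $s_6$ drop out, matching the displayed matrix; the determinant $s_0(s_6 s_7-s_5 s_8)^6$ reduces to $s_0(-s_5 s_8)^6=s_0 (s_5 s_8)^6$, which is nonzero iff $s_0 s_5 s_8\neq 0$. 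Finally, the equivalence statement for two marked structures over different open sets $\mathcal{U},\mathcal{V}$ follows formally: $f$ is an equivalence of marked structures iff $f^*$ sends an adapted coframe on $\mathcal{V}$ to an adapted coframe on $\mathcal{U}$ for the same marked structure, which by the first part of the Proposition means $(f^*\bar\omega^i)$ and $(\omega^i)$ differ by a matrix of the form \eqref{equiv_punct}. The only mild subtlety — and the one place to be careful — is checking that the group of matrices of the form \eqref{equiv_punct} is closed under multiplication and inversion, so that "related by such a matrix" is genuinely an equivalence relation on coframes; this is the statement that the lower-triangular Borel $B\subset\mathrm{GL}(2,\mathbb{R})$ of \eqref{borel} maps, under the irreducible representation \eqref{irrepres} with $\beta=0$, to lower-triangular matrices, together with the freedom in $s_0,\dots,s_4$, and it is immediate from \eqref{irrepres}. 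I do not expect any real obstacle; the computation is routine and the structural content is entirely captured by the reduction to $s_6=0$.
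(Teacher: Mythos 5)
Your proof is correct and follows the same route the paper intends (the paper states this Proposition without an explicit proof, only remarking afterwards that the lower-right block is the Borel subgroup $B$ in the representation \eqref{irrepres}): reduce to the unmarked Proposition and show that preserving $\ell^{\sigma}=\ker(\omega^0,\omega^1,\omega^2,\omega^3)$ forces $s_6=0$. One tiny simplification: the $(2,5)$ entry $s_6{}^3=0$ already gives $s_6=0$ outright, so you need not invoke $s_8\neq 0$ at that step.
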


 Here,  the bottom right $4\times 4$ block in the transformation matrix \eqref{equiv_punct} is  the Borel subgroup $B\subset\mathrm{GL}(2,\mathbb{R})$, defined in \eqref{borel}, in the irreducible representation as in \eqref{irrepres}.

\begin{remark}
Alternatively, we may describe a marked contact twisted cubic structure by considering the intersection of the null cones  of only the two metrics $g_1$ and $g_3$ from \eqref{3metrics}.
\end{remark}

 \subsection{Structure equations for marked contact Engel structures}
 From now on we shall concentrate on    marked contact Engel structures as defined in Definition \ref{contEngel}.

Consider the Maurer-Cartan equations of $\mathrm{G}_2$ as displayed in the Appendix in  \eqref{MaurerCartan}, written with respect to the basis $(E_0,E_1,\dots,E_{13})$ as in \eqref{basis_g2} of  $\mathfrak{g}$, which is adapted to the contact grading $$\mathfrak{g}=\mathfrak{g}_{-}\oplus\mathfrak{g}_0\oplus\mathfrak{g}_{+}=\mathfrak{g}_{-2}\oplus\mathfrak{g}_{-1}\oplus\mathfrak{g}_0\oplus\mathfrak{g}_1\oplus\mathfrak{g}_2.$$   Then the kernel of the nine left-invariant forms $\theta^5, \theta^6,\dots,\theta^{13}$ from \eqref{MaurerCartan} defines an integrable distribution. The leaves of the corresponding  foliation  correspond to certain sections of $\mathrm{G}_2\to\mathrm{G}_2/P_{2}$. The pullbacks of the forms $\theta^5, \theta^6,\dots,\theta^{13}$ with respect to any of these sections vanish on $\mathrm{G}_2/\mathrm{P}_2$,  and the pullbacks
 of the remaining forms $\theta^0,\theta^1,\theta^2,\theta^3,\theta^4$ define an adapted coframe $(\alpha^0, \alpha^1, \alpha^2, \alpha^3, \alpha^4)$ for the contact Engel structure on $\mathrm{G}_2/\mathrm{P}_2$, which
satisfies the system
  \begin{equation}\label{MCg-}\der\alpha^0=\alpha^1\dz\alpha^4-3\alpha^2\dz\alpha^3, \quad \der\alpha^1=0, \quad\der\alpha^2=0, \quad \der\alpha^3=0, \quad \der\alpha^4=0.\end{equation}
  Integrating this system yields  local coordinates $(x^0, x^1, x^2, x^3, x^4)$ such that
\begin{equation}\label{flatcoframe}\alpha^0=\mathrm{d}x^0+x^1 \mathrm{d}x^4- 3 x^2 \mathrm{d}x^3,\quad \alpha^1=\der x^1,\quad \alpha^2=\der x^2,\quad \alpha^3=\der x^3,\quad \alpha^4=\der x^4. \end{equation}
Hence such a coframe $(\alpha^0, \alpha^1,\alpha^2,\alpha^3,\alpha^4)$ is an adapted coframe for the  contact Engel structure.

  \begin{remark}
 Note that \eqref{MCg-} are the Maurer-Cartan equations of $G_{-}=\mathrm{exp}(\mathfrak{g}_{-})$ for the Maurer-Cartan form $\theta_{MC}$ of $G_{-}$.
 Alternatively, the coordinate representation \eqref{flatcoframe} can be obtained from the  parameterisation  $\phi:\mathbb{R}^5\to G_{-} \, \cdot \, o \subset \mathrm{G}_2/P_{2}$ given by
 $$\phi(x^0, x^1, x^2, x^3, x^4)=\mathrm{exp}(x^0E_0)\mathrm{exp}(x^1E_1)\mathrm{exp}(x^2 E_2)\mathrm{exp}(x^3 E_3)\mathrm{exp}(x^4 E_4) o,$$
 with $E_0\in\mathfrak{g}_{-2}$ and  $E_1, E_2, E_3, E_4\in\mathfrak{g}_{-1}$ and the well-known formula $\theta_{MC}=\phi^{-1}d\phi=\alpha^iE_i$.
 \end{remark}


Now denote by $(X_0, X_1, X_2, X_3, X_4)$ the frame dual to the coframe $(\alpha^0, \alpha^1, \alpha^2, \alpha^3, \alpha^4)$ as in  \eqref{flatcoframe}. We may assume that the section $\sigma:\mathcal{U}\to\gamma$ defining a general \emph{marked} contact Engel structure on $\mathrm{G}_2/\mathrm{P}_2$ is  of the form \begin{equation}\label{section}\sigma=[-t^3 X_1+t^2  X_2-t X_3+ X_4],\end{equation} where $t=t(x^0, x^1, x^2, x^3, x^4)$ is a smooth function on $\mathcal{U}$. In this sense, the choice of a function $t$ determines a marked contact Engel structure, and up to local equivalence, all marked contact Engel structures can be obtained in this way. Note however, that different $t$'s can correspond to the same structure (up to local equivalence).
The osculating filtration from Proposition \ref{propfilt} of the marked Engel structure is of the form
\begin{equation}\label{oscfilt}
\ell^{\sigma}=\mathrm{Span}(\xi_4)\subset\mathcal{D}^{\sigma}=\mathrm{Span}(\xi_4,\xi_3)\subset\mathcal{H}^{\sigma}=\mathrm{Span}(\xi_4,\xi_3,\xi_2)\subset\mathcal{C}=\mathrm{Span}(\xi_4,\xi_3,\xi_2,\xi_1),
\end{equation}
where
\begin{equation}\label{frame1}
\begin{aligned}
\xi_4&:=-t^3 X_1+t^2  X_2-t X_3+ X_4=\, -(x^1+3tx^2)\partial_{x^0}-t^3\partial_{x^1}+t^2\partial_{x^2}-t\partial_{x^3}+\partial_{x^4}\\
\xi_3&:= 3 t^2 X_1-2 t X_2+ X_3=\, 3x^2\partial_{x^0}+3t^2\partial_{x^1}-2t\partial_{x^2}+\partial_{x^3}\\
\xi_2&:=-3 t X_1+ X_2=\, -3 t \partial_{x^1}+\partial_{x^2}\\
\xi_1&:=X_1=\,\partial_{x^0}\\
\xi_0&:= X_0=\, \partial_{x^1}\\
\end{aligned}
\end{equation}
Passing to the coframe $(\omega^0, \omega^1, \omega^2, \omega^3, \omega^4)$ dual to the frame $(\xi_0, \xi_1, \xi_2, \xi_3, \xi_4)$ yields the following.

\begin{lemma}\label{lemma1marked}
 The most general marked contact Engel structure can be locally represented in terms of the following adapted coframe
 \begin{align}\label{coframet}
\begin{pmatrix} \omega^0\\ \omega^1\\ \omega^2\\ \omega^3\\ \omega^4\\  \end{pmatrix}=
               \begin{pmatrix}\begin{aligned}
 &\der x^0+x^1\der x^4- 3 x^2\der x^3\\
  &\der x^1+3 t\der x^2+ 3 t^2 \der x^3+t^3\der x^4\\
  &\der x^2 +2 t \der x^3 +t^2 \der x^4\\
  &\der x^3+ t \der x^4\\
  &\der x^4\\
  \end{aligned}
 \end{pmatrix},
\end{align}
where $t=t(x^0, x^1, x^2, x^3, x^4)\in C^{\infty}(\mathcal{U})$. The filtration \eqref{oscfilt} associated to a marked contact Engel structure is given in terms of this coframe as
\begin{equation}\label{eq.filtration.wo.w1.w2.w3}
\ell^{\sigma}=\mathrm{ker}(\omega^0,\omega^1,\omega^2,\omega^3)\subset\mathcal{D}^{\sigma}=\mathrm{ker}(\omega^0,\omega^1,\omega^2)\subset\mathcal{H}^{\sigma}=\mathrm{ker}(\omega^0,\omega^1)\subset\mathcal{C}=\mathrm{ker}(\omega^0).
\end{equation}

 \end{lemma}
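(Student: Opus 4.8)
The plan is to verify directly that the coframe \eqref{coframet} is adapted to the marked contact Engel structure determined by the section \eqref{section}, and then to read off the filtration \eqref{eq.filtration.wo.w1.w2.w3} from the definitions. First I would dualize: starting from the frame $(\xi_0,\xi_1,\xi_2,\xi_3,\xi_4)$ written in \eqref{frame1} in terms of the coordinate vector fields $\partial_{x^i}$, I would invert the resulting $5\times 5$ matrix to express $(\der x^0,\dots,\der x^4)$ in terms of $(\omega^0,\dots,\omega^4)$, or equivalently solve the linear system $\omega^i(\xi_j)=\delta^i_j$. Because the frame \eqref{frame1} is lower-triangular in a suitable ordering (each $\xi_k$ for $k\ge 1$ is a combination of $\partial_{x^0}$ together with $\partial_{x^1},\dots,\partial_{x^{5-k}}$ with the top one having coefficient $1$), this inversion is elementary, and it produces exactly the expressions displayed in the right-hand side of \eqref{coframet}. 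This is the only real computation, and it is routine rather than hard.

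Next I would check that $(\omega^0,\dots,\omega^4)$ is adapted to the underlying contact Engel structure in the sense of Definition \ref{adapted1}. Since $\omega^0=\alpha^0=\der x^0+x^1\der x^4-3x^2\der x^3$ coincides with the flat contact form \eqref{flatcoframe}, we have $\mathcal{C}=\ker(\omega^0)$ as required. For the twisted cubic condition, observe that the bottom $4\times 4$ block of the change-of-coframe matrix from $(\alpha^1,\alpha^2,\alpha^3,\alpha^4)$ to $(\omega^1,\omega^2,\omega^3,\omega^4)$ is precisely $\rho(A)$ for $A=\bigl(\begin{smallmatrix}1&0\\ t&1\end{smallmatrix}\bigr)$ in the irreducible representation \eqref{irrepres} (this is visible by inspection of \eqref{coframet}). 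Since $\rho(\mathrm{GL}(2,\mathbb{R}))$ preserves the twisted cubic and its defining quadrics up to the $\mathrm{GL}(2,\mathbb{R})$-action, the three tensors $g_1,g_2,g_3$ built from $(\omega^1,\omega^2,\omega^3,\omega^4)$ via \eqref{3metrics} cut out the same cone $\hat\gamma\subset\mathcal{C}$ as the ones built from $(\alpha^1,\dots,\alpha^4)$ via \eqref{cone}; hence the coframe is adapted to $\gamma$. Alternatively, one checks directly that $\xi_4=t^3X_1\cdot(-1)+t^2X_2-tX_3+X_4$ spans a null line of all three $g_i$, which is immediate from the form of $\hat\gamma$ in the introduction.

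Then I would verify that the section is correctly encoded, i.e. that $\ell^\sigma=\ker(\omega^0,\omega^1,\omega^2,\omega^3)$. By \eqref{section} the line field $\ell^\sigma$ is spanned by $\xi_4=-t^3X_1+t^2X_2-tX_3+X_4$, and by construction $(\omega^0,\dots,\omega^4)$ is dual to $(\xi_0,\dots,\xi_4)$, so $\omega^i(\xi_4)=0$ for $i=0,1,2,3$ and $\omega^4(\xi_4)=1$; thus $\ker(\omega^0,\omega^1,\omega^2,\omega^3)=\Span(\xi_4)=\ell^\sigma$. For the remaining steps of the filtration \eqref{eq.filtration.wo.w1.w2.w3}, recall from Proposition \ref{propfilt} and Lemma \ref{lemfilt} that $\mathcal{D}^\sigma$ is the deprojectivized tangent line to $\gamma$ at $\sigma(x)$ and $\mathcal{H}^\sigma$ its deprojectivized osculating plane. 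Differentiating the curve $s\mapsto \xi_4(t\mapsto s)$ (equivalently, using \eqref{X,Y} with the parameter specialization corresponding to $\sigma$) shows $\mathcal{D}^\sigma=\Span(\xi_4,\xi_3)$ and $\mathcal{H}^\sigma=\Span(\xi_4,\xi_3,\xi_2)$, exactly the frame \eqref{frame1}; dualizing once more yields $\mathcal{D}^\sigma=\ker(\omega^0,\omega^1,\omega^2)$ and $\mathcal{H}^\sigma=\ker(\omega^0,\omega^1)$. The Legendrian/symplectic-orthogonality statements of Proposition \ref{propfilt} are then automatic.

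The main obstacle, such as it is, is bookkeeping rather than conceptual: one must be careful with the normalization constants ($3$'s and $2$'s) in the twisted cubic parametrization and in \eqref{irrepres}, and with signs coming from the $(-1)^k$ pattern in \eqref{section}, so that the change-of-coframe matrix really lands in $\rho(B)$ with $B\subset\mathrm{GL}(2,\mathbb{R})$ the Borel subgroup and the matrix \eqref{equiv_punct} is reproduced. Once the dualization in the first step is carried out cleanly, everything else is a matter of matching against the algebraic normal forms already established in Section \ref{algebra}, so no genuinely new difficulty arises.
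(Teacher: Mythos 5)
Your proposal is correct and follows essentially the same route as the paper, which simply dualizes the frame \eqref{frame1} built from the section \eqref{section} and reads the osculating filtration \eqref{oscfilt} off as kernels; you merely fill in the routine matrix inversion and the adaptedness check. One small slip: the change from $(\alpha^1,\dots,\alpha^4)$ to $(\omega^1,\dots,\omega^4)$ is $\rho(A)$ for $A=\bigl(\begin{smallmatrix}1&t\\0&1\end{smallmatrix}\bigr)$ (the transpose of what you wrote), and this element lies in $\rho(\mathrm{GL}(2,\mathbb{R}))$ but \emph{not} in the Borel of \eqref{equiv_punct} — nor should it, since $(\alpha^i)$ and $(\omega^i)$ mark different points of the twisted cubic; what matters is only adaptedness to the same underlying contact twisted cubic structure plus the duality $\omega^i(\xi_4)=0$ for $i\le 3$, both of which you verify.
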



Our problem is to  produce differential invariants that allow us to distinguish non-equivalent classes of marked contact Engel structures.
In particular, all of these invariants should vanish for the simplest marked contact Engel structure, the one corresponding to $t=0$, which we call \emph{flat}.

\begin{definition}\label{flatEngel}
A  marked contact Engel structure is called \emph{flat} if it can be locally represented in terms of an adapted coframe $(\alpha^0,\alpha^1,\alpha^2,\alpha^3,\alpha^4)$ as in \eqref{flatcoframe}.
\end{definition}

Using Lemma \ref{lemma1marked}, we next observe the following.

  \begin{lemma}\label{diffcof}
   Any marked contact Engel structure admits an adapted coframe $(\omega^0,\omega^1,\omega^2,\omega^3,\omega^4)$ satisfying
   \begin{equation}\label{differentiatedcoframe2}
  \begin{aligned}
  &\der\omega^0=\omega^1\wedge\omega^4-3\omega^2\wedge\omega^3\\
  &\der\omega^1=\tfrac{3}{4}(b^2-4ac+M-P)\omega^0\wedge\omega^2+3c\omega^1\wedge\omega^2-3 a  \omega^2\wedge\omega^3 +3 J \omega^2\wedge\omega^4\\
  &\der\omega^2= \tfrac{1}{2}(b^2-4ac+M-P) \omega^0\wedge\omega^3+2 c\omega^1\wedge\omega^3-2 b\omega^2\wedge\omega^3+2 J \omega^3\wedge\omega^4\\
 & \der\omega^3=\tfrac{1}{4}(b^2-4ac+M-P)\omega^0\wedge\omega^4+c\omega^1\wedge\omega^4-b\omega^2\wedge\omega^4+a\omega^3\wedge\omega^4\\
  &\der\omega^4=0\\
  \end{aligned}
  \end{equation}
  for  functions $a, b, c, J, M, P$.
  \end{lemma}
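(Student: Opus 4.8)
The plan is to start from the explicit adapted coframe \eqref{coframet} and its exterior derivatives, and then apply the freedom in the choice of adapted coframe, described by the Borel-type transformation matrix in \eqref{equiv_punct}, to normalize the structure equations to the form \eqref{differentiatedcoframe2}. First I would compute $\der\omega^i$ for the coframe \eqref{coframet}, in which $t=t(x^0,\dots,x^4)$ is the only free function. The equation $\der\omega^0=\omega^1\wedge\omega^4-3\omega^2\wedge\omega^3$ is forced and already has the desired shape; it does not depend on $t$. The remaining $\der\omega^i$, $i=1,2,3,4$, will involve the partial derivatives of $t$ with respect to the coframe, i.e. the functions $t_{\omega^j}$ defined by $\der t=\sum_j t_{\omega^j}\omega^j$. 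The key structural observation to verify is that $\der\omega^4$ is already a multiple of $\omega^j\wedge\omega^4$ terms plus possibly $\omega^0\wedge\omega^2$, $\omega^0\wedge\omega^3$, etc., and that by the form of \eqref{coframet} the ``lowest'' components are highly constrained — in particular $\der\omega^4=\der(\der x^4)=0$ automatically, which matches the last line of \eqref{differentiatedcoframe2}.

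Next I would exploit the residual gauge freedom. By the proposition following \eqref{equiv_punct}, two adapted coframes for the \emph{same} marked contact Engel structure differ by a transformation of the form \eqref{equiv_punct}, parametrized by $s_0,s_1,s_2,s_3,s_4,s_5,s_7,s_8$ with $s_0s_5s_8\neq0$. The strategy is a standard Cartan-style normalization: use these eight functions to kill as many torsion coefficients as possible in $\der\hat\omega^i$. Concretely, I expect that after computing the transformed structure equations, the coefficients of the ``unwanted'' two-forms (those not appearing on the right-hand sides of \eqref{differentiatedcoframe2}) can be set to zero by an appropriate algebraic choice of the $s_i$'s — this is the step where one checks that the relevant linear system in the $s_i$'s is solvable. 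The surviving coefficients are then organized into the six functions $a,b,c,J,M,P$; the somewhat unusual combination $b^2-4ac+M-P$ appearing with coefficients $\tfrac34,\tfrac12,\tfrac14$ on $\omega^0\wedge\omega^2,\omega^0\wedge\omega^3,\omega^0\wedge\omega^4$ respectively reflects the irreducible $\mathrm{GL}(2,\mathbb{R})$-module structure (the $\bigodot^3\mathbb{R}^2$ pattern visible in \eqref{coframet}), so that a single torsion function propagates through the three equations with the weights dictated by \eqref{irrepres}.

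The main obstacle I anticipate is bookkeeping rather than conceptual: one must carefully track how each $t_{\omega^j}$ and each second derivative of $t$ enters $\der\omega^i$, and then verify that the normalization equations for the $s_i$ are consistent and leave exactly a six-parameter family of residual torsion. A secondary subtlety is making sure that the normalization is \emph{compatible} across all four equations simultaneously — i.e. that fixing the $s_i$'s to normalize $\der\omega^1$ does not reintroduce unwanted terms in $\der\omega^2$ or $\der\omega^3$; this is where the graded/representation-theoretic structure does the work, since the $s_i$'s act on the $\omega^i$'s ($i\geq1$) exactly through the Borel part of the irreducible representation, and the torsion transforms accordingly. In practice I would argue that the $d^2=0$ identity (applied to \eqref{differentiatedcoframe2}) provides a consistency check and also shows the claimed form is stable, so that it suffices to exhibit \emph{one} adapted coframe in which the structure equations take the form \eqref{differentiatedcoframe2} for \emph{some} functions $a,b,c,J,M,P$, which is exactly the content of the lemma.
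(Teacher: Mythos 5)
Your first step is exactly the paper's proof, and it already finishes the argument: differentiating the coframe \eqref{coframet} gives $\der\omega^0=\omega^1\wedge\omega^4-3\omega^2\wedge\omega^3$, $\der\omega^1=3\,\der t\wedge\omega^2$, $\der\omega^2=2\,\der t\wedge\omega^3$, $\der\omega^3=\der t\wedge\omega^4$, $\der\omega^4=0$, and expanding $\der t=\sum_j t_{\omega^j}\omega^j$ puts these immediately in the form \eqref{differentiatedcoframe2}, with $a=t_{\omega^3}$, $b=-t_{\omega^2}$, $c=t_{\omega^1}$, $J=-t_{\omega^4}$ and $\tfrac14(b^2-4ac+M-P)=t_{\omega^0}$ (so only the combination $M-P$ is pinned down here; $M$ and $P$ separately are fixed only by the next lemma). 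Since the lemma only asserts the \emph{existence} of one adapted coframe with these structure equations, nothing further is needed.

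Your second step — normalizing torsion by the residual gauge freedom \eqref{equiv_punct} — is therefore superfluous for this lemma: after the direct differentiation there are no unwanted two-form components left to kill, so the linear system in the $s_i$'s you propose to solve is empty. (That normalization machinery is what the paper deploys later, in Theorem \ref{main} and Proposition \ref{propo_coframe}, on the enlarged bundle.) Two smaller points: no second derivatives of $t$ enter $\der\omega^i$, only $\der t$ itself, so the bookkeeping you anticipate is lighter than you fear; and the weights $\tfrac34,\tfrac12,\tfrac14$ on the $\omega^0\wedge\omega^j$ terms are just the prefactors $3,2,1$ of $\der t$ in $\der\omega^1,\der\omega^2,\der\omega^3$ divided by $4$, i.e.\ they do reflect the $\smash{\bigodot^3\mathbb{R}^2}$ pattern of \eqref{coframet} as you guessed, but via this elementary route rather than through any transformation of torsion under \eqref{irrepres}.
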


  \begin{proof}
  We work in the representation from Lemma \ref{lemma1marked}.
  Differentiating the coframe \eqref{coframet} gives
   \begin{equation}\label{differentiatedcoframe1}
  \begin{aligned}
  \der\omega^0=&\ \omega^1\wedge\omega^4-3\omega^2\wedge\omega^3\\
    \der\omega^1=&\ 3 \der t \wedge \omega^2\\
  \der\omega^2= &\ 2  \der t \wedge \omega^3\\
  \der\omega^3=&\ \der t\wedge\omega^4\\
  \der\omega^4=&\ 0,\\
  \end{aligned}
  \end{equation}
  Then one expands $\der t$ in terms of the coframe  $(\omega^0, \omega^1, \omega^2, \omega^3, \omega^4)$ and then there is a unique solution for $a, b, c, J$ and $ M-P$
   in terms of the function $t$ and its derivatives.

  \end{proof}

  \begin{remark}
 Indeed,  any marked contact twisted cubic structure admitting an adapted coframe as in Lemma \ref{diffcof} is flat as a contact twisted cubic structure, i.e., it is a marked contact Engel structure.
  \end{remark}

  Applying the exterior derivative  on both sides of \eqref{differentiatedcoframe2} we get information about the exterior derivatives of the functions $a, b, c$ and $J$. Explicitly, we obtain the following lemmas. Recall that  a subscript $\omega^i$ denotes the $i$th frame derivative as in Section \ref{notation}.
  \begin{lemma}\label{lemma3marked}
  The functions $a, b, c$ and $J$ from Lemma \ref{diffcof} satisfy
   \begin{equation}\label{differentiatedJabc2}
  \begin{aligned}
  &\der J=J_{\omega^0}\omega^0+J_{\omega^1}\omega^1+J_{\omega^2}\omega^2+J_{\omega^3}\omega^3+J_{\omega^4}\omega^4\\
  &\der a=a_{\omega^0}\omega^0+a_{\omega^1}\omega^1+\tfrac{1}{4}(-3b^2+M+3P)\omega^2+L\omega^3+(a^2-2bJ-J_{\omega^3})\omega^4\\
  &\der b=\tfrac{1}{4}(-4 a_{\omega^1} b + 6 b^2 c - 8 a c^2 + 4 c M - M_{\omega^2} + P_{\omega^2} + 2 b Q - 4 a R)\omega^0+(2c^2+R)\omega^1+(2 a_{\omega^1}-3bc-Q)\omega^2\\&\quad+\tfrac{1}{2}(-b^2+M-3P)\omega^3+(ab-3cJ+J_{\omega^2})\omega^4\\
 & \der c =c_{\omega^0}\omega^0+S\omega^1+(c^2-R)\omega^2+(a_{\omega^1}-2bc)\omega^3+\tfrac{1}{4}(b^2-4J_{\omega^1}+M-P)\omega^4 ,\\
    \end{aligned}
  \end{equation}
   for functions $L, Q, R, S$  on $\M$.
\end{lemma}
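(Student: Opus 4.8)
The plan is to obtain \eqref{differentiatedJabc2} by the standard bootstrapping step of Cartan's method: differentiate the structure equations \eqref{differentiatedcoframe2} once more and extract the constraints imposed by $\der^2=0$ (the Bianchi-type identities). Concretely, I would expand $\der a$, $\der b$, $\der c$, $\der J$ in the coframe $(\omega^0,\dots,\omega^4)$ with a priori arbitrary coefficient functions, substitute these expansions into $\der(\der\omega^i)=0$ for $i=1,2,3$, and collect the coefficients of each of the ten two-forms $\omega^j\wedge\omega^k$. Each such coefficient must vanish, which yields a linear system in the unknown frame derivatives of $a,b,c,J,M,P$; solving it expresses many of those derivatives in terms of the others. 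The named functions $L,Q,R,S$ are precisely the components that remain \emph{free} after this elimination — they are defined to be the undetermined derivatives — and the remaining components come out as the explicit polynomial combinations written in \eqref{differentiatedJabc2}.

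In more detail, I would first apply $\der$ to the equation for $\der\omega^3$. Since $\der\omega^4=0$, the term $a\,\omega^3\wedge\omega^4$ produces $\der a\wedge\omega^3\wedge\omega^4 + a\,\der\omega^3\wedge\omega^4 - \cdots$; substituting the known $\der\omega^j$ and requiring the total to vanish pins down the $\omega^2$-, $\omega^3$- and $\omega^4$-components of $\der a$ (this is where the combination $\tfrac14(-3b^2+M+3P)$, the free function $L$, and $a^2-2bJ-J_{\omega^3}$ appear), while leaving $a_{\omega^0}$ and $a_{\omega^1}$ free. Then $\der(\der\omega^2)=0$, using the now-known $\der a$ together with $\der b$, $\der c$, $\der J$ with undetermined coefficients, forces most components of $\der b$ and $\der c$; the residual freedom is absorbed into $Q$ (appearing in $\der b$) and $R,S$ (appearing in $\der b$ and $\der c$). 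Finally $\der(\der\omega^1)=0$ should be checked for consistency and will, together with the previous two, fix the $\omega^0$-component of $\der b$ in the messy form displayed. The equation for $\der J$ is left completely general because $J$ lies in the top grading slot and its derivative is not constrained at this order. Throughout one must also keep track of $\der M$ and $\der P$: the identities will involve $M_{\omega^2}$, $P_{\omega^2}$ and $M+3P$ combinations, consistent with $M,P$ being not-yet-normalized torsion coefficients.

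The main obstacle is purely organizational rather than conceptual: it is a sizeable but mechanical linear-algebra computation over the exterior algebra, and the difficulty lies in (i) correctly bookkeeping all ten wedge-coefficients in each of the three identities $\der^2\omega^i=0$, and (ii) making the \emph{right} choices of which derivatives to designate as the free functions $L,Q,R,S$ so that the remaining ones are uniquely solved — different choices give cosmetically different but equivalent structure equations. I would carry this out with a computer algebra system to avoid sign errors, and then verify the output by re-substituting \eqref{differentiatedJabc2} back into $\der^2\omega^i=0$ to confirm all identities close. One subtlety worth double-checking is that the coefficient $\tfrac14(b^2-4ac+M-P)$ appearing in \eqref{differentiatedcoframe2} — call it $\Phi$ — reappears consistently: its frame derivatives $\Phi_{\omega^j}$ must match what the Bianchi identities predict, and in particular the $\omega^4$-components of $\der b$ and $\der c$ involve $J_{\omega^2}$ and $J_{\omega^1}$, reflecting that $\der J$ feeds back into the lower equations through the $\omega^j\wedge\omega^4$ terms.
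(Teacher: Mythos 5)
Your proposal is correct and follows essentially the same route as the paper: the lemma is obtained precisely by applying $\der$ to the structure equations \eqref{differentiatedcoframe2}, imposing $\der^2\omega^i=0$, and solving the resulting linear system for the frame derivatives of $a,b,c,J$, with $L,Q,R,S$ named as the components left undetermined. Your additional consistency check (re-substituting \eqref{differentiatedJabc2} into $\der^2\omega^i=0$) is sound and matches how the paper's subsequent lemma, giving $a,b,c,J,L,M,P,Q,R,S$ explicitly in terms of $t$, confirms the computation.
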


 \begin{lemma}
 The functions $a, b, c, J, L, M, P, Q, R, S$ are uniquely determined by \eqref{differentiatedcoframe1} and \eqref{differentiatedJabc2}. Explicitly,
    \begin{align*}
   &a= t_{\omega^3}, \quad  b=-t_{\omega^2},\quad  c=t_{\omega^1},\quad J=-t_{\omega^4},\quad L=t_{\omega^3\omega^3},\quad M=6t_{\omega^0}-2(t_{\omega^2})^2+6 t_{\omega^3} t_{\omega^1}+ t_{\omega^2\omega^3},\\& P=2t_{\omega^0}-(t_{\omega^2})^2+2 t_{\omega^3} t_{\omega^1}+t_{\omega^2\omega^3},\quad
    Q=2 t_{\omega^3\omega^1} +t_{\omega^2\omega^2}+3t_{\omega^2} t_{\omega^1},\quad R=-t_{\omega^2\omega^1}-2(t_{\omega^1})^2,\quad S=t_{\omega^1\omega^1}.
   \end{align*}
   \end{lemma}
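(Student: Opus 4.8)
The plan is to prove this by direct computation, exploiting the fact that the entire structure is generated by a single function $t$. First I would take the coframe $(\omega^0,\ldots,\omega^4)$ from \eqref{coframet} and compute $\der t$ in terms of this coframe; since the $\omega^i$ differ from the coordinate differentials $\der x^i$ only by the explicit triangular transformation in \eqref{coframet}, the frame derivatives $t_{\omega^i}$ are explicit (generally polynomial in $t$) linear combinations of the partial derivatives $t_{x^j}$. Plugging this expansion of $\der t$ into \eqref{differentiatedcoframe1} and matching against the right-hand sides of \eqref{differentiatedcoframe2} forces, coefficient by coefficient in the basis of $2$-forms, the identifications of $a,b,c,J$ and of the combination $M-P$; reading off those coefficients gives immediately $a=t_{\omega^3}$, $b=-t_{\omega^2}$, $c=t_{\omega^1}$, $J=-t_{\omega^4}$, and an expression for $b^2-4ac+M-P$, hence for $M-P$, in terms of $t_{\omega^0}$ and lower-order data.

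The next step is to pin down $L,Q,R,S$ and then $M$ and $P$ separately. These arise from Lemma \ref{lemma3marked}: differentiating the already-established formulas $a=t_{\omega^3}$, $b=-t_{\omega^2}$, $c=t_{\omega^1}$ a second time and re-expanding in the coframe $(\omega^0,\ldots,\omega^4)$ using \eqref{differentiatedcoframe1} again, one compares with \eqref{differentiatedJabc2}. The $\omega^3$-coefficient of $\der a$ gives $L=t_{\omega^3\omega^3}$; the $\omega^1$-coefficients of $\der b$ and the $\omega^1$-coefficient of $\der c$ give $R$ and $S$ respectively; the $\omega^2$-coefficient of $\der b$ (or equivalently the $\omega^3$-coefficient of $\der c$, which must agree) gives $Q$. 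Each comparison is an identity of the form ``second frame derivative of $t$ $=$ the claimed polynomial expression'', and the stated formulas for $L,Q,R,S$ are exactly these. Finally, $M$ and $P$ individually are recovered: one such equation (say the $\omega^2$-coefficient of $\der a$, which reads $\tfrac14(-3b^2+M+3P)$) combined with the already-known value of $M-P$ yields a nondegenerate linear system for $(M,P)$, solved by the displayed formulas involving $t_{\omega^0}$, $(t_{\omega^2})^2$, $t_{\omega^3}t_{\omega^1}$ and $t_{\omega^2\omega^3}$.

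Uniqueness — the actual content of the lemma — follows because at each stage the relevant function is read off as the coefficient of a fixed basis $2$-form (or $1$-form) in an equation whose left-hand side is determined by $t$ alone: given the coframe, the expansions \eqref{differentiatedcoframe2} and \eqref{differentiatedJabc2} have no freedom left once the lower-order functions are fixed, so each of $a,b,c,J,L,M,P,Q,R,S$ is forced. One should also note the consistency checks built into the argument: $Q$ appears in two places in \eqref{differentiatedJabc2}, and $M-P$ appears both in \eqref{differentiatedcoframe2} and implicitly through $\der a$, $\der b$, $\der c$; these over-determinations must be compatible, and verifying this compatibility is automatic since all quantities are genuine derivatives of the single function $t$.

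The main obstacle is purely computational bookkeeping: the transformation \eqref{coframet} is polynomial of degree up to three in $t$, so the frame derivatives $t_{\omega^i}$, and especially the iterated ones $t_{\omega^i\omega^j}$ appearing in $L,M,P,Q,R,S$, involve commutator terms $[\xi_i,\xi_j]t$ coming from $\der\omega^k\neq 0$ in \eqref{differentiatedcoframe1}. Keeping track of these correction terms — i.e. that $t_{\omega^2\omega^3}$ etc. are frame second derivatives, not naive coordinate ones — is where sign errors and missing terms would creep in; but conceptually there is no difficulty, and the result is just the list of formulas as stated, so I would present the derivation of $a,b,c,J$ in detail and then indicate that $L,M,P,Q,R,S$ follow by the same mechanism applied one order higher.
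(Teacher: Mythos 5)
Your proposal is correct and follows essentially the same route as the paper, which (in the proof of Lemma \ref{diffcof}) obtains $a,b,c,J$ and $M-P$ by expanding $\der t$ in the adapted coframe and matching coefficients in \eqref{differentiatedcoframe1} against \eqref{differentiatedcoframe2}, and then leaves the determination of $L,M,P,Q,R,S$ from \eqref{differentiatedJabc2} as the analogous computation one order higher. Your remark about the non-commuting frame derivatives being the only real source of error is exactly the right caution for carrying out that second step.
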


\subsection{The main invariants and a characterization of the flat model}\label{themain}
In this section we shall formulate our first main theorem, which in particular justifies the importance of the functions $J, L, M, P, Q, R, S$.  Note that the flat marked contact Engel structure corresponding to $t=0$ in the parametrization from Lemma \ref{lemma1marked}  satisfies $J=L=M=P=Q=R=S=0$.

Before stating the theorem, we introduce the following notation for  the Maurer-Cartan equations, given in the Appendix by formula  \eqref{MaurerCartan_Q}, of  the $9$-dimensional  parabolic subgroup $\mathrm{P}_1\subset\mathrm{G}_2$:
 \begin{equation}\label{MaurerCartrew} \begin{aligned}
&e^0= \der\theta^0 -(-6 \theta^0\wedge\theta^5 + \theta^1\wedge \theta^4 - 3\theta^2\wedge\theta^3)=0\\
&e^1= \der\theta^1 -(- 3\theta^1\wedge\theta^5 - 3\theta^1\wedge\theta^8)=0 \\
&e^2= \der\theta^2 -(\theta^1\wedge\theta^6 - 3\theta^2\wedge\theta^5 - \theta^2\wedge\theta^8)=0 \\
&e^3= \der\theta^3 -(2\theta^2\wedge\theta^6 -3\theta^3\wedge\theta^5 + \theta^3\wedge\theta^8)=0 \\
&e^4= \der\theta^4-(6 \theta^0\wedge\theta^{12}+3\theta^3\wedge\theta^6-3\theta^4\wedge\theta^5+3\theta^4\wedge\theta^8)=0\\
&e^5=\der\theta^5-(-\theta^1\wedge\theta^{12})=0\\
&e^6=\der\theta^6-(6\theta^2\wedge\theta^{12}+2\theta^6\wedge\theta^8)=0\\
&e^8=\der\theta^8-(-3\theta^1\wedge\theta^{12})=0\\
&e^{12}=\der\theta^{12}-( -3\theta^5\wedge\theta^{12}- 3\theta^8\wedge\theta^{12})=0\,.
 \end{aligned}\end{equation}

\begin{remark}
Anticipating the material that will be explained in Section \ref{sec_tanaka}, we advice a reader familiar with Tanaka theory  to look at Proposition \ref{p_1}  for the reason why we expect the parabolic subalgebra $\mathfrak{p}_1$ to be the infinitesimal symmetry algebra of the flat marked contact Engel structure.
\end{remark}

We call the group  $\mathbf{S}\cong B\ltimes\mathbb{R}^5$,
\begin{equation}
\label{matS}
\begin{aligned}\mathbf{S}=\left\{(\mathbf{S}^{\mu}{}_{\nu})=\begin{pmatrix}
s_0& 0&0&0&0\\
s_1&{s_5}^3& 0 & 0&0\\
s_2&{s_5}^2s_7& {s_5}^2 s_8& 0 &0\\
s_3& s_5 {s_7}^2& 2 s_7 s_5  s_8 & s_5{s_8}^2& 0\\
s_4& {s_7}^3& 3 {s_7}^2s_8&3s_7{s_8}^2&{s_8}^3
              \end{pmatrix} :\, \mathrm{det}(\mathbf{S}^{\mu}{}_{\nu})=s_0 {s_5}^6{s_8}^6\neq 0\right\}
\end{aligned}
\end{equation}
 the \emph{structure group} of the equivalence problem for marked contact twisted cubic structures.

\begin{theorem}\label{main}
Given the most general marked contact Engel structure on $\mathcal{U}$, consider an adapted coframe $\omega=(\omega^0,\omega^1,\omega^2,\omega^3,\omega^4)$ that satisfies  structure equations \eqref{differentiatedcoframe2}, and let $J, L, M, P, Q, R, S$ be the functions defined via \eqref{differentiatedcoframe2} and \eqref{differentiatedJabc2}.
 \begin{enumerate}
 \item
Let $\hat{\omega}=(\hat{\omega}^0,\hat{\omega}^1,\hat{\omega}^2,\hat{\omega}^3,\hat{\omega}^4)$ be another  coframe related to $\omega$  via $\hat{\omega}=A\cdot\phi^*(\omega)$, with $\phi:\mathcal{U}\to\mathcal{U}$ a diffeomorphism  and $A:\mathcal{U}\to \mathbf{S}$ a function with values in the structure group  \eqref{matS}.  Further suppose that $\hat{\omega}$ satisfies the structure equations \eqref{differentiatedcoframe2} for some functions $\hat{a}, \hat{b}, \hat{c}, \hat{J}, \hat{M}, \hat{P}$, and let $\hat{Q}, \hat{R}, \hat{S}$ be the derived functions as in \eqref{differentiatedJabc2}. Then
\begin{enumerate}
\item $J=0$ iff $\hat{J}=0$
\item $J=L=0$ iff $\hat{J}=\hat{L}=0$
\item $J=L=M=0$ iff $\hat{J}=\hat{L}=\hat{M}=0$
\item $J=L=M=P=0$ iff $\hat{J}=\hat{L}=\hat{M}=\hat{P}=0$
\item $J=L=M=P=Q=0$ iff $\hat{J}=\hat{L}=\hat{M}=\hat{P}=\hat{Q}=0$
\item $J=L=M=P=Q=R=0$ iff $\hat{J}=\hat{L}=\hat{M}=\hat{P}=\hat{Q}=\hat{R}=0$
\item $J=L=M=P=Q=R=S=0$ iff $\hat{J}=\hat{L}=\hat{M}=\hat{P}=\hat{Q}=\hat{R}=\hat{S}=0$\label{flatness_co}
\end{enumerate}
\item A marked contact Engel structure is flat
if and only if
 \begin{equation}\label{flatness_co}  J=L=M=P=Q=R=S=0 \end{equation}
holds. In this case the structure has a $9$-dimensional algebra of infinitesimal symmetries isomorphic to the parabolic subalgebra $\mathfrak{p}_1$.
\end{enumerate}
\end{theorem}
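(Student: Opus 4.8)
The plan is to run Cartan's equivalence method starting from the $\mathbf{S}$-bundle of adapted coframes over $\mathcal{U}$ and to show that, under the hypothesis \eqref{flatness_co}, the absorption/normalization procedure yields an $\{e\}$-structure on a $9$-dimensional bundle whose structure equations coincide with the Maurer--Cartan equations \eqref{MaurerCartrew} of $\mathrm{P}_1$. First I would lift the structure equations \eqref{differentiatedcoframe2} to the total space $\mathcal{U}\times\mathbf{S}$: writing $\tilde\omega^{\mu}=\mathbf{S}^{\mu}{}_{\nu}\,\omega^{\nu}$ and introducing the Maurer--Cartan forms $\pi^a$ of the group $\mathbf{S}\cong B\ltimes\mathbb{R}^5$ (i.e.\ the entries $s_0,\dots,s_8$ become fibre coordinates and $\der s$-combinations give nine independent $1$-forms), the exterior derivative $\der\tilde\omega^{\mu}$ picks up the torsion terms from \eqref{differentiatedcoframe2} — in which the functions $a,b,c,J$ and the combination $M-P$ appear — plus the connection terms $\pi^a\wedge\tilde\omega^{\nu}$.

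\textbf{Absorption and normalization.} The next step is the standard loop: absorb as much torsion as possible into redefinitions of the $\pi^a$, then normalize the remaining essential torsion. When $J=L=M=P=Q=R=S=0$, Lemma~\ref{lemma3marked} and the subsequent lemma force $a=t_{\omega^3}$, $b=-t_{\omega^2}$, $c=t_{\omega^1}$ to satisfy a closed first-order system (all their derivatives are expressed through $a,b,c$ themselves once the higher invariants vanish); the point I would exploit is that this is precisely the condition under which all essential torsion can be normalized to constants — in fact to the structure constants read off from \eqref{MaurerCartrew}. Concretely: the $\tilde\omega^0$ equation is already in the contact form $\der\tilde\omega^0=\tilde\omega^1\wedge\tilde\omega^4-3\tilde\omega^2\wedge\tilde\omega^3+(\text{connection})$, matching $e^0$; normalizing the torsion of $\der\tilde\omega^1,\dots,\der\tilde\omega^3$ kills the group parameters against $a,b,c$ (this is where $a,b,c$ get "eaten", leaving no residual functions), and $\der\tilde\omega^4=0$ up to connection matches $e^4$. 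After this, prolongation introduces the remaining forms $\theta^5,\theta^6,\theta^8,\theta^{12}$ as the surviving parts of $\pi^a$, and computing their exterior derivatives — using $\der a,\der b,\der c$ from Lemma~\ref{lemma3marked} specialized to the flat branch — must reproduce $e^5,e^6,e^8,e^{12}$ exactly. I would organize this so that at the end one has nine $1$-forms $(\theta^0,\theta^1,\theta^2,\theta^3,\theta^4,\theta^5,\theta^6,\theta^8,\theta^{12})$ on a $9$-manifold satisfying \eqref{MaurerCartrew}, with no free functions.

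\textbf{Identification of the symmetry algebra and converse.} By the Cartan--Lie theorem (the realization theorem for $\{e\}$-structures with constant structure functions), a coframe satisfying the Maurer--Cartan equations of $\mathrm{P}_1$ is locally diffeomorphic to a neighbourhood in $\mathrm{P}_1$, and its infinitesimal symmetry algebra is the Lie algebra $\mathfrak{p}_1$, of dimension $9$; this also proves local uniqueness, hence local equivalence with the homogeneous model, which is the definition of flatness (Definition~\ref{flatEngel} via Lemma~\ref{diffcof} and the subsequent remark — the model with $t=0$ gives $J=\dots=S=0$, so it sits in this branch and must be the flat marked contact Engel structure). For the converse direction of part (2), one notes that if the structure is flat then it is locally equivalent to the $t=0$ model, for which $J=L=M=P=Q=R=S=0$ by the remark preceding the theorem; since part (1) asserts these vanishing conditions are invariant under the allowed changes $\hat\omega=A\cdot\phi^*(\omega)$, they hold for every adapted coframe. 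Part (1) itself I would get as a byproduct of the normalization: each successive invariant $J$, then $L$, then $M$, etc., is the first essential torsion encountered at its stage of the Cartan algorithm, and essential torsion transforms tensorially under the structure group, so its vanishing is coframe-independent; this gives the nested "iff" statements (a)--(g).

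\textbf{Main obstacle.} The hard part will be the bookkeeping in the normalization loop: one must track which combinations of the group parameters $s_0,\dots,s_8$ are consumed at each stage, verify that after imposing $J=L=M=P=Q=R=S=0$ the reduction terminates with exactly a $9$-dimensional bundle (no further prolongation needed, no residual invariants), and check that the constants produced match \eqref{MaurerCartrew} on the nose rather than merely being some constant-coefficient system isomorphic to it. The cleanest route is probably to observe that the $t=0$ model manifestly has $\mathrm{P}_1$ (equivalently $\mathfrak{p}_1$) acting — as flagged in the Remark pointing to Proposition~\ref{p_1} and the Tanaka-theoretic discussion in Section~\ref{sec_tanaka} — and then argue that the Cartan algorithm applied to \emph{any} structure in the branch \eqref{flatness_co} produces structure equations with the \emph{same} constant torsion, since all the functions that could distinguish it from $t=0$ have been set to zero; constancy of the structure functions then upgrades "same structure equations" to "locally equivalent", completing both the flatness claim and the symmetry-algebra identification.
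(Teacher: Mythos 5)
Your proposal is correct and follows essentially the same route as the paper: lift to the $\mathbf{S}$-bundle via the tautological forms $\theta^\mu=\mathbf{S}^\mu{}_\nu\omega^\nu$, identify $J,L,M,P,Q,R,S$ successively as the relative invariants obstructing the reduction (each transforming tensorially once the previous ones vanish, which gives the nested iff's of part (1)), and show that when all vanish one obtains a $9$-dimensional bundle with a coframe satisfying the Maurer--Cartan equations \eqref{MaurerCartrew} of $\mathfrak{p}_1$, whence flatness and the symmetry algebra. The only cosmetic difference is that the paper does not run the generic absorb-and-normalize loop but directly solves the target system $e^0=\dots=e^{12}=0$ for the group parameters and the forms $\theta^5,\theta^6,\theta^8,\theta^{12}$ (guided, as you anticipate, by the Tanaka prolongation computation identifying $\mathfrak{p}_1$ in advance), reading off the invariants as the obstructions to solvability — the same computation in a different order.
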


\begin{remark}\label{rem_cond}
Part 1. of the Theorem says that each of the below itemized differential conditions
\begin{enumerate}
\item $J=0$
\item $J=L=0$
\item $J=L=M=0$
\item $J=L=M=P=0$
\item $J=L=M=P=Q=0$
\item $J=L=M=P=Q=R=0$
\item $J=L=M=P=Q=R=S=0$
\end{enumerate}
is an invariant  condition on the marked contact Engel structure defined by the equivalence class $[\omega]$. Note however  that e.g. $a=0$, or  $L=0$ alone, is \emph{not} an invariant condition.
\end{remark}

\begin{proof} of the Theorem \ref{main}.

 We choose an adapted coframe  $(\omega^0,\omega^1,\omega^2,\omega^3,\omega^4)$ that satisfies \eqref{differentiatedcoframe2}. This  determines a trivialization of  the bundle of all adapted coframes, which may thus be identified with   $\pi:\mathcal{U}\times \mathbf{S}\to \mathcal{U}$.
%
We  can now lift  $(\omega^0,\omega^1,\omega^2,\omega^3,\omega^4)$ to the $5$ well-defined (tautological) $1$-forms
  \begin{align}\label{thetas}
  \theta^{\mu}=\mathbf{S}^{\mu}{}_{\nu}\omega^{\nu},\quad \mu=0,1,2,3,4,
  \end{align}
on $\mathcal{U}\times \mathbf{S}$.
Writing equations \eqref{differentiatedcoframe2} symbolically as
\begin{equation}\label{diffcoframesymb}\der\omega^{\mu}=-\tfrac{1}{2}F^{\mu}{}_{\nu\rho}\omega^{\nu}\wedge\omega^{\rho},\end{equation}
we express  the differentials  $\der\theta^0,...,\der\theta^4$ as
$$\der\theta^{\mu}=\der(\mathbf{S}^{\mu}{}_{\nu}\omega^{\nu})=\der \mathbf{S}^{\mu}{}_{\nu}\wedge\omega^{\nu}+ \mathbf{S}^{\mu}{}_{\nu}\der\omega^{\nu}=\der \mathbf{S}^{\mu}{}_{\rho}(\mathbf{S}^{-1})^{\rho}{}_{\sigma}\wedge\theta^{\sigma}-\tfrac{1}{2}\mathbf{S}^{\mu}{}_{\nu} F^{\nu}{}_{\rho\sigma}(\mathbf{S}^{-1})^{\rho}{}_{\alpha}(\mathbf{S}^{-1})^{\sigma}{}_{\beta}\theta^{\alpha}\wedge\theta^{\beta}.$$
For computational reasons we set $$\delta=-s_5 s_8.$$

Now we will solve equations \eqref{MaurerCartrew}. The unknowns in these equations are the group parameters $s_0, s_1, s_2,$ $ s_3,s_4,s_5, s_7,$ $ \delta$ and the four $1$-forms $\theta^5$, $\theta^6$, $\theta^8$ and $\theta^{12}$. What is given is the coframe $\omega$ and the derived functions $a, b, c, J,$ etc, as defined in \eqref{differentiatedcoframe2} and \eqref{differentiatedJabc2}.
Therefore, if we say that we solve equations $$e^0=0, \,e^1=0,\, ...,\,e^{12}=0,$$ we mean that we are searching  for $s_0, s_1, s_2,$ $ s_3,s_4,s_5, s_7,$ $ \delta$ and $\theta^5$, $\theta^6$, $\theta^8$ and $\theta^{12}$ such that the equations are satisfied.


We start by solving  equation $e^0=0$.
Computing
\begin{align*}
\der\theta^0=
\tfrac{1}{s_0} \der s_0\wedge \theta^0 - \tfrac{s_4}{\delta^3}  \theta^0\wedge\theta^1 +   \tfrac{3 s_3}{\delta^3} \theta^0\wedge\theta^2- \tfrac{3 s_2}{\delta^3}\theta^0\wedge\theta^3+\tfrac{s_1}{\delta^3}\theta^0\wedge\theta^4
- \tfrac{s_0}{\delta^3}\theta^1\wedge\theta^4 +\tfrac{3 s_0}{\delta^3}\theta^2\wedge\theta^3
\end{align*}
and inserting it into $e^0\wedge\theta^0=0$ gives
\begin{align*}
 (-1- \tfrac{s_0}{\delta^3})\theta^1\wedge\theta^4\wedge\theta^0 +(3+\tfrac{3 s_0}{\delta^3})\theta^2\wedge\theta^3\wedge\theta^0=0,
\end{align*}
whose unique solution is \begin{equation}\label{s0} {s_0}=-{\delta^3}.\end{equation}
Having established this,   the most general solution of $e^0=0$ for $\theta^5$ is
\begin{equation}\label{deftheta5}\theta^5:=\tfrac{1}{2\delta} \der \delta + \tfrac{s_4}{6\delta^3}\theta^1 -  \tfrac{ s_3}{2\delta^3}\theta^2+ \tfrac{s_2}{2\delta^3}\theta^3-\tfrac{s_1}{6\delta^3}\theta^4 - \tfrac{1}{6}u_0\,\theta^0.\end{equation}
Note that we had to introduce a new variable $u_0$, since adding to any particular solution for $\theta^5$ a functional multiple of $\theta^0$ is a solution as well. At this point the equation $e^0=0$ is satisfied.

We next consider the equation $e^1\wedge\theta^0\wedge\theta^1=0$, which reads
\begin{equation}\label{theta1}\tfrac{3(s_1 \delta + a  {s_5}^3\delta -3 J {s_5}^4 s_7)}{{\delta}^4}\theta^0\wedge\theta^1\wedge\theta^2\wedge\theta^3+\tfrac{3 J {s_5}^5}{{\delta}^4}\theta^0\wedge\theta^1\wedge\theta^2\wedge\theta^4=0.\end{equation}
Since ${s_5}$ cannot be zero, the vanishing of the coefficient at the $\theta^0\wedge\theta^1\wedge\theta^2\wedge\theta^4$-term in \eqref{theta1} is equivalent to $J=0$.
In other words, we have shown that under the most general transformation that maps one adapted coframe $\omega$ to another adapted coframe $\hat{\omega}$,  the coefficient $F^1_{24}$ in the structure equations \eqref{diffcoframesymb} transforms as
$$\hat{F}^1{}_{24}=\tfrac{3{s_5}^5}{{\delta}^4}F^1{}_{24}.$$
 This shows that it defines a density invariant (or, \emph{relative invariant}) of the marked contact twisted cubic structure. In particular,  its vanishing or not is an invariant property of the structure. For those coframes that satisfy the structure equations  \eqref{differentiatedcoframe2}, the coefficient $\hat{F}^1{}_{24}$ is proportional to $ J$. Moreover, for the (particular) flat structure corresponding to $t=0$ we have $J=0$. This further shows that vanishing of this density invariant that we discovered is a necessary condition for flatness.

From now on we assume $$J=0$$
(which means that also the consequences  $J_{\omega^0}=J_{\omega^1}=J_{\omega^2}=J_{\omega^3}=J_{\omega^4}=0$ hold).
We return to equation \eqref{theta1}. We can now solve it by setting
\begin{equation}\label{s1}s_1=- a {s_5}^3.\end{equation} Then we look at equation $e^1\wedge\theta^1=0$, which reads
$$
\tfrac{-{s_5}^2}{\delta^5}(M\delta +2 L s_5 s_7)\theta^0\wedge\theta^1\wedge\theta^2+\tfrac{{s_5}^4}{\delta^5}L\theta^0\wedge\theta^1\wedge\theta^3=0.
$$
The same argument as above applied to the second term in this equation shows that $L$ must be zero for $e^1\wedge\theta^1=0$ to admit a solution.
We  also infer from this that the simultaneous vanishing of $J$ and $L$ is an invariant condition on  marked contact Engel structures, and that $J=L=0$ is another necessary condition for a structure to be flat.
We now  assume that
$$
J=L=0.
$$
With this assumption $e^1\wedge\theta^1=0$ reads
$$
\tfrac{-{s_5}^2}{\delta^4}M\theta^0\wedge\theta^1\wedge\theta^2=0.
$$
As before, we may now conclude that the simultaneous  vanishing of $J$, $L$ and $M$ is an invariant property and necessary for flatness. We will from now on  assume that
$$
J=L=M=0
$$
holds.
Now the general solution for $e^1=0$ is
\begin{equation}\begin{aligned}\label{deftheta8}\theta^8&=-\tfrac{1}{2\delta}\der\delta+\tfrac{1}{s_5}\der{s_5}-\tfrac{(-6 c s_2\delta^2 + 2 a_1 s_5 \delta^3 + 2 a s_4 {s_5}^4 - 6 a c {s_5}^2{s_7}\delta^2- 6 a {s_3}{s_5}^3 s_7 +6 a s_2 {s_5}^2{s_7}^2 +2 a^2 {s_5}^4 {s_7}^3 - s_5 u_0 {\delta}^6}{6 {s_5} {\delta}^6}\theta^0\\
 &+\tfrac{2c\delta^2+s_3s_5-2a{s_5}^2{s_7}^2}{2{s_5}\delta^3}\theta^2-\tfrac{s_2-2a{s_5}^2s_7}{2\delta^3}\theta^3-\tfrac{a{s_5}^3}{2\delta^3}\theta^4 - \tfrac{1}{3}u_1\,\theta^1,\end{aligned}\end{equation} where we have introduced a new variable $u_1$. In this way, $e^1=0$ is solved.

We next attempt to solve  $e^2=0$. We start with $e^2\wedge\theta^0\wedge\theta^1=0,$ which reads
$$\tfrac{2(2 s_2-b s_5 \delta +2 a {s_5}^2 s_7)}{\delta^3}\theta^0\wedge\theta^1\wedge\theta^2\wedge\theta^3=0.$$
Its unique solution is given by
\begin{equation}\label{s2}s_2=\tfrac{1}{2}b s_5 \delta- a {s_5}^2s_7.\end{equation}
Computing $e^2\wedge\theta^1=0$ and looking at the coefficient at the $\theta^0\wedge\theta^1\wedge\theta^3$ term, we see that in order to be able to solve the equation,  $P$ has to be zero. We also conclude that
$$
J=L=M=P=0
$$
is an invariant property,  which we from now on assume to hold. Then the unique solution of $e^2\wedge\theta^1=0$ is
\begin{equation}\label{defu0}u_0=\tfrac{(4a_1-6bc-3Q)\delta^3+3b^2s_5s_7\delta^2-3(s_3 +2a{s_7}^2s_5)b{s_5}\delta+2a{s_5}^2(-s_4s_5+3s_3s_7+2as_5{s_7}^3)}{2{\delta}^6}.\end{equation}
Now the most general $1$-form $\theta^6$ such that $e^2=0$ holds is
\begin{equation}\label{deftheta6}\begin{aligned}
&\theta^6= \tfrac{s_7}{s_5\delta}\der\delta - \tfrac{s_7}{{s_5}^2}\der {s_5} -\tfrac{1}{s_5}\der {s_7}\\
&-\tfrac{2(2 c^2+R)\delta^4-2(4 b c +Q)s_5 s_7\delta^3 +(8cs_3  +5 b^2 {s_5} {s_7}^2+8 a c {s_5}{s_7}^2)s_5 \delta^2 +2(s_4{s_5}-4 s_3s_7-6a  {s_5}{s_7}^3)b{s_5}^2\delta
-4(s_4 {s_5}-3s_3 {s_7}-2 a{s_5}{s_7}^3)a{s_5}^3s_7}{4{s_5}^2{\delta}^6}\theta^0\\
&+\tfrac{2{s_5}^2u_1\delta^3+6cs_7\delta^2-12b s_5{s_7}^2\delta-3s_4{s_5}^2+18a {s_5}^2{s_7}^3}{6{s_5}^2\delta^3}\theta^2-\tfrac{2c \delta^2-2bs_5s_7\delta+3a{s_5}^2{s_7}^2}{s_5\delta^3}\theta^3
-\tfrac{s_5(b\delta - 2 a s_5s_7)}{2\delta^3}\theta^4 +u_2\theta^1.\end{aligned}\end{equation}

Next we compute $e^3\wedge\theta^0=0$, which can be solved
 by
\begin{equation}\label{s3}s_3=\tfrac{-c\delta^2+b s_5s_7\delta-a{s_5}^2{s_7}^2}{s_5},\end{equation}
\begin{equation}\label{u1}u_1=\tfrac{-3(2 c s_7\delta^2-2b s_5{s_7}^2\delta+s_4{s_5}^2+2a{s_5}^2{s_7}^3)}{2{s_5}^2\delta^3},\end{equation}
\begin{equation}\label{u2}u_2=\tfrac{-{s_7}^2(c\delta^2-bs_5s_7\delta+a{s_5}^2{s_7}^2)}{{s_5}^3\delta^3}.\end{equation}
Equation $e^3=0$ now reads
\begin{equation*}\begin{aligned}
\tfrac{1}{{s_5}^4\delta^3}(-S\delta^2+2 Rs_5 s_7 \delta-Q{s_5}^2{s_7}^2)\theta^0\wedge\theta^1-\tfrac{2}{{s_5}^2\delta^3}(R\delta-Qs_5s_7)\theta^0\wedge\theta^2-\tfrac{1}{\delta^3}Q\theta^0\wedge\theta^3=0.
\end{aligned}\end{equation*}
From here we conclude that
$$J=L=M=P=Q=0$$ is an invariant condition. Assuming that it be satisfied, we see that  in order to  be able to solve equation $e^3=0$, we also have to assume $R$  to be zero. We also see  that
$$J=L=M=P=Q=R=0$$
is an invariant condition.
Assuming that it holds,  we see that also $S$ has to be zero. Assuming that the invariant condition
$$J=L=M=P=Q=R=S=0$$ holds, equation $e^3=0$ is now solved.

Now we consider $e^4=0$.  The most general $1$-form  $\theta^{12}$ solving this equation is
\begin{equation}\label{deftheta12}\begin{aligned}
\theta^{12}&=\tfrac{-(cs_7\delta^2-bs_5{s_7}^2\delta+s_4{s_5}^2+a{s_5}^2{s_7}^3)}{2{s_5}^2\delta^4}\der\delta+\tfrac{1}{6\delta^3}\der{s_4}+\tfrac{cs_7\delta^2-bs_5{s_7}^2\delta+s_4{s_5}^2+a{s_5}^2{s_7}^3}{2{s_5}^3\delta^3}\der{s_5}
+\tfrac{c\delta^2-b s_5s_7\delta+a{s_5}^2{s_7}^2}{2{s_5}^2\delta^3}\der{s_7}\\
&+\tfrac{3c^2{s_7}^2\delta^4-6bcs_5{s_7}^3\delta^3+3(cs_4{s_5}^2s_7+b^2{s_5}^2{s_7}^4+2ac{s_5}^2{s_7}^4)\delta^2-3(bs_4{s_5}^3{s_7}^2+2 a b {s_5}^3{s_7}^5)\delta+{s_4}^2{s_5}^4+3a s_4{s_5}^4{s_7}^3+3a^2{s_5}^4{s_7}^6}{6{s_5}^4\delta^6}\theta^1\\
&+\tfrac{bc{s_7}^2\delta^3+(c s_4s_5-b^2s_5{s_7}^3-2acs_5{s_7}^3)\delta^2+3ab{s_5}^2{s_7}^4\delta-a s_4{s_5}^3{s_7}^2-2a^2{s_5}^3{s_7}^5}{2{s_5}^2\delta^6}\theta^2\\
&+\tfrac{c^2\delta^4-2bcs_5s_7{\delta}^3+(b^2{s_5}^2{s_7}^2+3ac{s_5}^2{s_7}^2)\delta^2-3ab{s_5}^3{s_7}^3\delta+as_4{s_5}^4{s_7}+2a^2{s_5}^4{s_7}^4}{2{s_5}^2\delta^6}\theta^3\\
&+\tfrac{4a_1\delta^3-(3b^2s_5s_7+12acs_5s_7)\delta^2+12ab{s_5}^2{s_7}^2\delta-4as_4{s_5}^3-8a^2{s_5}^3{s_7}^3}{24\delta^6}\theta^4+\tfrac{1}{6}u_3\theta^0,
\end{aligned}\end{equation}
where $u_3$ is a new variable. Next we consider equation $e^5=0$, which we solve for
\begin{equation}\label{u3}\begin{aligned}
u_3&=\tfrac{-8c^3\delta^6+24bc^2s_5s_7\delta^5-(21b^2c{s_5}^2{s_7}^2+36ac^2{s_5}^2{s_7}^2)\delta^4+(6bcs_4{s_5}^3+5b^3{s_5}^3{s_7}^3+60abc{s_5}^3{s_7}^3)\delta^3}{4{s_5}^3\delta^9}\\
&-\tfrac{(3b^2s_4{s_5}^4{s_7}+24acs_4{s_5}^4{s_7}+21ab^2{s_5}^4{s_7}^4+36a^2c{s_5}^4{s_7}^4)\delta^2-(18abs_4{s_5}^5{s_7}^2+24a^2b{s_5}^5{s_7}^5)\delta+4{s_4}^2{s_5}^6+12a^2s_4{s_5}^6{s_7}^3+8a^3{s_5}^6{s_7}^6}{4{s_5}^3\delta^9}.
\end{aligned}\end{equation}
Computing shows that now $e^6=0,$  $e^8=0$ and $e^{12}=0$ are satisfied as well.

Concluding, we proved that the conditions displayed in Remark \ref{rem_cond}
 are  invariant conditions on marked contact Engel structures.
The flat marked contact Engel structure satisfies $J=L=M=P=Q=R=S=0$, so this is evidently a necessary condition  for flatness.

  Moreover, assuming $J=L=M=P=Q=R=S=0$, we  uniquely determined
 \begin{itemize}
 \item a $9$-dimensional sub-bundle $\mathcal{P}$ of the $13$-dimensional bundle $\mathcal{U}\times \mathbf{S}\to \mathcal{U}$ we started out with (parametrized by the coordinates $x^0, x^1, x^2, x^3, x^4$ and the remaining fibre coordinates  $s_4, s_5, \delta, s_7$)
 \item and a well defined coframe $(\theta^0, \theta^1, \theta^2, \theta^3, \theta^4, \theta^5, \theta^6, \theta^8, \theta^{12})$  on $\mathcal{P}$ satisfying the Maurer-Cartan equations \eqref{MaurerCartan} whose first five forms $(\theta^0, \theta^1, \theta^2, \theta^3, \theta^4)$ when pulled back with respect to any section of  $\mathcal{P}\to\mathcal{U}$ are contained in the equivalence class $[(\omega^0,\omega^1,\omega^2,\omega^3,\omega^4)]$.
 \end{itemize} Hence a structure that satisfies these conditions  has a $9$-dimensional algebra of infinitesimal symmetries isomorphic to the parabolic subalgebra $\mathfrak{p}_1$.  Taking a section corresponding to a leaf of the integrable distribution given by the kernel of  $\theta^5, \theta^6, \theta^8, \theta^{12}$, the pullbacks of  $\theta^0, \theta^1, \theta^2, \theta^3, \theta^4$ to $\mathcal{U}$ satisfy
 $$\der\theta^0 = \theta^1\wedge \theta^4 - 3\theta^2\wedge\theta^3,\,\der \theta^1=0,\, \der\theta^2=0,\,\der\theta^3=0,\,\der \theta^4=0.$$
 In particular, there exist local coordinates $(x^0, x^1, x^2, x^3, x^4)$ such that $\theta^0=\der x^0+x^1\der x^4-3 x^2\der x^3,\, \theta^1=\der x^1, \, \theta^2=\der x^2,\, \theta^3=\der x^3, \theta^4=\der x^4$, which means that the marked Engel structure is flat.

\end{proof}

\subsection{A rigid coframe for marked contact Engel structures}
In the previous section we have explicitly constructed a rigid coframe on a $9$-dimensional bundle over the \emph{flat} marked contact Engel structure. In  this section we apply Cartan's equivalence method to show how to associate a rigid coframe  on a $9$-dimensional bundle to a \emph{general} marked contact Engel structure.

We start as in the proof of Theorem \ref{main}.
We choose an adapted coframe  $(\omega^0,\omega^1,\omega^2,\omega^3,\omega^4)$ that satisfies the structure equations \eqref{differentiatedcoframe2}
%
and as in the beginning of the proof of Theorem \ref{main} we lift it
 to the $5$ well-defined (tautological) $1$-forms
  \begin{align*}\label{thetas}
  \theta^{\mu}=\mathbf{S}^{\mu}{}_{\nu}\omega^{\nu},\quad \mu=0,1,2,3,4,
  \end{align*}
on $\mathcal{U}\times \mathbf{S}$, where $\mathbf{S}$ is the structure group \eqref{matS}. We again reparametrize $\delta=-s_5 s_8$.

Since
\begin{align*}
\der\theta^0 \wedge\theta^0=- \tfrac{s_0}{\delta^3}\theta^1\wedge\theta^4\wedge\theta^0 +\tfrac{3 s_0}{\delta^3}\theta^2\wedge\theta^3\wedge\theta^0
\end{align*}
 we  normalize the coefficient of the $\theta^1\wedge\theta^4$--term in the expansion of $\der\theta^0$  to $1$ by setting
 \begin{equation}\label{szero}s_0=-{\delta^3}.\end{equation}
Then there exists a $1$-form $\theta^5$, which is uniquely defined up to addition of multiples of $\theta^0$,
 satisfying the equation
$$\der\theta^0= -6 \theta^0\wedge \theta^5 + \theta^1\wedge\theta^4 - 3 \theta^2\wedge\theta^3.$$

Computing
\begin{align*}
\der\theta^1 \wedge\theta^0\wedge\theta^1\wedge\theta^4=\tfrac{3(s_1\delta+a {s_5}^3\delta-3 J{s_5}^4{s_7})}{{\delta}^4}\theta^0\wedge\theta^1\wedge\theta^2\wedge\theta^3\wedge\theta^4
\end{align*}
 shows that we can further normalize the $\theta^2\wedge\theta^3$--coefficient in the expansion of $\der\theta^1$ to $0$ by setting
 \begin{equation}\label{sone}s_1=\tfrac{-a \delta {s_5}^3+3 J {s_5}^4{s_7}}{\delta}.\end{equation}
Then there exists a $1$-form $\theta^8$, uniquely defined up to addition of multiples of $\theta^0$ and $\theta^1$, satisfying
$$\der\theta^1\wedge\theta^0=-3 \theta^0\wedge\theta^1\wedge\theta^5-3 \theta^0\wedge\theta^1\wedge\theta^8+\tfrac{3 J {s_5}^5}{\delta^4}\theta^0\wedge\theta^2\wedge\theta^4.$$

Now
\begin{align*}
\der\theta^2\wedge\theta^0\wedge\theta^1=&\tfrac{2(2 \delta s_2-b\delta^2 s_5+2 a \delta {s_5}^2s_7-3J {s_5}^3{s_7}^2)}{\delta^4}\theta^0\wedge\theta^1\wedge\theta^2\wedge\theta^3-3 \theta^0\wedge\theta^1\wedge\theta^2\wedge\theta^5\\&-\theta^0\wedge\theta^1\wedge\theta^2\wedge\theta^8 +\tfrac{2 J{s_5}^5}{\delta^4}\theta^0\wedge\theta^1\wedge\theta^3\wedge\theta^4
\end{align*}
shows that we can normalize the $\theta^2\wedge\theta^3$--term in the expansion of $\der\theta^2$ to $0$ by setting
 \begin{equation}\label{stwo}s_2=\tfrac{s_5}{2\delta}(b\delta^2-2a\delta s_5 s_7+3J{s_5}^2{s_7}^2),\end{equation} and
\begin{align*}
\der\theta^3\wedge\theta^0&\wedge\theta^2\wedge\theta^3  = -\tfrac{c \delta^3+\delta s_3 s_5 - b \delta^2 s_5 s_7 + a\delta {s_5}^2 {s_7}^2-J{s_5}^3{s_7}^3}{\delta^4 s_5} \theta^0\wedge\theta^1\wedge\theta^2\wedge\theta^3\wedge\theta^4
\end{align*}
shows that we can normalize the $\theta^1\wedge\theta^4$--term in the expansion of $\der\theta^3$ to $0$ by setting
\begin{equation}\label{sthree}s_3=-\tfrac{1}{\delta s_5}(c\delta^3 - b \delta^2 s_5 {s_7}+a\delta {s_5}^2{s_7}^2-J {s_5}^3{s_7}^3).\end{equation}

Having performed these normalizations, on the subbundle $\mathcal{G}^9\subset(\mathcal{U}\times \mathbf{S})$ defined  by \eqref{szero}, \eqref{sone}, \eqref{stwo}, \eqref{sthree},  we now have
\begin{equation}\label{5forms}
\begin{aligned}
&\theta^0=-\delta^3\omega^0\\
&\theta^1=\tfrac{{s_5}^3(3J {s_5}{s_7}-a\delta)}{\delta}\omega^0+{s_5}^3\omega^1\\
&\theta^2=\tfrac{s_5(b\delta^2 - 2 a\delta s_5 s_7+3 J {s_5}^2{s_7}^2)}{2\delta}\omega^0+{s_5}^2 s_7\omega^1-\delta s_5\omega^2\\
&\theta^3=\tfrac{-c\delta^3+b\delta^2 s_5 s_7- a\delta {s_5}^2{s_7}^2+J{s_5}^3{s_7}^3}{s_5}\omega^0+s_5{s_7}^2\omega^1-2 \delta s_7 \omega^2 +\tfrac{\delta^2}{s_5}\omega^3\\
&\theta^4=s_4\omega^0+{s_7}^3\omega^1-\tfrac{3\delta {s_7}^2}{s_5}\omega^2+\tfrac{3\delta^2s_7}{{s_5}^2}\omega^3-\tfrac{\delta^3}{{s_5}^3}\omega^4.
\end{aligned}
\end{equation}


  We have further introduced two additional forms $\theta^5$ and $\theta^8$, but on the $9$-dimensional bundle $\mathcal{G}^9$ given by \eqref{szero}, \eqref{sone}, \eqref{stwo}, \eqref{sthree} they are defined up to a certain freedom. 
  It turns out that imposing further normalizations determines  forms $\theta^5$, $\theta^8$ uniquely and in addition picks up unique $1$-forms $\theta^6$ and $\theta^{12}$ that together with the five $1$-forms \eqref{5forms} constitute a coframe on $\mathcal{G}^9$. The normalizations needed are included in the following proposition:
\begin{proposition}\label{propo_coframe}
 The five forms \eqref{5forms} on the $9$-dimensional subbundle $\mathcal{G}^9\subset \mathcal{U}\times \mathbf{S}$ given by \eqref{szero}, \eqref{sone}, \eqref{stwo}, \eqref{sthree} can be supplemented to a rigid coframe $(\theta^0, \theta^1, \theta^2, \theta^3, \theta^4, \theta^5, \theta^6, \theta^8, \theta^{12}) $   which is uniquely determined by the fact that it satisfies
  \begin{equation}\label{5structureeq}
\begin{aligned}
\der\theta^0=&-6\theta^0\wedge\theta^5+\theta^1\wedge\theta^4-3\theta^2\wedge\theta^3\\
\der\theta^1=& - 3 \theta^1\wedge\theta^5 - 3 \theta^1\wedge\theta^8+
 T^1{}_{02} \theta^0\wedge\theta^2 +
 T^1{}_{03} \theta^0\wedge\theta^3 + T^1{}_{04} \theta^0\wedge\theta^4+ T^1{}_{06}  \theta^0\wedge\theta^6 +
 T^1{}_{24}\theta^2\wedge\theta^4\\
  \der\theta^2=& \theta^1\wedge\theta^6- 3\theta^2\wedge\theta^5 -
 \theta^2\wedge\theta^8 +   T^2{}_{03} \theta^0\wedge\theta^3 - T^2{}_{04} \theta^0\wedge\theta^4 +T^2{}_{34}\theta^3\wedge\theta^4\\
  \der\theta^3=&2 \theta^2\wedge\theta^6 - 3\theta^3\wedge\theta^5 + \theta^3\wedge\theta^8+T^3{}_{01}\theta^0\wedge\theta^1 + T^3{}_{02} \theta^0\wedge\theta^2 - T^3{}_{03} \theta^0\wedge\theta^3 + T^3{}_{04} \theta^0\wedge\theta^4\\
\der\theta^4=&6 \theta^0\wedge\theta^{12}+3\theta^3\wedge\theta^6-3\theta^4\wedge\theta^5+3\theta^4\wedge\theta^8,\\
\end{aligned}
\end{equation}
for some functions $T^i{}_{jk}$, and the additional normalization that $\der\theta^5$, when written with respect to the basis of forms $\theta^i\wedge\theta^j$, has zero coefficient at the $\theta^0\wedge\theta^1$ term.
\end{proposition}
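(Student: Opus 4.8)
The plan is to continue Cartan's reduction procedure from exactly the point where the five tautological forms in \eqref{5forms} have been pinned down on the $9$-dimensional bundle $\mathcal{G}^9$ cut out by \eqref{szero}, \eqref{sone}, \eqref{stwo}, \eqref{sthree}. At this stage the forms $\theta^5$ and $\theta^8$ are not yet defined on $\mathcal{G}^9$: $\theta^5$ is determined only up to adding a multiple of $\theta^0$ (this reflects the leftover group parameter, the new variable $u_0$ that appeared in \eqref{deftheta5}), and $\theta^8$ up to adding multiples of $\theta^0$ and $\theta^1$ (reflecting $u_1$ from \eqref{deftheta8}). The strategy is: (i) compute $\der\theta^1, \der\theta^2, \der\theta^3, \der\theta^4$ on $\mathcal{G}^9$ using the known expressions \eqref{5forms} together with the structure equations \eqref{differentiatedcoframe2}, \eqref{differentiatedJabc2}; (ii) read off from these differentials that there exist unique $1$-forms $\theta^6$ and $\theta^{12}$ (modulo the residual freedom in $\theta^5, \theta^8$) making the $\theta^1\wedge\theta^6$, $\theta^2\wedge\theta^6$, $\theta^3\wedge\theta^6$, $\theta^0\wedge\theta^{12}$ terms appear as in \eqref{5structureeq}; and (iii) show that the residual ambiguity in $(\theta^5, \theta^6, \theta^8, \theta^{12})$ is a torsor under a group which is killed by imposing the single normalization that the $\theta^0\wedge\theta^1$-coefficient of $\der\theta^5$ vanishes — whence the whole coframe is rigid.

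Concretely, I would proceed term by term as in the proof of Theorem \ref{main}. From $\der\theta^1\wedge\theta^0$ one sees that, after the normalization \eqref{sone}, the only term not yet of the required form $-3\theta^0\wedge\theta^1\wedge\theta^5-3\theta^0\wedge\theta^1\wedge\theta^8+\dots$ involves $\theta^0\wedge\theta^2\wedge\theta^4$ with coefficient a multiple of $J$; absorbing the remaining terms forces the existence of $\theta^8$ (up to $\theta^0, \theta^1$) and makes $\der\theta^1$ take the stated shape with the displayed torsion coefficients $T^1{}_{jk}$. Similarly $\der\theta^2$ on $\mathcal{G}^9$, once the $\theta^2\wedge\theta^3$-coefficient has been killed by \eqref{stwo}, must read $\theta^1\wedge\theta^6-3\theta^2\wedge\theta^5-\theta^2\wedge\theta^8+\dots$; this is the equation that \emph{defines} $\theta^6$, and one checks that $\theta^6$ is thereby determined up to a multiple of $\theta^1$ (the ambiguity $u_2$ in \eqref{deftheta6}) and up to the induced ambiguities coming from $\theta^5, \theta^8$. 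The equation for $\der\theta^3$, after \eqref{sthree}, pins $\theta^6$ further (cf. \eqref{u1}, \eqref{u2}) and the equation for $\der\theta^4$ introduces $\theta^{12}$ (as in \eqref{deftheta12}), again determined up to a multiple of $\theta^0$ (the variable $u_3$). At this point one has a four-parameter family of coframe completions $(\theta^5,\theta^6,\theta^8,\theta^{12})$ parametrized by $(u_0,u_1,u_2,u_3)$, or more invariantly by functions $v_0, v_1, v_2, v_3$ via $\theta^5\mapsto\theta^5+v_0\theta^0$, $\theta^8\mapsto\theta^8+v_1\theta^1+(\text{induced})\theta^0$, $\theta^6\mapsto\theta^6+v_2\theta^1+(\text{induced})$, $\theta^{12}\mapsto\theta^{12}+v_3\theta^0+(\text{induced})$.

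The final and decisive step is to show that requiring the coefficient of $\theta^0\wedge\theta^1$ in $\der\theta^5$ to vanish uniquely fixes all four residual functions. The point is that, because the residual changes of $\theta^6, \theta^8, \theta^{12}$ are themselves forced functions of $v_0$ (their ambiguities are not independent — they are all generated by the single "prolongation" direction corresponding to $\mathfrak{g}_1$ in the contact grading, which is one-dimensional in the relevant block), imposing one scalar equation on $\der\theta^5$ is exactly enough. Differentiating $\der\theta^0 = -6\theta^0\wedge\theta^5+\theta^1\wedge\theta^4-3\theta^2\wedge\theta^3$ and using $\der^2\theta^0=0$ yields a linear relation expressing the $\theta^0\wedge\theta^1$-coefficient of $\der\theta^5$ as an affine function of $v_0$ with nonzero leading coefficient; solving gives a unique $v_0$, and then the dependent quantities $v_1, v_2, v_3$ are determined. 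Once $(\theta^5,\theta^6,\theta^8,\theta^{12})$ are thus uniquely specified, the nine forms $(\theta^0,\dots,\theta^4,\theta^5,\theta^6,\theta^8,\theta^{12})$ are pointwise linearly independent on $\mathcal{G}^9$ (the $5\times 5$ block \eqref{5forms} is already independent, and $\theta^5, \theta^6, \theta^8, \theta^{12}$ contain the independent differentials $\der\delta, \der s_4, \der s_5, \der s_7$), so they form a coframe; uniqueness is built into the construction.

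I expect the main obstacle to be bookkeeping rather than conceptual: one must verify that the normalization on $\der\theta^5$ really does fix \emph{all four} remaining parameters and not just $v_0$, i.e., that the ambiguities in $\theta^6, \theta^8, \theta^{12}$ are genuinely slaved to that of $\theta^5$ by the Bianchi-type identities $\der^2\theta^i=0$. This is the analogue, in the general (non-flat) setting, of the fact that in the flat case of Theorem \ref{main} the equations $e^5=0, e^6=0, e^8=0, e^{12}=0$ were all consequences of $e^0=\dots=e^4=0$ after substituting the solved parameters. The cleanest way to handle it is to work invariantly with the grading \eqref{cont_grad} of $\mathfrak{p}_1$: the residual structure group at this stage of the prolongation is abelian, isomorphic to the relevant piece of $\mathfrak{g}_1$, and the normalization condition on $\der\theta^5$ is precisely a choice of a complement that trivializes this last bundle. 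Everything else — producing $\theta^6, \theta^{12}$, and exhibiting the torsion functions $T^i{}_{jk}$ — is a direct, if lengthy, computation with \eqref{5forms}, \eqref{differentiatedcoframe2} and \eqref{differentiatedJabc2}, entirely parallel to the computation already carried out in the proof of Theorem \ref{main}.
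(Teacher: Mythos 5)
Your steps (i) and (ii) are sound and follow the same route the paper takes (the proposition is proved by continuing the Cartan reduction of Theorem \ref{main} without assuming the torsions vanish), and your inventory of the residual freedom as a four-parameter family $(u_0,u_1,u_2,u_3)$ is correct. The gap is in your ``final and decisive step'', which contains two errors. First, the identity $\der^2\theta^0=0$ cannot produce the relation you want: the only place $\der\theta^5$ enters it is through the term $6\,\theta^0\wedge\der\theta^5$, which annihilates exactly the $\theta^0\wedge\theta^1$-component of $\der\theta^5$, so no affine equation for that coefficient in terms of $v_0$ can come out of it. Second, the residual ambiguities of $\theta^6,\theta^8,\theta^{12}$ are \emph{not} slaved to the single parameter $v_0$; they are independent, and the normalization on $\der\theta^5$ is not what fixes $v_0$ at all.

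What actually happens is that three of the four parameters are already killed by the structure equations \eqref{5structureeq} themselves, via the torsion terms that are required to be \emph{absent} there. For instance, the missing $\theta^0\wedge\theta^1$, $\theta^0\wedge\theta^2$ and $\theta^1\wedge\theta^2$ terms in $\der\theta^1$ and $\der\theta^2$ force the $\theta^0$-shifts of $\theta^5$ and $\theta^8$ and the $\theta^1$-shift of $\theta^8$ to vanish (one finds $v_0+w_0=0$ and $3v_0+w_0=0$, hence $v_0=w_0=0$); the missing $\theta^1\wedge\theta^2$ and $\theta^1\wedge\theta^3$ terms in $\der\theta^3$ kill the shifts of $\theta^6$ and the remaining shift of $\theta^8$; and the torsion-free form of $\der\theta^4$ kills every shift of $\theta^{12}$ \emph{except} the one by a multiple of $\theta^0$, to which the term $6\,\theta^0\wedge\theta^{12}$ is blind. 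That last one-dimensional freedom --- the analogue of $u_3$ in \eqref{deftheta12}, i.e.\ the $\mathfrak{q}_1$-prolongation direction --- is the only thing the extra normalization is needed for: since $\der\theta^5$ contains $\theta^1\wedge\theta^{12}$ with coefficient $-1$, re-expanding it after $\theta^{12}\mapsto\theta^{12}+v_3\,\theta^0$ shifts its $\theta^0\wedge\theta^1$-coefficient by $-v_3$, so requiring that coefficient to vanish fixes $v_3$ algebraically. With the roles of the normalizations reassigned in this way --- exactly parallel to the flat computation, where $u_0$, $(u_1,u_2)$ and $u_3$ were fixed by $e^2\wedge\theta^1=0$, $e^3\wedge\theta^0=0$ and $e^5=0$ respectively --- your argument closes; as written, the mechanism you propose for the last step would fail.
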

We remark that the normalizations given in Proposition \ref{propo_coframe} also uniquely determine the structure functions $T^k{}_{jl}$. In particular we have
\begin{equation}
T^1{}_{24}=-T^1{}_{06}=\tfrac{3}{2}T^2{}_{34}=\tfrac{3 J {s_5}^5}{\delta^4},\end{equation}
and
\begin{equation}
T^1{}_{02}=\tfrac{{s_5}^2 (\delta^4 M - 6 \delta J s_4 {s_5}^3 - 9 c \delta^3 J s_5 s_7 -
 3 \delta^3 J_{\omega^2} s_5 s_7 + 2 \delta^3 L s_5 s_7 - 9 b \delta^2 J {s_5}^2 {s_7}^2 -   9 \delta^2 J_{\omega^3} {s_5}^2 {s_7}^2 + 21 a \delta J {s_5}^3 {s_7}^3 - 9 \delta J_{\omega^4} {s_5}^3 {s_7}^3 - 27 J^2 {s_5}^4 {s_7}^4)}{\delta^8}.
\end{equation}

\begin{remark}
The bundle $\mathcal{G}^9\to \mathcal{U}$  has as  structure group  the subgroup of $\mathbf{S}$ of matrices of the form
\begin{align*}\begin{pmatrix}
-{\delta}^3& 0&0&0&0\\
0&{s_5}^3& 0 & 0&0\\
0&{s_5}^2 s_7& -\delta{s_5}& 0 &0\\
0& s_5 {s_7}^2& -2 \delta  s_7 & \tfrac{\delta^2}{s_5}& 0\\
s_4& {s_7}^3& -\tfrac{3 \delta{s_7}^2}{s_5}&\tfrac{3\delta^2 s_7}{{s_5}^2}&-\tfrac{\delta^3}{{s_5}^3}
              \end{pmatrix}.
\end{align*}
\end{remark}

\begin{remark}
The coframe  constructed in Proposition \ref{propo_coframe} does  not define a Cartan connection. In order to obtain a Cartan connection, more elaborate normalizations are necessary.
\end{remark}


\subsection{Integrable structures and the submaximal models}
Recall that any marked contact Engel structure is called \emph{integrable} if  the rank $2$ distribution $\mathcal{D}^{\sigma}$, which in terms of an adapted coframe is given by $$\mathcal{D}^\sigma=\mathrm{ker}(\omega^0,\omega^1,\omega^2),$$ is integrable.
%
The following proposition shows that integrability of a marked contact Engel structure precisely corresponds to the vanishing of the first (relative) invariant from Theorem \ref{main}.
\begin{proposition}\label{prop.integrability}
A marked contact Engel structure is integrable if and only if $J=0$.
\end{proposition}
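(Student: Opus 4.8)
By the Frobenius theorem, the rank two distribution $\mathcal{D}^\sigma=\ker(\omega^0,\omega^1,\omega^2)$ is integrable if and only if the algebraic ideal $\mathcal{I}=\langle\omega^0,\omega^1,\omega^2\rangle$ in the exterior algebra is a differential ideal, i.e.\ if and only if each of $\der\omega^0$, $\der\omega^1$, $\der\omega^2$ lies in $\mathcal{I}$. So the plan is simply to evaluate these three two-forms modulo $\mathcal{I}$ using an adapted coframe. Since $J=0$ is an invariant condition by Theorem \ref{main} and integrability of $\mathcal{D}^\sigma$ is manifestly independent of the choice of adapted coframe, it suffices to carry out this computation in any one adapted coframe.

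\textbf{Step 1: compute modulo $\mathcal{I}$ in the coframe of Lemma \ref{diffcof}.} Take an adapted coframe satisfying the structure equations \eqref{differentiatedcoframe2}. There, $\der\omega^0=\omega^1\wedge\omega^4-3\omega^2\wedge\omega^3\equiv 0\pmod{\mathcal{I}}$, and every term of $\der\omega^1$ contains one of $\omega^0,\omega^1,\omega^2$ as a factor, so $\der\omega^1\equiv 0\pmod{\mathcal{I}}$ as well. In $\der\omega^2=\tfrac12(b^2-4ac+M-P)\,\omega^0\wedge\omega^3+2c\,\omega^1\wedge\omega^3-2b\,\omega^2\wedge\omega^3+2J\,\omega^3\wedge\omega^4$ all terms lie in $\mathcal{I}$ except the last, whence
$$\der\omega^2\equiv 2J\,\omega^3\wedge\omega^4\pmod{\mathcal{I}}.$$
Since $\omega^3\wedge\omega^4\notin\mathcal{I}$, we conclude that $\der\omega^2\in\mathcal{I}$ if and only if $J=0$, which gives the claim.

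\textbf{Step 2 (alternative, via Lemma \ref{lemma1marked}).} One may equally well use the explicit coframe \eqref{coframet}, where $\der\omega^1=3\,\der t\wedge\omega^2$, $\der\omega^2=2\,\der t\wedge\omega^3$, $\der\omega^3=\der t\wedge\omega^4$ by \eqref{differentiatedcoframe1}. Again $\der\omega^0,\der\omega^1\in\mathcal{I}$ automatically, while expanding $\der t=\sum_i t_{\omega^i}\omega^i$ in the coframe yields $\der\omega^2\equiv -2t_{\omega^4}\,\omega^3\wedge\omega^4\pmod{\mathcal{I}}$; by the last Lemma preceding Section \ref{themain}, $t_{\omega^4}=-J$, so the integrability condition is once more $J=0$.

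\textbf{Main obstacle.} There is essentially no analytic difficulty here; the only point requiring a word of care is the coframe–independence of the criterion, which is handled by invoking the invariance of the condition $J=0$ from Theorem \ref{main}, so that the single-coframe computation above suffices.
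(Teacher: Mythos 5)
Your proof is correct and follows essentially the same route as the paper: the paper likewise verifies the Frobenius criterion by computing $\der\omega^i\wedge\omega^0\wedge\omega^1\wedge\omega^2$ for $i=0,1,2$ in an adapted coframe satisfying \eqref{differentiatedcoframe2}, finding that only $\der\omega^2$ contributes the term $2J\,\omega^0\wedge\omega^1\wedge\omega^2\wedge\omega^3\wedge\omega^4$. Your remark on coframe-independence and the alternative computation via \eqref{coframet} are consistent with, and slightly more explicit than, the paper's one-line proof.
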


\begin{proof}
Let $(\omega^0,\omega^1,\omega^2,\omega^3,\omega^4)$ be any adapted coframe that satisfies the structure equations \eqref{differentiatedcoframe2} with associated function $J$. A direct computation shows that
\begin{align*}&\der\omega^0\wedge\omega^0\wedge\omega^1\wedge\omega^2=0\\
&\der\omega^1\wedge\omega^0\wedge\omega^1\wedge\omega^2=0\\
&\der\omega^2\wedge\omega^0\wedge\omega^1\wedge\omega^2= 2 \ J\; \omega^0\wedge\omega^1\wedge\omega^2\wedge\omega^3\wedge\omega^4.
\end{align*}
\end{proof}

For integrable marked  contact Engel structures the structure equations simplify as follows.
\begin{proposition}\label{propo_coframe_int}
Consider an integrable marked contact Engel structure. Then the five forms \eqref{5forms} on the $9$-dimensional  subbundle $\mathcal{G}^9$ of $\mathcal{U}\times \mathbf{S}$ given by \eqref{szero}, \eqref{sone}, \eqref{stwo}, \eqref{sthree} can be supplemented to a rigid coframe $(\theta^0, \theta^1, \theta^2, \theta^3, \theta^4, \theta^5, \theta^6, \theta^8, \theta^{12}) $   which is uniquely determined by the structure equations
 \begin{equation}\label{5structureeq}
\begin{aligned}
\der\theta^0=&-6\theta^0\wedge\theta^5+\theta^1\wedge\theta^4-3\theta^2\wedge\theta^3\\
\der\theta^1=&-3 \theta^1\wedge\theta^5-3\theta^1\wedge\theta^8+\tfrac{{s_5}^2(\delta M +2 L {s_5}{s_7})}{{\delta}^5}\theta^0\wedge\theta^2-\tfrac{{s_5}^4 L}{{\delta}^5} \theta^0\wedge\theta^3\\
\der\theta^2 = &  \theta^1\wedge\theta^6 - 3\theta^2\wedge\theta^5 - \theta^2\wedge\theta^8 -\tfrac{{s_5}^2(5\delta P-3 \delta M +4 L {s_5}{s_7})}{4{\delta}^5}\theta^0\wedge\theta^3\\
\der\theta^3 = & 2\theta^2\wedge\theta^6 -3\theta^3\wedge\theta^5 + \theta^3\wedge\theta^8-\tfrac{{\delta}^4 U -2 {\delta}^3 R {s_5}{s_7}+{\delta}^2 Q {s_5}^2{s_7}^2+2\delta P {s_5}^3{s_7}^3 + L {s_5}^4{s_7}^4}{{\delta}^5{s_5}^5}\theta^0\wedge\theta^1 \\
 &-\tfrac{2(\delta^3 R-{\delta}^2 Q {s_5}{s_7} - 3 {\delta} P {s_5}^2{s_7}^2 -2 L {s_5}^3{s_7}^3)}{\delta^5{s_5}^2}\theta^0\wedge\theta^2-\tfrac{\delta^2 Q+6\delta P s_5 s_7 +6 L {s_5}^2{s_7}^2}{\delta^3}\theta^0\wedge\theta^3+\tfrac{(M-P){s_5}^2}{2{\delta}^4}\theta^0\wedge\theta^4\\
\der\theta^4=&6 \theta^0\wedge\theta^{12}+3\theta^3\wedge\theta^6-3\theta^4\wedge\theta^5+3\theta^4\wedge\theta^8\\
\end{aligned}
\end{equation}
and the additional normalization that $\der\theta^5$, when expressed with respect to the basis of forms $\theta^i\wedge\theta^j$, has zero coefficient at the $\theta^0\wedge\theta^1$ term.
\end{proposition}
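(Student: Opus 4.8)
The plan is to deduce Proposition \ref{propo_coframe_int} as the integrable specialization of Proposition \ref{propo_coframe}. By Proposition \ref{prop.integrability} a marked contact Engel structure is integrable exactly when the relative invariant $J$ vanishes, and then $J_{\omega^0}=J_{\omega^1}=J_{\omega^2}=J_{\omega^3}=J_{\omega^4}=0$ as well; so the whole normalization procedure used to prove Proposition \ref{propo_coframe} applies verbatim, and the only genuine task is to record which terms disappear once $J$ and its frame derivatives are set to zero.

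First I would rerun the normalizations in the same order as in Proposition \ref{propo_coframe}. Starting from the tautological forms $\theta^{\mu}=\mathbf{S}^{\mu}{}_{\nu}\omega^{\nu}$ on $\mathcal{U}\times\mathbf{S}$ and setting $\delta=-s_5s_8$, normalizing the $\theta^1\wedge\theta^4$--coefficient of $\der\theta^0$ to $1$ gives $s_0=-\delta^3$ and makes $\theta^5$ well defined up to multiples of $\theta^0$; requiring in turn that the $\theta^2\wedge\theta^3$--term of $\der\theta^1$, the $\theta^2\wedge\theta^3$--term of $\der\theta^2$ and the $\theta^1\wedge\theta^4$--term of $\der\theta^3$ vanish forces $s_1,s_2,s_3$ to equal the right-hand sides of \eqref{sone}, \eqref{stwo}, \eqref{sthree} evaluated at $J=0$, namely $s_1=-a{s_5}^3$, $s_2=\tfrac{1}{2} b\delta s_5-a{s_5}^2s_7$ and $s_3=\tfrac{-c\delta^2+b\delta s_5s_7-a{s_5}^2{s_7}^2}{s_5}$. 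On the $9$-dimensional subbundle $\mathcal{G}^9\subset\mathcal{U}\times\mathbf{S}$ cut out by these relations, the five forms \eqref{5forms} take exactly their stated shape with all $J$-terms absent.

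Next I would remove the residual freedom precisely as in Proposition \ref{propo_coframe}: after the above steps $\theta^5$ is still free up to a multiple of $\theta^0$, and there is a $1$-form $\theta^8$, unique up to multiples of $\theta^0$ and $\theta^1$, with $\der\theta^1\wedge\theta^0=-3\theta^0\wedge\theta^1\wedge\theta^5-3\theta^0\wedge\theta^1\wedge\theta^8$ (the extra term $\tfrac{3J{s_5}^5}{\delta^4}\theta^0\wedge\theta^2\wedge\theta^4$ of the general case vanishing identically). Imposing that $\der\theta^5$, expanded in the basis $\theta^i\wedge\theta^j$, have vanishing $\theta^0\wedge\theta^1$--coefficient then pins down $\theta^5$, hence $\theta^8$, and forces unique $1$-forms $\theta^6$ and $\theta^{12}$ which, together with $\theta^0,\dots,\theta^5,\theta^8$, complete a coframe on $\mathcal{G}^9$; this yields the uniqueness assertion at the same time.

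It then remains to read off the structure functions. Since $J$ and all its frame derivatives vanish, the coefficients $T^i{}_{jk}$ of Proposition \ref{propo_coframe} collapse — $T^1{}_{24}=T^1{}_{06}=T^2{}_{34}=0$ and $T^1{}_{02}=\tfrac{{s_5}^2(\delta M+2L s_5 s_7)}{\delta^5}$, for instance — producing the displayed right-hand sides of $\der\theta^1$ and $\der\theta^2$; the coefficients $T^3{}_{jk}$ in $\der\theta^3$, not written out explicitly in Proposition \ref{propo_coframe}, are computed in exactly the same way, and at $J=0$ they involve $M,P,L$ and the derived functions $Q,R,S$ of Lemma \ref{lemma3marked} together with one further invariant $U$ obtained by differentiating once more, giving the equations in the statement. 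I expect the main obstacle to be purely computational: accurately tracking, through several layers of substitution, which monomials in the group parameters and in the invariants $L,M,P,Q,R,S,U$ survive the specialization $J=0$, so that the coefficients match those displayed. This bookkeeping is routine once organized — and best verified with computer algebra — with no conceptual difficulty beyond what was already handled in Theorem \ref{main} and Proposition \ref{propo_coframe}.
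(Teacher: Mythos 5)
Your proposal is correct and follows essentially the same route as the paper, which in fact omits a separate proof of Proposition \ref{propo_coframe_int} precisely because it is the $J=0$ specialization of the construction in Proposition \ref{propo_coframe}: the normalizations \eqref{szero}--\eqref{sthree} and the final normalization on $\der\theta^5$ carry over verbatim, and the displayed structure functions are obtained by setting $J$ and its frame derivatives to zero in the general coefficients (with $U$ a further derived function appearing only at this stage). The remaining work is, as you say, bookkeeping of the surviving monomials in $L,M,P,Q,R,S,U$ and the group parameters.
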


In particular, the structure equations for  integrable structures exhibit a new relative invariant for these structures that is independent of the filtration of invariant conditions from Section \ref{themain}.

\begin{proposition}\label{prop_seconddi}
Consider an integrable  marked contact Engel structure on $\mathcal{U}$.
Let $(\omega^0,\omega^1,\omega^2, \omega^3,\omega^4)$ be an adapted  coframe satisfying the structure equations \eqref{differentiatedcoframe2} with $J=0$, and let $\phi$ be the $3$-form  defined as
\begin{equation}\phi=\omega^1\wedge\omega^2\wedge\omega^3- a \omega^0\wedge\omega^2\wedge\omega^3+\tfrac{1}{2}b \omega^0\wedge\omega^1\wedge\omega^3- c \omega^0\wedge\omega^1\wedge\omega^2.\end{equation}
\begin{enumerate}
\item Then the  rank $2$-distribution
\begin{align*}\mathcal{R}^{\sigma}=\mathrm{ker}(\phi)
\end{align*}
on $\mathcal{U}$ is invariantly associated to the marked contact twisted cubic structure.
\item This distribution is integrable if and only if $M-P$ vanishes.
\end{enumerate}
 \end{proposition}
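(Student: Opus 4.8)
The plan is to establish both claims by direct computation with the coframe from Lemma~\ref{lemma1marked}, specialized to the integrable case $J=0$, i.e., $\partial_{\omega^4}t=0$. For part~(1), the strategy is to show that the $3$-form $\phi$ is, up to multiplication by a nonvanishing function, independent of the choice of adapted coframe within the equivalence class determined by the marked contact Engel structure. Concretely, I would take a second adapted coframe $\hat\omega=\mathbf{S}\cdot\omega$ with $\mathbf{S}$ valued in the structure group \eqref{matS} (and with $J=\hat J=0$ preserved), recompute $\hat a,\hat b,\hat c$ from the transformation rules already implicit in Lemmas~\ref{diffcof}--\ref{lemma3marked}, and verify that $\hat\phi=\lambda\,\phi$ for a nowhere-zero function $\lambda$ built from the group parameters $s_0,s_5,s_7$. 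Since $\ker(\phi)$ depends only on the conformal class of $\phi$, this proves $\mathcal R^\sigma$ is well-defined. A cleaner alternative, which I would try first, is to identify $\phi$ invariantly: note that $\omega^0,\omega^1$ cut out $\mathcal H^\sigma$ and $\omega^0,\omega^1,\omega^2$ cut out $\mathcal D^\sigma$, so $\phi$ is a section of $\Lambda^3(T^*\mathcal U)$ whose kernel is transverse to $\mathcal D^\sigma$ inside $\mathcal H^\sigma$; the coefficients $a,b,c$ are precisely $t_{\omega^3},-t_{\omega^2},t_{\omega^1}$, i.e., the ``horizontal part'' of $\der t$ along $\mathcal H^\sigma$, so $\phi$ should be expressible as a wedge of $\omega^0,\omega^1$ with a correction term measuring how $\ell^\sigma$ sits relative to the flat model — making its conformal invariance manifest.

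For part~(2), the approach is to compute $\der\phi$ using the structure equations \eqref{differentiatedcoframe2} (with $J=0$) together with the first derivatives \eqref{differentiatedJabc2} of $a,b,c$, and to extract the Frobenius obstruction. The distribution $\mathcal R^\sigma=\ker(\phi)$ is integrable precisely when $\der\phi=\eta\wedge\phi$ for some $1$-form $\eta$, equivalently when $\der\phi\wedge\phi=0$ in $\Lambda^7$ — but since we are in dimension $5$, the relevant condition is rather that $\der\phi$ restricted appropriately vanishes; more precisely, for a decomposable $3$-form $\phi$ with $2$-dimensional kernel, integrability of $\ker\phi$ is equivalent to $\der\phi\in\mathcal I(\phi)$, the algebraic ideal generated by $\phi$. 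Expanding $\der\phi$ and reducing modulo $\phi$, I expect the obstruction to collapse to a single scalar multiple of $\omega^0\wedge\omega^1\wedge\omega^2\wedge\omega^3\wedge\omega^4$ (or of the complementary $2$-form factor), and the claim is that this scalar is a nonzero constant times $M-P$. Here I would use the explicit formulas $M=6t_{\omega^0}-2(t_{\omega^2})^2+6t_{\omega^3}t_{\omega^1}+t_{\omega^2\omega^3}$ and $P=2t_{\omega^0}-(t_{\omega^2})^2+2t_{\omega^3}t_{\omega^1}+t_{\omega^2\omega^3}$, so that $M-P=4t_{\omega^0}-(t_{\omega^2})^2+4t_{\omega^3}t_{\omega^1}$, and check that the mixed-derivative terms appearing in $\der\phi$ organize themselves exactly into this combination.

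The main obstacle will be bookkeeping in part~(2): $\der\phi$ involves $\der a,\der b,\der c$, each of which by \eqref{differentiatedJabc2} is a sum of five terms with coefficients $a_{\omega^0},a_{\omega^1},L,M,P,Q,R,S$ and various quadratic expressions in $a,b,c$. Most of these terms must cancel against each other or land inside the ideal $\mathcal I(\phi)$, leaving only the $M-P$ contribution. I would manage this by first checking the $J=0$ consequence $J_{\omega^i}=0$ to kill several terms, then grouping the expansion of $\der\phi$ by which $\omega^i$ are present, and systematically reducing modulo $\phi$ (using $\phi\equiv 0$ to eliminate, say, $\omega^1\wedge\omega^2\wedge\omega^3$ in favor of the lower-weight terms). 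A useful sanity check along the way: for the flat structure $t=0$ all of $a,b,c,M,P$ vanish, $\phi=\omega^1\wedge\omega^2\wedge\omega^3$, and $\mathcal R^\sigma=\mathcal D^\sigma=\ker(\omega^0,\omega^1,\omega^2)$ is manifestly integrable with $M-P=0$, consistent with the claim; a second check is any explicit integrable example with $M\neq P$ exhibiting a non-integrable $\mathcal R^\sigma$.
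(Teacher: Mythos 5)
Your plan is sound and follows essentially the same route as the paper: part (1) is proved by checking that $\phi$ transforms by a nonzero scalar under the structure group (the paper packages this as the observation that the canonical forms on $\mathcal{G}^9$ satisfy $\theta^1\wedge\theta^2\wedge\theta^3=\delta^3 s_5^3\,\phi$, which is the same computation), and part (2) is a Frobenius check using the structure equations \eqref{differentiatedcoframe2} and \eqref{differentiatedJabc2}. The one substantive point you assert but neither verify nor exploit is the decomposability of $\phi$: one has
\begin{equation*}
\phi=(\omega^1-a\,\omega^0)\wedge\bigl(\omega^2-\tfrac{1}{2}b\,\omega^0\bigr)\wedge(\omega^3-c\,\omega^0),
\end{equation*}
which is needed to justify your criterion that integrability of $\ker\phi$ is equivalent to $\der\phi\in\mathcal{I}(\phi)$, and which the paper uses to reduce part (2) to three separate Frobenius conditions $\der\alpha^i\wedge\alpha^1\wedge\alpha^2\wedge\alpha^3=0$ for the three displayed $1$-forms. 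Two of these vanish identically and the third produces $\tfrac{1}{2}(M-P)$ times the volume form, exactly matching your expectation (and your formula $M-P=4t_{\omega^0}-(t_{\omega^2})^2+4t_{\omega^3}t_{\omega^1}$ is correct); organizing the computation through the factorization rather than by reducing the full expansion of $\der\phi$ modulo $\mathcal{I}(\phi)$ makes the cancellations you anticipate largely automatic.
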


\begin{proof}
Let $\theta^0,\theta^1,\theta^2,\theta^3,\theta^4$ be the invariant forms \eqref{5forms} on $\mathcal{G}^9$ with $J=0$.
A direct calculation gives
\begin{align*}
\theta^1\wedge\theta^2\wedge\theta^3=\delta^3 {s_5}^3(\omega^1\wedge\omega^2\wedge\omega^3- a \omega^0\wedge\omega^2\wedge\omega^3+\tfrac{1}{2}b \omega^0\wedge\omega^1\wedge\omega^3- c \omega^0\wedge\omega^1\wedge\omega^2).
\end{align*}
This shows that the kernel of $\theta^1\wedge\theta^2\wedge\theta^3$ descends to a  distribution $\mathcal{R}^{\sigma}=\mathrm{ker}(\phi)$ on $\mathcal{U}$, which is independent of the choice of adapted coframe, and thus invariantly associated to the marked contact twisted cubic structure.

%
Since
$$\phi=(\omega^1-a \omega^0)\wedge ( \omega^2-\tfrac{1}{2}b\omega^0)\wedge ( \omega^3- c \omega^0)$$
and
\begin{equation*}
\begin{aligned}
&\der(\omega^1- a\omega^0)\wedge(\omega^1-a\omega^0)\wedge(\omega^2-\tfrac{b}{2}\omega^0)\wedge(\omega^3-c\omega^0)=0,\\
&\der(\omega^2- \tfrac{b}{2}\omega^0)\wedge(\omega^1-a\omega^0)\wedge(\omega^2-\tfrac{b}{2}\omega^0)\wedge(\omega^3-c\omega^0)=0,\\
&\der(\omega^3- c\omega^0)\wedge(\omega^1-a\omega^0)\wedge(\omega^2-\tfrac{b}{2}\omega^0)\wedge(\omega^3-c\omega^0)=\tfrac{1}{2} (M-P) {s_5}^2\omega^0\wedge\omega^1\wedge\omega^2\wedge\omega^3\wedge\omega^4 ,
\end{aligned}
\end{equation*}
  integrability of $\mathcal{R}^{\sigma}$ is equivalent to the vanishing of $M-P$.
%
\end{proof}


\begin{remark}{\textbf{(Submaximal branch)}}
The structure equations for integrable marked contact Engel structures displayed in Proposition \ref{propo_coframe_int} show that for the subclass of structures with nowhere vanishing relative invariant $M-P$, we can further normalize the coefficient $T^3{}_{04}=\tfrac{(M-P){s_5}^2}{2\delta^4}$.  It is also visible that the sign of $M-P$ is an invariant of integrable marked contact Engel structures.

We could now proceed as follows. We could  normalize the coefficient $T^3{}_{04}$ to $\tfrac{\epsilon}{2}$ with $\epsilon=\mathrm{sign}(M-P)$ (or any non-zero multiple of $\epsilon$).
 This means that we restrict to the $8$-dimensional subset $\mathcal{G}^8\subset \mathcal{G}^9$ defined by
$$ s_5 =\tfrac{\delta^2}{\sqrt{\epsilon (M-P)}}.$$
On this subset, the pullbacks of the $1$-form $\theta^8$  is linearly dependent on the pullbacks of the remaining forms $\theta^0,\theta^1,\theta^2,\theta^3,\theta^4,\theta^5,\theta^6,\theta^{12},$ which define a coframe on $\mathcal{G}^8$.
If we now compute the structure equations with respect to the coframe on $\mathcal{G}^8$ and assume that all of the structure functions  are constants, we arrive at the structure equations  \eqref{submaxsyst}. These are Maurer-Cartan equations for $\mathfrak{sl}(3,\mathbb{R})$ if $\epsilon> 0$ and Maurer-Cartan equations for $\mathfrak{su}(2,1)$ if $\epsilon< 0$. The analysis in Section \ref{sec_tree} (where we will start by normalizing $T^1{}_{03}=-\tfrac{{s_5}^4 L}{{\delta}^5}$ rather than $T^3{}_{04}$) will show that these are, up to local equivalence,  the only marked contact Engel structures with $8$-dimensional transitive symmetry algebra, and we will refer to these structures as the \emph{submaximal} marked contact Engel structures.
\end{remark}

\subsection{A tree of homogeneous models}\label{sec_tree}
The goal of this section is to find all locally non-equivalent homogeneous marked contact Engel structures with symmetry group of dimension $\geq 6$.
To this end, we return to the conditions from Theorem \ref{main}, which  divide marked contact Engel structures into classes of mutually non-equivalent structures.
We apply  Cartan's reduction procedure   to determine the maximally symmetric homogeneous structures in each of the  branches determined by the conditions from Theorem \ref{main}.

We will, in the following, often abuse notation. In particular, we will denote various different subbundles $\mathcal{G}^i\subset\mathcal{G}^9$ of dimension $i$ by the same symbol. Moreover, we will frequently pullback the forms $\theta^0,\theta^1,\theta^2, \theta^3,\theta^4,\theta^5,\theta^6,\theta^8,\theta^{12}$ to these various subbundles and always reuse the same names for the pulled back forms. For different $\mathcal{G}^i$, we will be choosing subsets of these forms that constitute  coframes on the subbundles $\mathcal{G}^i$. We will express the exterior derivatives $\der\theta^i$ of these coframe forms in terms of the bases of $2$-forms given by the wedge products $\theta^i\wedge\theta^j$ of the coframe forms,   and refer to the equations
$$\der\theta^k=T^k{}_{ij}\theta^i\wedge\theta^j,$$
as the \emph{structure equations}  and to the  functions $T^k{}_{ij}$ as the \emph{structure functions} (with respect to the coframe).

\subsubsection{The branch $J\neq 0$}
Here we shall assume that $J\neq0$. This assumption allows us to perform a number of normalizations. We proceed as follows. First, looking at $\der\theta^1$ in Proposition \ref{propo_coframe}, we see that we can normalize the coefficient $T^1_{24}=\tfrac{3 {s_5}^5}{\delta^4} J$ to any non-zero value, and we shall normalize it to $3$. We also see that we can normalize the coefficient
$T^1_{02}$ to zero. This means that we restrict to a subbundle $\mathcal{G}^7\subset\mathcal{G}^9$ given by
$$s_5=\left(\tfrac{\delta^4}{J}\right)^{\frac{1}{5}},\, s_4=\tfrac{\delta^4 M - 9 c \delta^3 J s_5 s_7 - 3 \delta^3 J_{\omega^2} s_5 s_7 + 2 \delta^3 L s_5 s_7 -  9 b \delta^2 J {s_5}^2 {s_7}^2 - 9 \delta^2 J_{\omega^3} {s_5}^2 {s_7}^2 +
 21 a \delta J {s_5}^3 {s_7}^3 - 9 \delta J_{\omega^4} {s_5}^3 {s_7}^3 - 27 J^2 {s_5}^4 {s_7}^4}{6 \delta J {s_5}^3}.$$
 We pullback the forms $\theta^0, \theta^1, \theta^2,  \theta^3, \theta^4, \theta^5, \theta^6, \theta^8, \theta^{12}$ to $\mathcal{G}^7$, where they are no longer independent, and express $\theta^8$ and $\theta^{12}$  as linear combinations with functional coefficients of the remaining forms. Now we compute the structure equations with respect to the coframe on $\mathcal{G}^7$ given by $\theta^0,\dots,\theta^6$. Looking at these structure equations shows that we can now normalize the coefficient of $\der\theta^1$ at the $\theta^1\wedge\theta^4$ term to zero, which determines a $6$-dimensional subbundle $\mathcal{G}^6\subset\mathcal{G}^7$ given by
 $$s_7=\tfrac{\delta^{\frac{1}{5}}(3 a J -J_{\omega^4})}{14 J^{\frac{9}{5}}}.$$
 On this subbundle, which is parametrized by the coordinates on $\mathcal{U}$ and the fibre coordinate $\delta$, the forms $\theta^0,\dots,\theta^5$ define a coframe
 that satisfies structure equations  of the form
\begin{equation}\label{Jnot0coframe}
\begin{aligned}
\der\theta^0=&-6 \theta^0\wedge\theta^5+\theta^1\wedge\theta^4-3 \theta^2\wedge\theta^3\\
\der\theta^1=&\tfrac{\alpha_1}{\delta^3} \theta^0 \wedge\theta^1 +\tfrac{\alpha_2}{\delta^{\frac{12}{5}}}\theta^0\wedge\theta^2+ \tfrac{\alpha_3}{\delta^{\frac{9}{5}}}\theta^0\wedge\theta^3 +
 \tfrac{\alpha_4}{\delta^{\frac{6}{5}}} \theta^0\wedge\theta^4 + \tfrac{\alpha_5}{\delta^{\frac{9}{5}}} \theta^1\wedge\theta^2 +
 \tfrac{\alpha_6}{\delta^{\frac{6}{5}}} \theta^1\wedge\theta^3 \\&
  -  \tfrac{24}{5} \theta^1\wedge\theta^5  + 3 \theta^2\wedge\theta^4\\
\der\theta^2=&\tfrac{\alpha_7}{\delta^{\frac{18}{5}}} \theta^0 \wedge\theta^1 + \tfrac{\alpha_8}{\delta^3} \theta^0 \wedge\theta^2 +
 \tfrac{\alpha_9}{\delta^{\frac{12}{5}}} \theta^0 \wedge\theta^3 + \tfrac{5\alpha_5}{6\delta^{\frac{9}{5}}} \theta^0 \wedge\theta^4 +
 \tfrac{\alpha_{10}}{\delta^{\frac{12}{5}}} \theta^1 \wedge\theta^2 + \tfrac{\alpha_{11}}{\delta^{\frac{9}{5}}} \theta^1 \wedge\theta^3\\ &
 -\tfrac{3\alpha_4 + 5 \alpha_6}{9\delta^{\frac{6}{5}}} \theta^1 \wedge\theta^4 + \tfrac{\alpha_6}{3\delta^{\frac{6}{5}}}  \theta^2 \wedge\theta^3  - \tfrac{18}{5} \theta^2 \wedge\theta^5 + 2 \theta^3 \wedge\theta^4\\
\der\theta^3=&\tfrac{\alpha_{12}}{\delta^{\frac{21}{5}}} \theta^0 \wedge\theta^1 + \tfrac{\alpha_{13}}{\delta^{\frac{18}{5}}}\theta^0\wedge\theta^2+
 \tfrac{\alpha_{14}}{\delta^3} \theta^0 \wedge\theta^3
    + \tfrac{6 \alpha_9 +75\alpha_{10}+25\alpha_2}{15 \delta^{\frac{12}{5}}} \theta^0 \wedge\theta^4
+\tfrac{2(\alpha_1-3\alpha_8)}{3\delta^3}\theta^1\wedge\theta^2   \\&
-\tfrac{3 \alpha_{10}+\alpha_2}{3\delta^{\frac{12}{5}}} \theta^1 \wedge\theta^3 + \tfrac{\alpha_5 +6 \alpha_{11}}{3\delta^{\frac{9}{5}}} \theta^2 \wedge\theta^3 -
 \tfrac{6 \alpha_4+10\alpha_6}{9\delta^{\frac{6}{5}}}  \theta^2 \wedge\theta^4
   -  \tfrac{12}{5} \theta^3 \wedge\theta^5\\
\der\theta^4=&\tfrac{\alpha_{15}}{\delta^{\frac{24}{5}}} \theta^0 \wedge\theta^1 + \tfrac{\alpha_{16}}{\delta^{\frac{21}{5}}} \theta^0 \wedge\theta^2 + \tfrac{\alpha_{17}}{\delta^{\frac{18}{5}}} \theta^0 \wedge\theta^3 + \tfrac{\alpha_{18}}{\delta^3} \theta^0 \wedge\theta^4+\tfrac{\alpha_1-3\alpha_8}{\delta^3}\theta^1\wedge\theta^3 -\tfrac{3\alpha_{10}+\alpha_2}{\delta^{\frac{12}{5}}} \theta^1 \wedge\theta^4 \\& +\tfrac{\alpha_2}{\delta^{\frac{12}{5}}} \theta^2 \wedge\theta^3 +
 \tfrac{\alpha_5}{\delta^{\frac{9}{5}}} \theta^2 \wedge\theta^4 - \tfrac{3 \alpha_4+2\alpha_6}{3\delta^{\frac{6}{5}}} \theta^3 \wedge\theta^4 -
 \tfrac{6}{5} \theta^4 \wedge\theta^5\\
\der\theta^5=&\tfrac{\alpha_{19}}{\delta^{\frac{27}{5}}} \theta^0 \wedge\theta^1 + \tfrac{\alpha_{20}}{\delta^{\frac{24}{5}}} \theta^0 \wedge\theta^2 +  \tfrac{\alpha_{21}}{\delta^{\frac{21}{5}}} \theta^0 \wedge\theta^3 + \tfrac{\alpha_{22}}{\delta^{\frac{18}{5}}} \theta^0 \wedge\theta^4 -\tfrac{3 \alpha_{12}+\alpha_{16}}{6\delta^{\frac{21}{5}}} \theta^1 \wedge\theta^2 -  \tfrac{\alpha_{17}-3\alpha_7}{6 \delta^{\frac{18}{5}}} \theta^1 \wedge\theta^3\\
& -\tfrac{\alpha1+\alpha_{18}}{6\delta^3} \theta^1 \wedge\theta^4 +
\tfrac{\alpha_8+\alpha_{14}}{2\delta^3} \theta^2 \wedge\theta^3 +
 \tfrac{6\alpha_9+75\alpha_{10}+20\alpha_2}{30\delta^{\frac{12}{5}}} \theta^2 \wedge\theta^4 -
 \tfrac{2\alpha_3 - 5 \alpha_5}{12\delta^{\frac{9}{5}}} \theta^3 \wedge\theta^4,
 \end{aligned}
 \end{equation}
 where $\alpha_1,\dots,\alpha_{21}$ are the pullbacks of functions on $\mathcal{U}$, that is,   as functions on $\mathcal{G}^6$ they do not depend on $\delta$.

 Now we are looking for homogeneous structures with six dimensional symmetry algebra in this branch. For such structures  all of the structure functions  are constants. In particular, all of those that depend on $\delta$ have to be identically zero. On the other hand, one easily checks that this constant coefficient  system
   \begin{equation}\label{Jnotzerosyst}
\begin{aligned}
\der\theta^0=&-6 \theta^0\wedge\theta^5+\theta^1\wedge\theta^4-3 \theta^2\wedge\theta^3\\
\der\theta^1=& -  \tfrac{24}{5} \theta^1\wedge\theta^5 + 3 \theta^2\wedge\theta^4\\
\der\theta^2=&- \tfrac{18}{5} \theta^2 \wedge\theta^5 + 2 \theta^3 \wedge\theta^4\\
\der\theta^3=& -  \tfrac{12}{5} \theta^3 \wedge\theta^5\\
\der\theta^4=&-  \tfrac{6}{5} \theta^4 \wedge\theta^5\\
\der\theta^5=&0.
 \end{aligned}
\end{equation}
 is closed, that is, $\der^2\theta^i=0$, for all $i=0,1,2,3,4,5$.  This means that there is a unique local model with $6$-dimensional symmetry algebra in this branch whose  symmetry algebra has Maurer-Cartan equations \eqref{Jnotzerosyst}.

There may be homogeneous models with $5$-dimensional symmetry algebra in this branch as well.

\subsubsection{The branch $J=0$, $L\neq 0$}
For integrable structures, we have seen that $L$ defines a relative invariant. We shall assume here that it be nowhere vanishing. Similar as before, this assumption allows us to perform normalisations. We normalize the coefficient
$T^1_{02}$
  to zero, and the coefficient $T^1_{03}$
   to $1$. On the subbundle determined by these normalizations, $\theta^6$ and $\theta^8$ are expressible in terms of the remaining forms, which constitute a coframe. Looking at $\der\theta^3$ (with the expressions for $\theta^6$ and $\theta^8$ inserted) we now see that the coefficient at the $\theta^1\wedge\theta^3$ term can be normalized to zero.
Together, these normalizations determine a $6$-dimensional subbundle $\mathcal{G}^6\subset\mathcal{G}^9$ defined by
$$s_7=\tfrac{M}{2 L^{\frac{4}{5}} {s_5}^{\frac{1}{5}}},\quad   \delta=-L^{\frac{1}{5}} {s_5}^{\frac{4}{5}},\quad s_4=-\tfrac{8 L^2 L_{\omega^1} +16 c L^2 M -4 L L_{\omega^2} M +8 bL M^2 +2 L_{\omega^3} M^2 +a M^3}{8 L^{\frac{12}{5}}{s_5}^{\frac{3}{5}} },$$
on which (the pullbacks of) the forms $\theta^0, \theta^1,\theta^2, \theta^3, \theta^4, \theta^5$ define a coframe.

Now, if there were homogeneous structures with $6$-dimensional symmetry algebra in this branch, then for these structures all of the structure functions of the structure equations with respect to the coframe $(\theta^0, \theta^1,\theta^2, \theta^3, \theta^4, \theta^5)$ on $\mathcal{G}^6$ must be constant. However, this assumption leads to a contradiction, and we conclude that there are no homogeneous models with $6$-dimensional symmetry algebra in this branch.

It turns out that there are structures with $5$-dimensional transitive symmetry algebra in this branch, and below we describe how to find them.
The structure equations lead us to distinguish two subclasses of structures, those for which the relative invariant $M-P$ vanishes and those for which it does not vanish.

 We first consider the class of structures for which $M-P\neq 0$, which allows us to normalize
 the coefficient at the $\theta^0\wedge\theta^3$ term of $\der\theta^2$. This determines a $5$-dimensional subbundle of $\mathcal{G}^6\to \mathcal{U}$, and thus a rigid coframe $\theta^0,\theta^1,\theta^2,\theta^3,\theta^4$ on $\mathcal{U}$. However, assuming that the structure equations with respect to this coframe have only constant structure functions quickly leads to a contradiction, and we conclude that there are no homogeneous structures in this branch.

We shall henceforth assume that $M-P=0$.  In this case, the structure equations exhibit a new relative invariant, namely $5bL+2L_{\omega^3}$. This leads us to branch further into the subclass of structures for which $5bL+2L_{\omega^3}$ is vanishing and the subclass for which is non-vanishing.
Assuming that $5bL+2L_{\omega^3}\neq 0$  allows us to normalize, namely we normalize the coefficient of $\der\theta^1$ at the $\theta^1\wedge\theta^3$ term to $\frac{3}{8}$. This determines a $5$-dimensional subbundle $\mathcal{G}^5\subset\mathcal{G}^6,$ given by
$$s_5=\left(\tfrac{(5b L+2 L_{\omega^3})^5}{L^7}\right)^{\frac{1}{3}},$$ and a rigid coframe $\theta^0,\theta^1,\theta^2,\theta^3,\theta^4$ on $\mathcal{U}$.  Assuming that all of the structure functions with respect to this coframe  are constant and using that $\der^2=0$, we find that there is a locally unique  homogeneous model with $5$-dimensional symmetry algebra in this branch. It has  Maurer-Cartan equations
\begin{equation}\begin{aligned}\label{J=0Lneq0}
&\der\theta^0 = -\tfrac{5}{6} \theta^0\wedge\theta^3 -24 \theta^0\wedge \theta^4 + \theta^1\wedge\theta^4 -3 \theta^2\wedge\theta^3\\
&\der\theta^1 =   \theta^0\wedge\theta^3 -\tfrac{2}{3}\theta^1\wedge\theta^3 -30 \theta^1\wedge\theta^4\\
&\der\theta^2 = -\tfrac{1}{2}  \theta^2\wedge\theta^3 -18 \theta^2\wedge\theta^4\\
&\der\theta^3 = -6\theta^3\wedge\theta^4 \\
&\der\theta^4=\tfrac{1}{6} \theta^3\wedge\theta^4.\\
 \end{aligned}\end{equation}
Further analysis shows that there are no homogeneous models in the branch $5bL+2L_{\omega^3}=0$.

\subsubsection{The branch $J=L=0$, $M\neq0$}



Looking at \eqref{5structureeq}, we see that under the assumption $J=L=0$, the coefficient  $T^1_{02}$ reads $\tfrac{{s_5}^2 M}{\delta^4}$. This shows  that the sign of $M$ is an invariant, and  we normalize this coefficient to $\epsilon=\mathrm{sign}(M)$.
More precisely, we restrict to a hypersurface $\mathcal{G}^8$ in $\mathcal{G}^9$ defined by  $$s_5=\tfrac{\delta^2}{\sqrt{\epsilon M}}.$$
We pullback the $1$-forms $\theta^0, \theta^1, \theta^2, \theta^3,\theta^4, \theta^5, \theta^6, \theta^8, \theta^{12}$ to $\mathcal{G}^8$, and find that on this hypersurface $\theta^8$ is linearly dependent on the other $1$-forms.

 Having done that, we compute $\der \theta^i$, for all $i=0,1,2,3,4,5,6,12,$ on $\mathcal{G}^8$  in terms of the basis of $2$-forms  $\theta^i\wedge\theta^j$.
 Inspecting the system shows that the coefficient of $\der\theta^5$ at the $\theta^2\wedge\theta^3$ term reads
 $$\tfrac{\sqrt{\epsilon M} Q+6 \delta P s_7}{2\delta^3\sqrt{\epsilon M}}.$$
  We now branch according to whether $P$ vanishes or not.

  Assuming that $P\neq 0$, allows to normalize the above coefficient to zero. This determines a $7$-dimensional subbundle $\mathcal{G}^7$ of $\mathcal{G}^8$, given by $s_7= -\tfrac{\sqrt{\epsilon M} Q}{6 \delta P}.$ We pullback the forms $\theta^i$, $i=0,1,2,3,4,5,6,12,$ and express $\theta^6$ as a combination of the remaining forms.  We compute the structure equations with respect to the coframe on $\mathcal{G}^7$, and note that we can now normalize the coefficient
  of $\der\theta^4$ at the $\theta^2\wedge\theta^3$ term to zero. This determines a $6$-dimensional subbundle $\mathcal{G}^6$ of $\mathcal{G}^7$, given by
  $$s_4 =\tfrac{(\epsilon M)^{\frac{3}{2}} (60 c M P Q - 30 M_{\omega^2} P Q + 180 c P^2 Q + 126 P P_{\omega^2} Q +  84 a_{\omega^0} Q^2 + 84 a a_{\omega^1} Q^2 - 21 b^3 Q^2 - 21 b M Q^2 + 81 b P Q^2 +  2 a Q^3 - 72 P^2 Q_{\omega^2})}{432 (\delta P)^3},$$
  on which we express $\theta^{12}$ in terms of the remaining forms. Moreover, as a consequence of the assumption that $P\neq 0,$ we have $2 c M-M_{\omega^2}\neq 0.$ This  allows to further normalize the coefficient of $\der\theta^1$ at the $\theta^1\wedge\theta^2$ term (with respect to the coframe on $\mathcal{G}^6$) to any non-zero value, and we shall normalize it to $12$. This determines a $5$-dimensional subbundle $\mathcal{G}^5\subset\mathcal{G}^6$, given by $\delta=\left(\tfrac{2 c M-M_{\omega^2}}{8\sqrt{\epsilon M}}\right)^{\frac{1}{3}}$. We have now obtained
   a unique coframe on the $5$-manifold $\M$. Inspecting the structure equations of this coframe shows that there are two locally non-equivalent homogeneous models  with $5$-dimensional symmetry algebras in this branch, whose Maurer-Cartan equations read
\begin{equation}\begin{aligned}\label{J=0L=0Mneq0Pneq0}
&\der\theta^0 = -\tfrac{15}{2} \theta^0\wedge\theta^2 -\tfrac{1}{6}\epsilon \theta^0\wedge \theta^4 + \theta^1\wedge\theta^4 -3 \theta^2\wedge\theta^3\\
&\der\theta^1 =  \epsilon \theta^0\wedge\theta^2 -3 \theta^1\wedge\theta^2 -\tfrac{1}{3}\epsilon \theta^1\wedge\theta^4\\
&\der\theta^2 = \tfrac{1}{4}  \theta^0\wedge\theta^1 -\tfrac{1}{12}\epsilon \theta^0\wedge\theta^3-\tfrac{1}{2}\theta^1\wedge\theta^3-\tfrac{1}{6}\epsilon\theta^2\wedge\theta^4 \\
&\der\theta^3 = \tfrac{9}{2}\theta^0\wedge\theta^2+\tfrac{1}{6}\epsilon\theta^0\wedge\theta^4+9\epsilon\theta^1\wedge\theta^2+3\theta^2\wedge\theta^3 \\
&\der\theta^4=-\tfrac{27}{4}\epsilon \theta^0\wedge\theta^1+\tfrac{9}{4}\theta^0\wedge\theta^3+\tfrac{27}{2}\epsilon\theta^1\wedge\theta^3+\tfrac{9}{2}\theta^2\wedge\theta^4\\
 \end{aligned}\end{equation}
  where $\epsilon=\pm 1$. For these structures $3P-2M=0$.

 Next we assumes that $P=0$. Analysing the differential consequences of this assumption, we obtain that for such structures
 $$\der\theta^1=-\tfrac{9 Q}{2\delta^3}\theta^0\wedge\theta^1+\epsilon\theta^0\wedge\theta^2+\tfrac{3(2 c M-M_{\omega^2})}{2\delta^3\sqrt{\epsilon M}}\theta^1\wedge\theta^2-12\theta^1\wedge\theta^5.$$
 In particular, we can further branch into those structures for which $Q$ vanishes and  those for which it does not vanish. The assumption  $Q\neq 0$ allows to perform further normalizations, which determine a unique coframe on the $5$-dimensional manifold. Further analysis shows that there are no homogeneous models in this branch.

 On the other hand,  assuming that $Q=0$ and analyzing the differential consequences  one obtains also that $R=S=0$. The only structures satisfying these assumptions are the submaximally symmetric structures, with structure equations
 \begin{equation}\label{submaxsyst}
\begin{aligned}
&\der\theta^0 =-6\theta^0\wedge\theta^5+\theta^1\wedge\theta^4-3\theta^2\wedge\theta^3\\
&\der\theta^1 = \epsilon \theta^0\wedge\theta^2-12\theta^1\wedge\theta^5  \\
&\der\theta^2 =\tfrac{3}{4} \epsilon \theta^0\wedge\theta^3+\theta^1\wedge\theta^6-6\theta^2\wedge\theta^5\\
&\der\theta^3 =\tfrac{1}{2}\epsilon\theta^0\wedge\theta^4+2\theta^2\wedge\theta^6 \\
&\der\theta^4=6\theta^0\wedge\theta^{12}+3\theta^3\wedge\theta^6+6\theta^4\wedge\theta^5\\
&\der\theta^5=-\tfrac{1}{12}\epsilon\theta^0\wedge\theta^6-\theta^1\wedge\theta^{12}+\tfrac{1}{12}\epsilon\theta^2\wedge\theta^4\\
&\der\theta^6=6\theta^2\wedge\theta^{12}-\tfrac{3}{4}\epsilon\theta^3\wedge\theta^4-6\theta^5\wedge\theta^6\\
&\der\theta^{12}=\tfrac{1}{6}\epsilon\theta^4\wedge\theta^6-12\theta^5\wedge\theta^{12}.\\
\end{aligned}
\end{equation}
These are Maurer-Cartan equations for $\mathfrak{sl}(3,\mathbb{R})$ if $\epsilon<0$ and Maurer-Cartan equations for $\mathfrak{su}(2,1)$ if $\epsilon> 0$.

\subsubsection{The branch $J=L=M=0$, $P\neq0$}
Looking at the structure equations \eqref{5structureeq}, we see that under the assumptions $J=L=M=0$ and $P\neq0$ we can normalize the coefficient  $T^3_{03}$
 to zero, and then, on the subbundle determined by this reduction, express $\theta^6$ in terms of the other forms. Having done that,  we compute $\der\theta^4$   and normalize the coefficient at the $\theta^2\wedge\theta^3$ term to zero,   and then we  normalize the  coefficient at the $\theta^0\wedge\theta^3$ term in $\der\theta^2$ to $-\tfrac{5 \epsilon}{4}$, where $\epsilon=\mathrm{sign}(P)$. These normalizations determine a $6$-dimensional subbundle $\mathcal{G}^6\subset\mathcal{G}^9$
 on which  $\theta^6, \theta^8, \theta^{12}$ are expressible in terms of the remaining forms $\theta^0,\dots,\theta^5$, which form a coframe. Assuming that the structure equations have only constant coefficients yields a contradiction, and we conclude that there are no homogeneous models with $6$-dimensional symmetry algebra in this branch. There may be models with $5$-dimensional transitive symmetry algebra in this branch.

%

 \subsubsection{The branch $J=0$, $L=0$, $M=0$, $P=0$, $Q\neq 0$}
Here the assumptions allow to normalize the coefficient at $\theta^0\wedge\theta^2$ of $\der\theta^3$ to zero, the coefficient at $\theta^0\wedge\theta^3$ of $\der\theta^3$ to one, and the coefficient at $\theta^0\wedge\theta^2$ of $\der\theta^6$ to zero.  This determines a $6$-dimensional subbundle $\mathcal{G}^6\subset\mathcal{G}^9$  given by
$$s_7=-\tfrac{R}{Q^{\frac{2}{3}}s_5},\quad s_5=-Q^{\frac{1}{3}},\quad s_4=\tfrac{2a_{\omega^1\omega^1}Q^2-4c^3Q^2+8cQ^2R+2QQ_{\omega^2}R-3bQR^2+2aR^3-2Q^2R_{\omega^2}-3bQ^2S}{2Q^2{s_5}^3}.$$  We express the pullbacks of the forms $\theta^5, \theta^6$ and $\theta^{12}$ to $\mathcal{G}^6$  in terms of $\theta^0, \theta^1, \theta^2, \theta^3, \theta^4, \theta^8$. Assuming that the structure equations have only constant coefficients then quickly implies that they are of the form
 \begin{equation}\begin{aligned}\label{J=0L=0M=0P=0Qneq0}
&\der\theta^0 =  \theta^1\wedge\theta^4 -3 \theta^2\wedge\theta^3\\
&\der\theta^1 =   \tfrac{1}{2}\theta^0\wedge\theta^1  -3 \theta^1\wedge\theta^8\\
&\der\theta^2 =   \tfrac{1}{2}\theta^0\wedge\theta^2  - \theta^2\wedge\theta^8\\
&\der\theta^3 =   -\tfrac{1}{2}\theta^0\wedge\theta^3  + \theta^3\wedge\theta^8\\
&\der\theta^4=  -\tfrac{1}{2}\theta^0\wedge\theta^4  +3 \theta^4\wedge\theta^8\\
&\der\theta^8=  -\tfrac{1}{2}\theta^1\wedge\theta^4  +\tfrac{1}{2} \theta^2\wedge\theta^3\,.
\end{aligned}\end{equation}
This system is closed, and can be viewed as the Maurer-Cartan equations of $\mathfrak{sl}(2,\mathbb{R})\oplus\mathfrak{sl}(2,\mathbb{R})$ with respect to a basis of left-invariant forms.  In particular, there is a locally unique maximally symmetric homogeneous model in this branch with $6$-dimensional symmetry algebra isomorphic to $\mathfrak{sl}(2,\mathbb{R})\oplus\mathfrak{sl}(2,\mathbb{R})$.

There may be homogeneous models with $5$-dimensional symmetry algebras in this branch  as well.

\subsection{Summary}

\tikzset{
  round/.style  = {circle,draw},
  notround/.style   = {draw, rounded corners=6pt, thin,align=center,
                   text width=6em},
   longer/.style   = {draw, rounded corners=6pt, thin,align=center,
                   text width=8em},
    shorter/.style   = {draw, rounded corners=6pt, thin,align=center,
                   text width=3em}
}


\begin{table}[h] \label{table}
\caption{The following graph shows the maximal symmetry dimension for homogeneous models in various branches of marked contact Engel structures. }
\centering
\begin{tikzpicture}[
  level 1/.style={sibling distance=30mm},
  >=latex]

\node [circle,draw] (j){$J$}
  child {node [round] (l) {$L$}
  child {node [round]  (m) {$M$}
  child {node [round]  (p) {$P$}
   child {node [round]  (q) {$Q$}
   child {node [round]  (r) {$R$}
   child {node[round]  (u) {$S$}
   child {node [notround] (max) {maximal symmetry (Theorem \ref{main})}
  }
  child {node [notround] (g) {no homog. model}
};
  }
child {node [notround] (f) {no homog. model}
};
}
child {node [notround] (e) {
6-dim. symmetry (\ref{J=0L=0M=0P=0Qneq0})}
};
  }
child {node [shorter] (d) {dim. < 6}
};
  }
child {node [round] (c) {$P$}
child[grow = down,level distance=2.3cm]{node[round] (q2){$Q$}
child{node[notround](submax){submaximal symmetry (\ref{submaxsyst})}
}
child{node[notround](k){no homog. model}
};
}
child
[grow = right,level distance=3.5cm]
{node [longer] (h){
5-dim. symmetry (\ref{J=0L=0Mneq0Pneq0})}
}
};
  }
child {node [notround] (b) {
5-dim. symmetry (\ref{J=0Lneq0}) }
};
}
child {node [longer] (a) {
6-dim. symmetry (\ref{Jnotzerosyst})}
};
 \path (j) edge node[anchor=south, above] {$= 0\quad\quad$} (l);
  \path (j) edge node[anchor=north, above] {$\quad\quad\neq 0$} (a);
    \path (l) edge node[anchor=south, above] {$= 0\quad\quad$} (m);
  \path (l) edge node[anchor=north, above] {$\quad\quad\neq 0$} (b);
     \path (m) edge node[anchor=south, above] {$= 0\quad\quad$} (p);
  \path (m) edge node[anchor=north, above] {$\quad\quad\neq 0$} (c);
     \path (p) edge node[anchor=south, above] {$= 0\quad\quad$} (q);
  \path (p) edge node[anchor=north, above] {$\quad\quad\neq 0$} (d);
\path (q) edge node[anchor=south, above] {$= 0\quad\quad$} (r);
  \path (q) edge node[anchor=north, above] {$\quad\quad\neq 0$} (e);
   \path (r) edge node[anchor=south, above] {$= 0\quad\quad$} (u);
  \path (r) edge node[anchor=north, above] {$\quad\quad\neq 0$} (f);
    \path (u) edge node[anchor=south, above] {$= 0\quad\quad$} (max);
  \path (u) edge node[anchor=north, above] {$\quad\quad\neq 0$} (g);
   \path (c) edge node[anchor=south, above] {$= 0\quad\quad$} (q2);
  \path (c) edge node[anchor=north, above] {$\neq 0$} (h);
   \path (q2) edge node[anchor=south, above] {$= 0\quad\quad$} (submax);
  \path (q2) edge node[anchor=north, above] {$\quad\quad\neq 0$} (k);
\end{tikzpicture}

\end{table}

We summarize the main results of this section in the following theorem:
 \begin{theorem}\label{thmsummary}
 \begin{itemize}

 \

 \item Up to local equivalence, there is a unique homogeneous marked contact Engel structure with $9$-dimensional  infinitesimal symmetry algebra. The infinitesimal symmetry algebra is isomorphic to $\mathfrak{p}_1$. The structure is characterized by $$J=L=M=P=Q=R=S=0.$$

\item Up to local equivalence, there are precisely two homogeneous marked contact Engel structures with $8$-dimensional infinitesimal symmetry algebra. The infinitesimal symmetry algebras are isomorphic to $\mathfrak{sl}(3,\mathbb{R})$ and $\mathfrak{su}(1,2)$, respectively.
The structures are characterized by  $$J=L=P=Q=0\quad \mbox{and}\quad M\neq0.$$

\item There are no homogeneous marked contact Engel structure with $7$-dimensional infinitesimal symmetry algebra.

\item Up to local equivalence, there are precisely two homogeneous marked contact Engel structures with $6$-dimensional  infinitesimal symmetry algebras. The respective Maurer-Cartan equations are given in \eqref{Jnotzerosyst} and \eqref{J=0L=0M=0P=0Qneq0}; the second symmetry algebra is isomorphic to $\mathfrak{sl}(2,\mathbb{R})\oplus\mathfrak{sl}(2,\mathbb{R})$.

\item There are examples of homogeneous marked contact Engel structures with $5$-dimensional  infinitesimal symmetry algebra, whose Maurer-Cartan equations are given in \eqref{J=0Lneq0} and \eqref{J=0L=0Mneq0Pneq0}.
\end{itemize}
 \end{theorem}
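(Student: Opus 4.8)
The plan is to assemble Theorem~\ref{main} with the branch-by-branch reductions carried out in Section~\ref{sec_tree} and recorded in Table~\ref{table}. The organizing principle is this: by Theorem~\ref{main} the conditions $J=0$, $J=L=0$, \dots, $J=L=M=P=Q=R=S=0$ form a nested chain of invariant conditions, so every homogeneous marked contact Engel structure lies in exactly one branch of Table~\ref{table}, determined by which of $J,L,M,P,Q,R,S$ first fails to vanish identically. Within each branch, Cartan's reduction, continued from the $9$-dimensional bundle $\mathcal{G}^9$ with its canonical coframe of Proposition~\ref{propo_coframe}, terminates in a rigid coframe on a bundle $\mathcal{G}^i$ of some dimension $i\le 9$. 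A homogeneous structure in that branch then has infinitesimal symmetry algebra of dimension at most $i$, with equality exactly when all structure functions of the $\mathcal{G}^i$-coframe are constant; imposing constancy together with $\der^2=0$ (the Jacobi identity for the prospective Lie algebra) either pins down the Maurer--Cartan equations of the maximally symmetric model in the branch or forces a contradiction, in which case no homogeneous model of dimension $i$ exists. I would therefore traverse the tree of Table~\ref{table} and read off the five assertions.

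For dimension $9$: part~(2) of Theorem~\ref{main} gives that a structure has a $9$-dimensional symmetry algebra if and only if it is flat if and only if $J=L=M=P=Q=R=S=0$, the algebra being $\mathfrak{p}_1$, and local uniqueness of the flat model is part of that theorem. For dimension $8$: the only branch whose reduction bottoms out above dimension $6$ is the submaximal branch $J=L=0$, $M\ne 0$, which lands on $\mathcal{G}^8$; within it the sub-branches $P\ne 0$ and $P=0,Q\ne 0$ reduce further to a rigid coframe on $\mathcal{M}^5$, so any homogeneous structure there has symmetry dimension at most $5$, whereas the sub-branch $P=Q=0$ consists of exactly the two structures with Maurer--Cartan equations~\eqref{submaxsyst} --- isomorphic to $\mathfrak{sl}(3,\mathbb{R})$ and $\mathfrak{su}(2,1)\cong\mathfrak{su}(1,2)$, characterised by $J=L=P=Q=0$ and $M\ne 0$, and both $8$-dimensional. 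Since the reductions of Section~\ref{sec_tree} show that every other branch either contains no homogeneous structure or one with canonical coframe on a bundle of dimension $\le 6$, these two are the only $8$-dimensional homogeneous structures, and none is $7$-dimensional.

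For dimension $6$: the only branches whose reduction ends on a $\mathcal{G}^6$ carrying a \emph{closed} constant-coefficient system are $J\ne 0$, giving~\eqref{Jnotzerosyst}, and $J=L=M=P=0$, $Q\ne 0$, giving~\eqref{J=0L=0M=0P=0Qneq0} $\cong\mathfrak{sl}(2,\mathbb{R})\oplus\mathfrak{sl}(2,\mathbb{R})$; in the remaining branches ($J=0,L\ne 0$; $J=L=M=0,P\ne 0$; $J=L=M=P=Q=0,R\ne 0$; $J=L=M=P=Q=R=0,S\ne 0$) the constancy hypothesis is contradictory, so no $6$-dimensional homogeneous model arises there. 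For the $5$-dimensional examples I would exhibit~\eqref{J=0Lneq0} in the branch $J=0$, $L\ne 0$ and~\eqref{J=0L=0Mneq0Pneq0} in the branch $J=L=0$, $M\ne 0$, $P\ne 0$, both constructed in Section~\ref{sec_tree}.

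The main obstacle is the branch-by-branch reduction itself: in each branch one must locate the relative invariant whose non-vanishing permits the next normalization, iterate until the coframe becomes rigid, and then determine whether the constant-coefficient structure equations close. Two further points need care: the dimension count --- that a homogeneous structure in a branch has symmetry dimension bounded by the dimension of the bundle carrying its canonical coframe, and that a closed constant-coefficient rigid coframe integrates, via Lie's third theorem, to a Lie group acting simply transitively with the prescribed structure constants --- and the identification of isomorphism types, distinguishing $\mathfrak{sl}(3,\mathbb{R})$ from $\mathfrak{su}(1,2)$ and $\mathfrak{sl}(2,\mathbb{R})\oplus\mathfrak{sl}(2,\mathbb{R})$ from the solvable $6$-dimensional algebra of~\eqref{Jnotzerosyst}, e.g. by the signature of the Killing form.
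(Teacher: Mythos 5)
Your proposal is correct and follows essentially the same route as the paper: Theorem~\ref{main} supplies the $9$-dimensional case, and the remaining assertions are read off from the branch-by-branch Cartan reductions of Section~\ref{sec_tree} summarized in Table~\ref{table}, exactly as you describe. The technical points you flag (the bound on the symmetry dimension by the dimension of the bundle carrying the rigid coframe, realizability of closed constant-coefficient systems via Lie's third theorem, and identification of the isomorphism types) are precisely the ingredients the paper relies on implicitly.
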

There may be  other, locally non-equivalent homogeneous marked contact Engel structures with $5$-dimensional symmetry algebra as well.

\section{Geometric characterizations of certain branches of  marked contact Engel structures}\label{sec_123}

In this section we geometrically interpret some of the invariant conditions on marked contact Engel structures from Theorem \ref{main}.
Namely, we shall see how the first three of these conditions can be understood as  properties of the filtration  $$\ell^{\sigma}\subset\mathcal{D}^{\sigma}\subset \mathcal{H}^{\sigma}\subset\mathcal{C}\subset T\M$$ from Proposition \ref{propfilt} associated with a marked contact Engel structure. We have already shown that the first condition for Theorem \ref{main}, $J=0$, is equivalent to the integrability of the rank two distribution $\mathcal{D}^{\sigma}$. In Section \ref{equivalent} we show that locally only two cases can occur:   either $\mathcal{D}^{\sigma}$ is indeed integrable, or $\mathcal{D}^{\sigma}$ is $(2,3,5)$ (see Definition \ref{235} below).
We further characterize the integrability of $\mathcal{D}^{\sigma}$ in terms of special properties, introduced in Section \ref{type}, of the line field $\ell^{\sigma}$.
Moreover, in Section \ref{morecond},  we show how to characterize, in the integrable case, further geometric conditions starting from $\ell^{\sigma}$.

\subsection{Various types of vector fields inside a contact distribution}\label{type}

Let $\M$ be a manifold with a distribution  $\mathcal{D}\subset T\M$. Taking Lie brackets of sections of $\mathcal{D}$ defines a filtration of the tangent bundle of $\M$,   called the (weak) derived flag $\mathcal{D}\subset\mathcal{D}'\subset\mathcal{D}''\subset\dots$ of $\mathcal{D}$, where
$$
\mathcal{D}'_x:=[\mathcal{D},\mathcal{D}]_x=\mathrm{Span}\{\xi_x, [\xi,\eta]_x \,:\, \xi, \eta \in\Gamma(\mathcal{D}) \},\,\,\mathcal{D}''_x:=[\mathcal{D},\mathcal{D}']_x=\mathrm{Span}\{\xi_x, [\xi,\eta]_x \,:\, \xi\in\Gamma(\mathcal{D}'),\,\eta\in\Gamma(\mathcal{D})  \}
$$ and so on.   The sequence $(\mathrm{dim}(\mathcal{D}_x), \mathrm{dim}(\mathcal{D}'_x),\dots)$ is called the growth (vector) at a point $x\in \M$.

\begin{definition}\label{235}
We say that a distribution  $\mathcal{D}$ on a $5$-dimensional manifold $\M$ is $(2,3,5)$ if its growth vector is $(2,3,5)$, i.e.,  $\mathrm{dim}(\mathcal{D}_x)=2$, $\mathrm{dim}(\mathcal{D}'_x)=3$ and $\mathrm{dim}(\mathcal{D}''_x)=5$ at all points $x\in\M$.
\end{definition}

Let us focus on a $5$-dimensional contact manifold $(\M,\mathcal{C})$, where, locally, $\mathcal{C}=\ker\theta$ for a contact form $\theta$. Let $\mathcal{D}\subset\mathcal{C}$ be a $2$-dimensional Legendrian distribution. The growth of $\mathcal{D}$ is strictly related to the notion of \emph{type} (see \cite{GianniDGA,GianniAnnales} for more details) of a vector field inside $\mathcal{C}$  defined below.
\begin{definition}
The \emph{type} of a vector field $Y\in\Gamma(\mathcal{C})$ is the rank of the following system:
$$
\theta\,,\,\, \mathcal{L}_Y(\theta)\,,\,\, \mathcal{L}_Y^2(\theta)\,,\,\,\mathcal{L}_Y^3(\theta)\,.
$$
\end{definition}
Note that, due to the complete non-integrability of the contact distribution, one cannot have vector field of type $1$. Note also that the type depends neither on the choice of $\theta$ nor on the length of $Y$, i.e. it is well defined the type of a line distribution contained in the contact distribution $\mathcal{C}$. By choosing a contact form $\theta$,
any $1$-form $\alpha$ on $\M$ determines a vector field $Y_\alpha$ lying in the contact distribution by the relations
$$
\mathcal{L}_{Y_\alpha}(\theta) = d\theta(Y_\alpha,\,\cdot\,) = \alpha - \alpha(Z)\theta\,,\quad \theta(Y_\alpha) = 0\,,
$$
where $Z$ is the Reeb vector field associated with  $\theta$. Although $Y_\alpha$ depends on the choice of $\theta$, its direction does not.
In the case $\alpha=df$ where $f\in C^\infty(\M)$, we simply write $Y_f$ instead of $Y_{df}$ and it will be called the \emph{Hamiltonian vector field associated with $f$}. Hamiltonian vector fields are a special kind of vector field of type 2.
 We quote the following propositions, whose proofs are contained in \cite{GianniDGA}.  We shall use them in Sections \ref{equivalent} and \ref{morecond} for a geometrical interpretation of some invariants of a marked contact Engel structure.
\begin{proposition}[\cite{GianniDGA}]\label{prop.equiv.type}
The following statements are equivalent.
\begin{enumerate}
\item The vector field $Y\in\mathcal{C}$ is of type $2$.
\item $Y$ is a characteristic symmetry of the distribution $Y^\perp$.
\item the derived distribution $(Y{^\perp})'$ has dimension 4.
\end{enumerate}
\end{proposition}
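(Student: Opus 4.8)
The plan is to fix a local contact form, record the two elementary facts that make the type well behaved, and then close the chain $(1)\Leftrightarrow(2)$ and $(2)\Leftrightarrow(3)$ by Cartan calculus plus a single piece of symplectic linear algebra; throughout one restricts to an open set on which all distributions below have locally constant rank, so that ``type $2$'' is a genuine pointwise condition, in keeping with the local conventions of the paper. Write $\mathcal{C}=\ker\theta$, let $Z$ be the Reeb field, and set $\beta:=\mathcal{L}_Y(\theta)$. Since $\theta(Y)=0$, Cartan's formula gives $\beta=\iota_Y\mathrm{d}\theta$. Because $\mathrm{d}\theta|_{\mathcal{C}}$ is non-degenerate and $Y$ is nowhere zero, $\beta\neq 0$, and since $\beta(Z)=\mathrm{d}\theta(Y,Z)=0\neq 1=\theta(Z)$ the forms $\theta$ and $\beta$ are everywhere independent; hence the type is always $\geq 2$, and $Y^{\perp}=\ker\theta\cap\ker\beta$ is the rank $3$, coisotropic $\mathrm{d}\theta$-orthogonal of the isotropic line $\ell=\langle Y\rangle$ inside $\mathcal{C}$, with $Y\subset Y^{\perp}$.

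For $(1)\Leftrightarrow(2)$, I would argue that the local flow of $Y$ preserves $Y^{\perp}=\{\theta,\beta\}^{0}$ if and only if $\mathcal{L}_Y(\theta)$ and $\mathcal{L}_Y(\beta)=\mathcal{L}_Y^{2}(\theta)$ lie in $\mathrm{Span}(\theta,\beta)$; the first membership is automatic because $\mathcal{L}_Y(\theta)=\beta$, so, together with $Y\in\Gamma(Y^{\perp})$, being a characteristic symmetry of $Y^{\perp}$ is exactly the condition $\mathcal{L}_Y^{2}(\theta)\in\mathrm{Span}(\theta,\beta)$. Then I would observe that if $\mathcal{L}_Y^{2}(\theta)=f\theta+g\beta$ for functions $f,g$, applying $\mathcal{L}_Y$ once more keeps $\mathcal{L}_Y^{3}(\theta)$ inside $\mathrm{Span}(\theta,\beta)$ as well; since $\{\theta,\beta\}$ already has rank $2$, this shows that the type equals $2$ precisely when $\mathcal{L}_Y^{2}(\theta)\in\mathrm{Span}(\theta,\beta)$, i.e. precisely under $(2)$.

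For $(2)\Leftrightarrow(3)$, set $\mathcal{W}=Y^{\perp}$. From $\mathrm{d}\alpha(\xi,\eta)=-\alpha([\xi,\eta])$ for $\alpha\in\Gamma(\mathcal{W}^{0})=\mathrm{Span}(\theta,\beta)$ and $\xi,\eta\in\Gamma(\mathcal{W})$, one gets $(\mathcal{W}')^{0}=\{\alpha\in\Gamma(\mathcal{W}^{0}):\mathrm{d}\alpha|_{\Lambda^{2}\mathcal{W}}=0\}$. Now $\mathrm{d}\theta|_{\Lambda^{2}\mathcal{W}}$ is the ambient symplectic form restricted to the coisotropic $3$-plane $\mathcal{W}$, hence nonzero with one-dimensional kernel $\ell=\langle Y\rangle$; in particular $\theta\notin(\mathcal{W}')^{0}$, so $\dim\mathcal{W}'\geq 4$, and $\dim\mathcal{W}'=4$ iff some $\mu\theta+\nu\beta$ with $\nu\neq 0$ satisfies $\mathrm{d}(\mu\theta+\nu\beta)|_{\Lambda^{2}\mathcal{W}}=0$. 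As $\mathrm{d}\mu\wedge\theta$ and $\mathrm{d}\nu\wedge\beta$ vanish on $\Lambda^{2}\mathcal{W}$, this is equivalent to $\mathrm{d}\beta|_{\Lambda^{2}\mathcal{W}}$ being a scalar multiple of $\mathrm{d}\theta|_{\Lambda^{2}\mathcal{W}}$. On the other hand $\mathcal{L}_Y^{2}(\theta)=\iota_Y\mathrm{d}\beta$, so $(2)$ says exactly that $Y$ lies in the kernel of $\mathrm{d}\beta|_{\Lambda^{2}\mathcal{W}}$. To finish, I would invoke the elementary fact that a $2$-form on the $3$-dimensional $\mathcal{W}$ having the nonzero vector $Y$ in its kernel is either $0$ or has kernel exactly $\ell$, and in either case descends to the $2$-dimensional quotient $\mathcal{W}/\ell$, where $2$-forms form a $1$-dimensional space; thus it is proportional to $\mathrm{d}\theta|_{\Lambda^{2}\mathcal{W}}$, giving $(2)\Rightarrow(3)$, while $(3)\Rightarrow(2)$ is immediate since $Y$ already annihilates $\mathrm{d}\theta|_{\Lambda^{2}\mathcal{W}}$.

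The structural heart of the argument, and the only place where something nontrivial happens, is the symplectic-linear-algebra observation that $\mathrm{d}\theta$ restricted to the coisotropic $3$-plane $Y^{\perp}$ has kernel exactly $\langle Y\rangle$; once that is in hand, both equivalences reduce to Cartan's formula and the characterization of $(\mathcal{W}')^{0}$. The main obstacle I anticipate is bookkeeping rather than substance: one must be careful to work on an open dense set where $Y^{\perp}$, its derived distribution, and the span of $\theta,\mathcal{L}_Y(\theta),\dots$ all have locally constant rank, so that the three conditions are comparable pointwise, and to keep track that ``characteristic symmetry'' bundles together both $Y\in\Gamma(Y^{\perp})$ and flow-invariance of $Y^{\perp}$.
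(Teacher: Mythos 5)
The paper does not actually prove this proposition: it is quoted verbatim from the reference [2] (Alonso-Blanco--Manno--Pugliese), with the remark that ``the proofs are contained in \cite{GianniDGA}''. So there is no in-paper argument to compare against; your proposal has to stand on its own, and it does. The reduction of both (1) and (2) to the single condition $\mathcal{L}_Y^2\theta\in\mathrm{Span}(\theta,\beta)$ is correct (the closure under a further $\mathcal{L}_Y$, which rules out type $3$ arising only at the third Lie derivative, is the step one could most easily forget, and you handle it); the identification $(\mathcal{W}')^{0}=\{\alpha\in\Gamma(\mathcal{W}^{0}):\mathrm{d}\alpha|_{\Lambda^2\mathcal{W}}=0\}$ is the standard dual description of the derived flag; and the symplectic fact that $\mathrm{d}\theta|_{\mathcal{W}}$ on the coisotropic $3$-plane $\mathcal{W}=Y^{\perp}$ has kernel exactly $\langle Y\rangle$ correctly forces $\dim\mathcal{W}'\in\{4,5\}$ and makes the ``descend to $\mathcal{W}/\langle Y\rangle$'' argument close the loop between (2) and (3). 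The only caveats are the ones you already flag: $Y$ must be nowhere vanishing and one works where all the ranks are locally constant, which is consistent with the paper's local conventions.
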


\begin{proposition}[\cite{GianniDGA}]\label{prop.Hamiltionian.4.dim}
$Y$ is a multiple of a hamiltonian field $Y_f$ if and only if $(Y^\perp)'$ is $4$--dimensional and integrable.
\end{proposition}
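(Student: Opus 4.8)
The plan is to unwind everything to an explicit computation with the symplectic orthogonal inside the contact distribution. Writing $\mathcal{C}=\ker\theta$ as above, I would first record two elementary facts. One: the distribution $Y^{\perp}$ depends only on the direction spanned by $Y$, since rescaling $Y$ by a nowhere-vanishing function merely rescales the functional $W\mapsto \mathrm{d}\theta(Y,W)$ on $\mathcal{C}$ without changing its kernel, so $(gY)^{\perp}=Y^{\perp}$. Two: for a Hamiltonian field $Y_f$, characterised by $\mathrm{d}\theta(Y_f,\,\cdot\,)=\mathrm{d}f-(\mathrm{d}f(Z))\theta$ and $\theta(Y_f)=0$, one has $\mathrm{d}\theta(Y_f,W)=\mathrm{d}f(W)$ for all $W\in\mathcal{C}$, hence $Y_f^{\perp}=\mathcal{C}\cap\ker\mathrm{d}f=\ker\{\theta,\mathrm{d}f\}$. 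So the statement reduces to analysing the derived distribution of a corank-two distribution of the form $\ker\{\theta,\mathrm{d}f\}$ with $\theta$ contact.

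For the direction "$Y$ a multiple of $Y_f$ $\Rightarrow$ $(Y^{\perp})'$ four-dimensional and integrable", I would work with $\Delta:=Y_f^{\perp}=\ker\{\theta,\mathrm{d}f\}$, on which $\theta$ and $\mathrm{d}f$ induce a coframe of the rank-two quotient $T\mathcal{M}/\Delta$. Using Cartan's formula, for $\xi,\eta\in\Gamma(\Delta)$ the class of $[\xi,\eta]$ in $T\mathcal{M}/\Delta$ has $\theta$-component $-\mathrm{d}\theta(\xi,\eta)$ and $\mathrm{d}f$-component $0$. Since $\Delta$ is a three-dimensional (odd-dimensional) subspace of the symplectic space $(\mathcal{C},\mathrm{d}\theta|_{\mathcal{C}})$, the restriction $\mathrm{d}\theta|_{\Delta}$ has one-dimensional radical (spanned by $Y_f$) and hence rank two, in particular it is not identically zero; therefore the image of the bracket map $\Lambda^{2}\Delta\to T\mathcal{M}/\Delta$ is exactly the line $(\ker\mathrm{d}f)/\Delta$. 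Thus $\Delta'=\ker\mathrm{d}f$, which is four-dimensional and integrable because $\mathrm{d}(\mathrm{d}f)=0$. Combined with fact one, this gives $(Y^{\perp})'=\ker\mathrm{d}f$ for $Y=gY_f$.

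For the converse I would argue as follows. Given that $(Y^{\perp})'$ is four-dimensional and integrable, Frobenius writes it as $\ker\beta$ for a nowhere-vanishing $1$-form $\beta$, and since a codimension-one foliation is locally cut out by the level sets of a submersion, I may take $(Y^{\perp})'=\ker\mathrm{d}f$ for a suitable function $f$. Because $(Y^{\perp})'$ is integrable while the contact distribution $\mathcal{C}$ is not, $(Y^{\perp})'\neq\mathcal{C}$, so $(Y^{\perp})'\cap\mathcal{C}$ has rank three; since it contains the rank-three distribution $Y^{\perp}$, it equals $Y^{\perp}$, i.e. $Y^{\perp}=\mathcal{C}\cap\ker\mathrm{d}f=Y_f^{\perp}$. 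Finally, $Y^{\perp}$ and $Y_f^{\perp}$ are the kernels of the non-zero functionals $\mathrm{d}\theta(Y,\,\cdot\,)$, $\mathrm{d}\theta(Y_f,\,\cdot\,)$ on $\mathcal{C}$, hence these functionals are proportional, and non-degeneracy of $\mathrm{d}\theta|_{\mathcal{C}}$ forces $Y=\lambda Y_f$ with $\lambda$ nowhere zero.

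The main obstacle — really the only non-formal point — is the Frobenius-type input in the converse: upgrading "$(Y^{\perp})'$ integrable" to "$(Y^{\perp})'=\ker\mathrm{d}f$ for an honest function $f$", i.e. realising the leaf space locally by a submersion. Beyond that, the work is bookkeeping with the non-degeneracy hypotheses (that $Y$ and $Y_f$ are non-vanishing and that $\mathrm{d}f$ is not proportional to $\theta$, so that all distributions have the asserted ranks) together with linear symplectic algebra. I would also note that, by Proposition \ref{prop.equiv.type}, "$(Y^{\perp})'$ four-dimensional" is the same as "$Y$ of type $2$", so the content of the proposition is that the Hamiltonian directions inside $\mathcal{C}$ are precisely the type-$2$ directions whose four-dimensional derived distribution $(Y^{\perp})'$ is integrable.
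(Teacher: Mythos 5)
The paper does not actually prove this proposition: it is quoted verbatim from \cite{GianniDGA} (``whose proofs are contained in \cite{GianniDGA}''), so there is no in-paper argument to compare yours against. Judged on its own, your proof is correct and complete. The forward direction correctly reduces to $Y_f^{\perp}=\mathcal{C}\cap\ker \mathrm{d}f$, uses that the radical of $\mathrm{d}\theta$ restricted to this rank-three distribution is exactly the line spanned by $Y_f$ (so the bracket map onto $T\M/Y_f^{\perp}$ is nonzero with image contained in $(\ker\mathrm{d}f)/Y_f^{\perp}$), and concludes $(Y_f^{\perp})'=\ker\mathrm{d}f$, which is four-dimensional and integrable. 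The converse correctly invokes Frobenius to write the integrable corank-one distribution as $\ker\mathrm{d}f$ for a local submersion $f$, rules out $(Y^{\perp})'=\mathcal{C}$ by the contact condition (which also guarantees $\mathrm{d}f$ is nowhere proportional to $\theta$, hence $Y_f$ is nonvanishing), and then identifies $Y^{\perp}=Y_f^{\perp}$ as kernels of proportional functionals, forcing $Y=\lambda Y_f$ by nondegeneracy of $\mathrm{d}\theta|_{\mathcal{C}}$. The only point needing care --- that the ranks are as asserted, i.e.\ $Y$, $Y_f$ nonvanishing and $\mathrm{d}f\not\propto\theta$ --- you address explicitly, and the ``non-formal'' Frobenius input you single out is indeed the standard local existence of a first integral for a codimension-one foliation. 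Nothing further is needed.
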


\subsection{Equivalent descriptions of Integrability}\label{equivalent}
In this section we shall use the notions introduced in Section \ref{type} to provide equivalent descriptions of integrable marked contact Engel structures.

First, using the coordinate description \eqref{oscfilt} of the osculating filtration $\ell^{\sigma}\subset\mathcal{D}^{\sigma}\subset \mathcal{H}^{\sigma}\subset\mathcal{C}\subset T\M$ it is straightforward to verify the following Proposition.
\begin{proposition}\label{prop.integrability}
\
\begin{enumerate}
\item We always have an inclusion
$(\mathcal{D}^{\sigma})'\subset\mathcal{H}^{\sigma}.$
\item There exists a well-defined invariant map
$$
\Phi_{J}:\Lambda^2\mathcal{D}^{\sigma}\to\mathcal{H}^{\sigma}/\mathcal{D}^{\sigma},\quad \xi_x\wedge\eta_x\mapsto [\xi,\eta]_x\quad\mathrm{mod}\,\mathcal{D}^{\sigma}.
$$
whose vanishing is equivalent to  integrability of the distribution $\mathcal{D}^{\sigma}$.
\item In the parametrization \eqref{section}, integrability of $\mathcal{D}^{\sigma}$ is equivalent to
\begin{equation}\label{J=0eq}
J=-\xi_4(t)=(x^1+3tx^2)t_{x^0}+t^3 t_{x^1} - t^2 t_{x^2} + t t_{x^3} - t_{x^4}=0.
\end{equation}
\end{enumerate}
\end{proposition}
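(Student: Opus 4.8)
\textbf{Plan for the proof of Proposition \ref{prop.integrability}.}

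The three statements are, in essence, coordinate unwindings of the same fact, and the plan is to set up the single computation once and read off all three parts. First I would fix the frame $(\xi_0,\xi_1,\xi_2,\xi_3,\xi_4)$ from \eqref{frame1} and the dual coframe $(\omega^0,\omega^1,\omega^2,\omega^3,\omega^4)$ from \eqref{coframet}, recalling that by Lemma \ref{lemma1marked} we have $\mathcal{D}^{\sigma}=\mathrm{ker}(\omega^0,\omega^1,\omega^2)=\mathrm{Span}(\xi_3,\xi_4)$, $\mathcal{H}^{\sigma}=\mathrm{ker}(\omega^0,\omega^1)=\mathrm{Span}(\xi_2,\xi_3,\xi_4)$, and $\mathcal{C}=\mathrm{ker}(\omega^0)$. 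For part (1), I would check by direct Lie bracket computation in the coordinate expressions \eqref{frame1} that $[\xi_3,\xi_4]\in\mathrm{Span}(\xi_2,\xi_3,\xi_4)=\mathcal{H}^{\sigma}$, which amounts to verifying that the $\partial_{x^0}$ and $\partial_{x^1}$ coefficients of $[\xi_3,\xi_4]$ lie in the span of those of $\xi_2,\xi_3,\xi_4$; equivalently, that $\omega^0([\xi_3,\xi_4])=\omega^1([\xi_3,\xi_4])=0$. Since $\mathcal{H}^{\sigma}$ is intrinsically the symplectic orthogonal of $\ell^{\sigma}$ inside $\mathcal{C}$ (Proposition \ref{propfilt}) and $\mathcal{D}^{\sigma}$ is Legendrian, this inclusion is in fact coordinate-free: for $\xi,\eta\in\Gamma(\mathcal{D}^{\sigma})$, $\omega^0([\xi,\eta])=-\der\omega^0(\xi,\eta)$ vanishes because $\mathcal{D}^{\sigma}\subset\mathcal{C}$ and $\mathcal{D}^{\sigma}$ is Legendrian (so $\der\alpha|_{\mathcal{D}^{\sigma}}=0$), while $[\xi,\eta]\perp\ell^{\sigma}$ follows because $\ell^{\sigma}\subset\mathcal{D}^{\sigma}$ and one can show the Levi bracket of the tangent direction against $\mathcal{D}^{\sigma}$ lands in $\mathcal{D}^{\sigma}$ — I would give the short conformal-symplectic argument for this rather than rely on coordinates.

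For part (2), the quotient $\mathcal{H}^{\sigma}/\mathcal{D}^{\sigma}$ is a well-defined line bundle, and the bracket map $\Phi_J(\xi_x\wedge\eta_x)=[\xi,\eta]_x \bmod \mathcal{D}^{\sigma}$ is tensorial: replacing $\xi$ by $f\xi$ changes $[\xi,\eta]$ by $-(\eta f)\xi\in\Gamma(\mathcal{D}^{\sigma})$, which dies modulo $\mathcal{D}^{\sigma}$, and similarly for $\eta$; antisymmetry is automatic; and part (1) guarantees the image actually lies in $\mathcal{H}^{\sigma}/\mathcal{D}^{\sigma}$. Invariance under change of adapted coframe follows because $\mathcal{D}^{\sigma}$ and $\mathcal{H}^{\sigma}$ are intrinsic to the marked contact Engel structure (Proposition \ref{propfilt}). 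Then $\mathcal{D}^{\sigma}$ is integrable precisely when $[\Gamma(\mathcal{D}^{\sigma}),\Gamma(\mathcal{D}^{\sigma})]\subset\Gamma(\mathcal{D}^{\sigma})$, i.e. when $\Phi_J\equiv 0$, since $\Phi_J$ computes exactly the obstruction in the one missing direction $\mathcal{H}^{\sigma}/\mathcal{D}^{\sigma}$.

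For part (3), I would compute $[\xi_3,\xi_4]$ explicitly from \eqref{frame1} with $t=t(x^0,\dots,x^4)$ and extract its component along $\xi_2$ (the generator of $\mathcal{H}^{\sigma}/\mathcal{D}^{\sigma}$), obtaining, up to a nonvanishing factor, the function $-\xi_4(t)=(x^1+3tx^2)t_{x^0}+t^3t_{x^1}-t^2t_{x^2}+tt_{x^3}-t_{x^4}$; alternatively and more cleanly, I would use the structure equations \eqref{differentiatedcoframe1}, $\der\omega^1=3\,\der t\wedge\omega^2$, $\der\omega^2=2\,\der t\wedge\omega^3$, $\der\omega^3=\der t\wedge\omega^4$, to read off $\omega^2([\xi_3,\xi_4])=-\der\omega^2(\xi_3,\xi_4)=-2(\der t\wedge\omega^3)(\xi_3,\xi_4)=-2\,\xi_4(t)$, using $\omega^3(\xi_3)=1$, $\omega^3(\xi_4)=0$, together with $\der t(\xi_3)\omega^3(\xi_4)$ contributing nothing. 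Comparing with Proposition \ref{prop.integrability} (the earlier one, Proposition \ref{prop.integrability} with $J$) and Lemma \ref{lemma1marked}, the relation $\der\omega^2\wedge\omega^0\wedge\omega^1\wedge\omega^2=2J\,\omega^0\wedge\cdots\wedge\omega^4$ combined with $\der\omega^2=2\,\der t\wedge\omega^3$ identifies $J=-\xi_4(t)$, since expanding $\der t$ in the coframe only its $\omega^4$-coefficient, namely $-J$ by the formula $J=-t_{\omega^4}$ from the lemma following Lemma \ref{lemma3marked}, survives the wedge. The main obstacle is purely bookkeeping: keeping the identification of $J$ as the $\omega^4$-frame-derivative $-t_{\omega^4}$ consistent with the explicit coordinate vector-field expression $\xi_4(t)$, since $\xi_4$ is a nontrivial coordinate combination; once the dictionary between frame derivatives $t_{\omega^i}$ and the vector fields $\xi_i$ is written down, every identity is a one-line check.
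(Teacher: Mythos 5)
Your proposal is correct and follows essentially the route the paper intends: the paper offers no written proof of this Proposition beyond the remark that it is ``straightforward to verify'' from the coordinate description \eqref{oscfilt}, and the companion statement in Section \ref{CartanEquiv} is proved by exactly the wedge computation $\der\omega^2\wedge\omega^0\wedge\omega^1\wedge\omega^2=2J\,\omega^0\wedge\cdots\wedge\omega^4$ that you invoke; your identification $J=-t_{\omega^4}=-\xi_4(t)$ and the extraction of the $\xi_2$-component of $[\xi_3,\xi_4]$ match the paper's computation.

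One caution on a side remark in your treatment of part (1): the inclusion $[\xi,\eta]\in(\ell^{\sigma})^{\perp}$ for $\xi,\eta\in\Gamma(\mathcal{D}^{\sigma})$ is \emph{not} a formal consequence of ``$\mathcal{D}^{\sigma}$ Lagrangian and $\ell^{\sigma}\subset\mathcal{D}^{\sigma}$'' alone. In the Heisenberg model with contact form $\alpha=dz-p_1dq_1-p_2dq_2$, the Lagrangian distribution spanned by $X=\partial_{p_1}$ and $Y=\partial_{q_2}+p_2\partial_z+p_1\partial_{p_2}$ has $[X,Y]=\partial_{p_2}$, which is not symplectically orthogonal to $Y$; so for the line $\ell=\mathrm{Span}(Y)$ the analogous inclusion fails. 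What makes it true here is the osculating relationship between $\ell^{\sigma}$ and $\mathcal{D}^{\sigma}$ encoded in $\der\omega^1\equiv 0\bmod(\omega^0,\omega^1,\omega^2)$, i.e.\ precisely the structure equation $\der\omega^1=3\,\der t\wedge\omega^2$ restricted to $\mathcal{D}^{\sigma}=\ker(\omega^0,\omega^1,\omega^2)$. Since you also propose the direct verification $\omega^0([\xi_3,\xi_4])=\omega^1([\xi_3,\xi_4])=0$, your proof goes through; just do not replace that check by a generic conformal-symplectic argument. (There is also a harmless sign slip in part (3): $(\der t\wedge\omega^3)(\xi_3,\xi_4)=-\xi_4(t)$, so $\omega^2([\xi_3,\xi_4])=+2\xi_4(t)=-2J$, which of course does not affect the vanishing statement.)
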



\begin{proposition}\label{prop.integr.2.3.5}
The distribution $\D^\sigma$ is either integrable or of $(2,3,5)$--type.
\end{proposition}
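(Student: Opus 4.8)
The statement asserts a dichotomy: the rank two Legendrian distribution $\mathcal{D}^\sigma$ inside the contact distribution $\mathcal{C}$ either is integrable (growth $(2,3,\dots)$ stabilizing at rank $2$) or has growth vector exactly $(2,3,5)$. The plan is to compute the derived flag of $\mathcal{D}^\sigma$ explicitly using the coordinate frame from Lemma \ref{lemma1marked}, namely $\mathcal{D}^\sigma=\mathrm{Span}(\xi_4,\xi_3)$ with $\xi_3,\xi_4$ as in \eqref{frame1}, and then to show that the only possibilities for the growth are $(2,2,\dots)$ and $(2,3,5)$. The key point is that the ``middle'' growth value, $\dim((\mathcal{D}^\sigma)')$, can only be $2$ or $3$ (it is between $2$ and $2+\binom{2}{2}=3$), and I want to rule out the case where it is $3$ but the next step fails to jump to $5$.

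\textbf{Key steps.} First I would compute $[\xi_3,\xi_4]$ in the coordinate frame. By the same kind of calculation underlying Proposition \ref{prop.integrability}, the obstruction to $[\xi_3,\xi_4]\in\Gamma(\mathcal{D}^\sigma)$ is controlled by the function $J=-\xi_4(t)$: when $J=0$ the bracket lies in $\mathcal{D}^\sigma$ and $\mathcal{D}^\sigma$ is integrable (this is exactly Proposition \ref{prop.integrability}, item 3), so there is nothing more to prove in that case. Second, I would assume $J\neq 0$ at a point (hence on a neighbourhood) and show that then $(\mathcal{D}^\sigma)'$ has rank $3$ there; indeed, item 1 of Proposition \ref{prop.integrability} gives $(\mathcal{D}^\sigma)'\subset\mathcal{H}^\sigma$, and the map $\Phi_J:\Lambda^2\mathcal{D}^\sigma\to\mathcal{H}^\sigma/\mathcal{D}^\sigma$ of item 2 is nonzero precisely when $J\neq 0$, so $(\mathcal{D}^\sigma)'=\mathcal{H}^\sigma$ has rank $3$. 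Third, and this is the heart of the matter, I would compute the second derived distribution $(\mathcal{D}^\sigma)''=[\mathcal{D}^\sigma,\mathcal{H}^\sigma]$ and show it has rank $5$ wherever $J\neq 0$. Since $\mathcal{H}^\sigma=\mathcal{D}^\sigma\oplus\mathrm{Span}(\xi_2)$, I need to examine $[\xi_2,\xi_3]$, $[\xi_2,\xi_4]$ modulo $\mathcal{H}^\sigma$; here I expect the crucial observation to be that $\mathcal{H}^\sigma$ is the symplectic orthogonal $\ell^{\sigma\perp}$ inside $\mathcal{C}$ (Proposition \ref{propfilt}), so a bracket $[\xi_2,\xi_3]$ leaving $\mathcal{H}^\sigma$ is detected by the conformal symplectic form $\mathcal{L}$ pairing it with $\xi_4$, which in turn unwinds via the Levi bracket to a nondegeneracy statement; alternatively one can simply exhibit, using \eqref{frame1}, that $[\xi_2,\xi_3]$ and $[\xi_2,\xi_4]$ together with $\mathcal{H}^\sigma$ span $T\M$ as long as $J\neq 0$ by a direct rank computation of the relevant $5\times 5$ matrix, whose determinant will turn out to be a nonzero multiple of a power of $J$.

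\textbf{Main obstacle.} The one place where care is needed is ruling out the intermediate possibility that $(\mathcal{D}^\sigma)'$ has rank $3$ but $(\mathcal{D}^\sigma)''$ has rank $4$ rather than $5$. This is where I would lean on the contact condition: $\mathcal{D}^\sigma$ is Legendrian (maximal isotropic) in the contact distribution $\mathcal{C}$, and $\mathcal{C}$ is genuinely contact, so brackets of sections of $\mathcal{C}$ are forced to escape $\mathcal{C}$ — concretely, $[\xi_2,\xi_1]$ or the relevant bracket picks up a $\partial_{x^0}$ component because $\alpha^0$ is a contact form with $\mathrm{d}\alpha^0\wedge\mathrm{d}\alpha^0\wedge\alpha^0\neq 0$. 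Packaging this: once $(\mathcal{D}^\sigma)'=\mathcal{H}^\sigma$ has rank $3$ and is not contained in any Legendrian plane (it contains $\xi_2$ with $\mathcal{L}(\xi_2,\xi_3)$ and/or $\mathcal{L}(\xi_2,\xi_4)$ not both vanishing, since $\mathcal{D}^\sigma$ was already maximal isotropic), the bracket $[\mathcal{D}^\sigma,\mathcal{H}^\sigma]$ cannot stay inside $\mathcal{C}$, hence $(\mathcal{D}^\sigma)''$ necessarily contains a transversal to $\mathcal{C}$ and thus has rank $5$. So the growth vector is exactly $(2,3,5)$ in the case $J\neq 0$. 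Combining the two cases completes the proof, and I would present the actual determinant/bracket computations briefly since they are routine once the frame \eqref{frame1} is in hand.
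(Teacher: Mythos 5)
Your strategy is essentially the paper's: the paper also argues by contradiction, lists five explicit vector fields spanning $(\mathcal{D}^\sigma)''$ in the frame \eqref{frame1}, and observes that the resulting $5\times 5$ determinant vanishes exactly when $\xi_4(t)=0$, i.e.\ when $J=0$, so non-integrability forces growth $(2,3,5)$. Your bookkeeping is slightly different but equivalent: once $J\neq 0$ you correctly get $(\mathcal{D}^\sigma)'=\mathcal{H}^\sigma$ (since $(\mathcal{D}^\sigma)'\subset\mathcal{H}^\sigma$ and $\Phi_J\neq 0$), and then testing $[\xi_2,\xi_3]$ and $[\xi_2,\xi_4]$ against $\mathcal{H}^\sigma$ is the same computation as the paper's (for $J\neq 0$, $[\xi_4,\xi_3]$ equals $\xi_2$ modulo $\mathcal{D}^\sigma$ up to a nonvanishing factor); the determinant is indeed a nonzero multiple of $J$.

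One caution about the ``packaged'' symplectic argument in your final paragraph: as stated it does not rule out growth $(2,3,4)$. The fact that $\mathcal{H}^\sigma$ is not isotropic (indeed $\mathcal{L}(\xi_3,\xi_2)\neq 0$, because $\mathcal{D}^\sigma$ is Lagrangian and $\mathcal{L}(\xi_4,\xi_2)=0$ by definition of $\mathcal{H}^\sigma=(\ell^\sigma)^\perp$) shows that $[\xi_3,\xi_2]$ escapes $\mathcal{C}$, but that yields only one new direction, hence only $\dim(\mathcal{D}^\sigma)''\geq 4$. To reach $5$ you must additionally produce a bracket with a nonzero component along the fourth direction $\xi_1$ of $\mathcal{C}$ not contained in $\mathcal{H}^\sigma$, and this is not a consequence of the contact condition alone: it is where $J$ enters a second time, via $[\xi_4,\xi_2]\equiv 3J\,\xi_1 \pmod{\mathcal{H}^\sigma}$. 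So the conceptual route needs this one extra check; the explicit determinant computation you offer as an alternative supplies it and is in substance the proof given in the paper.
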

\begin{proof}
Let us assume $\D^\sigma$ non-integrable. Then
\begin{equation}\label{eq.derivato.secondo}
{\D^\sigma}''=\langle  \xi_4\,, \xi_3\,, [\xi_4,\xi_3]\,, [\xi_4,[\xi_4,\xi_3]]  \,, [\xi_3,[\xi_4,\xi_3]] \rangle\,.
\end{equation}
The dimension of ${\D^\sigma}''$ is less than $5$ if and only if the determinant of the $5\times 5$ matrix formed by the components of vector fields of \eqref{eq.derivato.secondo} is zero. Such condition is $\xi_4(t)=0$, that in view of Proposition \ref{prop.integrability} implies the integrability of $\D^\sigma$, that contradicts our initial hypothesis.
\end{proof}

Recall that we denote by $\mathcal{H}^{\sigma}=(\ell^{\sigma})^{\perp}$ the symplectic orthogonal to $\ell^{\sigma}\subset\mathcal{C}$.

\begin{proposition}\label{prop.J}
The following statements are equivalent:
\begin{enumerate}
\item $J=0$.
\item $\D^\sigma$ is integrable.
\item $\dim\left({\mathcal{H}^{\sigma}}'\right)=4$.
\item Any vector field in $\ell^{\sigma}$ is of type $2$.
\item Any vector field in $\ell^{\sigma}$ is a characteristic symmetry of the distribution $\mathcal{H}^{\sigma}$.
\end{enumerate}
\end{proposition}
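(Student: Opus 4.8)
The plan is to prove Proposition~\ref{prop.J} by establishing the chain of implications (1)$\Leftrightarrow$(2), (2)$\Leftrightarrow$(3), and (2)$\Leftrightarrow$(4)$\Leftrightarrow$(5), freely using the results already available in the excerpt. The equivalence (1)$\Leftrightarrow$(2) has already been recorded in Proposition~\ref{prop.integrability} (and in Proposition~\ref{prop.integrability.wo} and \ref{prop.J=0eq}), so that link can simply be cited. For the remaining equivalences I would work in the explicit coordinate model of Lemma~\ref{lemma1marked}, with the frame $(\xi_0,\xi_1,\xi_2,\xi_3,\xi_4)$ from \eqref{frame1} and the section \eqref{section}, since everything reduces to a finite computation there.

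First I would treat (2)$\Leftrightarrow$(3). Here $\mathcal{H}^\sigma=(\ell^\sigma)^\perp=\mathrm{Span}(\xi_4,\xi_3,\xi_2)$ has rank $3$, and I would compute the derived distribution $(\mathcal{H}^\sigma)'$ by taking the three brackets $[\xi_4,\xi_3]$, $[\xi_4,\xi_2]$, $[\xi_3,\xi_2]$ modulo $\mathcal{H}^\sigma$. Since $\mathcal{C}$ is contact and $\mathcal{H}^\sigma$ is the symplectic orthogonal of the line $\ell^\sigma$, one always has $\dim(\mathcal{H}^\sigma)'\in\{4,5\}$ (it cannot be $3$ because $\mathcal{H}^\sigma$ is not Legendrian, being rank $3$ in a $4$-dimensional conformal symplectic space); this is the analogue of part (1) of Proposition~\ref{prop.integrability} for $\mathcal{H}^\sigma$. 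The point is then to show that $\dim(\mathcal{H}^\sigma)'=4$ precisely when $J=0$. Concretely, $[\xi_4,\xi_3]\equiv c_1\,\xi_1\pmod{\mathcal{H}^\sigma}$ and $[\xi_4,\xi_2]\equiv c_2\,\xi_1\pmod{\mathcal{H}^\sigma}$ for functions $c_1,c_2$ built from $t$ and its derivatives, while $[\xi_3,\xi_2]\equiv 0$; I expect $c_1$ to be exactly $J=-\xi_4(t)$ up to a nonzero factor, and $c_2$ to vanish identically, so that $(\mathcal{H}^\sigma)'=\mathcal{H}^\sigma$ if $J=0$ and has dimension $4$ otherwise. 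Wait --- I need to double-check orientation: $\mathcal{D}^\sigma\subset\mathcal{H}^\sigma$ is Legendrian, so the genuinely ``new'' bracket directions inside $\mathcal{C}$ come from pairing $\xi_2$ (the one generator of $\mathcal{H}^\sigma$ outside $\mathcal{D}^\sigma$) against $\xi_3,\xi_4$; the bracket $[\xi_2,\xi_3]$ stays in $\mathcal{D}^\sigma$ by the structure equations \eqref{differentiatedcoframe1}, and $[\xi_2,\xi_4]$ produces $\xi_1$ with a coefficient that I expect to be a nonzero multiple of $J$. So the honest statement is that $(\mathcal{H}^\sigma)'=\mathcal{C}$ iff $J\ne0$, i.e.\ $\dim(\mathcal{H}^\sigma)'=4$ iff $J=0$, which is exactly (3). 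Alternatively, and more cleanly, one can invoke Proposition~\ref{prop.equiv.type}: statement (3) of that proposition says a vector field $Y\in\mathcal{C}$ has type $2$ iff $(Y^\perp)'$ has dimension $4$, applied to $Y\in\ell^\sigma$ (so $Y^\perp=\mathcal{H}^\sigma$), which immediately gives (3)$\Leftrightarrow$(4).

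For (2)$\Leftrightarrow$(4): by Proposition~\ref{prop.equiv.type} a vector field $Y\in\ell^\sigma$ is of type $2$ iff $(Y^\perp)'=(\mathcal{H}^\sigma)'$ is $4$-dimensional, so (4)$\Leftrightarrow$(3), and (3) has been tied to $J=0$ above (or one can compute the type rank $\mathrm{rk}(\theta,\mathcal{L}_{\xi_4}\theta,\mathcal{L}_{\xi_4}^2\theta,\mathcal{L}_{\xi_4}^3\theta)$ directly with $\theta=\omega^0$ and check it drops to $2$ exactly when $\xi_4(t)=0$). Finally (4)$\Leftrightarrow$(5) is again immediate from Proposition~\ref{prop.equiv.type}: type $2$ is equivalent to $Y$ being a characteristic symmetry of $Y^\perp=\mathcal{H}^\sigma$. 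Assembling: $J=0 \Leftrightarrow \mathcal{D}^\sigma$ integrable (Proposition~\ref{prop.integrability}), $\mathcal{D}^\sigma$ integrable $\Leftrightarrow \dim(\mathcal{H}^\sigma)'=4$ (the computation above, or equivalently the non-integrability dichotomy of Proposition~\ref{prop.integr.2.3.5} which forces growth $(2,3,5)$ hence $\dim(\mathcal{H}^\sigma)'=5$ in the non-integrable case), and the last two are Proposition~\ref{prop.equiv.type} applied to $\ell^\sigma$.

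The main obstacle I anticipate is purely bookkeeping: getting the coefficient in $[\xi_2,\xi_4]\pmod{\mathcal{H}^\sigma}$ to come out as an explicit nonzero multiple of $J=-\xi_4(t)$, and confirming that all the other relevant brackets among $\xi_2,\xi_3,\xi_4$ land inside $\mathcal{H}^\sigma$, so that $\dim(\mathcal{H}^\sigma)'$ is genuinely governed by that one coefficient. This is a direct calculation using \eqref{frame1} and the fact that $t=t(x^0,\dots,x^4)$; the only subtlety is keeping track of the $\partial_{x^0}$-components, but since $\xi_1=\partial_{x^0}$ and $\mathcal{H}^\sigma\not\ni\xi_1$, these components are exactly what measures the failure of integrability. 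I would present the computation in the coordinates of Lemma~\ref{lemma1marked}, reduce modulo $\mathcal{H}^\sigma=\mathrm{ker}(\omega^0,\omega^1)$, and read off the dichotomy; then cite Propositions~\ref{prop.equiv.type} and \ref{prop.Hamiltionian.4.dim} to package the statements about type and characteristic symmetries.
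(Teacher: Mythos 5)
Your overall strategy is the same as the paper's: cite Proposition \ref{prop.integrability} for (1)$\Leftrightarrow$(2), apply Proposition \ref{prop.equiv.type} to a generator of $\ell^{\sigma}$ (whose symplectic orthogonal is $\mathcal{H}^{\sigma}$) to get (3)$\Leftrightarrow$(4)$\Leftrightarrow$(5), and establish (2)$\Leftrightarrow$(3) by a bracket computation in the frame \eqref{frame1}, with the dichotomy of Proposition \ref{prop.integr.2.3.5} available for the converse direction. However, your bookkeeping of the individual brackets in the (2)$\Leftrightarrow$(3) step is wrong in both of your passes, and the two passes contradict each other. The correct accounting is: (i) $[\xi_4,\xi_3]\in(\D^{\sigma})'\subset\mathcal{H}^{\sigma}$ \emph{always} (part (1) of Proposition \ref{prop.integrability}), so this bracket never contributes to $(\mathcal{H}^{\sigma})'/\mathcal{H}^{\sigma}$ and your $c_1$ is identically zero --- non-integrability of $\D^{\sigma}$ is detected by $[\xi_4,\xi_3]$ modulo $\D^{\sigma}$, not modulo $\mathcal{H}^{\sigma}$; (ii) $[\xi_3,\xi_2]$ does \emph{not} stay in $\D^{\sigma}$ and is not $\equiv 0$ modulo $\mathcal{H}^{\sigma}$: since $\der\omega^0(\xi_3,\xi_2)=3\neq 0$, this bracket always leaves the contact distribution, and it is precisely what forces $\dim(\mathcal{H}^{\sigma})'\geq 4$ unconditionally (your correct remark that $\mathcal{H}^{\sigma}$ is not Legendrian, made concrete); (iii) $[\xi_4,\xi_2]$ does lie in $\mathcal{C}$ (because $\mathcal{H}^{\sigma}=(\ell^{\sigma})^{\perp}$), and by \eqref{differentiatedcoframe1} one gets $\omega^1([\xi_4,\xi_2])=-\der\omega^1(\xi_4,\xi_2)=-3\,\xi_4(t)=3J$, so it contributes the fifth dimension, along $\xi_1$, exactly when $J\neq 0$. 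Hence $\dim(\mathcal{H}^{\sigma})'=4$ iff $J=0$; your concluding sentence is right, but the intermediate claims ``$c_2$ vanishes identically'', ``$[\xi_2,\xi_3]$ stays in $\D^{\sigma}$'' and ``$(\mathcal{H}^{\sigma})'=\mathcal{C}$ iff $J\neq 0$'' are all false ($(\mathcal{H}^{\sigma})'$ is never contained in $\mathcal{C}$, and equals all of $T\M$ when $J\neq 0$).

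Once this is repaired the argument closes exactly as in the paper, which proves (2)$\Rightarrow$(3) by the bracket computation above and (3)$\Rightarrow$(2) by contradiction: if $\D^{\sigma}$ is not integrable then $(\D^{\sigma})'=\mathcal{H}^{\sigma}$, hence $(\D^{\sigma})''\subset(\mathcal{H}^{\sigma})'$, and Proposition \ref{prop.integr.2.3.5} forces $(2,3,5)$ growth, so $\dim(\mathcal{H}^{\sigma})'=5$ --- the alternative you mention parenthetically. One further small point: the two extra propositions you cite alongside Proposition \ref{prop.integrability} for the equivalence (1)$\Leftrightarrow$(2) do not exist in the paper; Proposition \ref{prop.integrability} alone is what is needed.
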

\begin{proof}
The equivalence between point $1$ and $2$ has already been proven in Proposition \ref{prop.integrability}.

\smallskip\noindent
$2$ implies $3$. In general, since $\mathcal{H}^{\sigma}=\langle \xi_4,\xi_3,\xi_2\rangle$, we have ${\mathcal{H}^{\sigma}}'=\langle \xi_4,\xi_3,\xi_2,[\xi_4,\xi_3],[\xi_4,\xi_2],[\xi_3,\xi_2] \rangle$.
If $\D^\sigma=\langle \xi_4,\xi_3 \rangle$ is integrable,  then  ${\mathcal{H}^{\sigma}}'$ is spanned by  $\xi_4,\xi_3,\xi_2,[\xi_4,\xi_2],[\xi_3,\xi_2]$, and a direct calculation shows that the condition that it has rank equal to $4$ is precisely $J=0$.

\smallskip\noindent
$3$ implies $2$. By contradiction, let us suppose that $\D^\sigma$ is not integrable. Then ${{\D^\sigma}'}^\perp=\ell^\sigma$ that implies ${{\D^\sigma}''}={\mathcal{H}^{\sigma}}'$, that in view of Proposition \ref{prop.integr.2.3.5} is $5$--dimensional, a contradiction.

\smallskip\noindent
$3$, $4$ and $5$ are equivalent because of Proposition \ref{prop.equiv.type}.
\end{proof}

\begin{remark}
Proposition \ref{prop.J} shows that for integrable marked contact Engel structures, the filtration $$\mathcal{D}^{\sigma}\subset\mathcal{H}^{\sigma}\subset{\mathcal{H}^{\sigma}}'\subset T\M$$ is preserved under the Lie derivative of any vector field contained in $\ell^{\sigma}$. In particular, it descends to a filtration on the local leaf space of the foliation determined by $\ell^{\sigma}$.
\end{remark}

%

\subsection{Two more  conditions on integrable marked contact Engel structures}\label{morecond}

Suppose that $J=0$. Then, by Proposition \ref{prop.J}, any vector field in $\ell^{\sigma}$ is a characteristic symmetry of the distribution $\mathcal{H}^{\sigma}$ and consequently also of ${\mathcal{H}^{\sigma}}'$. It follows that, if $J$ vanishes, the Lie bracket of vector fields induces a well defined map
$$\Phi_L:\mathcal{D}^{\sigma}/\ell^{\sigma}\otimes( {\mathcal{H}^{\sigma}}'/\mathcal{H}^{\sigma})\to T\M/{\mathcal{H}^{\sigma}}'.$$
With respect to the frame \eqref{frame1}, the map is determined by a single function. Vanishing of $\Phi_L$ is equivalent to  $L=0$.

\begin{proposition}\label{prop.J.L}
Suppose that $J=0$. The following statements are equivalent:
\begin{enumerate}
\item $L=0$.
\item  Any vector field contained in the distribution $\D^\sigma$ is an internal symmetry of ${\mathcal{H}^{\sigma}}'$.
\end{enumerate}
\end{proposition}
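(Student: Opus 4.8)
The statement to prove is Proposition~\ref{prop.J.L}: under the standing assumption $J=0$, the vanishing of $L$ is equivalent to every vector field in $\mathcal{D}^{\sigma}$ being an internal symmetry of $(\mathcal{H}^{\sigma})'$. The plan is to work in the explicit coordinate model of Lemma~\ref{lemma1marked}, using the frame $(\xi_0,\xi_1,\xi_2,\xi_3,\xi_4)$ from \eqref{frame1}, and to compute the relevant derived distributions by direct Lie bracket calculations, exactly in the spirit of the proof of Proposition~\ref{prop.J}. Recall that ``internal symmetry'' of a distribution $\mathcal{E}$ means a section $Y$ of $\mathcal{E}$ (or at least tangent to it) whose flow preserves $\mathcal{E}$, i.e. $[Y,\Gamma(\mathcal{E})]\subset\Gamma(\mathcal{E})$; since $\mathcal{D}^{\sigma}\subset\mathcal{H}^{\sigma}\subset(\mathcal{H}^{\sigma})'$, any section of $\mathcal{D}^{\sigma}$ does lie inside $(\mathcal{H}^{\sigma})'$, so the content is the bracket condition $[\xi,\Gamma((\mathcal{H}^{\sigma})')]\subset\Gamma((\mathcal{H}^{\sigma})')$ for $\xi\in\{\xi_3,\xi_4\}$ (it suffices to check these two, since they span $\mathcal{D}^{\sigma}$ over $C^\infty$ and the Leibniz rule handles functional coefficients).

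First I would record, still assuming $J=0$ so that Proposition~\ref{prop.J} applies, that $(\mathcal{H}^{\sigma})'$ is $4$-dimensional and is spanned by $\xi_2,\xi_3,\xi_4$ together with one further bracket vector, say $\eta:=[\xi_3,\xi_2]$ (or $[\xi_4,\xi_2]$, whichever is convenient — the two differ by an element of $\mathcal{H}^\sigma$ modulo the $J=0$ relation). Since $\xi_3,\xi_4$ are characteristic symmetries of $\mathcal{H}^{\sigma}$ by Proposition~\ref{prop.J}, they already preserve $\mathrm{Span}(\xi_2,\xi_3,\xi_4)$ modulo $\mathcal{H}^\sigma$; the only thing that can fail is that $[\xi_i,\eta]$, for $i\in\{3,4\}$, might have a nonzero component along $\xi_1$ (equivalently, along the complement of $(\mathcal{H}^\sigma)'$ in $T\M$). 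So the whole statement reduces to computing the single function that measures the $T\M/(\mathcal{H}^{\sigma})'$-component of $[\xi_i,\eta]$ and identifying it with $L$. This is precisely the function defining the map $\Phi_L$ discussed immediately before the Proposition, so the computation is the same one already needed to justify that $\Phi_L$ is well defined; I would carry it out explicitly in the $t$-coordinates and verify, using the identities $a=t_{\omega^3}$, $b=-t_{\omega^2}$, $c=t_{\omega^1}$, $J=-t_{\omega^4}=0$ and $L=t_{\omega^3\omega^3}$ from the Lemma following Lemma~\ref{lemma3marked}, that it equals a nonzero multiple of $L$.

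Concretely the steps are: (i) compute $[\xi_3,\xi_2]$ and $[\xi_4,\xi_2]$ in coordinates and check both lie in $(\mathcal{H}^{\sigma})'=\mathrm{Span}(\xi_2,\xi_3,\xi_4,[\xi_3,\xi_2])$ when $J=0$; (ii) compute $[\xi_3,[\xi_3,\xi_2]]$ and $[\xi_4,[\xi_3,\xi_2]]$ and reduce them modulo $\mathrm{Span}(\xi_2,\xi_3,\xi_4)$; (iii) extract the coefficient along the direction transverse to $(\mathcal{H}^{\sigma})'$ (the $\xi_1=\partial_{x^0}$ direction), express it via $\der t$ in the coframe, and match it with $L$; (iv) conclude that $[\xi,\Gamma((\mathcal{H}^{\sigma})')]\subset\Gamma((\mathcal{H}^{\sigma})')$ for all $\xi\in\Gamma(\mathcal{D}^{\sigma})$ if and only if $L=0$, which is the asserted equivalence; invariance of the statement under change of adapted coframe follows because $L$ is (by Theorem~\ref{main} and Proposition~\ref{prop_seconddi}'s framework) a relative invariant whose vanishing is coframe-independent, or alternatively because $\Phi_L$ was constructed invariantly. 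The main obstacle I anticipate is purely bookkeeping: the second brackets $[\xi_i,[\xi_3,\xi_2]]$ involve second derivatives of $t$ and the reduction modulo $(\mathcal{H}^{\sigma})'$ must be done carefully, keeping track of which terms are absorbed using $J=0$ and its differential consequences $J_{\omega^i}=0$; there is no conceptual difficulty, only the risk of a sign or factor error, so I would double-check the final coefficient against the flat case $t=0$ (where everything must vanish) and against the known relation in Lemma~\ref{lemma3marked}.
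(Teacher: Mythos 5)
Your plan follows essentially the same route as the paper's proof: invoke Proposition \ref{prop.J} to get that $({\mathcal{H}^{\sigma}})'$ is $4$-dimensional and spanned by $\xi_2,\xi_3,\xi_4,[\xi_3,\xi_2]$, observe that the only brackets that could leave $({\mathcal{H}^{\sigma}})'$ are $[\xi_i,[\xi_3,\xi_2]]$ for $i=3,4$, and identify the resulting obstruction function with $L$ by direct computation in the $t$-coordinates. One correction: Proposition \ref{prop.J} makes only $\xi_4$ (i.e.\ sections of $\ell^{\sigma}$), not $\xi_3$, a characteristic symmetry of $\mathcal{H}^{\sigma}$ --- indeed $[\xi_3,\xi_2]\notin\mathcal{H}^{\sigma}$ in general, which is exactly why it generates the extra direction of $({\mathcal{H}^{\sigma}})'$; your argument survives because the containment $[\xi_i,\xi_j]\in({\mathcal{H}^{\sigma}})'$ for $\xi_i,\xi_j\in\Gamma(\mathcal{H}^{\sigma})$ follows from the definition of the derived distribution, not from any symmetry claim. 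The paper additionally exploits that $\xi_4$, being a characteristic symmetry of $\mathcal{H}^{\sigma}$, is automatically a symmetry of $({\mathcal{H}^{\sigma}})'$, which disposes of $[\xi_4,[\xi_3,\xi_2]]$ without computation and reduces the whole statement to the single bracket $[\xi_3,[\xi_3,\xi_2]]$ --- a small saving over computing both second brackets as you propose.
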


\begin{proof}
Since $\D^\sigma$ is integrable, in view of Proposition \ref{prop.J} the distribution ${\mathcal{H}^{\sigma}}'$ is $4$--dimensional. It is spanned by vectors $\xi_4$, $\xi_3$, $\xi_2$ (that are inside $\mathcal{H}^{\sigma}$) and by an extra vector
$$
[\xi_3,\xi_2]=-3\partial_{x^0} -3(3x^2t_{x^0} -3t^2t_{x^1} +t_{x^3})\partial_{x^1} -2(3tt_{x^1} - t_{x^2})\partial_{x^2}
$$
In view of the integrability of $\D^\sigma$ and in view of the fact that $\xi_4$ is a characteristic symmetry of ${\mathcal{H}^{\sigma}}$ (and then also of ${\mathcal{H}^{\sigma}}'$), see Proposition \ref{prop.equiv.type}, we have that the vector fields in $\D^\sigma$ are symmetries of ${\mathcal{H}^{\sigma}}'$ if and only if $[\xi_3,[\xi_3,\xi_2]]\in {\mathcal{H}^{\sigma}}'$. This is equivalent to $L=0$.
\end{proof}

By Proposition \ref{prop.J.L}, if $J=L=0$, then the Lie bracket of vector fields induces a well-defined map
$$\Phi_M: \Lambda^2{\mathcal{H}^{\sigma}}'/\mathcal{D}^{\sigma}\to T\M/{\mathcal{H}^{\sigma}}'.$$
With respect to the frame \eqref{frame1}, it corresponds to a single function. Vanishing of $\Phi_M$ means precisely that $M=0$.

\begin{proposition}\label{prop.J.L.M}
Suppose that $J=L=0$. The following statements are equivalent:
\begin{enumerate}
\item $M=0$.
\item  The distribution ${\mathcal{H}^{\sigma}}'$ is $4$-dimensional and integrable.
\item The direction $\ell^\sigma$ is a Hamiltonian direction.
\end{enumerate}
\end{proposition}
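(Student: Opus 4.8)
The statement asserts the equivalence of three conditions for an integrable marked contact Engel structure ($J=0$) with $L=0$: (1) $M=0$; (2) ${\mathcal{H}^{\sigma}}'$ is $4$-dimensional and integrable; (3) $\ell^{\sigma}$ is a Hamiltonian direction. The plan is to prove (1)$\Leftrightarrow$(2) by a direct computation with the explicit frame \eqref{frame1}, and then obtain (2)$\Leftrightarrow$(3) essentially for free by invoking Proposition \ref{prop.Hamiltionian.4.dim} applied to the distribution ${\ell^{\sigma}}^{\perp}=\mathcal{H}^{\sigma}$.

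First I would establish (1)$\Leftrightarrow$(2). Since $J=L=0$, Proposition \ref{prop.J.L} tells us that any vector field in $\mathcal{D}^{\sigma}$ is a symmetry of the $4$-dimensional distribution ${\mathcal{H}^{\sigma}}'=\mathrm{Span}(\xi_4,\xi_3,\xi_2,[\xi_3,\xi_2])$; in particular $\xi_3,\xi_4$ are symmetries of ${\mathcal{H}^{\sigma}}'$. Hence, to test integrability of ${\mathcal{H}^{\sigma}}'$, the only new bracket that can leave ${\mathcal{H}^{\sigma}}'$ is $[\xi_2,[\xi_3,\xi_2]]$ (modulo ${\mathcal{H}^{\sigma}}'$), since $[\xi_2,\xi_3]$, $[\xi_2,\xi_4]$ lie in $\mathcal{H}^{\sigma}\subset{\mathcal{H}^{\sigma}}'$ and $[\xi_3,[\xi_3,\xi_2]],[\xi_4,[\xi_3,\xi_2]]\in{\mathcal{H}^{\sigma}}'$ by the symmetry property. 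I would compute $[\xi_2,[\xi_3,\xi_2]]$ explicitly using the coordinate expressions for $\xi_2,\xi_3,\xi_4$ from \eqref{frame1} and the formula for $[\xi_3,\xi_2]$ quoted in the proof of Proposition \ref{prop.J.L.M}, reduce it modulo the span of $\xi_2,\xi_3,\xi_4,[\xi_3,\xi_2]$, and identify the single remaining coefficient. Comparison with the formula $M=6t_{\omega^0}-2(t_{\omega^2})^2+6 t_{\omega^3}t_{\omega^1}+t_{\omega^2\omega^3}$ (from the lemma following Lemma \ref{diffcof}, together with $J=L=0$) shows that this coefficient is a nonzero multiple of $M$; thus ${\mathcal{H}^{\sigma}}'$ is integrable precisely when $M=0$. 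Alternatively, and perhaps more transparently, I would read this off from the structure equations \eqref{5structureeq} of Proposition \ref{propo_coframe_int}: under $J=L=0$, the form $\theta^4$ (which together with $\theta^5,\theta^6,\theta^{12}$ cuts out ${\mathcal{H}^{\sigma}}'$ once one checks that ${\mathcal{H}^{\sigma}}'=\ker(\theta^0,\theta^1,\theta^2,\theta^3)$... actually ${\mathcal{H}^{\sigma}}'$ is $4$-dimensional so it is $\ker(\theta^0)\cap\ker(\theta^4)$ on $\mathcal{U}$, wait — on $\mathcal{U}$ it is cut out by the two forms that restrict to the annihilator) — more carefully, on $\mathcal{U}$ the distribution ${\mathcal{H}^{\sigma}}'$ is $\ker(\omega^0,\omega^4)$ only if $\mathcal{H}^\sigma=\ker(\omega^0,\omega^1)$ grows by adding $\xi_1$-direction, which one checks. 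Then $\der\omega^4\wedge\omega^0\wedge\omega^4$ and $\der\omega^0\wedge\omega^0\wedge\omega^4$ being the obstructions to integrability, and from \eqref{differentiatedcoframe2} one sees $\der\omega^0\wedge\omega^0\wedge\omega^4 = -3\omega^2\wedge\omega^3\wedge\omega^0\wedge\omega^4\neq 0$... so this needs the correct identification of the annihilator of ${\mathcal{H}^\sigma}'$; I would sort this out via Proposition \ref{propo_coframe_int}, where the coefficient $\tfrac{(M-P){s_5}^2}{2\delta^4}$ of $\theta^0\wedge\theta^4$ in $\der\theta^3$ governs integrability of $\mathcal{R}^\sigma$, not ${\mathcal{H}^\sigma}'$ — so the cleanest route really is the direct bracket computation above.

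For (2)$\Leftrightarrow$(3), I would argue as follows. By Lemma \ref{lemfilt} (fibrewise) and Proposition \ref{propfilt}, $\mathcal{H}^{\sigma}=(\ell^{\sigma})^{\perp}$ is the symplectic orthogonal of $\ell^{\sigma}$ inside $\mathcal{C}$, i.e., for a generator $Y$ of $\ell^{\sigma}$ we have $\mathcal{H}^{\sigma}=Y^{\perp}$ in the notation of Section \ref{type}. When $J=0$, Proposition \ref{prop.J} gives that $Y$ is of type $2$, equivalently (Proposition \ref{prop.equiv.type}) that $(Y^{\perp})'={\mathcal{H}^{\sigma}}'$ is $4$-dimensional. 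Proposition \ref{prop.Hamiltionian.4.dim} then states: $Y$ is a multiple of a Hamiltonian field $Y_f$ if and only if $(Y^{\perp})'$ is $4$-dimensional and integrable. Since ``$\ell^{\sigma}$ is a Hamiltonian direction'' means precisely that some (hence any, up to scale) generator of $\ell^{\sigma}$ is a multiple of a Hamiltonian vector field, this is exactly the equivalence of (2) and (3). The chain (1)$\Leftrightarrow$(2)$\Leftrightarrow$(3) then completes the proof.

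\textbf{Main obstacle.} The only genuinely computational step is the bracket $[\xi_2,[\xi_3,\xi_2]]$ and the verification that its component transverse to ${\mathcal{H}^{\sigma}}'$ equals a nonzero multiple of $M$; care is needed because $M$ must be expressed via the coframe derivatives ($M=6t_{\omega^0}-2(t_{\omega^2})^2+6t_{\omega^3}t_{\omega^1}+t_{\omega^2\omega^3}$), and the frame derivatives $t_{\omega^i}$ differ from the coordinate derivatives $t_{x^i}$ by the change-of-frame matrix in \eqref{coframet}. I expect the identity to come out cleanly once $J=L=0$ is used to kill the lower-order terms, but the bookkeeping between $\xi_i$-derivatives and $\partial_{x^i}$-derivatives of $t$ is where errors are most likely to creep in.
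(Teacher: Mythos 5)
Your proposal is correct and follows essentially the same route as the paper: (1)$\Leftrightarrow$(2) is obtained by observing, via Proposition \ref{prop.J.L}, that the only bracket to test is $[\xi_2,[\xi_3,\xi_2]]$ modulo $\langle\xi_4,\xi_3,\xi_2,[\xi_3,\xi_2]\rangle$, whose transverse component is a nonzero multiple of $M$, and (2)$\Leftrightarrow$(3) is exactly Proposition \ref{prop.Hamiltionian.4.dim} applied to $\mathcal{H}^{\sigma}=(\ell^{\sigma})^{\perp}$. The digression about reading integrability off the structure equations is unnecessary (and you rightly abandon it), but the core argument matches the paper's proof.
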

\begin{proof}
$1$ is equivalent to $2$. In fact, in view of the reasonings contained in the proof of Proposition \ref{prop.J.L}, under our assumption ${\mathcal{H}^{\sigma}}'$ is integrable if and only if $[\xi_2,[\xi_3,\xi_2]]\in {\mathcal{H}^{\sigma}}'$. Recalling that ${\mathcal{H}^{\sigma}}'=\langle \xi_4,\xi_3,\xi_2,[\xi_3,\xi_2] \rangle$ (see Proposition \ref{prop.J.L}), it is straightforward to realize that $[\xi_2,[\xi_3,\xi_2]]\in \langle \xi_4,\xi_3,\xi_2,[\xi_3,\xi_2] \rangle$ if and only if $M=0$.

\smallskip\noindent
$2$ is equivalent to $3$. It follows from Proposition \ref{prop.Hamiltionian.4.dim}.
\end{proof}


%

\section{A Kerr  theorem for contact Engel structures} \label{SecKerr}
In Section \ref{secKerr1} we show how to construct a general integrable marked contact Engel structure.
 We state  this result  in Theorem \ref{Kerr1} in analogy to Penrose's formulation of  Kerr's  theorem from relativity.  In Section \ref{secKerr2} we give a twistorial interpretation of the result.   We show that integrable marked contact Engel structures are in local 1-1 correspondence with generic  hypersurfaces in the twistor space $\mathrm{G}_2/\mathrm{P}_1$, see Corollary \ref{Kerr2}. Via this correspondence, highly symmetric integrable marked contact Engel structures correspond to highly symmetric hypersurfaces of $\mathrm{G}_2/\mathrm{P}_1$. We use this correspondence to give a description of the maximal and submaximal  models, having symmetry algebras $\mathfrak{p}_1$, $\mathfrak{sl}(3,\mathbb{R})$ and $\mathfrak{su}(1,2)$, respectively, in Section \ref{maxsubmax}. Moreover, we investigate the geometric structures hypersurfaces in $\mathrm{G}_2/\mathrm{P}_1$ inherit from the geometry of the ambient space.

\subsection{Local description of integrable marked contact Engel structures: the Kerr theorem} \label{secKerr1}
In this section we show how to find the general solution to the
non-linear PDE
\begin{equation}\label{eq_Jvanishes} J=(x^1+3tx^2)t_{x^0}+t^3 t_{x^1} - t^2 t_{x^2} + t t_{x^3} -t_{x^4}=0.
\end{equation}
This is analogous to a result from relativity attributed to Kerr, see e.g.
 \cite{penroserindler2, Tafel}. We thus refer to it as a Kerr theorem for Engel structures\footnote{We state our theorem in parallel to Penrose's formulation of the original Kerr theorem, as in \cite[Theorem 7.4.8]{penroserindler2}.}.

%

\begin{theorem}[Kerr theorem for contact Engel structures]\label{Kerr1}
The general smooth solution to the equation \eqref{eq_Jvanishes} is obtainable locally by choosing an arbitrary smooth function $F$ of five variables and solving the equation
$$F(x^0+x^1x^4+3tx^2x^4-t^3(x^4)^2, x^1+t^3x^4, x^2-t^2x^4, x^3+t x^4, t)=0$$
for $t$ in terms of $x^0, x^1, x^2, x^3, x^4$.
\end{theorem}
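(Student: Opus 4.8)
The plan is to recognize the PDE \eqref{eq_Jvanishes} as the condition that $t$ be constant along the integral curves of the vector field $\xi_4$ from \eqref{frame1}, and then to integrate that vector field explicitly. Recall from \eqref{frame1} that, in the parametrization \eqref{section}, $J = -\xi_4(t)$ where
$$\xi_4 = -(x^1+3tx^2)\partial_{x^0} - t^3\partial_{x^1} + t^2\partial_{x^2} - t\partial_{x^3} + \partial_{x^4}.$$
So $J=0$ says exactly that $t$ is a first integral of the flow of $\xi_4$ (equivalently, that $\ell^\sigma = \mathrm{Span}(\xi_4)$ together with the implicit definition of $\sigma$ via $t$ is consistent). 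The first step is therefore to find, along with $t$ itself, four further functionally independent first integrals of $\xi_4$; this reduces the problem to an ODE computation since $\xi_4$ has the special ``triangular'' structure: reading $x^4$ as the curve parameter, one solves for $t$ first (it is constant along the flow precisely when $J=0$), then $x^3$, then $x^2$, then $x^1$, then $x^0$ in turn, each being a first-order linear inhomogeneous ODE with coefficients depending only on the already-integrated quantities. The candidate first integrals are the five arguments appearing in the statement:
$$I_0 = x^0+x^1x^4+3tx^2x^4-t^3(x^4)^2,\quad I_1 = x^1+t^3x^4,\quad I_2 = x^2-t^2x^4,\quad I_3 = x^3+t x^4,\quad I_4 = t.$$

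The second step is the verification that $\xi_4(I_k) \equiv 0$ for $k=0,1,2,3,4$, \emph{modulo the relation $\xi_4(t)=0$}, i.e. treating $\der t$ as $\xi_4(t)\,(\cdots)$ which vanishes on the locus $J=0$. Concretely, one computes $\xi_4(I_4) = \xi_4(t) = -J$; $\xi_4(I_3) = \xi_4(x^3) + x^4\xi_4(t) + t\xi_4(x^4) = -t + t = 0$ mod $J$; and similarly, using the chain rule and the explicit action of $\xi_4$ on coordinates, $\xi_4(I_2) = t^2 - 2tx^4\xi_4(t) - t^2 = 0$, $\xi_4(I_1) = -t^3 + 3t^2x^4\xi_4(t) + t^3 = 0$, and finally $\xi_4(I_0) = -(x^1+3tx^2) + (\text{terms with }\xi_4(t)) + x^1 + 3tx^2 = 0$, all modulo $J$. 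These are elementary but slightly tedious; I would present them compactly. Once this is done, on the open set where $J=0$ the five functions $I_0,\dots,I_4$ are first integrals of $\xi_4$, and at a generic point their differentials span a rank-$4$ system once restricted to a level surface of $\xi_4$ — more precisely, $I_1,I_2,I_3,I_4$ are functionally independent (their Jacobian with respect to $x^1,x^2,x^3,t$ is easily seen to be nonzero at points where the relevant transversality holds) so they cut out the integral curves of $\xi_4$.

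The third step assembles the statement. Given an arbitrary smooth $F$ of five variables, the equation $F(I_0,I_1,I_2,I_3,I_4)=0$ defines, by the implicit function theorem (at points where $\partial F/\partial I_4 + (\text{contributions through }I_0,\dots,I_3 \text{ that depend on }t)$ is nonzero), a function $t = t(x^0,\dots,x^4)$. Since each $I_k$ is $\xi_4$-invariant on the solution locus and $F$ is a function of the $I_k$ only, differentiating $F(I_0,\dots,I_4)=0$ along $\xi_4$ and using $\xi_4(I_k)=0$ mod $J$ gives a linear relation forcing $\xi_4(t)=0$, i.e. $J=0$; so every such $t$ solves \eqref{eq_Jvanishes}. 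Conversely, given a solution $t$ of \eqref{eq_Jvanishes}, the function $t$ and the four first integrals $I_1,I_2,I_3$ (and $I_4=t$) are constant along $\xi_4$-curves, hence $t$ is a function of $I_1,I_2,I_3,I_4$ alone along leaves; this functional relation, rewritten as the vanishing of an appropriate $F$ evaluated at $(I_0,I_1,I_2,I_3,I_4)$ (using $I_0$ as the fifth slot to parametrize the leaf space and solving for $t$), recovers the asserted implicit description. I expect the main obstacle to be purely bookkeeping: getting the verification $\xi_4(I_k)\equiv 0 \pmod J$ right with the correct signs and the correct handling of the $\der t$ terms, and stating the genericity/transversality hypotheses under which the implicit function theorem applies so that the "general solution" claim is literally correct. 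This is exactly the structure of Penrose's proof of the classical Kerr theorem (cf. \cite[Theorem 7.4.8]{penroserindler2}), transported to the present vector field $\xi_4$, and no new idea beyond explicit integration of $\xi_4$ is needed.
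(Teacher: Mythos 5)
Your proposal is correct and is essentially the paper's own argument: your first integrals $I_0,\dots,I_4$ are precisely the variables $y^0,y^1,y^2,y^3,t$ the paper introduces, and your verification that $\xi_4(I_k)\equiv 0 \pmod{J}$ is the dual (vector-field) formulation of the paper's one-line identity $\der\omega^2\wedge\omega^0\wedge\omega^1\wedge\omega^2=-2\,\der t\wedge\der y^0\wedge\der y^1\wedge\der y^2\wedge\der y^3$, whose vanishing simultaneously encodes $J=0$ and the functional dependence expressed by $F=0$. The only step worth tightening is the converse, where the clean statement is that five first integrals of the nonvanishing field $\xi_4$ on a $5$-manifold are necessarily functionally dependent, which is exactly what the relation $F(I_0,\dots,I_4)=0$ records.
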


\begin{proof}
We introduce the following variables
\begin{equation}\label{new_var}\begin{aligned}
y^0=x^0+x^1x^4+3t x^2x^4-t^3(x^4)^2, \quad y^1=x^1+t^3x^4,\quad y^2=x^2-t^2x^4,\quad y^3=x^3+tx^4.
\end{aligned}
\end{equation}
As in the proof of Proposition \ref{prop.integrability} one sees that $\der\omega^0\wedge\omega^0\wedge\omega^1\wedge\omega^2=0,$  $ \der\omega^1\wedge\omega^0\wedge\omega^1\wedge\omega^2=0$ and in the new variables we have
\begin{equation*}
\der\omega^2\wedge\omega^0\wedge\omega^1\wedge\omega^2= -2 \; \der t\wedge\der y^0\wedge\der y^1\wedge\der y^2\wedge\der y^3.
\end{equation*}
The latter expression vanishes  if and only if there exists a smooth function $F$ of five variables such that $F(t, y^0,y^1,y^2,y^3)=0$.
 On the other hand, the proof of Proposition \ref{prop.integrability} shows that vanishing of $\der\omega^2\wedge\omega^0\wedge\omega^1\wedge\omega^2$
   is equivalent to $J=0$.
%
\end{proof}

\begin{example}
To give an example how Theorem \ref{Kerr1} works, we consider $F(t,y^0,y^1,y^2,y^3)=t-\frac{s y^3 -y^1}{y^2}$, where $s$ is an arbitrary constant.  Then we find $t$ as a function of $x^0,x^1,x^2,x^3,x^4$ from $$t=\frac{s y^3 -y^1}{y^2}=\frac{sx^3+tsx^4-x^1-t^3x^4}{x^2-t^2x^4}.$$
This gives
$$t=\frac{x^1-sx^3}{-x^2+sx^4},$$
and one can check by a direct calculation that it satisfies \eqref{eq_Jvanishes}.
\end{example}

\begin{remark}
An operational answer to how the variables \eqref{new_var} were obtained is  that we were rewriting the co-frame forms  from \eqref{coframet} as
\begin{align*}
\omega^4&=\der x^4\\
\omega^3&
=\der (x^3+t x^4)-x^4\der t\\
\omega^2&
=\der (x^2- t^2 x^4)+2 t \omega^3+2 t x^4 \der t\\
  \omega^1&
 =\der (x^1+t^3 x^4)-3t^2x^4\der t+3 t \omega^2-3 t^2 \omega^3\\
  \omega^0&
  =\der (x^0+3t x^2x^4+x^1x^4-t^3{x^4}^2)-x^4\omega^1 -3 (x^2-t^2x^4)\omega^3-3x^4(x^2-t^2x^4)\der t.
\end{align*}

\end{remark}
%

\subsection{Local coordinates adapted to the $\mathrm{G}_2$ double fibration}\label{double_coord}
In analogy with the classical Kerr Theorem, we also have a  geometrical interpretation of Theorem \ref{Kerr1} in terms of a  twistorial correspondence, which is given in Corollary \ref{Kerr2} in the next section.
Our proof of this correspondence uses local coordinates adapted to the double filtration for $\mathrm{G}_2$ depicted below.


\begin{center}
\begin{tikzpicture}[sharp corners=2pt,inner sep=7pt,node distance=.7cm,every text node part/.style={align=center}]

\node[draw, minimum height = 2.5cm, minimum width = 3cm] (state0){$\mathrm{G}_2/\mathrm{P}_{1,2}$\\\\$(x^0,x^1,x^2,x^3,x^4, x^5)$ \\\\$(y^0,y^1,y^2,y^3, y^4, y^5)$};
\node[below=2cm of state0, minimum height = 2.5cm, minimum width = 3cm](state2){};
\node[draw,left=2cm of state2, minimum height = 2cm, minimum width = 3cm](state1){$\mathrm{G}_2/\mathrm{P}_2$\\\\$(x^0,x^1,x^2,x^3,x^4, x^5)$};
\node[draw,right=2cm of state2, minimum height = 2cm, minimum width = 3cm](state3){$\mathrm{G}_2/\mathrm{P}_1$\\\\$(y^0,y^1,y^2,y^3, y^4, y^5)$};

\draw[-triangle 60] (state0) -- (state1) node [midway, above, rotate = 45]{$\pi_2$};
\draw[-triangle 60] (state0) -- (state1) node [midway, below, rotate = 45]{$\partial_{x^5}$};
\draw[-triangle 60] (state0) -- (state3) node [midway, above, rotate = -45]{$\pi_1$};
\draw[-triangle 60] (state0) -- (state3) node [midway, below, rotate = -45]{$\partial_{y^5}$};
\end{tikzpicture}
\end{center}


Let $(\theta^0,\theta^1,\dots, \theta^{13})$ be the coframe  of left-invariant forms on $\mathrm{G}_2$ corresponding to a basis of $\mathfrak{g}$ as in \eqref{basis_g2}.
This coframe is adapted to the grading of the Lie algebra $\mathfrak{g}$
  in such a way that each leaf of the integrable distribution of the kernel of the eight left-invariant forms $\theta^5, \theta^6, \theta^8, \theta^9, \theta^{10}, \theta^{11}, \theta^{12}, \theta^{13}$ on $\mathrm{G}_2$ from \eqref{MaurerCartan}
 corresponds to a section of $\mathrm{G}_2\to\mathrm{G}_2/\mathrm{P}_{1,2}$. The pullbacks $\omega^0,\omega^1,\omega^2,\omega^3,\omega^4,\omega^7$ of the forms $\theta^0,\theta^1,\theta^2,\theta^3,\theta^4, \theta^7$  to a leaf satisfy
 $$\der\omega^0=\omega^1\dz\omega^4-3\omega^2\dz\omega^3, \quad \der\omega^1=3\omega^2\wedge\omega^7, \quad\der\omega^2=2\omega^3\wedge\omega^7, \quad \der\omega^3=\omega^4\wedge\omega^7, \quad \der\omega^4=0,\quad \der\omega^7=0.$$

We integrate this system in two ways. One yields local coordinates $(x^0,x^1,x^2,x^3,x^4, x^5)$ on $\mathrm{G}_2/\mathrm{P}_{1,2}$ such that
 \begin{equation}\label{omegas}
\begin{aligned}
& \omega^0=\der x^0+x^1\der x^4- 3 x^2\der x^3\\
 &\omega^1=\der x^1+3 x^5\der x^2+ 3 (x^5)^2 \der x^3+(x^5)^3\der x^4\\
 &  \omega^2=\der x^2 +2 {x^5} \der x^3 +(x^5)^2 \der x^4\\
  &  \omega^3=\der x^3+ x^5 \der x^4\\
  &  \omega^4=\der x^4,\\
  &\omega^7=-\der x^5,
 \end{aligned}
 \end{equation}
Denoting  by $\xi_0,\xi_1,\xi_2,\xi_3,\xi_4,\xi_7$ the dual frame, the vertical bundle for $\pi_1$ is  spanned by $$\xi_4=-(x^1+3x^5x^2)\partial_{x^0}-(x^5)^3\partial_{x^1}+(x^5)^2\partial_{x^2}-(x^5)\partial_{x^3}+\partial_{x^4},$$ the vertical bundle for $\pi_2$ is spanned by $$\xi_7=-\partial_{x^5}.$$ We can view $(x^0,x^1,x^2,x^3,x^4)$ as local coordinates on $\mathrm{G}_2/\mathrm{P}_2$, then
$$\pi_2:(x^0,x^1,x^2,x^3,x^4,x^5)\mapsto (x^0,x^1,x^2,x^3,x^4),$$
i.e., $x^5$ is the fibre coordinate for $\pi_2$.

The other way of integrating yields local coordinates  $(y^0,y^1,y^2,y^3,y^4,y^5)$ on $\mathrm{G}_2/\mathrm{P}_{1,2}$ such that
\begin{equation}\label{newomegas}
\begin{aligned}
& \omega^0=\der y^0-y^5\der y^1- 3 y^4 y^5\der y^2-3(y^2+y^5(y^4)^2)\der y^3\\
 &\omega^1=\der y^1+3 y^4\der y^2+ 3 (y^4)^2 \der y^3\\
 &  \omega^2=\der y^2 +2 {y^4} \der y^3 \\
  &  \omega^3=\der y^3- y^5 \der y^4\\
  &  \omega^4=\der y^5,\\
  &\omega^7=-\der y^4.
 \end{aligned}
 \end{equation}
In these coordinates the field $\xi_4$ spanning the vertical bundle for $\pi_1$, is rectified, i.e., we have
$$\xi_4=\partial_{y^5},$$
and
$$\xi_7=-3 y^5 y^2 \partial_{y^0}-3 (y^4)^2 y^5 \partial_{y^1}+2 y^4 y^5 \partial_{y^2}-y^5 \partial_{y^3}-\partial_{y^4}\, .$$
 We can view $(y^0,y^1,y^2,y^3,y^4)$ as coordinates on $\mathrm{G}_2/\mathrm{P}_1$. Then
 $$\pi_1: (y^0,y^1,y^2,y^3,y^4,y^5)\mapsto (y^0,y^1,y^2,y^3,y^4),$$
 i.e., $y^5$ is the fibre coordinate for $\pi_1$.

 A change of coordinates from $(x^0,x^1,x^2,x^3,x^4, x^5)$ to $(y^0,y^1,y^2,y^3, y^4, y^5)$ is given by
\begin{equation}\label{coordchange}
\begin{aligned}
&y^0=x^0+x^1x^4+3x^5 x^2x^4-(x^5)^3(x^4)^2, \quad y^1=x^1+(x^5)^3x^4,\\ & y^2=x^2-(x^5)^2x^4,
\quad y^3=x^3+x^5x^4,\quad y^4=x^5, \quad y^5=x^4.
\end{aligned}
\end{equation}
Similar coordinate transformations can be found e.g. in \cite{machida, ishi}.

\subsection{Geometrical interpretation of the Kerr theorem for contact Engel structures}\label{secKerr2}

Having set up the coordinate systems, the geometrical interpretation  of Theorem \ref{Kerr1}, given in Corollary \ref{Kerr2}, is now almost immediate.

\begin{corollary}\label{Kerr2} Consider the double fibration
\begin{align}\label{doubfibcor}
   \xymatrix{
        &\mathrm{G}_2/\mathrm{P}_{1,2}  \ar[dl]_{\pi_2}^{\xi_4} \ar[dr]^{\pi_1}_{\xi_7} & \\
                   \mathrm{G}_2/\mathrm{P}_2 &            & \mathrm{G}_2/\mathrm{P}_1 .}
\end{align}
There is a local bijective correspondence between  integrable sections of $\pi_2$ and hypersurfaces $\Sigma\subset\mathrm{G}_2/\mathrm{P}_1$ which are generic in the sense that their preimages ${\pi_1}^{-1}(\Sigma)$ intersect the fibres ${\pi_2}^{-1}(x)$
 transversally.

\end{corollary}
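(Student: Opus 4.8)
The strategy is to work entirely in the double-fibration coordinates set up in Section~\ref{double_coord}, using the explicit coordinate change \eqref{coordchange} between $(x^0,\dots,x^5)$ and $(y^0,\dots,y^5)$. A section $\sigma$ of $\pi_2$ is, in the $x$-coordinates, a function $x^5 = t(x^0,x^1,x^2,x^3,x^4)$; by Lemma~\ref{lemma1marked} and \eqref{section} this is exactly the data of a marked contact Engel structure, and by Proposition~\ref{prop.integrability} the section is integrable precisely when $t$ satisfies \eqref{eq_Jvanishes}, i.e. $J=0$. On the other hand, the image $\sigma(\mathcal{U})$ is a hypersurface in $\mathrm{G}_2/\mathrm{P}_{1,2}$, cut out by the single equation $x^5 - t(x^0,\dots,x^4) = 0$; passing to the $y$-coordinates via \eqref{coordchange}, I will show that this equation, pushed forward along $\pi_1$, descends to a well-defined hypersurface $\Sigma\subset \mathrm{G}_2/\mathrm{P}_1$ if and only if $J=0$, and that genericity (transversality of $\pi_1^{-1}(\Sigma)$ to the $\pi_2$-fibres) is automatic.

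\textbf{Key steps, in order.} First I would translate the hypersurface $\sigma(\mathcal{U}) = \{x^5 = t(x^0,\dots,x^4)\}$ into the $y$-coordinates using \eqref{coordchange}; since $y^4 = x^5$ and $y^5 = x^4$, and $y^0,y^1,y^2,y^3$ are the functions appearing in Theorem~\ref{Kerr1}, the defining equation becomes
$$
F\bigl(y^0,y^1,y^2,y^3,y^4\bigr) := y^4 - t\bigl(\text{($x$'s expressed through $y$'s)}\bigr) = 0 ,
$$
where a priori the argument still involves $y^5$. The crucial observation is that $\pi_1$ has fibre coordinate $y^5$ and vertical field $\xi_4 = \partial_{y^5}$, so $\sigma(\mathcal{U})$ descends to a hypersurface $\Sigma$ in $\mathrm{G}_2/\mathrm{P}_1$ exactly when this defining function is independent of $y^5$, i.e. when $\xi_4$ is tangent to $\sigma(\mathcal{U})$. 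But $\xi_4(x^5 - t) = \xi_4(x^5) - \xi_4(t) = -\xi_4(t) = J$ by \eqref{frame1} and Proposition~\ref{prop.integrability} (note $\xi_4$ annihilates $x^5$ in the $x$-frame because $x^5$ is the $\pi_2$-fibre coordinate and $\xi_4$ spans the $\pi_1$-vertical, which is transverse — more precisely $\xi_4$ has no $\partial_{x^5}$ component). Hence $\sigma(\mathcal{U})$ is $\xi_4$-invariant iff $J=0$, which gives the descended hypersurface $\Sigma = \pi_1(\sigma(\mathcal{U}))$. Conversely, starting from a hypersurface $\Sigma = \{F(y^0,\dots,y^4)=0\}$ with $\partial_{y^4} F \neq 0$ (genericity, see below), its preimage $\pi_1^{-1}(\Sigma)$ is a $\xi_4$-invariant hypersurface in $\mathrm{G}_2/\mathrm{P}_{1,2}$, and one recovers a section of $\pi_2$ by solving $F = 0$ for $y^4 = x^5$ in terms of $x^0,\dots,x^4$ — this is precisely the procedure in Theorem~\ref{Kerr1}, and integrability ($J=0$) of the resulting section follows from the $\xi_4$-invariance by running the above computation backwards.

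\textbf{Genericity and transversality.} It remains to identify the genericity condition. The preimage $\pi_1^{-1}(\Sigma) \subset \mathrm{G}_2/\mathrm{P}_{1,2}$ meets a $\pi_2$-fibre $\pi_2^{-1}(x)$ — which is a curve with tangent $\xi_7$ — transversally iff $\xi_7$ is \emph{not} tangent to $\pi_1^{-1}(\Sigma)$, equivalently iff $\xi_7(F) \neq 0$ along $\Sigma$. Using the explicit form of $\xi_7$ in the $y$-coordinates from Section~\ref{double_coord}, $\xi_7 = -3y^5y^2\partial_{y^0} - 3(y^4)^2y^5\partial_{y^1} + 2y^4y^5\partial_{y^2} - y^5\partial_{y^3} - \partial_{y^4}$, one sees that $\xi_7(F)$ at a point of the fibre where $y^5 = 0$ equals $-\partial_{y^4}F$; so transversality of $\pi_1^{-1}(\Sigma)$ to all the $\pi_2$-fibres is equivalent to $\partial_{y^4} F \neq 0$ on $\Sigma$, which is exactly the nondegeneracy needed to solve $F=0$ for $y^4$ by the implicit function theorem, recovering a genuine (local) section of $\pi_2$. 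Putting these together establishes the claimed local bijection: integrable sections of $\pi_2$ $\longleftrightarrow$ $\xi_4$-invariant generic hypersurfaces of $\mathrm{G}_2/\mathrm{P}_{1,2}$ $\longleftrightarrow$ generic hypersurfaces $\Sigma\subset\mathrm{G}_2/\mathrm{P}_1$. The main obstacle I anticipate is purely bookkeeping: one must carefully check that, after expressing everything in $y$-coordinates, the arguments $y^0,y^1,y^2,y^3$ of $F$ genuinely do not depend on $y^5$ (which holds since \eqref{coordchange} gives $y^i$, $i=0,1,2,3$, as functions of $x^0,\dots,x^5$ only through $x^0,\dots,x^4$ and $x^5 = y^4$, with $y^5 = x^4$ entering but in a way that — on the locus $x^5 = t$ — is absorbed), so that $F$ really is a function on $\mathrm{G}_2/\mathrm{P}_1$. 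This coordinate verification is routine but is where the content of the statement actually resides.
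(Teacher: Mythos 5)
Your proof is correct and follows essentially the same route as the paper: the graph $x^5=t$ is a hypersurface in $\mathrm{G}_2/\mathrm{P}_{1,2}$, integrability $J=0$ is exactly tangency to the $\pi_1$-fibres spanned by $\xi_4$ (so the graph descends to $\Sigma\subset\mathrm{G}_2/\mathrm{P}_1$), and the converse uses transversality to the $\pi_2$-fibres plus the implicit function theorem. The only inessential blemish is your side-claim identifying transversality with $\partial_{y^4}F\neq 0$, which you verify only at $y^5=0$; the correct and sufficient formulation, which you also state, is $\xi_7(F\circ\pi_1)\neq 0$ along the preimage, which is precisely the hypothesis the implicit function theorem requires.
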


\begin{proof}
 Any local section $\sigma:\mathcal{U}\to\mathrm{G}_2/\mathrm{P}_{1,2}$, with\, $\mathcal{U}\subset \mathrm{G}_2/\mathrm{P}_2,$
 defines a hypersurface in  $\mathrm{G}_2/\mathrm{P}_{1,2}$ locally  given in terms of coordinates $(x^0,x^1,x^2,x^3,x^4,x^5)$ by its graph $$x^5=t(x^0,x^1,x^2,x^3,x^4).$$ By Proposition \ref{prop.integrability}, the integrability condition reads
 $$0=-(x^1+3tx^2)t_{x^0}-t^3 t_{x^1}+t^2 t_{x^2}-t t_{x^3}+ t_{x^4}=\xi_4(t)\vert_{\sigma (\mathcal{U})}.$$
 Since $\xi_4$ spans the vertical bundle of $\pi_1$, this means that $\sigma(\mathcal{U})$ is tangential to the fibres of $\pi_1$, which implies that $\sigma$ defines a hypersurface in $\mathrm{G}_2/\mathrm{P}_1$.

Conversely, let $\Sigma$ be a hypersurface in $\mathrm{G}_2/\mathrm{P}_1$ such that $\pi_1^{-1}(\Sigma)$ is transversal to the fibres of $\pi_2$.
  Because of this genericity assumption on $\Sigma$, we may apply the implicit function theorem and write $\pi_1^{-1}(\Sigma)$, locally, as the graph of a section $x^5=t(x^0,x^1,x^2,x^3,x^4)$. By construction $\xi_4\cdot t\vert_{\sigma (\mathcal{U})}=0$, that is,  the section is integrable.
\end{proof}

We conclude this section with a number of remarks, each of which deserves further investigations.  Recall that a  marked contact Engel structure can be viewed as a (local) foliation  of $\mathrm{G}_2/\mathrm{P}_2$ by unparametrized curves whose tangent directions are contained in  $\gamma\subset\mathbb{P}(\mathcal{C})$. We called such a foliation a $\gamma$-congruence in Proposition \ref{prop_punctured}. Note that $\Sigma$ appearing in the Corollary \ref{Kerr2} can be locally  identified with its leaf space.

 \begin{remark}{(\textbf{On geodesics for Weyl connections})} \
 For contact twisted cubic structures,   there exists a class of distinguished connections on the tangent bundle preserving the geometric structure, which are known as \emph{Weyl connections}.
 A choice of contact form uniquely determines a connection from the class of Weyl connections. It is an algebraic computation to determine how a Weyl connection transforms under a change of contact form, see  \cite[Proposition 5.1.6]{book}. In particular, using the transformation formula, it is straightforward to verify that if an unparametrised curve whose tangent directions are contained in  $\gamma\subset\mathbb{P}(\mathcal{C})$ is a geodesic for one Weyl connection, i.e., $\nabla_{c'}c'\propto c',$ then it is a geodesic for any other Weyl connection as well. We shall call these curves $\gamma$-geodesics.
 In the case of the flat model, i.e., the  contact Engel structure, the $\gamma$-geodesics are then just curves of the form $g\,\mathrm{exp}(tX)\cdot o\subset\mathrm{G}_2/\mathrm{P}_2$ with $X$ an element in the highest weight orbit of $G_0$ on $\mathfrak{g}_{-1}$.

Returning to the coordinate representation \eqref{section} of marked contact Engel structures, here the  Weyl connection $\nabla$ determined by the contact form $\alpha^0$ is such that it preserves the coframe $(\alpha^0,\alpha^1,\alpha^2,\alpha^3,\alpha^4)$ in all horizontal directions, i.e., $\nabla_{X}\alpha^i=0$ for all $X\in\Gamma(\mathcal{C})$.
In terms of this Weyl connection,
$\nabla_{\xi_4}\xi_4=-t_{\omega^4}\xi_3=J\xi_3,$ where $\ell^{\sigma}=\mathrm{Span}(\xi_4)$.
Hence,  the condition that  a  $\gamma$-congruence consists entirely of $\gamma$-geodesics  is precisely  the integrability condition $J=0$. (Note that this means  that the relative invariant $J$ is an obstruction against the existence of a Weyl connection that preserves the marked contact Engel structure.)

\end{remark}
There are further viewpoints on the $\mathrm{G}_2$-correspondence discussed here and results that should be useful in this context, we refer e.g.  to \cite{BryantCartan, BryantHsu, DoubZel,  machida, ishi, Ishiusw2}.

Our next remarks concern the geometric structures that a hypersurface $\Sigma\subset\mathrm{G}_2/\mathrm{P}_1$ inherits from the ambient geometry on $\mathrm{G}_2/\mathrm{P}_1$.
The $\mathrm{G}_2$-homogeneous space $\mathrm{G}_2/\mathrm{P}_1$ is equipped with a  $\mathrm{G}_2$-invariant $(2,3,5)$ distribution $\mathcal{D}^{(2,3,5)}$ (see Definition \ref{235}), first discovered  by Cartan and Engel \cite{cartan, engel}. Taking the pullback of the $1$-forms $\omega^0, \omega^1, \omega^2, \omega^3, \omega^7$ on $\mathrm{G}_2/\mathrm{P}_{1,2}$  as in \eqref{newomegas} by any section of $\pi_1:\mathrm{G}_2/\mathrm{P}_{1,2}\to\mathrm{G}_2/\mathrm{P}_1$  defines a co-frame on $\mathrm{G}_2/\mathrm{P}_1$. This coframe is adapted to the $\mathrm{G}_2$-invariant  $(2,3,5)$-distribution $\mathcal{D}^{(2,3,5)}$ in the sense that
$$\mathcal{D}^{(2,3,5)}=\mathrm{ker}(\omega^0,\omega^1,\omega^2),$$
with derived rank $3$ distribution
$$(\mathcal{D}^{(2,3,5)})'=[\mathcal{D}^{(2,3,5)},\mathcal{D}^{(2,3,5)}]=\mathrm{ker}(\omega^0,\omega^1).$$

\begin{remark}{\textbf{(On 3rd order ODEs 1)}}\
Consider the section of $\,\pi_2:\mathrm{G}_2/\mathrm{P}_{1,2}\to\mathrm{G}_2/\mathrm{P}_1$ corresponding to $y^5=0$, rename the coordinates as usual jet coordinates as follows
$$y^0=y,\, y^1=z,\, y^2=y',\,  3y^3=x,\, -\tfrac{2}{3}y^4=y'',$$ and change the co-frame by an admissible transformation
 (in other words, we are putting it  into Goursat normal form):
\begin{equation*}
\begin{aligned}
&\hat{ \omega}^0=\omega^0=\der y-y'\der x\\
 &\hat{\omega}^1=\omega^1-3y^4\omega^2=\der z -\tfrac{9}{4} (y'')^2\der x \\
 &  \hat{\omega}^2   =\der y'-y'' \der x\\
  & \hat{ \omega}^3= 3\omega^3=\der x\\
   &\hat{\omega}^7=\tfrac{3}{2}\omega^7=\der y''.
 \end{aligned}
 \end{equation*}
This shows that integral curves $c(x)=(x,y(x),y'(x),y''(x),z(x))$ of the distribution $\mathrm{ker}(\omega^0,\omega^1,\omega^2)$ are solutions to the Hilbert-Cartan equations $z'=\frac{9}{4}{y''}^2$.

Now consider a hypersurface $\Sigma\subset \mathrm{G}_2/\mathrm{P}_1$ given as as $H(x,y,y',y'',z)=0$. Differentiating and inserting the Hilbert-Cartan equation,  we get an explicit third order ODE on $y=y(x)$,
$$y'''=-\frac{1}{H_{y''}}(\tfrac{9}{4}{y''}^2+H_x-H_yy'-H_{y'}y'').$$
\end{remark}

\begin{remark}{\textbf{(On 3rd order ODEs 2)}} \
Here we take another viewpoint.
 Recall that a distribution with growth vector $(2,3,4)$ is called an \emph{Engel distribution} (see e.g. \cite{BryantHsu, BryGovEasNeu}). It is well known that the derived rank $3$ distribution of an  Engel distribution admits a unique line field spanned by a  characteristic symmetry contained in the Engel distribution. We refer to it as the \emph{characteristic line field}.   More precisely, there exist local coordinates $(x,y,y',y'')$ such that the Engel distribution is generated by
$$\tfrac{\mathrm{d}\,}{\mathrm{d} x}=\partial_x+y'\partial_y+y''\partial_{y'}, \quad \partial_{y''},$$
where $\partial_{y''}$ spans the characteristic line field. Any line field transversal to $\partial_{y''}$ is generated by $\mathrm{D}=\tfrac{\mathrm{d}\,}{\mathrm{d} x}+F\,\partial_{y''}$, for some smooth function $F$, to which is associated the third order ODE
$$y'''=F(x,y,y',y'').$$ The geometry consisting of an Engel disribution together with a transversal line field is itself a parabolic geometry, modeled on  $\mathrm{Sp}(4,\mathbb{R})/P,$ where $P$ is the Borel subgroup \cite{thirdorder, BryGovEasNeu}.

Now let $\Sigma$ be a hypersurface in $\mathrm{G}_2/\mathrm{P}_1$. One verifies that in terms of the geometry on $\mathrm{G}_2/\mathrm{P}_1$, the genericity condition of Corollary \ref{Kerr2}, namely,  that $\pi_1^{-1}(\Sigma)$ be transversal to fibres of $\pi_2$,
can  be rephrased as the condition that  at each point $p\in\Sigma$ the tangent space of $\Sigma$ and the $(2,3,5)$-distribution $\mathcal{D}^{(2,3,5)}$ intersect  in a line. In particular, this yields a line distribution $\mathcal{L}^{\Sigma}\subset T\Sigma$ on $\Sigma$ (and $\Sigma$ is  thus foliated by integral curves).
Likewise, the rank three distribution $(\mathcal{D}^{(2,3,5)})'$ on $\mathrm{G}_1/\mathrm{P}_1$ gives rise to a rank two distribution $\mathcal{H}^{\Sigma}\subset T \Sigma$.

It turns out that  distribution $\mathcal{H}^{\Sigma}$ is maximally non-integrable, i.e., it is an Engel distribution,
if and only if an additional genericity condition on the hypersurface $\Sigma$ is satisfied. Computing shows that this condition is equivalent  to  $L\neq 0$ as in Theorem \ref{main}. Suppose that $L\neq 0$ and
let $\mathcal{K}^{\Sigma}\subset\mathcal{H}^{\Sigma}$ be the characteristic line field of the Engel distribution $\mathcal{H}^{\Sigma}$.
Then one further verifies that the fields $\mathcal{K}^{\Sigma}$ and  $\mathcal{L}^{\Sigma}$ are linearly independent, and  thus one has a direct sum decomposition $\mathcal{H}^{\Sigma}=\mathcal{K}^{\Sigma}\oplus\mathcal{L}^{\Sigma}$. By the above discussion, this equips $\Sigma$ with the structure of a third order ODE (considered modulo contact transformations), or equivalently, a parabolic geometry modeled on $\mathrm{Sp}(4,\mathbb{R})/P$.

\end{remark}

\begin{remark}{\textbf{(On the induced conformal structures)}}\
For our final remark, we recall that $\mathrm{G}_2/\mathrm{P}_1$ carries a $\mathrm{G}_2$-invariant conformal class of metrics $[g]$ of signature $(2,3)$, with respect to which $\mathcal{D}^{(2,3,5)}$ is totally null, see \cite{nurowskiconf}. When $\mathrm{G}_2/\mathrm{P}_1$ is identified with the projectivized null cone $\mathbb{P}(\mathcal{N})=\{[X]\in\mathbb{R}^{3,4}: h(X,X)=0\}$, then this conformal structure is induced from the $\mathrm{G}_2$-invariant metric $h$ on $\mathbb{R}^{3,4}$.

One can   pullback the $\mathrm{G}_2$-invariant conformal class $[g]$ to the hypersurface $\Sigma\subset \mathrm{G}_2/\mathrm{P}_1$,
which yields
an induced  \emph{non-degenerate} conformal structure on $\Sigma$ if and only if the relative invariant $M-P$ as in Proposition \ref{prop_seconddi}  is non-vanishing.
\end{remark}

\subsection{Maximal and  submaximal models for marked contact Engel structures revisited}\label{maxsubmax}

We shall use the correspondence between integrable marked contact Engel structures and hypersurfaces in the twistor space to describe  the maximal and submaximal models derived in Section \ref{CartanEquiv}.

 Let $\Phi\in\Lambda^3(\mathbb{R}^{3,4})^*$ be the defining three form of the group $\mathrm{G}_2$ and let  $h\in\bigodot^2(\mathbb{R}^{3,4})^*$ be the $\mathrm{G}_2$-invariant bilinear form of signature $(3,4)$. Then homogeneous spaces occurring in the double fibration \eqref{doublefib} admit the following descriptions (see e.g. \cite{BryantCartan, machida, SplitOct}):
 \begin{itemize}
 \item $\mathrm{G}_2/\mathrm{P}_1$ can be identified with the projectivized null cone $\mathbb{P}(\mathcal{N})$ of all $1$-dimensional subspaces $\mathbb{L}\subset\mathbb{R}^{3,4}$ that are null with respect to $h$,
 \item $\mathrm{G}_2/\mathrm{P}_2$ can be identified with the set of $2$-dimensional totally null subspaces $\Pi\subset\mathbb{R}^{3,4}$ that insert trivially into the defining $3$-form $\Phi$,
 \item $\mathrm{G}_2/\mathrm{P}_{1,2}$ can be identified with the correspondence space of all  pairs $(\mathbb{L}, \Pi)\in\mathrm{G}_2/\mathrm{P}_1\times \mathrm{G}_2/\mathrm{P}_2$, where  $\mathbb{L}\subset \Pi$.
\end{itemize}
 A fibre ${\pi_2}^{-1}(\Pi)$ can be identified with the set of all $1$-dimensional subspaces contained in $\Pi$ and is thus isomorphic to $\mathbb{R}\mathbb{P}^1$. A fibre ${\pi_1}^{-1}(\mathbb{L})$ can be identified with the set of all totally null $2$-dimensional subspaces $\Pi$  that insert trivially into $\Phi$ and contain $\mathbb{L}$; this is the set of $2$-dimensional subspaces of the $3$-dimensional null subspace
\begin{equation*}
\mathrm{Ann}_{\Phi}(\mathbb{L})=\{ X\in \mathbb{R}^{3,4}\mid \Phi (\mathbb{L}, X, \cdot)=0\}\subset  \mathbb{R}^{3,4} ,
\end{equation*}
and hence also isomorphic to $\mathbb{R}\mathbb{P}^1$.

Viewing $\mathrm{G}_2/\mathrm{P}_1=\mathbb{P}(\mathcal{N})$ as a projectivized null cone, the simplest kinds of hypersurfaces in $\mathrm{G}_2/\mathrm{P}_1$ are obtained by intersecting the null cone with a $6$-dimensional subspace $\mathbb{W}\subset\mathbb{R}^{3,4}$ and projectivizing. Such hyperplanes $\mathbb{W}=\mathbb{L}^{\perp}$ split into three classes according to whether its annihilator $\mathbb{L}$ is a  lightlike, timelike or spacelike line.
  It is further known that the group $\mathrm{G}_2$ acts transitively on the set of, respectively,   lightlike, timelike,  spacelike lines $\mathbb{L}\subset \mathbb{R}^{3,4}$ and that
\begin{itemize}
\item $\mathrm{Stab}_{\mathrm{G}_2}(\mathbb{L})=\mathrm{P}_1$ iff $\left\langle\mathbb{L},\mathbb{L}\right\rangle=0$,
\item $\mathrm{Stab}_{\mathrm{G}_2}(\mathbb{L})=\mathrm{SU}(1,2)$ iff $\left\langle\mathbb{L},\mathbb{L}\right\rangle >0$,
\item $\mathrm{Stab}_{\mathrm{G}_2}(\mathbb{L})=\mathrm{SL}(3,\mathbb{R})$ iff $\left\langle\mathbb{L},\mathbb{L}\right\rangle<0$.
\end{itemize}
Each of these groups has a unique open orbit in $\mathbb{P}(\mathcal{N}),$  which is contained in    the space $\mathbb{P}(\mathcal{N}\cap\mathbb{L}^\perp)$,
 see e.g. \cite{meTravis}.

According to Theorem \ref{Kerr2}, there are corresponding marked contact Engel structures, which we can easily describe explicitly:


\begin{proposition} \label{submaxmod}
The subset
\begin{equation}\label{eqSubMaxMod}
\M_{\mathbb{L}}:=\{ \Pi\in \mathrm{G}_2/\mathrm{P}_2\mid   \dim(\Pi\cap\mathbb{L}^\perp)=1\}\subset \mathrm{G}_2/\mathrm{P}_2
\end{equation}
is  equipped with a canonical  $\mathrm{Stab}_{\mathrm{G}_2}(\mathbb{L})$-invariant marked contact Engel structure
\begin{equation}\label{def_sig}
\sigma (\Pi):=( \Pi,\Pi\cap\mathbb{L}^\perp)\in \mathrm{G}_2/\mathrm{P}_{1,2}\, .
\end{equation}

\end{proposition}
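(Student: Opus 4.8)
The plan is to verify that the assignment $\sigma(\Pi) = (\Pi, \Pi \cap \mathbb{L}^\perp)$ is well-defined on $\M_{\mathbb{L}}$, that it takes values in $\gamma \subset \mathbb{P}(\mathcal{C})$, and that it is invariant under $\mathrm{Stab}_{\mathrm{G}_2}(\mathbb{L})$; integrability will then follow from Corollary \ref{Kerr2} since $\sigma$ is by construction a section whose image descends to the hypersurface $\Sigma = \mathbb{P}(\mathcal{N} \cap \mathbb{L}^\perp) \subset \mathrm{G}_2/\mathrm{P}_1$. First I would check that $\M_{\mathbb{L}}$ is open in $\mathrm{G}_2/\mathrm{P}_2$: the condition $\dim(\Pi \cap \mathbb{L}^\perp) = 1$ is the generic one (a $2$-plane $\Pi$ meets the hyperplane $\mathbb{L}^\perp$ in dimension $1$ unless $\Pi \subset \mathbb{L}^\perp$), so $\M_{\mathbb{L}}$ is the complement of the closed subset $\{\Pi : \Pi \subset \mathbb{L}^\perp\}$, hence open. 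On $\M_{\mathbb{L}}$ the intersection $\Pi \cap \mathbb{L}^\perp$ is a line contained in $\Pi$, so $(\Pi, \Pi \cap \mathbb{L}^\perp)$ is indeed a point of $\mathrm{G}_2/\mathrm{P}_{1,2}$, using the identification from Section \ref{maxsubmax} of $\mathrm{G}_2/\mathrm{P}_{1,2}$ with pairs $(\mathbb{L}', \Pi)$ with $\mathbb{L}' \subset \Pi$; note $\Pi \cap \mathbb{L}^\perp$ is automatically null since it lies in the totally null plane $\Pi$. Thus $\sigma$ is a smooth section of $\pi_2 : \mathrm{G}_2/\mathrm{P}_{1,2} \to \mathrm{G}_2/\mathrm{P}_2$ over $\M_{\mathbb{L}}$, and by Proposition \ref{G2modG12} (identifying $\gamma = \mathrm{G}_2/\mathrm{P}_{1,2}$) it defines a marked contact Engel structure on $\M_{\mathbb{L}}$.

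Next I would verify the invariance. The group $H := \mathrm{Stab}_{\mathrm{G}_2}(\mathbb{L})$ acts on $\mathrm{G}_2/\mathrm{P}_2$; since $H$ preserves $\mathbb{L}$ and the metric $h$, it preserves $\mathbb{L}^\perp$, hence preserves the condition $\dim(\Pi \cap \mathbb{L}^\perp) = 1$, so $H \cdot \M_{\mathbb{L}} = \M_{\mathbb{L}}$. For $g \in H$ we have $g \cdot \sigma(\Pi) = (g\Pi, g(\Pi \cap \mathbb{L}^\perp)) = (g\Pi, g\Pi \cap g\mathbb{L}^\perp) = (g\Pi, g\Pi \cap \mathbb{L}^\perp) = \sigma(g\Pi)$, using linearity of the $\mathrm{G}_2$-action on $\mathbb{R}^{3,4}$ and $g\mathbb{L}^\perp = \mathbb{L}^\perp$. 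Since $H$ already acts by automorphisms of the contact twisted cubic structure on $\mathrm{G}_2/\mathrm{P}_2$ (being a subgroup of $\mathrm{G}_2$), this equivariance of $\sigma$ shows $H$ acts by automorphisms of the marked structure. Combined with the fact that $H$ acts transitively on its open orbit in $\mathbb{P}(\mathcal{N}) = \mathrm{G}_2/\mathrm{P}_1$ (recalled in Section \ref{maxsubmax}) and hence — via $\pi_1, \pi_2$ and the structure of the double fibration — transitively on the corresponding open subset of $\M_{\mathbb{L}}$, one gets that the marked contact Engel structure on $\M_{\mathbb{L}}$ is homogeneous with symmetry algebra containing the Lie algebra of $H$.

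Finally, integrability: the image $\sigma(\M_{\mathbb{L}}) \subset \mathrm{G}_2/\mathrm{P}_{1,2}$ consists of pairs $(\mathbb{L}', \Pi)$ with $\mathbb{L}' = \Pi \cap \mathbb{L}^\perp \subset \mathbb{L}^\perp$, so it is contained in $\pi_1^{-1}(\mathbb{P}(\mathcal{N} \cap \mathbb{L}^\perp))$; conversely its fibres over $\mathrm{G}_2/\mathrm{P}_1$ are single points (for a null line $\mathbb{L}' \subset \mathbb{L}^\perp$, the unique $\Pi$ in the image with $\Pi \cap \mathbb{L}^\perp = \mathbb{L}'$ is pinned down, as one checks from the annihilator description $\pi_1^{-1}(\mathbb{L}') \cong \mathbb{P}(\mathrm{Ann}_\Phi(\mathbb{L}')/\mathbb{L}')$ together with the transversality forced by $\mathbb{L}' \ne \mathbb{L}$), so $\sigma(\M_{\mathbb{L}})$ descends to the hypersurface $\Sigma = \mathbb{P}(\mathcal{N} \cap \mathbb{L}^\perp)$ in $\mathrm{G}_2/\mathrm{P}_1$, and the genericity (transversality of $\pi_1^{-1}(\Sigma)$ to the fibres of $\pi_2$) amounts exactly to the defining condition $\dim(\Pi \cap \mathbb{L}^\perp) = 1$ on $\M_{\mathbb{L}}$. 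By Corollary \ref{Kerr2}, $\sigma$ is an integrable section. I expect the main obstacle to be the bookkeeping in the last step — carefully matching the abstract linear-algebra description of the double fibration with the transversality hypothesis of Corollary \ref{Kerr2}, and checking that the fibres of $\pi_1|_{\sigma(\M_{\mathbb{L}})}$ really are points so that $\sigma(\M_{\mathbb{L}})$ genuinely descends; the invariance and openness claims are routine. One should also be slightly careful that when $\mathbb{L}$ is null (the case $H = \mathrm{P}_1$), the construction still gives a well-defined hypersurface (here $\Sigma = \mathbb{P}(\mathcal{N} \cap \mathbb{L}^\perp)$ is the flat/maximal model), which can be checked directly in the coordinates of Section \ref{double_coord}.
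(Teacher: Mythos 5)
Your verification of the proposition as actually stated --- openness of $\M_{\mathbb{L}}$ (the complement of the closed set $\{\Pi:\Pi\subset\mathbb{L}^\perp\}$, since $\dim(\Pi\cap\mathbb{L}^\perp)\geq 2+6-7=1$ always), well-definedness of $\sigma$ as a smooth section of $\gamma=\mathrm{G}_2/\mathrm{P}_{1,2}\to\mathrm{G}_2/\mathrm{P}_2$ via the identification of $\pi_2^{-1}(\Pi)$ with the lines in $\Pi$, and $\mathrm{Stab}_{\mathrm{G}_2}(\mathbb{L})$-equivariance from $g(\Pi\cap\mathbb{L}^\perp)=g\Pi\cap\mathbb{L}^\perp$ --- is correct and is exactly the (unwritten) argument the paper intends; the paper states the proposition without a formal proof and only afterwards records that $\Sigma_{\mathbb{L}}=\pi_1(\sigma(\M_{\mathbb{L}}))$ is contained in $\mathbb{P}(\mathcal{N}\cap\mathbb{L}^\perp)$.

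The extra integrability step, however, contains a genuine error. You claim that the fibres of $\pi_1$ restricted to $\sigma(\M_{\mathbb{L}})$ are single points, i.e.\ that for a null line $\mathbb{L}'\subset\mathbb{L}^\perp$ there is a \emph{unique} $\Pi$ with $\mathbb{L}'\subset\Pi$ and $\Pi\cap\mathbb{L}^\perp=\mathbb{L}'$. This is false, and if it were true it would defeat your own conclusion: $\sigma(\M_{\mathbb{L}})$ is $5$-dimensional, so if $\pi_1$ were injective on it the image would be \emph{open} in the $5$-dimensional $\mathrm{G}_2/\mathrm{P}_1$, not a hypersurface. The correct picture is the opposite: the fibre $\pi_1^{-1}(\mathbb{L}')\cong\mathbb{P}\bigl(\mathrm{Ann}_{\Phi}(\mathbb{L}')/\mathbb{L}'\bigr)\cong\mathbb{R}\mathbb{P}^1$ consists of all totally null $2$-planes with $\mathbb{L}'\subset\Pi\subset\mathrm{Ann}_{\Phi}(\mathbb{L}')$; when $\mathrm{Ann}_{\Phi}(\mathbb{L}')\not\subset\mathbb{L}^\perp$ exactly one of these, namely $\mathrm{Ann}_{\Phi}(\mathbb{L}')\cap\mathbb{L}^\perp$, is contained in $\mathbb{L}^\perp$, and \emph{every other} $\Pi$ in the fibre satisfies $\Pi\cap\mathbb{L}^\perp=\mathbb{L}'$. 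Hence $\pi_1|_{\sigma(\M_{\mathbb{L}})}$ has $1$-dimensional fibres (open arcs of $\pi_1$-fibres), which is precisely what makes $\sigma(\M_{\mathbb{L}})$ tangent to the vertical bundle of $\pi_1$ (spanned by $\xi_4$) and forces the $5$-dimensional image to project onto a $4$-dimensional hypersurface in $\mathbb{P}(\mathcal{N}\cap\mathbb{L}^\perp)$; integrability then follows from Corollary \ref{Kerr2}. Since the proposition itself only asserts existence and invariance of the marked structure, this slip does not invalidate your proof of the statement as posed, but the descent argument must be corrected before it can support the integrability claim.
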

Clearly, if we
 fit  \eqref{eqSubMaxMod}  into the double fibration \eqref{doublefib},
then for $\sigma:\M_{\mathbb{L}}\to \pi_2^{-1}(\M_{\mathbb{L}})\subset\mathrm{G}_2/\mathrm{P}_{1,2}$ defined as in \eqref{def_sig}, the corresponding hypersurface $\Sigma_{\mathbb{L}}:=\pi_1(\sigma(\M_{\mathbb{L}}))$ is contained in $\mathbb{P}(\mathcal{N}\cap \mathbb{L}^\perp)$.
\begin{remark}
By looking at the three cases individually we can see that $\Sigma_{\mathbb{L}}$  indeed coincides with the open $\mathrm{Stab}_{\mathrm{G}_2}(\mathbb{L})$-orbit in $\mathbb{P}(\mathcal{N}\cap \mathbb{L}^\perp)$.

If $\left\langle \mathbb{L},\mathbb{L}\right\rangle=0$, the $\mathrm{Stab}_{\mathrm{G}_2}(\mathbb{L})\cong \mathrm{P}_1$ preserves a filtration
$$\mathbb{L}\subset\mathbb{D}\subset\mathbb{D}^{\perp}\subset\mathbb{L}^{\perp}\subset\mathbb{V},$$ where
$\mathbb{D}:=\mathrm{Ann}_{\Phi}(\mathbb{L})=\{ X\in \mathbb{R}^{3,4}\mid \Phi (\mathbb{L}, X, \cdot)=0\}\subset  \mathbb{R}^{3,4}.$
The open $\mathrm{Stab}_{\mathrm{G}_2}(\mathbb{L})$-orbit
consists of all null lines contained in $\mathbb{L}^\perp$ but transversal to $\mathbb{D}^\perp$.
Now suppose that a $2$-plane $\Pi\in \mathrm{G}_2/\mathrm{P}_2$  has non-trivial intersection with $\mathbb{D}^{\perp}$. Then, since $\mathbb{D}$ is maximally isotropic, a null line contained in the intersection has to be already contained in $\mathbb{D}$.
Using the terminology from \cite{BaezHuerta}, this implies that any element $X\in\mathbb{L}$ and any element $Y\in\Pi$ are two rolls away from each other and then Theorem 10 in \cite{BaezHuerta} shows that $\left\langle X,Y\right\rangle=0$, hence $\Pi\subset\mathbb{L}^{\perp}$. This shows that $\Sigma_{\mathbb{L}}$ is contained in the open $\mathrm{Stab}_{\mathrm{G}_2}(\mathbb{L})$-orbit and equality follows from the fact that $\Sigma_{\mathbb{L}}$ is also invariant under the $\mathrm{Stab}_{\mathrm{G}_2}(\mathbb{L})$-action.

If $\left\langle \mathbb{L},\mathbb{L}\right\rangle<0$, we have $\mathrm{Stab}_{\mathrm{G}_2}(\mathbb{L})\cong\mathrm{SL}(3,\mathbb{R})$ which preserves the following decomposition
$$\mathbb{R}^7=\mathbb{L}\oplus\mathbb{L}^{\perp}=\mathbb{L}\oplus\mathbb{U}\oplus\mathbb{U}^*.$$ The group $\mathrm{SL}(3,\mathbb{R})$ acts transitively on $\mathbb{P}\mathbb{U}$, $\mathbb{P}\mathbb{U}^*$ and the open orbit of all null lines in $\mathbb{L}^{\perp}$ that are neither contained in $\mathbb{U}$ nor $\mathbb{U}^*$,
respectively, see \cite{meTravis}.
The open orbit is $\Sigma_{\mathbb{L}}$; this follows from the fact that if a null line $\mathbb{L'}$  is contained in one of the spaces $\mathbb{U}$ or $\mathbb{U}^*$, then its $\Phi$-annihilator $\mathrm{Ann}_{\Phi}(\mathbb{L}')$   is contained in $\mathbb{L}^{\perp}$.

If $\left\langle \mathbb{L},\mathbb{L}\right\rangle>0,$ the group $\mathrm{Stab}_{\mathrm{G}_2}(\mathbb{L})\cong\mathrm{SU}(1,2)$ acts transitively on $\mathbb{P}(\mathcal{N}\cap \mathbb{L}^\perp)$.
\end{remark}

\begin{proposition}\label{submaxprop}
The structures from Proposition \ref{submaxmod} realize maximally symmetric and submaximally symmetric models of marked contact twisted cubic structures.
 Their    infinitesimal symmetry algebras  are $\mathfrak{p}_1$, $\mathfrak{sl}(3,\mathbb{R})$, and $\mathfrak{su}(1,2)$, respectively.
\end{proposition}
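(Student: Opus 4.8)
The plan is to compute the infinitesimal symmetry algebras of the three structures $\M_{\mathbb{L}}$ directly from their construction in Proposition \ref{submaxmod}, and then match the resulting dimensions and isomorphism types against the classification obtained in Theorem \ref{thmsummary}. First I would observe that by $\mathrm{G}_2$-invariance of the double fibration \eqref{doublefib} and of all the structures appearing in it, the stabilizer $\mathrm{Stab}_{\mathrm{G}_2}(\mathbb{L})$ acts on $\M_{\mathbb{L}}\subset\mathrm{G}_2/\mathrm{P}_2$ preserving the contact Engel structure, and since the section $\sigma$ in \eqref{def_sig} is defined purely in terms of $\mathbb{L}$ and the incidence relation, $\mathrm{Stab}_{\mathrm{G}_2}(\mathbb{L})$ also preserves $\sigma$. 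Hence $\mathrm{stab}_{\mathfrak{g}}(\mathbb{L})$ embeds into the infinitesimal symmetry algebra of the marked contact Engel structure on $\M_{\mathbb{L}}$. From the three cases listed just before Proposition \ref{submaxmod} this gives, respectively, a copy of $\mathfrak{p}_1$ (dimension $9$), $\mathfrak{sl}(3,\mathbb{R})$ (dimension $8$), and $\mathfrak{su}(1,2)$ (dimension $8$) acting by symmetries. Transitivity of these actions on $\M_{\mathbb{L}}$ (for the open orbit, after restricting to it) identifies these as homogeneous models.

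Next I would invoke Theorem \ref{main} and Theorem \ref{thmsummary}: the maximal possible symmetry dimension for a marked contact Engel structure is $9$, attained only by the flat model with symmetry algebra $\mathfrak{p}_1$. So the $\mathbb{L}$ null case is automatically the maximal model — one just needs to check $\M_{\mathbb{L}}$ with this section is not the flat structure in disguise but rather \emph{is} the flat structure, which is immediate since a $9$-dimensional symmetry algebra forces flatness by Theorem \ref{main}(2). For the timelike and spacelike cases, the $8$-dimensional symmetry algebras $\mathfrak{su}(1,2)$ and $\mathfrak{sl}(3,\mathbb{R})$ already match exactly the two submaximal models identified in Theorem \ref{thmsummary}, and since Theorem \ref{thmsummary} asserts these are the \emph{only} homogeneous marked contact Engel structures with $8$-dimensional symmetry algebra up to local equivalence, the structures on $\M_{\mathbb{L}}$ must coincide (locally) with those. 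The correspondence of $\epsilon=\mathrm{sign}(M-P)$ (or $\mathrm{sign}(M)$) with the sign of $\langle\mathbb{L},\mathbb{L}\rangle$ should be pinned down by a direct computation in the coordinates of Section \ref{double_coord}, choosing an explicit representative hyperplane $\mathbb{L}^\perp$ and reading off the corresponding function $t$ via the Kerr construction of Theorem \ref{Kerr1}, then computing the invariant $M-P$ from Proposition \ref{prop_seconddi}.

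The main obstacle I anticipate is making the identification ``$\mathrm{stab}_{\mathfrak{g}}(\mathbb{L})$ \emph{equals} — not just embeds in — the full symmetry algebra'' watertight in the two submaximal cases. One clean route is: the embedding gives symmetry dimension $\geq 8$; by the gap result quoted in Remark \ref{remark6} and used throughout Section \ref{sec_tree}, a marked contact Engel structure that is not flat has symmetry dimension $\leq 8$; and these structures are not flat (their invariants do not all vanish, e.g. $M\neq 0$ in the relevant branch, which one checks on an explicit representative). Hence the symmetry algebra has dimension exactly $8$ and, being $8$-dimensional and transitive, is one of the two algebras classified in Theorem \ref{thmsummary}; comparing with the embedded subalgebra forces equality. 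The null case is even easier: dimension $\geq 9$ combined with the universal bound of $9$ from Theorem \ref{main} gives dimension exactly $9$, hence flatness and $\mathfrak{p}_1$. The remaining bookkeeping — verifying transitivity of each $\mathrm{Stab}_{\mathrm{G}_2}(\mathbb{L})$-action on (the open orbit in) $\M_{\mathbb{L}}$, and checking the genericity hypothesis of Corollary \ref{Kerr2} so that these are genuine integrable marked contact Engel structures — is routine using the orbit descriptions recalled in the remark following Proposition \ref{submaxmod} and the dimension count $\dim\mathrm{Stab}_{\mathrm{G}_2}(\mathbb{L}) - \dim\mathrm{P}_{1,2}\cap\mathrm{Stab}_{\mathrm{G}_2}(\mathbb{L}) = 5$.
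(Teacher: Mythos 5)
Your argument is sound, but it follows a genuinely different route from the paper's. You establish the lower bound the same way (the $\mathrm{Stab}_{\mathrm{G}_2}(\mathbb{L})$-action gives an embedding of $\mathfrak{p}_1$, $\mathfrak{sl}(3,\mathbb{R})$, $\mathfrak{su}(1,2)$ into the symmetry algebra), but for the upper bound you lean on the Cartan-method classification: the Tanaka bound of $9$ from Corollary \ref{maximal_sym}, the characterization of flatness in Theorem \ref{main}, and the uniqueness statements of Theorem \ref{thmsummary}, together with an explicit coordinate verification that the two submaximal structures are non-flat (e.g.\ $M\neq 0$). The paper instead argues purely algebraically: by the gap theorem for contact twisted cubic structures \cite{gap}, any marked structure with symmetry dimension $\geq 8$ sits over the flat contact Engel structure, so its symmetry algebra is a \emph{subalgebra of} $\mathfrak{g}$ containing the respective stabilizer; since $\mathfrak{p}_1$, $\mathfrak{sl}(3,\mathbb{R})$ and $\mathfrak{su}(1,2)$ are maximal subalgebras of $\mathfrak{g}$ \cite{subalg_g2} and the full $\mathfrak{g}$ is excluded by the bound of $9$, equality follows with no curvature computation and no appeal to Section \ref{sec_tree}. (The paper's remark immediately after the proof explicitly acknowledges your route as the alternative.) Two small points to tighten in your write-up: the statement ``non-flat implies symmetry dimension $\leq 8$'' for \emph{marked} structures comes from Theorem \ref{main} together with the Tanaka bound, not from Remark \ref{remark6}, which concerns the unmarked gap ($14$ versus $\leq 7$); and you can avoid the explicit non-flatness computation entirely by noting that $\mathfrak{sl}(3,\mathbb{R})$ and $\mathfrak{su}(1,2)$ cannot embed into $\mathfrak{p}_1$, whose semisimple Levi factor is only $\mathfrak{sl}(2,\mathbb{R})$ — or by adopting the paper's maximality argument, which is what that step buys you.
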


\begin{proof}
It is known that   the infinitesimal symmetry algebra of a contact twisted cubic structure is either of dimension  $14$, in which case it is the Lie algebra $\mathfrak{g}$ of $\mathrm{G}_2$,  or else the dimension is $\leq 7$, see \cite{gap}. This implies that if the infinitesimal symmetry algebra of a marked contact twisted cubic structure has dimension $8$ or $9$, then it is a subalgebra of the Lie algebra $\mathfrak{g}$ of $\mathrm{G}_2$ and the underlying contact twisted cubic structure is a contact Engel structure.

By construction, the marked contact Engel structures from Proposition \ref{submaxmod} are invariant under  $\mathfrak{p}_1$, $\mathfrak{sl}(3,\mathbb{R})$ and $\mathfrak{su}(1,2)$, respectively.
It remains to show that the infinitesimal symmetry algebras of these structures are not bigger, but this follows from the fact that $\mathfrak{p}_1$, $\mathfrak{sl}(3,\mathbb{R})$ and $\mathfrak{su}(1,2)$ are maximal subalgebras of $\mathfrak{g}$ \cite{subalg_g2}.
\end{proof}

\begin{remark}
Of course, it follows from the analysis  in Section \ref{CartanEquiv} that, up to local equivalence, the structures from Proposition \ref{submaxmod} are the  unique homogeneous marked contact Engel structures having  infinitesimal symmetry algebras of dimension eight or nine. Alternatively, with a little more work, we could recover this fact from purely algebraic considerations at this point using that we know the subalgebras of   $\mathfrak{g}$.
\end{remark}

\section{Considerations about general marked contact twisted cubic structures}\label{sec_general}
The discussion of this section applies to general marked contact twisted cubic structures, i.e., here we shall \emph{not} restrict our considerations to marked contact Engel structures. We will regard marked contact twisted cubic structures as particular types of filtered $G_0$-structures in this section. For references on the general material used in this section see \cite{tanaka,  Morimoto,  yamaguchi, zelenko, book, CapCartan}.

In Section \ref{sec_tanaka} we review the  (algebraic) Tanaka prolongation and some of its implications. The computation of the Tanaka prolongation implies the existence of a canonical coframe on a $9$-dimensional bundle associated with any marked contact twisted cubic structure in a natural manner.

 In Section \ref{Cartan}, we briefly address the existence question of a canonical Cartan connection for marked contact twisted cubic structures, that is, of a canonical coframe with particularly nice  properties. We show that, for algebraic reasons, the constructions   of canonical Cartan connections from \cite{Morimoto} or \cite{CapCartan} are not applicable to our case.  
 In particular, for the filtered $G_0$-structures we are considering, a normalization condition in the sense of \cite{CapCartan} does not exist.

\subsection{Tanaka prolongation and applications}
 \label{sec_tanaka}
Recall, see Proposition \ref{prop_twisted}, that a contact twisted cubic structure can be equivalently regarded as a contact structure $\mathcal{C}\subset T\M$ together with a reduction of the graded frame bundle $\mathcal{F}\to \M$ with respect to an irreducible representation $\rho:\mathrm{GL}(2,\mathbb{R})\to\mathrm{CSp}(2,\mathbb{R})$. A marked contact twisted cubic structure, see Proposition \ref{prop_punctured}, can be seen as a further reduction of $\mathcal{F}\to \M$ with respect to the restriction $\rho:B\to\mathrm{CSp}(2,\mathbb{R})$ of $\rho$ to the Borel subgroup $B\subset\mathrm{GL}(2,\mathbb{R})$.
In the terminology of \cite{Morimoto, CapCartan}, this means that
\begin{itemize}
\item  a contact twisted cubic structure is a  filtered  $G_0$-structures    of type $\mathfrak{m}$, where $G_0$ is the irreducible $\mathrm{GL}(2,\mathbb{R})$, and
\item a marked contact twisted cubic structure is a  filtered  $Q_0$-structures    of type $\mathfrak{m}$, where $Q_0$ is the Borel subgroup $B\subset \mathrm{GL}(2,\mathbb{R}).$
\end{itemize}
In both cases $\mathfrak{m}=\mathfrak{m}_{-2}\oplus\mathfrak{m}_{-1}$ is the $5$-dimensional Heisenberg Lie algebra.

Now suppose $\mathfrak{m}=\mathfrak{m}_{-k}\oplus\cdots\oplus\mathfrak{m}_{-1}$ is any fundamental graded Lie algebra, where fundamental means that it is generated as a Lie algebra by $\mathfrak{m}_{-1}$.  Let $\mathfrak{g}_0\subset\mathrm{Der}_{gr}(\mathfrak{m})$ be a subalgebra of  the Lie algebra $\mathrm{Der}_{gr}(\mathfrak{m})$ of $\mathrm{Aut}_{gr}(\mathfrak{m})$.
Tanaka introduced the following  algebraic object, which plays a fundamental role in his approach to the equivalence problem of filtered $G_0$-structures.
\begin{proposition}\label{tanakaprop} (\cite{tanaka}) There exists a unique, up to isomorphism, graded Lie algebra $\mathfrak{g}(\mathfrak{m},\mathfrak{g}_0)$,  called the (algebraic) \emph{Tanaka prolongation} of the pair $(\mathfrak{m},\mathfrak{g}_0)$, satisfying the following conditions:
\begin{enumerate}
\item The non-positive part is  $\mathfrak{m}\oplus\mathfrak{g}_0$, i.e., $\mathfrak{g}(\mathfrak{m},\mathfrak{s})_i=\mathfrak{m}_i$ for $i<0$ and $\mathfrak{g}(\mathfrak{m},\mathfrak{g}_0)_0=\mathfrak{g}_0$.
\item If $X\in\mathfrak{g}(\mathfrak{m},\mathfrak{g}_0)_i$ for some $i>0$ satisfies $[X,\mathfrak{m}_{-1}]=\{0\}$, then $X=0$.
\item $\mathfrak{g}(\mathfrak{m},\mathfrak{g}_0)$ is maximal among the graded Lie algebras satisfying (1) and (2).
\end{enumerate}
\end{proposition}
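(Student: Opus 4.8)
The plan is to prove Proposition \ref{tanakaprop} by the standard Tanaka construction, building $\mathfrak{g}(\mathfrak{m},\mathfrak{g}_0)$ inductively one graded piece at a time. First I would set up the universal prolongation space in each positive degree. For $k\geq 1$, define $\mathfrak{g}_k$ to be the space of degree-$k$ derivations of the graded Lie algebra $\mathfrak{m}\oplus\bigoplus_{i=0}^{k-1}\mathfrak{g}_i$ built so far; concretely, an element of $\mathfrak{g}_k$ is a collection of linear maps $\mathfrak{m}_j\to\mathfrak{g}_{j+k}$ (interpreting $\mathfrak{g}_{j+k}$ as $\mathfrak{m}_{j+k}$ when $j+k<0$) compatible with all the brackets that have already been defined. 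Because $\mathfrak{m}$ is fundamental, i.e.\ generated by $\mathfrak{m}_{-1}$, such a derivation is determined by its restriction to $\mathfrak{m}_{-1}$, which gives a clean way to realize $\mathfrak{g}_k$ as a subspace of $\mathrm{Hom}(\mathfrak{m}_{-1},\mathfrak{g}_{k-1})$ cut out by the compatibility (Jacobi-type) conditions; in particular this immediately yields property (2) of the statement, and it also makes each $\mathfrak{g}_k$ finite-dimensional since $\mathfrak{m}_{-1}$ and $\mathfrak{g}_{k-1}$ are.

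Next I would define the bracket on $\mathfrak{g}=\mathfrak{m}\oplus\mathfrak{g}_0\oplus\bigoplus_{k\geq 1}\mathfrak{g}_k$. On the non-positive part it is already given; the bracket of an element of $\mathfrak{g}_k$ ($k\geq 1$) with an element of $\mathfrak{m}_j$ is the evaluation of the derivation, which lands in $\mathfrak{g}_{j+k}$ by construction; and the bracket of $X\in\mathfrak{g}_k$ with $Y\in\mathfrak{g}_l$ for $k,l\geq 1$ is defined recursively by specifying its action on $\mathfrak{m}_{-1}$ via $[X,Y]|_{\mathfrak{m}_{-1}}(v)=[X,[Y,v]]-[Y,[X,v]]$ (using lower-degree brackets already constructed) and invoking fundamentality again to see this extends uniquely to a derivation, hence to an element of $\mathfrak{g}_{k+l}$. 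One then checks the bracket is graded skew-symmetric and satisfies the Jacobi identity; the verification proceeds by degree induction, reducing every Jacobi identity to its value on $\mathfrak{m}_{-1}$, where it follows from the Jacobi identities already established in lower degree.

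I would then verify the three defining properties. Property (1) holds by construction. Property (2) is the ``transitivity'' built into the definition of $\mathfrak{g}_k$ as I noted. For maximality, property (3), suppose $\mathfrak{h}=\mathfrak{m}\oplus\mathfrak{g}_0\oplus\bigoplus_{k\geq 1}\mathfrak{h}_k$ is any graded Lie algebra with the same non-positive part satisfying (1) and (2); then one shows by induction on $k$ that $\mathrm{ad}$ embeds $\mathfrak{h}_k$ into $\mathfrak{g}_k$: indeed for $X\in\mathfrak{h}_k$ the map $\mathrm{ad}(X)$ restricted to $\mathfrak{m}\oplus\bigoplus_{i<k}\mathfrak{h}_i$ is a degree-$k$ derivation of the truncated algebra (which by the inductive hypothesis already sits inside the truncation of $\mathfrak{g}$), hence defines an element of $\mathfrak{g}_k$, and by property (2) this assignment $X\mapsto\mathrm{ad}(X)|_{\mathfrak{m}_{-1}}$ is injective. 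Uniqueness up to isomorphism then follows by applying this embedding argument in both directions. I expect the main technical obstacle to be the careful bookkeeping in the inductive construction of the bracket between two positive-degree pieces and the attendant Jacobi identity check: one must make sure the recursive definition on $\mathfrak{m}_{-1}$ actually extends to a well-defined derivation on all of $\mathfrak{m}$ (this is where fundamentality is used crucially and repeatedly) and that all the degree inductions are threaded consistently. Since this is the classical result of Tanaka, I would at this point cite \cite{tanaka} for the complete proof and present only this outline, as the statement is invoked here merely as a tool.
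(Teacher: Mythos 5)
The paper offers no proof of this proposition: it is quoted verbatim from Tanaka's work and used as a black box, so there is nothing internal to compare against. Your outline is a faithful sketch of the classical construction from \cite{tanaka} — the inductive definition of $\mathfrak{g}_k$ as compatible degree-$k$ derivations determined by their restriction to $\mathfrak{m}_{-1}$, the recursive bracket, and the $\mathrm{ad}$-embedding argument for maximality and uniqueness — and you correctly identify the one genuinely delicate point (well-definedness of the bracket between positive-degree pieces and the threaded Jacobi checks), so citing Tanaka for the details is entirely appropriate here.
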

Let $\mathfrak{g}=\bigoplus_{i\in\mathbb{Z}}\mathfrak{g}_i$ be a graded Lie algebra
satisfying (1) and (2) from Proposition \ref{tanakaprop}. The condition that $\mathfrak{g}$ be
the Tanaka prolongation of $(\mathfrak{m},\mathfrak{g}_0)$
can be expressed in terms of the Lie algebra cohomology $H^*(\mathfrak{m},\mathfrak{g})$ with respect to the representation $\mathrm{ad}:\mathfrak{m}\to\mathfrak{gl}(\mathfrak{g})$; this is the cohomology of the cochain complex $(C(\mathfrak{m},\mathfrak{g}),\partial)$ where $C^q(\mathfrak{m},\mathfrak{g}):=\Lambda^q\mathfrak{m}^*\otimes\mathfrak{g}$ and $\partial:C^q(\mathfrak{m},\mathfrak{g})\to C^{q+1}(\mathfrak{m},\mathfrak{g})$ is the standard differential. Note that since $\mathfrak{m}$ and $\mathfrak{g}$ are graded Lie algebras, also the cochain spaces are naturally graded, and  since $\partial$ preserves the homogeneous degree of maps, we have an induced grading on the cohomology spaces.  We shall denote the $l$th grading component by a subscript $l$.  Then (see e.g. \cite{yamaguchi}) the graded Lie algebra    $\mathfrak{g}$ is the prolongation of $(\mathfrak{m},\mathfrak{g}_0)$ if and only if $H^1(\mathfrak{m},\mathfrak{g})_l=0$ for all $l>0$. If $\mathfrak{g}$ is simple, the Lie algebra cohomologies can be computed using Kostant's theorem (see e.g. \cite{book} for an account of Kostant's theorem).

\begin{example}
Let $\mathfrak{g}$ be the Lie algebra of  $\mathrm{G}_2$ equipped with its contact grading
$$\mathfrak{g}=\mathfrak{g}_{-2}\oplus\mathfrak{g}_{-1}\oplus\mathfrak{g}_0\oplus\mathfrak{g}_{1}\oplus\mathfrak{g}_2$$
as discussed in Section \ref{LieTheory}. Then $\mathfrak{m}=\mathfrak{g}_{-2}\oplus\mathfrak{g}_{-1}$ is the $5$-dimensional Lie Heisenberg algebra and,
via the restriction of the adjoint representation, $\mathfrak{g}_0$ is a subalgebra of $\mathrm{Der}_{gr}(\mathfrak{m})$.
 Utilizing Kostant's theorem, one shows that $H^1(\mathfrak{m},\mathfrak{g})_l=0$ for all $l>0$, see \cite{yamaguchi}, and therefore  $\mathfrak{g}$ is the Tanaka prolongation of $(\mathfrak{m},\mathfrak{g}_0)$.
\end{example}

 Let  $\mathfrak{q}_0\subset\mathfrak{g}_0\subset \mathrm{Der}_{gr}(\mathfrak{m})$ be a subalgebra, then the  Tanaka prolongation $\mathfrak{q}=\mathfrak{g}(\mathfrak{m},\mathfrak{q}_0)$ of the pair $(\mathfrak{m},\mathfrak{q}_0)$   is a graded subalgebra of $\mathfrak{g}=\mathfrak{g}(\mathfrak{m},\mathfrak{g}_0)$, where, for positive $i$,
$$\mathfrak{q}_{i}=\{X\in\mathfrak{g}_{i}:\, [X,\mathfrak{g}_{-1}]\subset\mathfrak{q}_{i-1} \}.$$
This immediately leads to the following:
\begin{proposition}
\label{p_1}
Let $\mathfrak{g}=\mathfrak{g}_{-2}\oplus\mathfrak{g}_{-1}\oplus\mathfrak{g}_0\oplus\mathfrak{g}_{1}\oplus\mathfrak{g}_2$  be the Lie algebra of  $\mathrm{G}_2$ equipped with its contact grading, $\mathfrak{m}=\mathfrak{g}_{-2}\oplus\mathfrak{g}_{-1}$ the $5$-dimensional Heisenberg Lie algebra,  and let $\mathfrak{q}_0\subset\mathfrak{g}_0\cong\mathfrak{gl}(2,\mathbb{R})$ be the Borel subalgebra. Then the Tanaka prolongation  $\mathfrak{q}$ of $(\mathfrak{m},\mathfrak{q}_0)$ is a $9$-dimensional   Lie algebra isomorphic to the parabolic subalgebra $\mathfrak{p}_1\subset\mathfrak{g}$.
\end{proposition}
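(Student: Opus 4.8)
The plan is to compute the Tanaka prolongation $\mathfrak{q}=\mathfrak{g}(\mathfrak{m},\mathfrak{q}_0)$ directly, using the inductive description of the positive parts provided just above the statement, namely $\mathfrak{q}_i=\{X\in\mathfrak{g}_i:[X,\mathfrak{g}_{-1}]\subset\mathfrak{q}_{i-1}\}$, and then to identify the resulting $9$-dimensional graded Lie algebra with $\mathfrak{p}_1$. Since $\mathfrak{q}_0\subset\mathfrak{g}_0\cong\mathfrak{gl}(2,\mathbb{R})$ is the Borel subalgebra, it is $2$-dimensional (a Cartan subalgebra plus one root space), whereas $\mathfrak{g}_0$ is $4$-dimensional; the negative part $\mathfrak{m}=\mathfrak{g}_{-2}\oplus\mathfrak{g}_{-1}$ is unchanged and $6$-dimensional. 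So the non-positive part of $\mathfrak{q}$ has dimension $6+2=8$, and to reach total dimension $9$ we need the positive part of $\mathfrak{q}$ to be exactly $1$-dimensional.

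First I would fix an explicit root-space basis of $\mathfrak{g}$ adapted to the contact grading (one can use the basis $(E_0,\dots,E_{13})$ referred to in the paper, or directly the $\mathrm{G}_2$ root diagram in \eqref{cont_grad}). Recall $\dim\mathfrak{g}_{-2}=\dim\mathfrak{g}_2=1$, $\dim\mathfrak{g}_{-1}=\dim\mathfrak{g}_1=4$ (the irreducible $\bigodot^3\mathbb{R}^2$), $\dim\mathfrak{g}_0=4$. Write $\mathfrak{q}_0=\mathfrak{h}\oplus\mathfrak{g}_{\beta}$ where $\mathfrak{h}$ is the Cartan subalgebra and $\beta$ is the simple root $\alpha_1$ (the one \emph{not} crossed for the contact grading), so $\mathfrak{g}_\beta\subset\mathfrak{g}_0$. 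Then I compute $\mathfrak{q}_1=\{X\in\mathfrak{g}_1:[X,\mathfrak{g}_{-1}]\subset\mathfrak{q}_0\}$. The point is that for a generic $X\in\mathfrak{g}_1$ the bracket $[X,\mathfrak{g}_{-1}]$ fills out all of $\mathfrak{g}_0$, but for $X$ in the highest-weight line (with respect to the $\mathfrak{sl}(2,\mathbb{R})$-action of ${\mathfrak{g}_0}^{ss}$) it lands inside the Borel $\mathfrak{q}_0$; this follows because $\mathfrak{g}_1\cong\bigodot^3\mathbb{R}^2$ as an $\mathfrak{sl}(2,\mathbb{R})$-module and the bracket $\mathfrak{g}_1\otimes\mathfrak{g}_{-1}\to\mathfrak{g}_0$ is the (essentially unique) equivariant pairing, which on weight vectors raises/lowers weights in a controlled way. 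So $\mathfrak{q}_1$ is at most (and, by exhibiting the highest-weight vector, exactly) $1$-dimensional. Next I check $\mathfrak{q}_2=\{X\in\mathfrak{g}_2:[X,\mathfrak{g}_{-1}]\subset\mathfrak{q}_1\}$: since $[\mathfrak{g}_2,\mathfrak{g}_{-1}]=\mathfrak{g}_1$ (the whole $4$-dimensional space, by irreducibility and $\mathfrak{g}_2\ne 0$) and $\mathfrak{q}_1$ is only $1$-dimensional, no nonzero $X\in\mathfrak{g}_2$ survives, so $\mathfrak{q}_2=0$, and hence all higher $\mathfrak{q}_i$ vanish. This gives $\dim\mathfrak{q}=8+1=9$.

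It then remains to identify $\mathfrak{q}$, as a graded (equivalently, filtered) Lie algebra, with $\mathfrak{p}_1$. I would do this by observing that $\mathfrak{q}$ is a $9$-dimensional subalgebra of $\mathfrak{g}$ whose associated filtration is $\mathfrak{q}^{-2}\supset\mathfrak{q}^{-1}\supset\mathfrak{q}^0\supset\mathfrak{q}^1$ with $\mathfrak{q}^0=\mathfrak{q}_0\oplus\mathfrak{q}_1$ of dimension $3$; and $\mathfrak{p}_1$ is exactly the parabolic obtained from the simple root $\alpha_1$, which contains the Borel $\mathfrak{p}_{1,2}$ and one extra negative root space. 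Concretely, matching root spaces: $\mathfrak{q}$ consists of the Cartan, all root spaces for roots with nonpositive $\alpha_2$-height together with the highest-weight line in $\mathfrak{g}_1$ and the simple root space $\mathfrak{g}_{\alpha_1}$; one checks this is precisely $\mathfrak{h}\oplus\bigoplus_{\{\alpha:\mathrm{ht}_{\{\alpha_1\}}(\alpha)\geq 0\}}\mathfrak{g}_\alpha=\mathfrak{p}_1$. Since $\dim\mathfrak{p}_1=9$ as well (stated in the paper), equality of subalgebras follows from the containment and the dimension count. Alternatively, one can invoke the characterization of $\mathfrak{p}_1$ as the stabilizer of a highest-weight line in the $7$-dimensional representation $\mathbb{R}^{3,4}$ and match it with $\mathfrak{q}$ acting on that representation.

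I expect the main obstacle to be purely computational bookkeeping: one must be careful that the Tanaka prolongation condition is the recursive one ($[\mathfrak{q}_i,\mathfrak{g}_{-1}]\subset\mathfrak{q}_{i-1}$, using all of $\mathfrak{g}_{-1}$, not just $\mathfrak{q}_{-1}$ — in fact here $\mathfrak{q}_{-1}=\mathfrak{g}_{-1}$ anyway), and that one correctly identifies which root vector in $\mathfrak{g}_1$ survives. This is essentially the single nontrivial step: showing $\dim\mathfrak{q}_1=1$ by pinning down exactly which weight vector $X\in\mathfrak{g}_1\cong\bigodot^3\mathbb{R}^2$ has $[X,\mathfrak{g}_{-1}]\subset\mathfrak{q}_0$. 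Everything else — $\mathfrak{q}_{\leq 0}=\mathfrak{m}\oplus\mathfrak{q}_0$ by definition, $\mathfrak{q}_{\geq 2}=0$ by the dimension mismatch, and the final identification with $\mathfrak{p}_1$ — is forced.
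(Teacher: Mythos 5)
Your argument is correct in substance, but it takes a genuinely different route from the paper's. You compute the prolongation bottom-up, degree by degree, using the recursive description $\mathfrak{q}_i=\{X\in\mathfrak{g}_i:[X,\mathfrak{g}_{-1}]\subset\mathfrak{q}_{i-1}\}$ stated just before the proposition (legitimately available, since the preceding Example establishes that $\mathfrak{g}$ is the full prolongation of $(\mathfrak{m},\mathfrak{g}_0)$). The one genuinely nontrivial step is then, as you say, the weight computation showing that $\mathfrak{q}_1$ is exactly the highest weight line of $\mathfrak{g}_1\cong\bigodot^3\mathbb{R}^2$: a root space $\mathfrak{g}_\beta\subset\mathfrak{g}_1$ fails the test precisely when $\beta+\alpha_1$ is again a root of $\mathfrak{g}_1$, so only $\beta=3\alpha_1+\alpha_2$ survives; and $\mathfrak{q}_2=0$ because $[\mathfrak{g}_2,\mathfrak{g}_{-1}]=\mathfrak{g}_1$ (nondegeneracy of the Killing form pairing $\mathfrak{g}_2$ with $\mathfrak{g}_{-2}=[\mathfrak{g}_{-1},\mathfrak{g}_{-1}]$ plus Schur). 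The paper instead argues top-down: it exhibits the candidate subalgebra $\mathfrak{q}$ via a marked root diagram, checks properties (1) and (2) of Proposition \ref{tanakaprop} directly, and obtains the maximality property (3) for free from the fact that $\mathfrak{g}$ has no proper subalgebra of dimension greater than $9$. The paper's route trades your bracket bookkeeping for an external input about maximal subalgebras of $\mathrm{G}_2$; yours is self-contained but must pin down the surviving weight vector explicitly.

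Two slips should be corrected, though neither invalidates the argument. First, $\dim\mathfrak{m}=5$ (it is the $5$-dimensional Heisenberg algebra, $1+4$), and the Borel subalgebra $\mathfrak{q}_0\subset\mathfrak{gl}(2,\mathbb{R})$ is $3$-dimensional (the full $2$-dimensional Cartan plus one root space), not $2$-dimensional; your two errors cancel, so the non-positive part is indeed $5+3=8$-dimensional and the total is $9$. Second, the identification of $\mathfrak{q}$ with $\mathfrak{h}\oplus\bigoplus_{\{\alpha:\mathrm{ht}_{\{\alpha_1\}}(\alpha)\geq 0\}}\mathfrak{g}_\alpha$ is not literally correct: $\mathfrak{q}$ contains all of $\mathfrak{g}_{-1}$, in particular $\mathfrak{g}_{-\alpha_1-\alpha_2}$, whose $\alpha_1$-height is negative. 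What you have constructed is a Weyl-group conjugate of the standard $\mathfrak{p}_1$ (a parabolic whose reductive Levi factor is $\mathfrak{h}\oplus\mathfrak{g}_{\pm(3\alpha_1+\alpha_2)}\cong\mathfrak{gl}(2,\mathbb{R})$), which is exactly what the proposition asserts, since it only claims an isomorphism; the sentence about ``containment and the dimension count'' should be replaced by this conjugacy observation.
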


\begin{proof} Let $\mathfrak{q}=\mathfrak{q}_{-2}\oplus\mathfrak{q}_{-1}\oplus\mathfrak{q}_0\oplus\mathfrak{q}_1$ be the subalgebra of $\mathfrak{g}$
spanned by  the Cartan subalgebra and all root spaces corresponding to black nodes in the following root diagram of $\mathrm{G}_2$ :
\begin{center}
\begin{tikzpicture}[scale=1]
     \draw (-1.85,  -0.60) rectangle (1.85,-2);
  \draw (-0.4,  -0.2) rectangle (1.4,0.2);
\draw (0.85,  0.65) rectangle (1.85,1.1);

 \draw[thick] (0,-1.732) -- (0,1.732);
 \draw[thick](0,1.732)circle(0.08);
  \filldraw[white](0,1.732)circle(0.06);
 \filldraw[black](0,-1.732)circle(0.08);

 \draw[thick](-1.5,-0.866)--(1.5,0.866);
\draw[thick](-1.5,-0.866)circle(0.08);
\filldraw[black](-1.5,-0.866)circle(0.06);
 \draw[thick](1.5,0.866)circle(0.08);
  \filldraw[black](1.5,0.866)circle(0.06);

 \draw[thick](1.5,-0.866)--(-1.5,0.866);
\filldraw[white](-1.5,0.866)circle(0.06);
\draw[thick](-1.5,0.866)circle(0.08);
 \filldraw(1.5,-0.866)circle(0.08);

\draw[thick](-1,0)--(1,0);
\draw[thick](-1,0)circle(0.08);
\filldraw[white](-1,0)circle(0.06);
 \draw[thick](1,0)circle(0.08);
  \filldraw[black](1,0)circle(0.06);
\draw[thick](1,0)circle(0.08);

 \filldraw[thick](0.5,-0.866)--(-0.5,0.866);
  \filldraw[black](0.5,-0.866)circle(0.08);
 \draw[thick](-0.5,0.866)circle(0.08);
  \filldraw[white](-0.5,0.866)circle(0.06);

  \draw[thick](-0.5,-0.866)--(0.5,0.866);
 \filldraw[black](-0.5,-0.866)circle(0.08);
  \draw[thick](0.5,0.866)circle(0.08);
 \filldraw[white](0.5,0.866)circle(0.06);

  \draw[dashed](-  2, 1.2)--(2, 1.2);
   \draw[dashed](-  2, 0.4)--(2, 0.4);
   \draw[dashed](-  2, -1.2)--(2, -1.2);
   \draw[dashed](-  2, -0.4)--(2, -0.4);

\draw (-2.7,1.7) node {$\mathfrak{g}_{2}$};
\draw (-2.5,-1.7) node {$\mathfrak{g}_{-2\,}$};
\draw (-2.7,0.9) node {$\mathfrak{g}_{1}$};
\draw (-2.5,-0.9) node {$\mathfrak{g}_{-1\,}$};
\draw (-2.6,0) node {$\mathfrak{g}_{0}\ $};

\draw (3,-1.7) node {$\mathfrak{q}_{-2\,}$};
\draw (2.8,0.9) node {$\mathfrak{q}_{1}$};
\draw (3,-0.9) node {$\mathfrak{q}_{-1\,}$};
\draw (2.9,0) node{$\mathfrak{q}_{0}\ $};

 \draw (2.5,1.2) node {};
 \draw (-2.5,-1.2) node {};
 \draw (0.9,1.2) node {};
  \draw (-0.8,-1.2) node {};
 \draw (1.55,0) node {};
  \draw (-1.55,0) node {};
\draw(2.1,-1.2) node {} ;
\draw(-2.1,1.2) node {} ;
 \draw(0.8,-1.2) node {};
  \draw(-0.8,1.2) node {};
 \draw (0,-2.1) node {};
 \draw (0,2.1) node {};
%
%

\end{tikzpicture}
\end{center}
Then $\mathfrak{q}$ is a graded Lie algebra satisfying properties (1) and (2) from Proposition \ref{tanakaprop}. Moreover, there is no proper  subalgebra $\mathfrak{q}'\subset\mathfrak{g}$ containing $\mathfrak{q}$. This can be either deduced  from the above root diagram, by observing that any subalgebra $\mathfrak{q}'$ containing $\mathfrak{q}$ and in addition a root space corresponding to a white root has to be  all of $\mathfrak{g}$. Alternatively, it immediately follows from the fact that a Lie algebra of root type $\mathrm{G}_2$  has no subalgebra of dimension bigger than $9$. Hence property (3) of Proposition \ref{tanakaprop} is satisfied as well.
\end{proof}
\begin{remark}
Identifying $\mathfrak{g}_{-1}\cong S^3\mathbb{R}^2$, the Borel subalgebra $\mathfrak{q}_0\subset\mathfrak{g}_0$ is the stabilizer of a line $\mathrm{Span}(l)\subset\mathbb{R}^2$, equivalently, of a line $\mathrm{Span}(l\odot l\odot l)\subset \smash{\bigodot^3\mathbb{R}^2}$. Recall that $\mathfrak{g}_1=(\mathfrak{g}_{-1})^*$ via the Killing form, and then $\mathfrak{q}_1$ can be  viewed as the annihilator of the $3$-dimensional subspace $\mathrm{Span}(\{X\odot Y\odot l: X,Y\in\mathbb{R}^2\})$ of $\smash{\bigodot^3\mathbb{R}^2}=\mathfrak{g}_{-1}.$
\end{remark}




Given a filtered $G_0$-structure of type $\mathfrak{m}$ such that the Tanaka prolongation of the pair $(\mathfrak{m},\mathfrak{g}_0)$ is finite-dimensional,  Tanaka theory
\begin{itemize}
\item
 provides a procedure to construct, in a natural manner,
 a bundle  $\mathcal{G}\to \M$ of dimension $\mathrm{dim}(\mathfrak{g}(\mathfrak{m},\mathfrak{g}_0))$
 together with a   coframe $\omega$ (an \emph{absolute parallelism}) on  $\mathcal{G}$ (and it predicts the number of prolongation steps to be done to arrive there),
\item
and it establishes $\mathrm{dim}(\mathfrak{g}(\mathfrak{m},\mathfrak{g}_0))$ as a sharp upper bound for the dimension of the  infinitesimal symmetry algebra of the filtered $G_0$-structure.
\end{itemize}

Applied to marked contact twisted cubic structures, as a Corollary to Proposition \ref{p_1}, this yields the following:
\begin{corollary}\label{maximal_sym}

\

\begin{itemize}
\item To any marked contact twisted cubic structure there is a naturally associated $9$-dimensional bundle equipped with a canonical coframe.
\item The dimension of the Lie algebra of infinitesimal symmetries of a marked contact twisted cubic structure is
$\leq 9$.
\end{itemize}
\end{corollary}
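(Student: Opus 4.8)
The plan is to derive both statements of Corollary \ref{maximal_sym} directly from Proposition \ref{p_1} together with the general machinery of Tanaka theory, which is treated as a black box (citing \cite{tanaka, Morimoto, zelenko}). First I would recall from Propositions \ref{prop_twisted} and \ref{prop_punctured} that a marked contact twisted cubic structure \emph{is}, by definition, a filtered $Q_0$-structure of type $\mathfrak{m}$, where $\mathfrak{m}=\mathfrak{m}_{-2}\oplus\mathfrak{m}_{-1}$ is the $5$-dimensional Heisenberg algebra and $Q_0=B\subset\mathrm{GL}(2,\mathbb{R})$ is the Borel subgroup acting via the restriction of the irreducible $4$-dimensional representation. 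One then only needs to feed the pair $(\mathfrak{m},\mathfrak{q}_0)$, with $\mathfrak{q}_0=\mathrm{Lie}(B)$ the Borel subalgebra, into the Tanaka prolongation procedure.

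The key step is Proposition \ref{p_1}, already established in the excerpt: the algebraic Tanaka prolongation $\mathfrak{q}=\mathfrak{g}(\mathfrak{m},\mathfrak{q}_0)$ is finite-dimensional, in fact $9$-dimensional and isomorphic to the parabolic subalgebra $\mathfrak{p}_1\subset\mathfrak{g}$. Given this, Tanaka's theorem applies verbatim: (i) since the prolongation is finite-dimensional, the iterated prolongation procedure terminates after finitely many steps and produces, in a functorial/natural manner, a principal-type bundle $\mathcal{G}\to\M$ of dimension $\dim\mathfrak{q}=9$ carrying a canonical coframe (absolute parallelism); and (ii) Tanaka's theorem furnishes $\dim\mathfrak{g}(\mathfrak{m},\mathfrak{q}_0)=9$ as a sharp upper bound for the dimension of the infinitesimal symmetry algebra, since any infinitesimal symmetry lifts to a vector field on $\mathcal{G}$ commuting with the coframe and such vector fields form a Lie algebra of dimension at most $\dim\mathcal{G}=9$. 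I would state these two conclusions as the two bullet points of the Corollary.

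The only genuine content beyond quoting Proposition \ref{p_1} is checking that the hypotheses of the abstract Tanaka theorem are met for our $Q_0$-structure — principally that $Q_0$ is the full (or an admissible) structure group whose Lie algebra sits inside $\mathrm{Der}_{gr}(\mathfrak{m})$, and that finite-dimensionality of the prolongation (which Proposition \ref{p_1} supplies) is indeed what guarantees termination. This is essentially bookkeeping: one notes $\mathfrak{q}_0\subset\mathfrak{g}_0\cong\mathfrak{gl}(2,\mathbb{R})\subset\mathrm{Der}_{gr}(\mathfrak{m})$, so the setup of \cite{tanaka, Morimoto} applies without modification. The main obstacle — if one wanted to be fully self-contained — would be reproving the Tanaka existence-and-uniqueness theorem for the canonical coframe; but since the excerpt explicitly invokes ``the general framework due to Tanaka,'' I would simply cite it and keep the proof to a few lines. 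A remark could point out that this coframe is precisely the one constructed by hand in Proposition \ref{propo_coframe} in the flat-underlying (marked contact Engel) case, consistent with the dimension count.

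\begin{proof}[Proof of Corollary \ref{maximal_sym}]
By Propositions \ref{prop_twisted} and \ref{prop_punctured}, a marked contact twisted cubic structure is a filtered $Q_0$-structure of type $\mathfrak{m}$, where $\mathfrak{m}=\mathfrak{m}_{-2}\oplus\mathfrak{m}_{-1}$ is the $5$-dimensional Heisenberg algebra and $Q_0=B\subset\mathrm{GL}(2,\mathbb{R})$ is the Borel subgroup, acting on $\mathfrak{m}_{-1}\cong\smash{\bigodot^3\mathbb{R}^2}$ through the irreducible representation \eqref{irrepres}. Its Lie algebra $\mathfrak{q}_0$ is the Borel subalgebra of $\mathfrak{g}_0\cong\mathfrak{gl}(2,\mathbb{R})\subset\mathrm{Der}_{gr}(\mathfrak{m})$. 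By Proposition \ref{p_1}, the Tanaka prolongation $\mathfrak{q}=\mathfrak{g}(\mathfrak{m},\mathfrak{q}_0)$ is finite-dimensional, equal to $9$, and isomorphic to $\mathfrak{p}_1\subset\mathfrak{g}$.

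Since the prolongation is finite-dimensional, Tanaka's prolongation procedure (see \cite{tanaka, Morimoto, zelenko}) terminates and produces, naturally associated with the given structure, a bundle $\mathcal{G}\to\M$ with $\dim\mathcal{G}=\dim\mathfrak{q}=9$ together with a canonical coframe (absolute parallelism) on $\mathcal{G}$. This proves the first assertion. For the second, any infinitesimal symmetry of the structure lifts uniquely to a vector field on $\mathcal{G}$ preserving the canonical coframe; the space of such vector fields is a Lie algebra of dimension at most $\dim\mathcal{G}=9$. Hence the infinitesimal symmetry algebra of a marked contact twisted cubic structure has dimension $\leq 9$.
\end{proof}
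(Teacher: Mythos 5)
Your proposal is correct and follows essentially the same route as the paper: the Corollary is obtained by viewing a marked contact twisted cubic structure as a filtered $Q_0$-structure of type $\mathfrak{m}$ with $Q_0=B$, invoking Proposition \ref{p_1} for the $9$-dimensionality of the Tanaka prolongation $\mathfrak{q}\cong\mathfrak{p}_1$, and then applying Tanaka's general theorem (existence of the canonical absolute parallelism and the sharp symmetry bound) as a black box. The only difference is that you spell out the bookkeeping slightly more explicitly than the paper, which simply states the two consequences of Tanaka theory and reads off the Corollary.
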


\subsection{Canonical Cartan connections and the problem of finding a normalization condition} \label{Cartan}
Given a filtered $G_0$-structure of type $\mathfrak{m}$ with algebraic Tanaka prolongation $\mathfrak{g}=\mathfrak{g}(\mathfrak{m},\mathfrak{g}_0)$, it is a natural question to ask whether   there exists a canonical Cartan connection associated with the structure. This question has been studied in \cite{Morimoto}, where a general criterion  (the ``condition (C)'') ensuring the existence of a canonical Cartan connection is given, and more recently in \cite{CapCartan}, where the essential step to obtaining a canonical Cartan connection is to find a normalization condition with certain algebraic properties.
%

\subsubsection{Cartan geometries}
For a comprehensive introduction to Cartan geometries and applications of the concept see \cite{book}.

Let $G/P$ be a homogeneous space, let $\mathfrak{g}$ be the Lie algebra of $G$ and $\mathfrak{p}$ the Lie algebra of $P$.
A Cartan geometry of type $(\mathfrak{g},P)$  on a manifold $\M$ is a pair $(\mathcal{G} \to \M, \omega)$, where $\mathcal{G} \to \M$ is a $P$-principal bundle and $\omega\in\Omega^1(\mathcal{G},\mathfrak{g})$  a Cartan connection, i.e., a Lie algebra valued $1$-form  satisfying
\begin{enumerate}
\item  $\omega_u : T_u \mathcal{G} \to \mathfrak{g}$ is an isomorphism for all $u \in \mathcal{G}$,
      \item  $\omega(\zeta_X) = X$ for all $X \in \mathfrak{p}$,
	\item $(r^p)^*\omega = \mathrm{Ad}(p^{-1})\omega$,
	\end{enumerate}
where $r^p$ denotes the  right action of $P$ on $\mathcal{G}$ and $\zeta_{X}$ the fundamental vector field generated by $X\in\mathfrak{p}$. The homogeneous (flat) model of a Cartan geometry of type $(\mathfrak{g},P)$ is the principal bundle $G\to G/P$ together with the Maurer-Cartan form $\omega_{MC}$ on $G$. The curvature of a Cartan geometry is the $2$-form $K = d\omega + \frac{1}{2} [\omega, \omega] \in \Omega^2(\mathcal{G},\mathfrak{g}).$ It is equivariant for the principal $P$-action and horizontal, i.e. $K(\zeta_{X},\cdot)=0$ for any $X\in\mathfrak{p}$, which implies that it can be equivalently viewed as an equivariant function $K:\mathcal{G}\to\Lambda^2(\mathfrak{g}/\mathfrak{p})^*\otimes\mathfrak{g}$. The curvature vanishes if and only if the Cartan geometry is locally isomorphic to the homogeneous model; in this  case the Cartan geometry is called flat.


\subsubsection{Normalization conditions}
Given a filtered $G_0$-structure of type $\mathfrak{m}$, let $\mathfrak{g}=\mathfrak{g}(\mathfrak{m},\mathfrak{g}_0)$ be the algebraic Tanaka prolongation. Let $P$ be a Lie group with Lie algebra the non-negative part $\mathfrak{g}^0$ of $\mathfrak{g}$.
Then the curvature function of  any  Cartan connection of type $(\mathfrak{g}, P)$ takes values in $\Lambda^2(\mathfrak{g}/\mathfrak{g}^0)^*\otimes\mathfrak{g}$, which is naturally filtered, and the associated graded space  $\mathrm{gr}(\Lambda^2(\mathfrak{g}/\mathfrak{g}^0)^*\otimes\mathfrak{g})$ can be identified with $\Lambda^2\mathfrak{m}^*\otimes\mathfrak{g}$. The latter space is the space of $2$-cochains in the standard complex computing the Lie algebra cohomology $H^*(\mathfrak{m},\mathfrak{g})$. As before we denote by $\partial:\Lambda^k\mathfrak{m}^*\otimes\mathfrak{g}\to\Lambda^{k+1}\mathfrak{m}^*\otimes\mathfrak{g}$ the coboundary operators in that complex and we denote the $i$th grading component by  a subscript $i$.

\begin{definition}\label{normcond}\cite[Definition 3.3]{CapCartan}
A normalization  condition for Cartan geometries of type  $(\mathfrak{g},P)$  is  a  $P$-invariant linear subspace  $\mathcal{N}\subset\Lambda^2(\mathfrak{g}/\mathfrak{g}^0)^*\otimes\mathfrak{g}$ such that  for each $i>0$ the subspace $\mathrm{gr}(\mathcal{N})_i\subset (\Lambda^2\mathfrak{m}^*\otimes\mathfrak{g})_i$ is complementary to the image of
  $\partial:(\mathfrak{m}^*\otimes\mathfrak{g})_i\to (\Lambda^2\mathfrak{m}^*\otimes\mathfrak{g})_i$.
\end{definition}

\subsubsection{Analysis for marked contact twisted cubic structures}
 Recall the algebraic setup: Let $\mathfrak{g}$ be the Lie algebra of $\mathrm{G}_2$  endowed with its contact grading
$\mathfrak{g}=\bigoplus_{i=-2}^{2}\mathfrak{g}_i$
and
$\mathfrak{q}=\bigoplus_{i=-2}^{1}\mathfrak{q}_i$
 the  graded  subalgebra from Proposition \ref{p_1}.
In particular,  $\mathfrak{m}=\mathfrak{g}_{-2}\oplus\mathfrak{g}_{-1}=\mathfrak{q}_{-2}\oplus\mathfrak{q}_{-1}$ is the $5$-dimensional Heisenberg algebra.
We ask whether we can find a normalization condition for Cartan geometries of type $(\mathfrak{q},Q^0),$ where $Q^0$ is a Lie group with Lie algebra $\mathfrak{q}^0=\mathfrak{q}_0\oplus\mathfrak{q}_1$.

The  inclusion $\mathfrak{q}\hookrightarrow\mathfrak{g}$ induces inclusions of the corresponding cochain spaces and we obtain the following  commuting diagram
$$\begin{array}{cccccccccccc}
0&\longrightarrow&
\mathfrak{g}&
\stackrel{\tilde{\partial}}{\longrightarrow}&\mathfrak{m}^*\otimes\mathfrak{g}&\stackrel{\tilde{\partial}}{\longrightarrow}&\Lambda^2\mathfrak{m}^*\otimes\mathfrak{g}&\stackrel{\tilde{\partial}}{\longrightarrow}
 &\Lambda^3\mathfrak{m}^*\otimes\mathfrak{g}&\stackrel{\tilde{\partial}}{\longrightarrow}&\dots\\
& &\uparrow&  &\uparrow &  &\uparrow& &\uparrow& \\
0&\longrightarrow&\mathfrak{q}&\stackrel{\partial}{\longrightarrow}&\mathfrak{m}^*\otimes\mathfrak{q}&\stackrel{\partial}{\longrightarrow}&\Lambda^2\mathfrak{m}^*\otimes\mathfrak{q}&\stackrel{\partial}{\longrightarrow}&\Lambda^3\mathfrak{m}^*\otimes\mathfrak{g}&\stackrel{\partial}{\longrightarrow}&\dots
\end{array}$$
We know that  $H^1(\mathfrak{m},\mathfrak{g})_l=0$ and $H^1(\mathfrak{m},\mathfrak{q})_l=0$ for all $l>0$, since this is implied by the fact that $\mathfrak{g}$  and $\mathfrak{q}$ are the Tanaka prolongations  of $(\mathfrak{m},\mathfrak{g}_0)$ and  $(\mathfrak{m},\mathfrak{q}_0)$, respectively.

 The space of $2$-cochains of homogeneity one
 \begin{equation}\label{2cochain}(\Lambda^2\mathfrak{m}^*\otimes\mathfrak{g})_1=\Lambda^2\mathfrak{g}_{-1}^*\otimes\mathfrak{g}_{-1}\oplus\mathfrak{g}_{-2}^*\otimes\mathfrak{g}_{-1}\otimes\mathfrak{g}_{-2}\end{equation}
  is a completely reducible $\mathfrak{g}_0\cong\mathfrak{gl}(2,\mathbb{R})$ representation
  isomorphic, as a representation of the semisimple part ${\mathfrak{g}_0}^{ss}$, to
\begin{align}\label{decomp}
\overbrace{\underbrace{\smash{\bigodot}^5\mathbb{R}^2\oplus\smash{\bigodot}^3\mathbb{R}^2\oplus\smash{\bigodot}^3\mathbb{R}^2\oplus\mathbb{R}^2}_{\mathrm{Im}(\tilde{\partial})}\oplus\smash{\bigodot}^7\mathbb{R}^2}^{\mathrm{ker}(\tilde{\partial})}\oplus\smash{\bigodot}^3\mathbb{R}^2\,.
\end{align}
Hence $$H^2(\mathfrak{m},\mathfrak{g})_1\cong \smash{\bigodot}^7\mathbb{R}^2.$$
This fact can also be derived using Kostant's theorem (see \cite{yamaguchi, book}).

%

Next, it is visible from the decomposition \eqref{2cochain} that the inclusion $\mathfrak{q}\hookrightarrow\mathfrak{g}$ induces an identification $(\Lambda^2\mathfrak{m}^*\otimes\mathfrak{q})_1 =(\Lambda^2\mathfrak{m}^*\otimes\mathfrak{g})_1$. Likewise $(\Lambda^3\mathfrak{m}^*\otimes\mathfrak{q})_1= (\Lambda^3\mathfrak{m}^*\otimes\mathfrak{g})_1,$ and thus  $\mathrm{ker}(\partial)=\mathrm{ker}(\tilde{\partial})\subset(\Lambda^2\mathfrak{m}^*\otimes\mathfrak{g})_1$. We can see from \eqref{decomp} that $\mathrm{Im}(\tilde{\partial})$ has dimension $16$, and that it has an invariant complement isomorphic to $\smash{\bigodot}^7\mathbb{R}^2$ in $\mathrm{ker}(\tilde{\partial})$.
The image of $\partial:(\mathfrak{m}^*\otimes\mathfrak{q})_1\to(\Lambda^2\mathfrak{m}^*\otimes\mathfrak{q})_1$ is a $\mathfrak{q}_0$-submodule  $\mathrm{Im}(\partial)\subset\mathrm{Im}(\tilde{\partial})$ of dimension $\mathrm{dim}((\mathfrak{m}^*\otimes\mathfrak{q})_1)-\mathrm{dim}(\mathfrak{q}_1)=15$, where $(\mathfrak{m}^*\otimes\mathfrak{q})_1=\mathfrak{q}_{-1}^*\otimes\mathfrak{q}_0\oplus\mathfrak{q}_{-2}^*\otimes\mathfrak{q}_{-1}$,  and hence of codimension $1$ in $\mathrm{Im}(\tilde{\partial})$. In particular,
$$H^2(\mathfrak{m},\mathfrak{q})_1\cong H^2(\mathfrak{m},\mathfrak{g})_1\oplus\mathbb{R}\cong \smash{\bigodot}^7\mathbb{R}^2\oplus\mathbb{R}.$$

On the other hand, we have the following:
 \begin{proposition}\label{no_norm}
There is no $\mathfrak{q}_0$-invariant subspace complementary to the image of
$$\partial: (\mathfrak{m}^*\otimes\mathfrak{q})_1\to (\Lambda^2\mathfrak{m}^*\otimes\mathfrak{q})_1\,.$$
In particular, there exists no normalization condition in the sense of Definition \ref{normcond} for Cartan geometries of type $(\mathfrak{q}, Q^0)$.
\end{proposition}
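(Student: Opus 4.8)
\textbf{Proof proposal for Proposition \ref{no_norm}.}

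The plan is to reduce the statement to a purely representation-theoretic incompatibility inside the completely reducible $\mathfrak{g}_0\cong\mathfrak{gl}(2,\mathbb{R})$-module $(\Lambda^2\mathfrak{m}^*\otimes\mathfrak{g})_1$, and then observe that, although the ambient $\mathfrak{g}_0$-module is completely reducible, the relevant subspace $\mathrm{Im}(\partial)$ for the Borel-prolongation $\mathfrak{q}$ is \emph{not} a $\mathfrak{g}_0$-submodule — it is only a $\mathfrak{q}_0$-submodule — and the class of $\mathfrak{q}_0$-modules occurring here is \emph{not} semisimple, so a $\mathfrak{q}_0$-invariant complement need not exist and in fact does not. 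First I would fix the explicit bases of $\mathfrak{g}$ adapted to the contact grading as in \eqref{basis_g2} and write down, in those bases, the Borel subalgebra $\mathfrak{q}_0\subset\mathfrak{g}_0$, the one-dimensional $\mathfrak{q}_1$, and the cochain spaces $(\mathfrak{m}^*\otimes\mathfrak{q})_1=\mathfrak{q}_{-1}^*\otimes\mathfrak{q}_0\oplus\mathfrak{q}_{-2}^*\otimes\mathfrak{q}_{-1}$ and $(\Lambda^2\mathfrak{m}^*\otimes\mathfrak{q})_1=\Lambda^2\mathfrak{q}_{-1}^*\otimes\mathfrak{q}_{-1}\oplus\mathfrak{q}_{-2}^*\otimes\mathfrak{q}_{-1}^*\otimes\mathfrak{q}_{-2}$, which by the identifications established just before the Proposition coincide with the corresponding spaces for $\mathfrak{g}$. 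Then I would compute $\mathrm{Im}(\partial)$ explicitly as a subspace of the $\cong\smash{\bigodot}^5\mathbb{R}^2\oplus\smash{\bigodot}^3\mathbb{R}^2\oplus\smash{\bigodot}^3\mathbb{R}^2\oplus\mathbb{R}^2$ part of the decomposition \eqref{decomp}, of codimension one inside that $16$-dimensional $\mathrm{Im}(\tilde\partial)$.

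The key point is then the following. As a $\mathfrak{q}_0$-module, $\mathrm{Im}(\tilde\partial)$ is not semisimple: its $\mathfrak{q}_0$-socle filtration has nontrivial extensions coming from the nilradical of $\mathfrak{q}_0$ acting by raising the $\mathfrak{h}$-weight. Concretely, the ``missing'' direction — the one-dimensional cokernel $\mathrm{Im}(\tilde\partial)/\mathrm{Im}(\partial)$, which under \eqref{decomp} sits as the top (or bottom) weight vector of one of the irreducible $\mathfrak{g}_0$-constituents — is not a $\mathfrak{q}_0$-submodule complement inside $\mathrm{Im}(\tilde\partial)$, because the nilradical of $\mathfrak{q}_0$ maps a suitable vector of $\mathrm{Im}(\partial)$ onto that direction. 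Thus $\mathrm{Im}(\partial)$ admits no $\mathfrak{q}_0$-invariant complement in $(\Lambda^2\mathfrak{m}^*\otimes\mathfrak{q})_1$: any $\mathfrak{q}_0$-submodule that maps isomorphically onto $H^2(\mathfrak{m},\mathfrak{q})_1$ would in particular project nontrivially to the offending weight line, but $\mathfrak{q}_0$-invariance forces it to also contain the vector of $\mathrm{Im}(\partial)$ that maps to it under the nilradical action — a contradiction. I would make this precise by exhibiting one specific weight vector $v\in(\Lambda^2\mathfrak{m}^*\otimes\mathfrak{q})_1$ with $v\notin\mathrm{Im}(\partial)$ but $N\cdot v\in\mathrm{Im}(\partial)$ and $N\cdot v\neq 0$, where $N$ spans the nilradical of $\mathfrak{q}_0$; equivalently, showing that the weight line spanned by $N\cdot v$ is the only candidate complement and is ``attached'' to $\mathrm{Im}(\partial)$ by the $N$-action.

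The last clause of the Proposition — that there is then no normalization condition for Cartan geometries of type $(\mathfrak{q},Q^0)$ — is immediate from Definition \ref{normcond}: a normalization condition $\mathcal{N}$ would in particular give, in homogeneity one, a $\mathfrak{q}_0$-invariant (indeed $Q^0$-invariant, hence $\mathfrak{q}_0$-invariant) complement $\mathrm{gr}(\mathcal{N})_1$ to $\mathrm{Im}(\partial:(\mathfrak{m}^*\otimes\mathfrak{q})_1\to(\Lambda^2\mathfrak{m}^*\otimes\mathfrak{q})_1)$, which the first part of the argument rules out. The main obstacle I anticipate is purely computational bookkeeping: correctly identifying, in the chosen basis, which weight line of which $\smash{\bigodot}^k\mathbb{R}^2$-constituent in \eqref{decomp} is the one not hit by $\partial$ for $\mathfrak{q}$ (as opposed to $\tilde\partial$ for $\mathfrak{g}$), and verifying that the nilradical of $\mathfrak{q}_0$ genuinely links it to $\mathrm{Im}(\partial)$ rather than acting as zero on the relevant vectors; this requires care with the $\mathfrak{g}_0$-action on the three $\smash{\bigodot}^3\mathbb{R}^2$ summands and the $\mathfrak{q}_1$-correction term that distinguishes $(\mathfrak{m}^*\otimes\mathfrak{q})_1$ from $(\mathfrak{m}^*\otimes\mathfrak{g})_1$. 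Once the single vector $v$ and the computation $N\cdot v\in\mathrm{Im}(\partial)\setminus\{0\}$ are in hand, the conclusion follows formally.
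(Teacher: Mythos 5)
Your diagnosis of the phenomenon is correct — $\mathrm{Im}(\partial)$ is only a $\mathfrak{q}_0$-submodule, the relevant $\mathfrak{q}_0$-module theory is not semisimple, and the final clause does follow formally from Definition \ref{normcond} — but the test you propose for establishing the non-existence of a complement is logically insufficient, and this is exactly where the content of the proof lives. Exhibiting \emph{one} weight vector $v\notin\mathrm{Im}(\partial)$ with $0\neq N\cdot v\in\mathrm{Im}(\partial)$ does not rule out an invariant complement. Counterexample to the criterion: in the $\mathfrak{q}_0$-module $\bigodot^2\mathbb{R}^2\oplus\mathbb{R}$, with $\mathbb{R}$ the one-dimensional module of weight $0$ and $I=\bigodot^2\mathbb{R}^2$, the line $\mathbb{R}$ \emph{is} an invariant complement, yet the weight-$0$ vector $v=f_0+b$ (middle weight vector plus the generator of $\mathbb{R}$) satisfies $v\notin I$ and $0\neq N\cdot v=f_2\in I$. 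The point is that if $W$ is an invariant complement and $v=v_I+v_W$, then $N\cdot v\in I$ only forces $N\cdot v_W=0$, which is no contradiction. To make your route work you would have to show that \emph{every} $N$-annihilated weight vector of the offending weight lies in $\mathrm{Im}(\partial)$, i.e., actually compute the extension class — precisely the computation you defer to ``computational bookkeeping.'' So as it stands the proposal is a plausible plan with its decisive step both unverified and resting on a criterion that needs strengthening.

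The paper avoids all explicit weight-vector computations. Assuming a $\mathfrak{q}_0$-invariant complement $\mathbb{W}$ exists, it sets $\mathbb{U}:=\mathbb{W}\cap\mathrm{Im}(\tilde{\partial})$; since $\mathrm{Im}(\partial)$ has codimension one in $\mathrm{Im}(\tilde{\partial})$, this $\mathbb{U}$ is a \emph{one-dimensional} $\mathfrak{q}_0$-invariant complement of $\mathrm{Im}(\partial)$ inside $\mathrm{Im}(\tilde{\partial})$. The irreducible $\mathfrak{g}_0$-submodule $\tilde{\mathbb{U}}$ generated by $\mathbb{U}$ has dimension $>1$ because \eqref{decomp} shows $\mathrm{Im}(\tilde{\partial})$ contains no one-dimensional $\mathfrak{g}_0$-constituent; hence $\tilde{\mathbb{U}}=\mathbb{U}\oplus\bigl(\tilde{\mathbb{U}}\cap\mathrm{Im}(\partial)\bigr)$ would be a nontrivial $\mathfrak{q}_0$-invariant direct-sum decomposition of an irreducible $\mathfrak{gl}(2,\mathbb{R})$-module, which is impossible since such modules restricted to the Borel are uniserial and therefore indecomposable. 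This uses only the decomposition \eqref{decomp} and the codimension-one statement, so I would recommend adopting that reduction rather than completing the explicit weight computation.
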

\begin{proof}
Suppose such a $\mathfrak{q}_0$-invariant complement $\mathbb{W}$ exists, i.e., we have a $\mathfrak{q}_0$-invariant decomposition
$$\mathbb{W}\oplus\mathrm{Im}(\partial)=   (\Lambda^2\mathfrak{m}^*\otimes\mathfrak{q})_1.$$
To simplify the discussion, recall that $\mathrm{Im}(\partial)$ is a codimension one subspace of $\mathrm{Im}(\tilde{\partial})$, and consider
 $\mathbb{U}:=\mathbb{W}\cap\mathrm{Im}(\tilde{\partial})$; this is now a $1$-dimensional $\mathfrak{q}_0$-subrepresentation  of the $\mathfrak{g}_0$-representation $\mathrm{Im}(\tilde{\partial})$ such that
 $$\mathbb{U}\oplus\mathrm{Im}(\partial)=\mathrm{Im}(\tilde{\partial}).$$
 Now let $\tilde{\mathbb{U}}$ be the irreducible $\mathfrak{g}_0$-subrepresentation of $\mathrm{Im}(\tilde{\partial})$ generated by $\mathbb{U}$. The dimension of $\tilde{\mathbb{U}}$ is $>1$, since  \eqref{decomp} shows that there is no $1$-dimensional $\mathfrak{g}_0$-subrepresentation in $\mathrm{Im}(\tilde{\partial})$. In particular,  $\tilde{\mathbb{U}}$  has non-zero intersection with $\mathrm{Im}(\partial)$. So we now have a non-trivial $\mathfrak{q}_0$-invariant decomposition
 $$\mathbb{U}\oplus(\tilde{\mathbb{U}}\cap\mathrm{Im}(\tilde{\partial}))=\tilde{\mathbb{U}},$$
 where $\tilde{\mathbb{U}}$ is now a finite-dimensional \emph{irreducible} $\mathfrak{g}_0$-representation. But this is impossible.
\end{proof}

Proposition \ref{no_norm} also shows that there exists no Lie group $Q_0$ with Lie algebra $\mathfrak{q}_0$ such that Morimoto's ``Condition C'' (see \cite[Definition 3.10.1]{Morimoto}) is satisfied.

\section{Appendix}
\label{appendixg2}
For explicit computations we use the following basis of the Lie algebra $\mathfrak{g}$  of  $\mathrm{G}_2$. Consider the $7\times 7$ matrices
\begin{equation*}
\begin{aligned}
&A=\tiny{
\bma 0&\tfrac43\al_2&\tfrac43\al_0&\tfrac49\al_1-\al_3&-\tfrac49\al_1-\al_3&-\tfrac43\al_0&0\\
-\tfrac43\al_2&0&2\al_3&0&0&-2\al_3&-\tfrac43\al_2\\
-\tfrac43\al_0&-2\al_3&0&-\tfrac23\al_2+\tfrac32\al_4&\tfrac23\al_2+\tfrac32\al_4&0&-\tfrac43\al_0\\
-\tfrac49\al_1+\al_3&0&\tfrac23\al_2-\tfrac32\al_4&0&0&-\tfrac23\al_2+\tfrac32\al_4&-\tfrac49\al_1+\al_3\\
-\tfrac49\al_1-\al_3&0&\tfrac23\al_2+\tfrac32\al_4&0&0&-\tfrac23\al_2-\tfrac32\al_4&-\tfrac49\al_1-\al_3\\
-\tfrac43\al_0&-2\al_3&0&-\tfrac23\al_2+\tfrac32\al_4&\tfrac23\al_2+\tfrac32\al_4&0&-\tfrac43\al_0\\
0&-\tfrac43\al_2&-\tfrac43\al_0&-\tfrac49\al_1+\al_3&\tfrac49\al_1+\al_3&\tfrac43\al_0&0
\ema},\\
&B=\tiny{
\bma
0&0&\tfrac34\bet_2-\tfrac13\bet_3&0&0&-\tfrac34\bet_2-\tfrac13\bet_3&3\bet_1+\bet_4\\
0&0&0&\tfrac32\bet_2-\tfrac23\bet_3&\tfrac32\bet_2+\tfrac23\bet_3&0&0\\
-\tfrac34\bet_2+\tfrac13\bet_3&0&0&0&0&-3\bet_1+\bet_4&\tfrac34\bet_2+\tfrac13\bet_3\\
0&-\tfrac32\bet_2+\tfrac23\bet_3&0&0&-2\bet_4&0&0\\
0&\tfrac32\bet_2+\tfrac23\bet_3&0&-2\bet_4&0&0&0\\
-\tfrac34\bet_2-\tfrac13\bet_3&0&-3\bet_1+\bet_4&0&0&0&\tfrac34\bet_2-\tfrac13\bet_3\\
3\bet_1+\bet_4&0&\tfrac34\bet_2+\tfrac13\bet_3&0&0&-\tfrac34\bet_2+\tfrac13\bet_3&0
\ema},\\
&C=\tiny{
\bma 0&-\tfrac32\gam_3&-\tfrac98\gam_0&-\tfrac12\gam_2+\tfrac{27}{8}\gam_4&\tfrac12\gam_2+\tfrac{27}{8}\gam_4&-\tfrac98\gam_0&0\\
\tfrac32\gam_3&0&\gam_2&0&0&\gam_2&-\tfrac32\gam_3\\
\tfrac98\gam_0&-\gam_2&0&-\gam_1+\tfrac34\gam_3&\gam_1+\tfrac34\gam_3&0&-\tfrac98\gam_0\\
\tfrac12\gam_2-\tfrac{27}{8}\gam_4&0&\gam_1-\tfrac34\gam_3&0&0&\gam_1-\tfrac34\gam_3&-\tfrac12\gam_2+\tfrac{27}{8}\gam_4\\
\tfrac12\gam_2+\tfrac{27}{8}\gam_4&0&\gam_1+\tfrac34\gam_3&0&0&\gam_1+\tfrac34\gam_3&-\tfrac12\gam_2-\tfrac{27}{8}\gam_4\\
-\tfrac98\gam_0&\gam_2&0&\gam_1-\tfrac34\gam_3&-\gam_1-\tfrac34\gam_3&0&\tfrac98\gam_0\\
0&-\tfrac32\gam_3&-\tfrac98\gam_0&-\tfrac12\gam_2+\tfrac{27}{8}\gam_4&\tfrac12\gam_2+\tfrac{27}{8}\gam_4&-\tfrac98\gam_0&0
\ema},
\end{aligned}
\end{equation*}
where $\al_0,\al_1,\al_2,\al_3,\al_4,\bet_1,\bet_2,\bet_3,\bet_4,\gam_0,\gam_1,\gam_2,\gam_3,\gam_4$ are real constants. Then
\be \label{basis_g2}
E_0=\tfrac{\der A}{\der\al_0},\quad E_i=\tfrac{\der A}{\der\al_i},\quad E_{4+I}=\tfrac{\der B}{\der\bet_I},\quad E_{8+i}=\tfrac{\der C}{\der\gam_i},\quad E_{13}=\tfrac{\der C}{\der\gam_0},\ee
where $i=1,2,3,4$, and $I=1,2,3,4,$  define (as one can verify) a basis for $\mathfrak{g}_2$.
The basis is adapted to the contact grading   of $\mathfrak{g}=\mathfrak{g}_2$ in the sense that $E_0$ is contained in the grading component $\mathfrak{g}_{-2}$, $E_1, E_2, E_3, E_4$ are contained in $\mathfrak{g}_{-1}$, $E_5, E_6, E_7, E_8$ are contained in $\mathfrak{g}_0$, $E_9, E_{10}, E_{11},E_{12}$ in $\mathfrak{g}_1$, and $E_{13}$ in $\mathfrak{g}_{2}$.

Writing the Maurer-Cartan form $\Omega_{G_2}$ as $\Omega_{G_2}=\theta^i E_i$, where the $\theta^i$ are now left-invariant $\mathbb{R}$-valued $1$-forms,
  the Maurer-Cartan equations for $\mathrm{G}_2$ are of the following form:
  \begin{equation}\begin{aligned}\label{MaurerCartan}
&\der\theta^0=-6\theta^0\dz\theta^5+\theta^1\dz\theta^4-3\theta^2\dz\theta^3\\
&\\
&\der\theta^1=6\theta^0\dz\theta^9-3\theta^1\dz\theta^5-3\theta^1\dz\theta^8+3\theta^2\dz\theta^7\\
&\der\theta^2=2\theta^0\dz\theta^{10}+\theta^1\dz\theta^6-3\theta^2\dz\theta^5-\theta^2\dz\theta^8+2\theta^3\dz\theta^7\\
&\der\theta^3=2\theta^0\dz\theta^{11}+2\theta^2\dz\theta^6-3\theta^3\dz\theta^5+\theta^3\dz\theta^8+\theta^4\dz\theta^7\\
&\der\theta^4=6\theta^0\dz\theta^{12}+3\theta^3\dz\theta^6-3\theta^4\dz\theta^5+3\theta^4\dz\theta^8\\
&\\
&\der\theta^5=2\theta^0\dz\theta^{13}-\theta^1\dz\theta^{12}+\theta^2\dz\theta^{11}-\theta^3\dz\theta^{10}+\theta^4\dz\theta^9\\
&\der\theta^6=6\theta^2\dz\theta^{12}-4\theta^3\dz\theta^{11}+2\theta^4\dz\theta^{10}+2\theta^6\dz\theta^8\\
&\der\theta^7=-2\theta^1\dz\theta^{11}+4\theta^2\dz\theta^{10}-6\theta^3\dz\theta^9-2\theta^7\dz\theta^8\\
&\der\theta^8=-3\theta^1\dz\theta^{12}+\theta^2\dz\theta^{11}+\theta^3\dz\theta^{10}-3\theta^4\dz\theta^9-\theta^6\dz\theta^7\\
&\\
&\der\theta^9=-\theta^1\dz\theta^{13}-3\theta^5\dz\theta^9-\theta^7\dz\theta^{10}+3\theta^8\dz\theta^9\\
&\der\theta^{10}=-3\theta^2\dz\theta^{13}-3\theta^5\dz\theta^{10}-3\theta^6\dz\theta^9-2\theta^7\dz\theta^{11}+\theta^8\dz\theta^{10}\\
&\der\theta^{11}=-3\theta^3\dz\theta^{13}-3\theta^5\dz\theta^{11}-2\theta^6\dz\theta^{10}-3\theta^7\dz\theta^{12}-\theta^8\dz\theta^{11}\\
&\der\theta^{12}=-\theta^4\dz\theta^{13}-3\theta^5\dz\theta^{12}-\theta^6\dz\theta^{11}-3\theta^8\dz\theta^{12}\\
&\\
&\der\theta^{13}=-6\theta^5\dz\theta^{13}-6\theta^9\dz\theta^{12}+2\theta^{10}\dz\theta^{11}.
 \end{aligned}\end{equation}

The nine generators $(E_0, E_1, E_2, E_3, E_4, E_5, E_6, E_8, E_{12})$ marked by black dots below are a basis for a subalgebra $\mathfrak{q}\cong{\mathfrak{p}_1}$ having minimal intersection with $\mathfrak{p}_2=\mathfrak{g}_0\oplus\mathfrak{g}_1\oplus\mathfrak{g}_2$.

\begin{center}
\begin{tikzpicture}[scale=1]


\draw[dashed] (0,-1.732) -- (0,1.732);
 \draw[thick](0,1.732)circle(0.08);
  \filldraw[white](0,1.732)circle(0.06);
 \filldraw[black](0,-1.732)circle(0.08);

 \draw[dashed](-1.5,-0.866)--(1.5,0.866);
\draw[thick](-1.5,-0.866)circle(0.08);
\filldraw[black](-1.5,-0.866)circle(0.06);
 \draw[thick](1.5,0.866)circle(0.08);
  \filldraw[black](1.5,0.866)circle(0.06);

 \draw[dashed](1.5,-0.866)--(-1.5,0.866);
\filldraw[white](-1.5,0.866)circle(0.06);
\draw[thick](-1.5,0.866)circle(0.08);
 \filldraw(1.5,-0.866)circle(0.08);

\draw[dashed](-1,0)--(1,0);
\draw[thick](-1,0)circle(0.08);
\filldraw[white](-1,0)circle(0.06);
 \draw[thick](1,0)circle(0.08);
  \filldraw[black](1,0)circle(0.06);
\draw[thick](1,0)circle(0.08);

 \filldraw[dashed](0.5,-0.866)--(-0.5,0.866);
  \filldraw[black](0.5,-0.866)circle(0.08);
 \draw[thick](-0.5,0.866)circle(0.08);
  \filldraw[white](-0.5,0.866)circle(0.06);

  \draw[dashed](-0.5,-0.866)--(0.5,0.866);
 \filldraw[black](-0.5,-0.866)circle(0.08);
  \draw[thick](0.5,0.866)circle(0.08);
 \filldraw[white](0.5,0.866)circle(0.06);

\draw(0.1,2.1) node {$E_{13}$};
 \draw(0.9,1.2) node {$E_{11}$};
   \draw (-0.7,1.2) node {$E_{10}$};
    \draw (-1.8,1.2) node {$E_{9}$};
\draw (-1.6,0) node {$E_7$};
 \draw (2,1.2) node {$E_{12}$};
 \draw (1.6,0) node {$E_8$};
 \draw (2,-1.2) node {$E_{4}$};
  \draw (0.9,-1.2) node {$E_{3}$};
   \draw (-0.7,-1.2) node {$E_{2}$};
    \draw (-1.8,-1.2) node {$E_{1}$};
 \draw (-0.3,-0.2) node {$E_5$};
  \draw (0.3,-0.2) node {$E_6$};
 \draw (0.1,-2.1) node {$E_0$};
\end{tikzpicture}
\end{center}
The kernel of the forms $\theta^7, \theta^9, \theta^{10}, \theta^{11}, \theta^{13}$ is an integrable distribution. On each leaf of the foliation defined by  this distribution the forms $\theta^7, \theta^9, \theta^{10}, \theta^{11}, \theta^{13}$ vanish and the system \eqref{MaurerCartan} reduces to the Maurer-Cartan equations for $Q\cong \mathrm{P}_1$:
  \begin{equation}\begin{aligned}\label{MaurerCartan_Q}
&\der\theta^0 = -6 \theta^0\wedge\theta^5 + \theta^1\wedge \theta^4 - 3\theta^2\wedge\theta^3\\
&\der\theta^1 =  - 3\theta^1\wedge\theta^5 - 3\theta^1\wedge\theta^8 \\
&\der\theta^2 =   \theta^1\wedge\theta^6 - 3\theta^2\wedge\theta^5 - \theta^2\wedge\theta^8 \\
&\der\theta^3 =  2\theta^2\wedge\theta^6 -3\theta^3\wedge\theta^5 + \theta^3\wedge\theta^8 \\
&\der\theta^4=6 \theta^0\wedge\theta^{12}+3\theta^3\wedge\theta^6-3\theta^4\wedge\theta^5+3\theta^4\wedge\theta^8\\
&\der\theta^5=-\theta^1\wedge\theta^{12}\\
&\der\theta^6=6\theta^2\wedge\theta^{12}+2\theta^6\wedge\theta^8\\
&\der\theta^8=-3\theta^1\wedge\theta^{12}\\
&\der\theta^{12}= -3\theta^5\wedge\theta^{12}- 3\theta^8\wedge\theta^{12}
 \end{aligned}\end{equation}

 \end{document}